                                                                                        \newcounter{n}
\newtheorem{lemm}{Lemma}[section]
\newtheorem{theo}{Theorem}[section]
\newtheorem{defi}{Definition}[section]
\newtheorem{rem}{Remark}[section]
\newtheorem{prop}{Proposition}[section]
\newtheorem{cor}{Corollary}[section]
\newcommand{\IN}{{\mathrm I}\!{\mathrm N}}
\title{An arithmetic site of Connes-Consani type for imaginary quadratic fields with class number 1}
\author{Aur\'{e}lien Sagnier}
\begin{document}
\maketitle
\tableofcontents

\section{Introduction (fran\c{c}ais)}
\subsection{Bref rappel des travaux de A.Connes et de C.Consani sur le site arithmetique}

L'analogie entre les corps de fonctions (ie extension algébrique finie de $\mathbb{F}_{q}(T)$ pour $q$ une puissance d'un nombre premier) et les corps de nombres (ie extension algébrique finie de $\mathbb{Q}$) a été et demeure un principe très fécond en géométrie arithmétique. Comme le raconte A.Weil dans \cite{metaphysique}, gr\^{a}ce à cette analogie, l'analogue de la conjecture de Riemann pour les corps de fonctions a pu être démontré dans \cite{weilriemann} et \cite{gmt}. L'espoir a alors depuis été de s'inspirer de ce qui a été fait pour les corps de fonctions pour démontrer la vraie conjecture de Riemann. Pour cela il a longtemps fait partie du folklore qu'il faudrait "faire tendre $q$ vers $1$" et donc "travailler en caractéristique $1$". Cela n'a aucune signification au sens strict et de nombreuses tentatives ont déjà été effectuées pour essayer de donner autant que possible un sens rigoureux à ces phrases "faire tendre $q$ vers $1$" et  "travailler en caractéristique $1$" comme \cite{soule}, \cite{durov},\cite{leichtnam}, \cite{borger}, \cite{lorscheid}, \cite{lescot}, \cite{haran}, \cite{deitmar}, \cite{kurokawa}, \cite{connesconsanimarcolli}, \cite{connesconsani1}, \cite{connesconsani12}, \cite{manin1}, \cite{tits}, \cite{toenvaquie}. Notre principale inspiration dans cette thèse est la tentative la plus récente de A.Connes et C.Consani développée dans \cite{sitearithcr}, \cite{ccsitearith}, \cite{scalingcr}, \cite{scaling}.

En 1995, A.Connes (\cite{trace}) donna une interpr\'{e}tation spectrale des z\'{e}ros de la fonction z\^{e}ta de Riemann en utilisant l'espace des classes d'ad\`{e}les $\mathbb{A}_{\mathbb{Q}}/\mathbb{Q}^{\star}$. En mai 2014, A.Connes et C.Consani r\'{e}ussirent (\cite{sitearithcr}, \cite{ccsitearith}) \`{a} trouver une structure de g\'{e}om\'{e}trie alg\'{e}brique sous-jacente \`{a} cet espace en construisant ce qu'ils ont baptis\'{e} le site arithm\'{e}tique. Cet espace est en fait un topos muni d'une faisceau structural qui a pour propriété d'être de "caractéristique 1" au sens d'être un semi-anneau idempotent. Pour introduire ce dernier ils se sont inspirés de ce qui a été développé dans le domaine max-plus par l'école de Maslov (\cite{maslov}, \cite{litvinov}) et par l'école de l'INRIA (\cite{gaubert1}, \cite{gaubert2}).

Pour construire ce site arithm\'{e}tique, ils consid\`{e}rent la petite cat\'{e}gorie, not\'{e}e $\mathbb{N}^{\times}$, n'ayant qu'un seul objet $\star$ et les fl\`{e}ches index\'{e}es par $\mathbb{N}^{\times}=\mathbb{N}\backslash\{ 0\}$, la composition des fl\'{e}ches \'{e}tant donn\'{e}e par la multiplication sur $\mathbb{N}^{\times}$.

Puis ils consid\`{e}rent $\widehat{\mathbb{N}^{\times}}$ (appel\'{e} ensuite le site arithm\'{e}tique), le topos de pr\'{e}faisceaux associ\'{e} \`{a} cette petite cat\'{e}gorie munie de la topologie chaotique (cf \cite{sga4}), ce qui autrement dit est la cat\'{e}gorie des foncteurs contravariants de la  cat\'{e}gorie $\mathbb{N}^{\times}$ dans la cat\'{e}gorie des ensembles.

Ensuite ils calculent la cat\'{e}gorie des points (au sens de \cite{sga4}) du topos $\widehat{\mathbb{N}^{\times}}$ et ils trouvent  (th\'{e}or\`{e}me 2.1 de \cite{ccsitearith}) que cette derni\`{e}re est \'{e}quivalente \`{a} la cat\'{e}gorie des groupes totalement ordonn\'{e}s isomorphes aux sous groupes ordonnés non triviaux de $(\mathbb{Q},\mathbb{Q}_{+})$ avec comme morphismes les morphismes injectifs de groupes ordonn\'{e}s.

Ils montrent ensuite (proposition 2.5 de \cite{ccsitearith}) que l'ensemble des classes d'isomorphie des points du topos $\widehat{\mathbb{N}^{\times}}$ est en bijection naturelle avec l'espace quotient $\mathbb{Q}^{\times}_{+}\backslash\mathbb{A}^{f}_{\mathbb{Q}}/\hat{\mathbb{Z}}^{\times}$.

Cet espace est une composante de l'espace des classes d'ad\`{e}les $\mathbb{Q}^{\times}_{+}\backslash\mathbb{A}_{\mathbb{Q}}/\hat{\mathbb{Z}}^{\times}$
 d\'{e}j\`{a} utilis\'{e} par Connes (\cite{trace}) pour donner une interpr\'{e}tation spectrale des z\'{e}ros de la fonction z\^{e}ta de Riemann. Connes et Consani \'{e}quipent ensuite leur site arithm\'{e}tique du faisceau structural $(\mathbb{Z}\cup\{-\infty\},\max, +)$ (vu comme semi anneau) et ils montrent alors  (théorème 3.8 de \cite{ccsitearith}) que les points du site arithm\'{e}tique $(\widehat{\mathbb{N}^{\times}},\mathbb{Z}_{\max})$ sur $\mathbb{R}_{\max}$ est l'ensemble des classes d'ad\`{e}les $\mathbb{Q}^{\times}_{+}\backslash\mathbb{A}_{\mathbb{Q}}/\hat{\mathbb{Z}}^{\times}$.

Connes et Consani concluent leur article \cite{ccsitearith} en explicitant la relation qu'entretiennent les topos de Zariski $\text{Spec}(\mathbb{Z})$ et le site arithm\'{e}tique et en construisant le carr\'{e} du site arithm\'{e}tique. Ceci est une \'{e}tape importante si on souhaite adapter \`{a} la fonction z\^{e}ta de Riemann la preuve donn\'{e}e par Weil puis raffinée par Grothendieck (\cite{gmt}) pour l'analogue, dans le cas des corps de fonctions sur un corps fini, de l'hypoth\`{e}se de Riemann.

\subsection{Description des résultats principaux}
Dans cette th\`{e}se, nous essayons de g\'{e}n\'{e}raliser les constructions ci-dessus de Connes et Consani \`{a} d'autres anneaux d'entiers de corps de nombres. Nous avons d'abord considéré l'anneau des entiers de Gauss $\mathbb{Z}[\imath]$ qui est l'anneau d'entiers le plus simple \`{a} consid\'{e}rer apr\`{e}s $\mathbb{Z}$ et il se trouve que ce que nous avons pu faire pour $\mathbb{Z}[\imath]$ reste valable pour les 8 autres anneaux d'entiers de corps quadratiques imaginaires avec un nombre de classes \'{e}gal \`{a} 1.

Dans cette th\`{e}se, nous suivons la strat\'{e}gie g\'{e}n\'{e}rale qui a \'{e}t\'{e} emprunt\'{e}e par Connes et Consani dans \cite{ccsitearith} pour d\'{e}velopper le site arithm\'{e}tique mais la \textit{principale difficult\'{e}} dans la généralisation de leur travail est que leurs constructions et une partie de leurs r\'{e}sultats reposent de mani\`{e}re cruciale sur l'ordre total naturel $<$ pr\'{e}sent sur $\mathbb{R}$ et qui est compatible avec les op\'{e}rations arithm\'{e}tiques de base $+$ et $\times$. H\'{e}las rien de tel n'existe pour $\mathbb{Z}[\imath]$ ainsi la plus grande partie de notre travail a \'{e}t\'{e} de trouver les bons objets \`{a} \'{e}tudier.

Le point de d\'{e}part de notre \'{e}tude est, pour $K$ un corps de nombres quadratique imaginaire avec un nombre de classes \'{e}gal \`{a} 1, la petite cat\'{e}gorie not\'{e}e $\mathcal{O}_{K}$. Elle est constitu\'{e}e d'un unique objet $\star$ et de fl\`{e}ches index\'{e}es par $\mathcal{O}_{K}$, l'anneau des \'{e}l\'{e}ments entiers de $K$ où la loi de composition des fl\`{e}ches est donn\'{e}e par la multiplication $\times$.

Dans ce travail, nous avons déterminé (théorème \ref{theo:ptsscd}) la cat\'{e}gorie des points du topos $\widehat{\mathcal{O}_{K}}$ (ie le topos des pr\'{e}faisceaux sur la petite cat\'{e}gorie $\mathcal{O}_{K}$ munie de la topologie chaotique) et nous avons montr\'{e} que celle-ci est \'{e}quivalente \`{a} la cat\'{e}gorie des sous-$\mathcal{O}_{K}$-modules de $K$. C'est pour avoir cette équivalence qu'il nous faut supposer que le nombre de classes de K est égal à 1, sinon le calcul des points nous donnerait l'équivalence avec la catégorie des sous-$\mathcal{O}_{K}$-modules de $K$ de rang 1 (au sens que deux éléments distincts sont commensurables) ce qui n'est pas intéressant ensuite dans la perspective de relier les points à un quotient des adèles finies.

Nous montrons (théorème \ref{theo:adlfst}) que nous avons, de mani\`{e}re analogue \`{a} Connes et Consani, une interpr\'{e}tation ad\'{e}lique de l'ensemble des classes d'isomorphie des points du topos $\widehat{\mathcal{O}_{K}}$. Cet ensemble est en bijection avec l'espace quotient $\frac{\mathbb{A}^{f}_{K}}{(K^{\star}(\prod\mathcal{O}_{\mathfrak{p}}^{\star}\times \{1\}))}$ ce qui généralise la proposition 2.5 de \cite{ccsitearith} de Connes et Consani.

Un autre d\'{e}fi majeur est de trouver un faisceau structural pour le topos $\widehat{\mathcal{O}_{K}}$. Il fallait que ce soit un semi-anneau li\'{e} d'une certaine fa\c{c}on \`{a} $\mathcal{O}_{K}$. Dans ce travail, nous proposons l'ensemble $\mathcal{C}_{\mathcal{O}_{K}}$ des polygones convexes du plan d'intérieur non vide dont le centre est $0$, qui sont invariants par l'action par similitudes directes des unit\'{e}s de $\mathcal{O}_{K}$ et dont les sommets sont dans $\mathcal{O}_{K}$ (quelques restrictions suppl\'{e}mentaires doivent \^{e}tre faites dans les cas o\`{u} $K$ est diff\'{e}rent de $\mathbb{Q}(\imath)$ et $\mathbb{Q}(j)$). Nous le munissons des op\'{e}rations $\text{Conv}(\bullet\cup\bullet )$ et $+$ (la somme de Minskowski), ce qui en fait un semi-anneau idempotent, qu'on définit comme le faisceau structural sur $\widehat{\mathcal{O}_{K}}$.

Nous définissons ensuite $\mathcal{C}_{K,\mathbb{C}}$ comme l'ensemble des polygones convexes du plan d'intérieur non vide dont le centre est $0$, qui sont invariants par l'action par similitudes directes des unit\'{e}s de $\mathcal{O}_{K}$ et dont les sommets sont dans $\mathbb{C}$ (quelques restrictions suppl\'{e}mentaires doivent \^{e}tre faites dans les cas o\`{u} $K$ est diff\'{e}rent de $\mathbb{Q}(\imath)$ et $\mathbb{Q}(\imath\sqrt{3})$) auxquels on rajoute aussi $\{ 0\}$ et $\emptyset$. On le munit des op\'{e}rations $\text{Conv}(\bullet\cup\bullet )$ et $+$ (la somme de Minskowski)), ce qui en fait un anneau idempotent. On peut remarquer que $\emptyset$, l'élément neutre de $\text{Conv}(\bullet\cup\bullet )$, est un \'{e}l\'{e}ment absorbant pour $+$. Nous prouvons ensuite que $\text{Aut}_{\mathbb{B}}^{+}(\mathcal{C}_{K,\mathbb{C}})$, l'ensemble des $\mathbb{B}$-automorphismes de $\mathcal{C}_{K,\mathbb{C}}$ qu'on dira directs, est égal à $\mathbb{C}^{\star}/\mathcal{U}_{K}$. L'ensemble de tous les $\mathbb{B}$-automorphismes de $\mathcal{C}_{K,\mathbb{C}}$ a une structure un peu plus compliquée. Cela suggère que, heuristiquement, $\mathcal{C}_{K,\mathbb{C}}$ est de dimension tropicale 2, ce qui est une différence avec ce qu'ont fait A.Connes et C.Consani dans \cite{ccsitearith} et suggère déjà que l'interprétation spectrale sera un peu différente de la leur.

Nous pouvons alors prouver (cf \ref{theo:ptsckc}) que l'ensemble points du site arithm\'{e}tique $(\widehat{\mathcal{O}_{K}},\mathcal{C}_{\mathcal{O}_{K}})$ au dessus de $\mathcal{C}_{K,\mathbb{C}}$ est en bijection naturelle avec $\frac{\mathbb{A}_{K}}{\left( K^{\star}\left(\prod_{\mathfrak{p}}\mathcal{O}_{\mathfrak{p}}^{\star}\times\{ 1\}\right)\right)}$. Ceci généralise le théorème 3.8 de \cite{ccsitearith} de Connes et Consani.

Notons $\mathcal{H}$ l'espace de Hilbert associ\'{e} par Connes à $\frac{\mathbb{A}_{K}^{f}\times\mathbb{C}}{K^{\star}}$ dans \cite{trace} pour définir la réalisation spectrale des fonctions L de Hecke de $K$. Posons $G=\frac{K^{\star}\times\left(\prod_{\mathfrak{p}\,\text{prime}}\mathcal{O}_{\mathfrak{p}}^{\star}\times 1\right)}{K^{\star}}$. L'espace de Hilbert associé à notre espace $\frac{\mathbb{A}_{K}^{f}\times\mathbb{C}}{(K^{\star}(\prod\mathcal{O}_{\mathfrak{p}}^{\star}\times \{1\}))}$ est $\mathcal{H}^{G}$ (cf \ref{theo:intsp}).

Posons $C_{K,1}$ le groupe des classes d'adèles de norme 1. L'espace de Hilbert associé dans \cite{trace} à $\zeta_{K}$, la fonction zêta de Dedekind de $K$, est $\mathcal{H}^{C_{K,1}}$. On peut alors observer dans notre cas que $\frac{C_{K,1}}{G}=\frac{\mathbb{S}^{1}}{\mathcal{U}_{K}}$. Nous prouvons alors (cf théorème \ref{theo:intsp}) que $\mathcal{H}^{G}=\bigoplus_{\chi\in\widehat{\mathbb{S}^{1}/\mathcal{U}_{K}}}\mathcal{H}_{\chi}^{G}$   et que l'interprétation spectrale dit alors que les valeurs propres $\lambda$ du générateur infinitésimal de l'action de $1\times\mathbb{R}^{\star}_{+}$ sur $\mathcal{H}_{\chi}^{G}$ sont les $z-\frac{1}{2}\in\imath\mathbb{R}$ où $L(\chi, z )=0$. En particulier lorsque $\chi$ est trivial, on obtient ainsi une interprétation spectrale des zéros de la fonction zêta de Dedekind de $K$. La nuance avec ce qu'ont fait A.Connes et C.Consani dans \cite{ccsitearith} est que dans l'interprétation spectrale, on obtient certaines fonctions $L$ de Hecke pour des caractères non triviaux en plus de la fonction zêta (ici de Dedekind et non de Riemann). Cela est dû au fait que $\mathcal{C}_{K,\mathbb{C}}$ est heuristiquement de dimension tropicale 2. Notre travail donne donc une famille d'exemples où le topos associé prend en compte certaines fonctions $L$ non triviales, cela donnera peut-être une piste pour atteindre dans le futur plus de fonctions $L$ de Hecke.

Nous étendons ensuite à $K$ le théorème 5.3 de \cite{ccsitearith} de Connes et Consani qui établit un lien entre $\text{Spec}(\mathbb{Z})$ et le topos $\left(\widehat{\mathbb{N}},\mathbb{Z}_{\max}\right)$. Plus précisément (cf theorèmes \ref{theo:liensiteun} et \ref{theo:liensitedeux}), nous construisons un morphisme géométrique $T:\text{Spec}(\mathcal{O}_{K})\to\widehat{\mathcal{O}_{K}}$ et montrons que pour $\mathfrak{p}$ idéal premier de $\mathcal{O}_{K}$, la fibre $T^{\star}(\mathcal{C}_{\mathcal{O}_{K}})_{\mathfrak{p}}$ est le semi-anneau $\mathcal{C}_{H_{\mathfrak{p}}}$. De plus, au point générique, la fibre de $T^{\star}(\mathcal{C}_{\mathcal{O}_{K}})$ est $\mathbb{B}$.

Enfin dans le paragraphe 10, nous supposons que $K=\mathbb{Q}(\imath)$. Nous commençons (cf proposition \ref{prop:faiscstrucfct}) par donner une description fonctionnelle $\mathcal{F}_{\mathbb{Z}[\imath]}$ du faisceau structural $\mathcal{C}_{\mathbb{Z}[\imath]}$ de $\widehat{\mathbb{Z}[\imath]}$. Ceci nous permet alors (cf définition \ref{defi:defprodtensnred}) de définir le $\mathbb{B}$-module $\mathcal{F}_{\mathbb{Z}[\imath]}\otimes_{\mathbb{B}}\mathcal{F}_{\mathbb{Z}[\imath]}$ et de montrer (cf propositions \ref{prop:prodtsemrg} et \ref{prop:actprodtens}) que $\mathcal{F}_{\mathbb{Z}[\imath]}\otimes_{\mathbb{B}}\mathcal{F}_{\mathbb{Z}[\imath]}$ peut être muni de manière naturelle d'une structure de semi-anneau sur lequel $\mathbb{Z}[\imath]\times\mathbb{Z}[\imath]$ agit. Cela nous permet alors (cf définition \ref{defi:defprodtensnred}) de définir le carré non réduit $\left(\widehat{\mathbb{Z}[\imath]\times\mathbb{Z}[\imath]}, \mathcal{F}_{\mathbb{Z}[\imath]}\otimes_{\mathbb{B}}\mathcal{F}_{\mathbb{Z}[\imath]}\right)$. Il semblerait que ce semi-anneau n'est pas simplifiable. Nous lui associons alors (cf définition \ref{defi:prdtensred}) son semi-anneau simplifiable canonique $\mathcal{F}_{\mathbb{Z}[\imath]}\hat{\otimes}_{\mathbb{B}}\mathcal{F}_{\mathbb{Z}[\imath]}$, ce qui nous permet de considérer (cf définition \ref{defi:redsquare}) le carré réduit $\left(\widehat{\mathbb{Z}[\imath]\times\mathbb{Z}[\imath]}, \mathcal{F}_{\mathbb{Z}[\imath]}\hat{\otimes}_{\mathbb{B}}\mathcal{F}_{\mathbb{Z}[\imath]}\right)$.

\subsection{D\'{e}veloppements futurs}

Dans la construction du carré du site arithmétique $(\widehat{\mathcal{O}_{\mathbb{Z}[\imath]}},\mathcal{C}_{\mathcal{O}_{\mathbb{Z}[\imath]}})$, je suis déjà passé du point de vue de certains polygones convexes ($\mathcal{C}_{\mathbb{Z}[\imath]}$) au point de vue de certaines fonctions convexes affines par morceaux sur $[1,\imath]/(1\sim\imath)$ vu comme une courbe tropicale ($\mathcal{F}_{\mathbb{Z}[\imath]}$). Dans ma thèse, j'ai défini abstraitement et formellement le produit tensoriel $\mathcal{F}_{\mathbb{Z}[\imath]}\otimes_{\mathbb{B}}\mathcal{F}_{\mathbb{Z}[\imath]}$. Il serait intéressant d'avoir une description explicite de ce produit tensoriel car dans le cas de $\mathbb{Z}\otimes_{\mathbb{B}}\mathbb{Z}$, la description explicite de ce dernier produit tensoriel a des applications aux systèmes dynamiques à événements discrets comme montré dans \cite{gaubert}. Je suis actuellement en train d'essayer de trouver une description explicite du produit tensoriel $\mathcal{F}_{\mathbb{Z}[\imath]}\otimes_{\mathbb{B}}\mathcal{F}_{\mathbb{Z}[\imath]}$ et nous pouvons espérer qu'en plus des applications au carré du site arithmétique, une telle description pourrait être utile en mathématiques appliquées.

Je compte aussi poursuivre mes recherches dans une autre direction tout d'abord en remarquant que ce qui a été fait dans ma thèse jusqu'au faisceau structural peut être facilement généralisé à tout corps de nombres dont le nombre de classes est 1. La principale difficulté sera alors de trouver un faisceau structural adéquat. Dans le cas où $K$ était un corps de nombres imaginaire quadratique et de nombre de classe 1, je n'ai pas eu besoin de passer au point de vue fonctionnel dès le début car le groupe $\mathcal{U}_{K}$ des unités de $\mathcal{O}_{K}$, qui doit être vu comme le groupe des symétries, est fini. Cependant ceci n'est plus vrai même pour un corps de nombres quadratique réel. En étudiant le cas de $\mathbb{Z}[\sqrt{2}]$, il me semble que dans le cas général, pour un corps de nombres avec nombre de classes égal à 1, il faille regarder les fonctions rationnelles (du point de vue de la géométrie tropicale) sur le quotient (du point de vue de la géométrie tropicale) d'un sous ensemble bien choisi de $\mathbb{R}^{[K:\mathbb{Q}]}$ par l'action multiplication des éléments de $\mathcal{U}_{K}$. Le cas de $K=\mathbb{Q}(\sqrt{2})$ est en cours de réalisation.

Pour un corps de nombres $K$ avec nombre de classes différent de $1$, il semble qu'il serait raisonnable d'essayer d'étudier le topos $\widehat{\mathcal{I}_{K}}$ : le topos de préfaisceaux sur le site défini par la petite catégorie $\mathcal{I}_{K}$, la catégorie avec un seul objet $\star$ et dont les flèches sont indexées par $\mathcal{I}_{K}$ (le mono\"{i}de des idéaux entiers) et la loi de composition des flèches donnée par la multiplication des idéaux et sur lequel on met la topologie chaotique au sens de \cite{sga4}. Il semble que la catégorie des points de ce topos est un quotient intéressant des adèles finies, je suis en train de le calculer. La principale difficulté sera alors de trouver un faisceau structural adéquat, il faudrait que ce dernier soit de dimension tropicale 1, car grâce à l'interprétation spectrale de la fonction zêta de Dedekind de $K$, nous savons que nous devons quotienter l'espace des classes d'adèles par l'action des classes d'idèles de norme 1 (ces dernières pouvant être vues comme le noyau du module) pour qu'il ne reste plus que l'action par $\mathbb{R}^{\star}_{+}$. Ainsi, il me parait raisonnable de penser qu'heuristiquement le semi-anneau dans lequel les points prennent leur valeur et le faisceau structural doivent être de dimension tropicale 1.

Comme $\widehat{\mathcal{I}_{K}}$ a l'air d'être un candidat intéressant pour faire office de site arithmétique d'un corps de nombres $K$ quelconque et puis que $DR_{K}$, le mono\"{i}de de Deligne-Ribet de $K$, est intimement relié au mono\"{i}de $\mathcal{I}_{K}$ et que $DR_{K}$ joue un rôle important dans la structure des sytèmes de Bost-Connes comme expliqué dans \cite{bora}, il pourrait être intéressant et probablement difficile de calculer la catégorie des points du topos $\widehat{DR_{K}}$. Cela permettrait peut-être d'établir un lien entre le site arithmétique et le système de Bost-Connes et donner une meilleure compréhension des deux.

Il serait aussi intéressant de voir s'il serait possible de développer un analogue du site arithmétique dans le cas des corps de fonctions sur un corps fini et de voir si la preuve existante de Weil et Grothendieck de l'analogue de l'hypothèse de Riemann peut aussi être réalisée dans ce cadre.

En mars 2016 dans \cite{scalingcr} et in \cite{scaling}, A.Connes et C.Consani ont construit par extension des scalaires un site des fréquences pour $(\widehat{\mathbb{N}^{\times}},\mathbb{Z}_{\max})$ et ont montré que l'espace des classes d'adèles de $\mathbb{Q}$ qui est si important dans l'interprétation spectrale des zéros de la fonction zêta de Riemann admet une structure de courbe tropicale. Dans le futur, j'ai l'intention de construire des sites des fréquences similaires pour d'autres corps de nombres. \`{A} propos du site des fréquences, il serait aussi intéressant de voir si le morphisme géométrique existant de $\text{Spec}(\mathbb{Z})$ dans $(\widehat{\mathbb{N}^{\times}},\mathbb{Z}_{\max})$ peut être étendu et aller de la compactification d'Arakelov de $\text{Spec}(\mathbb{Z})$ dans le site des fréquences défini par A.Connes et C.Consani. Il serait ensuite intéressant de voir si un tel phénomène se produit aussi pour des corps de nombres plus généraux. Pour cela, il faudra utiliser le formalisme de $\mathbb{S}$-algèbres développé par A.Connes et C.Consani dans \cite{gamma}.

\subsection{Remerciements}
Je tiens à remercier Eric Leichtnam, mon directeur de thèse, pour sa patience, ses conseils et la liberté qu'il m'a laissée dans l'orientation de mes recherches. Je tiens aussi à remercier Caterina Consani pour ses encouragements, ses conseils et l'intér\^{e}t qu'elle a eu pour mes travaux tout au long de ma thèse. Je tiens enfin à exprimer ma profonde gratitude à Alain Connes pour ses conseils inspirants, sa disponibilité et pour m'avoir signalé une erreur dans la première version du théorème \ref{theo:intsp}.

\section{Introduction (english)}
\subsection{Brief summary of the work of A.Connes and C.Consani on the arithmetic site}

The analogy between function fields (ie finite algebraic extension of $\mathbb{F}_{q}(T)$  for $q$ a power of a prime number) and number fields (ie finite algebraic extension of $\mathbb{Q}$) has been and remains a fruitful principle in arithmetic geometry. As A.Weil tells in \cite{metaphysique}, thanks to this analogy, the analogous for function fields of the Riemann conjecture was proved in \cite{weilriemann} and \cite{gmt}. Since then, the hope has been to get inspiration from what happens in the function field case in order to try to prove the Riemann conjecture. For a long time the folklore has been to say that in order to achieve this, one should try to make "$q$ tend to $1$" and so work in "characteristic $1$". Rigorously speaking it doesn't make any sense but many people since then have tried to give a reasonable meaning to the sentences "$q$ tend to $1$" and "characteristic $1$" like in \cite{soule}, \cite{durov}, \cite{leichtnam}, \cite{borger}, \cite{lorscheid}, \cite{lescot}, \cite{haran}, \cite{deitmar}, \cite{kurokawa}, \cite{connesconsanimarcolli}, \cite{connesconsani1}, \cite{connesconsani12}, \cite{manin1}, \cite{tits}, \cite{toenvaquie}. In this thesis, our main inspiration is coming from the last approach of A.Connes and C.Consani on this problem as developped in \cite{sitearithcr}, \cite{ccsitearith}, \cite{scalingcr}, \cite{scaling}. 

In 1995, A.Connes (\cite{trace}) gave a spectral interpretation of the zeroes of the Riemann zeta function using the adele class space $\mathbb{A}_{\mathbb{Q}}/\mathbb{Q}^{\star}$. In May 2014, A.Connes and C.Consani (\cite{sitearithcr}, \cite{ccsitearith}) found  for this space $\mathbb{A}_{\mathbb{Q}}/\mathbb{Q}^{\star}$ an underlying structure coming from algebraic geometry by building what they have called the arithmetic site. This space is in fact a topos with a structural sheaf which has the property to be of "caracteristic $1$" in the sense that it is an idempotent semiring. To introduce this structural sheaf, they drew their inspiration from what has been developped in the max-plus area by Maslov's school (\cite{maslov}, \cite{litvinov}) and by the school of the INRIA (\cite{gaubert1}, \cite{gaubert2}).

To construct this arithmetic site, they consider the small category, denoted $\mathbb{N}^{\times}$, with only one object $\star$ and the arrows indexed by $\mathbb{N}^{\times}=\mathbb{N}\backslash\{ 0\}$. The composition law of arrows is given by the multiplication on $\mathbb{N}^{\times}$.

Then they consider $\widehat{\mathbb{N}^{\times}}$, later called the arithmetic site, the presheaf topos associated to this small category considered with the chaotic topology (cf \cite{sga4}), in other words it is the category of contravariant functors from the category $\mathbb{N}^{\times}$ into the category of sets.

Then they show (theorem 2.1 of \cite{ccsitearith}) that the category of points (in the sense of \cite{sga4}) of the topos $\widehat{\mathbb{N}^{\times}}$ is equivalent to the category of totally ordered groups isomorphic to non trivial subgroups of $(\mathbb{Q},\mathbb{Q}_{+})$ with morphisms in the category being injective morphisms of ordered groups.

Then they show (proposition 2.5 of \cite{ccsitearith}) that the set of classes of isomorphic points of the topos $\widehat{\mathbb{N}^{\times}}$ is in natural bijection with the quotient space $\mathbb{Q}^{\times}_{+}\backslash\mathbb{A}^{f}_{\mathbb{Q}}/\hat{\mathbb{Z}}^{\times}$.

This space is a component of the adele class space $\mathbb{Q}^{\times}_{+}\backslash\mathbb{A}_{\mathbb{Q}}/\hat{\mathbb{Z}}^{\times}$ already used by Connes (\cite{trace}) to give a spectral interpretation of the zeroes of the Riemann zeta function. Connes and Consani then put on the arithmetic site as a structural sheaf the idempotent semiring $(\mathbb{Z}\cup\{-\infty\},\max, +)$. They show then in theorem 3.8 of \cite{ccsitearith} that the points of the arithmetic site $(\widehat{\mathbb{N}^{\times}},\mathbb{Z}_{\max})$ over $\mathbb{R}^{\max}$ is the adele class space $\mathbb{Q}^{\times}_{+}\backslash\mathbb{A}_{\mathbb{Q}}/\hat{\mathbb{Z}}^{\times}$.

Connes and Consani end their article \cite{ccsitearith} by describing precisely the relation between the Zariski topos $\text{Spec}(\mathbb{Z})$ and the arithmetic site, and by building the square of the arithmetic site. Building the square of the arithmetic site is important in the hope of adapting to the Riemann zeta function the proof given by Weil and refined by Grothendieck in \cite{gmt} of the analogue of the Riemann hypothesis in the case of function fields.

\subsection{Description of the main results}

In this thesis, we try to generalize the constructions of Connes and Consani mentionned above to other rings of integers of number fields. We have first considered $\mathbb{Z}[\imath]$ the ring of Gaussian integers which is the simplest ring of integers to look after $\mathbb{Z}$ and it turns out that what we have done for $\mathbb{Z}[\imath]$ remains true for the 8 other rings of integers of imaginary quadratic number fields of class number 1.

In this thesis, we follow the general strategy adopted by Connes and Consani in \cite{ccsitearith} to develop the arithmetic site but the \emph{main difficulty} in generalizing their work is that their constructions and part of their results strongly rely on the natural total order $<$ existing on $\mathbb{R}$ which is compatible with basic arithmetic operations $+$ and $\times$. Of course nothing of this sort exists in the case of $\mathbb{Z}[\imath]$ and the main part of my work has been to find the good objects to study.

The starting point of my study is, for $K$ an imaginary quadratic field with class number 1, the small category denoted $\mathcal{O}_{K}$ with only one object $\star$ and arrows indexed by $\mathcal{O}_{K}$, the ring of integers of the number field $K$, the composition law of the arrows being given by the multiplication law $\times$.

In this thesis, we have shown (cf \ref{theo:ptsscd}) that the category of points of the topos $\widehat{\mathcal{O}_{K}}$ (ie the presheaf topos on the small category $\mathcal{O}_{K}$ endowed with the chaotic topology) is equivalent to the category of sub-$\mathcal{O}_{K}$-modules of $K$.

We  have shown (cf\ref{theo:adlfst}), in the same way as Connes and Consani, that we have an adelic interpretation of the set of classes of isomorphic points of the topos $\widehat{\mathcal{O}_{K}}$. This set is in bijection with $\frac{\mathbb{A}^{f}_{K}}{(K^{\star}(\prod\mathcal{O}_{\mathfrak{p}}^{\star}\times \{1\}))}$, which generalizes the proposition 2.5 of \cite{ccsitearith} of Connes and Consani.

Another great difficulty is to find a structural sheaf for the topos $\widehat{\mathcal{O}_{K}}$. It needs to be an idempotent semiring somehow linked to $\mathcal{O}_{K}$. In this work, we propose the set $\mathcal{C}_{\mathcal{O}_{K}}$ of convex polygons of the plane whose interior is non empty, invariants by the action by direct similitudes of the units $\mathcal{U}_{K}$ of $\mathcal{O}_{K}$ and whose summits are in $\mathcal{O}_{K}$ (some restrictions have to be made when $K$ is not equal to $\mathbb{Q}(\imath)$ or $\mathbb{Q}(\imath\sqrt{3})$). We endow it with the operations $\text{Conv}(\bullet\cup\bullet )$ and $+$ (the Minkowski sum). These laws turn $\mathcal{C}_{\mathcal{O}_{K}}$ into an idempotent semiring which we define to be the structural sheaf on $\widehat{\mathcal{O}_{K}}$.

Then we define $\mathcal{C}_{K,\mathbb{C}}$ as the set of convex polygons of the plane whose interior is non empty, invariants by the action by direct similitudes of the units $\mathcal{U}_{K}$ of $\mathcal{O}_{K}$ and whose summits are in $\mathbb{C}$ (some restrictions have to be made when $K$ is not equal to $\mathbb{Q}(\imath)$ or $\mathbb{Q}(\imath\sqrt{3})$) and we endow it with the operations $\text{Conv}(\bullet\cup\bullet )$ and $+$ (the Minkowski sum) to which we also add the sets $\{0\}$ and $\emptyset$. These laws turn $\mathcal{C}_{K,\mathbb{C}}$ into an idempotent semiring. We can already remark that $\emptyset$, the neutral element of the law $\text{Conv}(\bullet\cup\bullet)$, is an absorbant element for the law $+$. We then prove that $\text{Aut}_{\mathbb{B}}^{+}(\mathcal{C}_{K,\mathbb{C}})$, the set of $\mathbb{B}$-automorphisms of $\mathcal{C}_{K,\mathbb{C}}$ which we will call direct, is equal to $\mathbb{C}^{\star}/\mathcal{U}_{K}$. The set of all $\mathbb{B}$-automorphisms of $\mathcal{C}_{K,\mathbb{C}}$ has a more complicated structure. This suggests heuristically that $\mathcal{C}_{K,\mathbb{C}}$ is of tropical dimension 2 which is  different from what A.Connes and C.Consani did in \cite{ccsitearith} and already suggests that our spectral interpretation will be different from the one they obtained.

We prove then (cf \ref{theo:ptsckc}) that the set of points of the arithmetic site $(\widehat{\mathcal{O}_{K}},\mathcal{C}_{\mathcal{O}_{K}})$ over $\mathcal{C}_{K,\mathbb{C}}$ is in natural bijection with $\frac{\mathbb{A}_{K}}{\left( K^{\star}\left(\prod_{\mathfrak{p}}\mathcal{O}_{\mathfrak{p}}^{\star}\times\{ 1\}\right)\right)}$. This generalizes the theorem 3.8 of \cite{ccsitearith} of Connes and Consani.

Let us now denote by $\mathcal{H}$ the Hilbert space associated by Connes to $\frac{\mathbb{A}_{K}^{f}\times\mathbb{C}}{K^{\star}}$ in \cite{trace} to build the spectral realization of Hecke L functions of $K$. Let us denote $G=\frac{K^{\star}\times\left(\prod_{\mathfrak{p}\,\text{prime}}\mathcal{O}_{\mathfrak{p}}^{\star}\times 1\right)}{K^{\star}}$. The Hilbert space associated to our space $\frac{\mathbb{A}_{K}^{f}\times\mathbb{C}}{(K^{\star}(\prod\mathcal{O}_{\mathfrak{p}}^{\star}\times \{1\}))}$ is $\mathcal{H}^{G}$ (cf \ref{theo:intsp}).

Let us denote $C_{K,1}$ the group of adele classes of norm 1. The Hilbert space associated in \cite{trace} to $\zeta_{K}$, the Dedekind zeta function of $K$, is $\mathcal{H}^{C_{K,1}}$. But in our case, we can notice that $\frac{C_{K,1}}{G}=\frac{\mathbb{S}^{1}}{\mathcal{U}_{K}}$. We can therefore prove (cf theorem \ref{theo:intsp}) that $\mathcal{H}^{G}=\bigoplus_{\chi\in\widehat{\mathbb{S}^{1}/\mathcal{U}_{K}}}\mathcal{H}_{\chi}^{G}$  and that the spectral interpretation tells us that the eigenvalues of the infinitesimal generator of the action of $1\times\mathbb{R}^{\star}_{+}$ on $\mathcal{H}_{\chi}^{G}$ are exactly the $z-\frac{1}{2}$ such that $L(\chi,z)=0$. In particular when $\chi$ is trivial we get a spectral interpretation of the zeroes of the Dedekind zeta function of $K$. The slight difference here with what A.Connes and C.Consani did in \cite{ccsitearith} is that the spectral interpretation gives us not only the zeta function (here of Dedekind and not of Riemann) but also some Hecke $L$ functions. The reason for this is that heuristically $\mathcal{C}_{K,\mathbb{C}}$ is of tropical dimension 2. Our work is thus giving a family of examples where the associated topos encodes some non trivial $L$ functions, it may give a hint on how to take into account more Hecke $L$ functions in the future.

Then we extend to the case of $K$ the theorem 5.3 of \cite{ccsitearith} of Connes and Consani which establish a link between $\text{Spec}(\mathbb{Z})$ and the topos $\left(\hat{\mathbb{N}},\mathbb{Z}_{\max}\right)$. More precisely (cf theorems \ref{theo:liensiteun} and \ref{theo:liensitedeux}), we build a geometric morphism $T:\text{Spec}(\mathcal{O}_{K})\to\hat{\mathcal{O}_{K}}$ and show that for $\mathfrak{p}$ a prime ideal of $\mathcal{O}_{K}$, the fiber $T^{\star}(\mathcal{C}_{\mathcal{O}_{K}})_{\mathfrak{p}}$ is the semiring $\mathcal{C}_{H_{\mathfrak{p}}}$. Moreover at the generic point, the fiber of $T^{\star}(\mathcal{C}_{\mathcal{O}_{K}})$ is $\mathbb{B}$.

Lastly, in section 10, we assume that $K=\mathbb{Q}(\imath)$. We begin (cf proposition \ref{prop:faiscstrucfct}) with giving a functional description $\mathcal{F}_{\mathbb{Z}[\imath]}$ of the structural sheaf $\mathcal{C}_{\mathbb{Z}[\imath]}$ of $\widehat{\mathcal{O}_{K}}$. This allows us (cf definition \ref{defi:defprodtensnred}) to define the $\mathbb{B}$-module $\mathcal{F}_{\mathbb{Z}[\imath]}\otimes_{\mathbb{B}}\mathcal{F}_{\mathbb{Z}[\imath]}$ and to show (cf propositions \ref{prop:prodtsemrg} and \ref{prop:actprodtens})that it can be naturally endowed with a structure of semiring on which $\mathbb{Z}[\imath]\times\mathbb{Z}[\imath]$ acts. It allows us then (cf definition \ref{defi:defprodtensnred}) to define the non reduced square $\left(\widehat{\mathbb{Z}[\imath]\times\mathbb{Z}[\imath]}, \mathcal{F}_{\mathbb{Z}[\imath]}\otimes_{\mathbb{B}}\mathcal{F}_{\mathbb{Z}[\imath]}\right)$. It seems that this semiring is not multiplicatively cancellative. Therefore we associate to it (cf definition \ref{defi:prdtensred}) its canonical multiplicatively cancellative semiring $\mathcal{F}_{\mathbb{Z}[\imath]}\hat{\otimes}_{\mathbb{B}}\mathcal{F}_{\mathbb{Z}[\imath]}$, which allows us to define (cf \ref{defi:redsquare}) the reduced square $\left(\widehat{\mathbb{Z}[\imath]\times\mathbb{Z}[\imath]}, \mathcal{F}_{\mathbb{Z}[\imath]}\hat{\otimes}_{\mathbb{B}}\mathcal{F}_{\mathbb{Z}[\imath]}\right)$.

\subsection{Future projects}

In the construction of the square of the arithmetic site  $(\widehat{\mathcal{O}_{\mathbb{Z}[\imath]}},\mathcal{C}_{\mathcal{O}_{\mathbb{Z}[\imath]}})$ I have already switched viewpoints from the set $\mathcal{C}_{\mathbb{Z}[\imath]}$ of convex polygons with some special hypothesis to the set $\mathcal{F}_{\mathbb{Z}[\imath]}$ of some special convex affine by parts functions on $[1,\imath]/(1\sim\imath)$ seen as a tropical curve. In my thesis, I have defined abstractly the tensor product over $\mathbb{B}$ : $\mathcal{F}_{\mathbb{Z}[\imath]}\otimes_{\mathbb{B}}\mathcal{F}_{\mathbb{Z}[\imath]}$. As shown in \cite{gaubert}, the concrete description of $\mathbb{Z}\otimes_{\mathbb{B}}\mathbb{Z}$ has applications to discrete event dynamic systems. I am currently trying to find a concrete description of $\mathcal{F}_{\mathbb{Z}[\imath]}\otimes_{\mathbb{B}}\mathcal{F}_{\mathbb{Z}[\imath]}$ and one could hope, as in the case of $\mathbb{Z}\otimes_{\mathbb{B}}\mathbb{Z}$, that the concrete description of $\mathcal{F}_{\mathbb{Z}[\imath]}\otimes_{\mathbb{B}}\mathcal{F}_{\mathbb{Z}[\imath]}$ could be useful too in applied mathematics.

Another direction of research could consist first by noticing that what has been done in my thesis until the structural sheaf of the arithmetic site could be generalized easily for a $K$ a number field whose ring of integers is principal. The main difficulty will be then to find an adequate structural sheaf. In the case where $K$ was imaginary quadratic and of class number 1, we could have prevented us from going to the functionnal viewpoint at the begining because the group  $\mathcal{U}_{K}$ of units of $\mathcal{O}_{K}$, which must be seen as a symmetry group, is finite. However this is no longer true even for a quadratic real number field. By studying the case of $\mathbb{Z}[\sqrt{2}]$, it seems to me that in the general case, one should try to look at rationnal functions (from the viewpoint of tropical geometry) on the quotient (seen from the viewpoint of tropical geometry) of a well chosen subset of $\mathbb{R}^{[K:\mathbb{Q}]}$ by the multiplicative action as elements of $K$ of the elements of the group $\mathcal{U}_{K}$. The case of $K=\mathbb{Q}(\sqrt{2})$ is under realisation.

For number fields $K$ with class number different than $1$, it seems that a reasonable topos to study, would be the topos $\widehat{\mathcal{I}_{K}}$ : the presheaf topos on the site defined by the small category $\mathcal{I}_{K}$, the category with one object $\star$ with arrows indexed by the elements of $\mathcal{I}_{K}$ (the monoid of integral ideals) and the law of composition of arrows given by the multiplication of ideals, and with the chaotic topology in the sense of \cite{sga4}. It seems that the category of points of this topos is an interesting quotient of finite adeles, I am computing it now. The main difficulty will be to find a suitable structural sheaf. We would have to try to find a structural sheaf of tropical dimension 1, because thanks to the spectral interpretation, we know that in order to get in this spectral interpretation the Dedekind zeta function of $K$, we have to divide the adèle class space by the idèles classes of norm 1 (ie the kernel of the module map) and what is left is only an action by $\mathbb{R}^{\star}_{+}$.

Since $\widehat{\mathcal{I}_{K}}$ seems to be an interesting candidate for the arithmetic site for a general number field $K$, and since $DR_{K}$, the monoid of Deligne-Ribet of $K$, is closely linked to the monoid $\mathcal{I}_{K}$ and is playing a crucial role in the structure of Bost-Connes systems as shown in \cite{bora}, it would be interesting and difficult to try to compute the category of points of the presheaf topos associated to the small category with only one object $\star$ and the arrows indexed by the elements of $DR_{K}$ and the law of composition of arrows given by the law of the monoid $DR_{K}$, the delicate thing will be to put an adequate topology on this category and compute the points. One could hope that it could provide a link and maybe a better understanding between Bost-Connes systems and arithmetic sites.

It would be also interesting to see if it is possible to develop an analogue of the arithmetic site in the case of function fields on a finite field and see if the already existing proof of Weil and Grothendieck for the analogue of the Riemann hypothesis can be done also in this framework.

In march 2016 in \cite{scalingcr} and in \cite{scaling}, A.Connes and C.Consani constructed by extensions of scalar a scaling site for $(\widehat{\mathbb{N}^{\times}},\mathbb{Z}_{\max})$ and so showed that the adèle class space of $\mathbb{Q}$ which is so important in the spectral interpretation of the zeroes of the Riemann z\^{e}ta function admits a natural structure of tropical curve. In the future, I intend to build similar scaling sites for more general number fields. Also regarding the scaling site, it would be interesting to see if the geometric morphism $\text{Spec}(\mathbb{Z})$ and the arithmetic site $(\widehat{\mathbb{N}^{\times}},\mathbb{Z}_{\max})$, could be extended from the Arakelov compactification of $\text{Spec}(\mathbb{Z})$ to the scaling site and see if a similar situation occurs also for more general number fields. In order to achieve this, one would have to use the formalism of $\mathbb{S}$-algebras developped by A.Connes and C.Consani in \cite{gamma}.

\subsection{Acknowledgements}
I would like to thank Eric Leichtnam, my advisor, for his patience, his advice and the freedom he gave me in the orientation of my research. I would also like to thank Caterina Consani for her encouragements, her advice and the interest she had for my work all along my PhD. I would like finally to express my deep gratitude towards Alain Connes for his inspiring advice and for pointing it out a mistake in the first version of the theorem \ref{theo:intsp} and the time he took to help me correcting it.

\section{Notations}
The starting point of Alain Connes and Caterina Consani's construction is the topos $\widehat{\IN ^{\star}}$. Here we shall use the topos $\widehat{\mathcal{O}_{K}}$ where :
\begin{itemize}
\item[$\bullet$] $K$ is a number field whose ring of integers $\mathcal{O}_{K}$ is principal
\item[$\bullet$] $\mathcal{O}_{K}$ is a written shortcut for the little category which has only one object noted $\star$ and arrows indexed by the elements of $\mathcal{O}_{K}$ and the law of composition of arrows is determined by the multiplication law of $\mathcal{O}_{K}$ ($\mathcal{O}_{K}$ is a mono\"{i}d for the multiplication law)
\item[$\bullet$]Let denote $\widehat{\mathcal{O}_{K}}$ the presheaf topos associated to the small category $\mathcal{O}_{K}$, ie the category of contravariant functors from the small category $\mathcal{O}_{K}$ to $\mathfrak{Sets}$ the category of sets.
\item[$\bullet$] $\mathcal{U}_K$ is the set of the units of $\mathcal{O_{K}}$ the ring of integers of $K$
\item[$\bullet$] $\mathbb{S}^{1}$ the circle, ie the set of complex number with modulus equal to $1$
\end{itemize}
\section{Geometric points of $\widehat{\mathcal{O}_{K}}$}
As recalled by Alain Connes and Caterina Consani in \cite{ccsitearith} and proved by MacLane and Moerdijk in \cite{mlm} : in topos theory, the category of geometric points of a presheaf topos $\widehat{\mathcal{C}}$, with $\mathcal{C}$ being a small category, is canonically equivalent to the category of covariant flat functors from $\mathcal{C}$ to $\mathfrak{Sets}$. Let us also recall that a covariant flat functor $F:\mathcal{C}\to\mathfrak{Sets}$ is said to be flat if and only if it is filtering which means :
\begin{enumerate}
\item $F(C)\neq\emptyset$ for at least one object $C$ of $\mathcal{C}$
\item Given two objects $A$ and $B$ of $\mathcal{C}$ and two elements $a\in F(A)$ and $b\in F(B)$, then there exists an object $Z$ of $\mathcal{C}$, two morphisms $u:Z\to A$, $v:Z\to B$ and an element $z\in F(Z)$ such that $F(u)z=a$ and $F(v)z=b$
\item Given two objects $A$ and $B$ of $\mathcal{C}$, two arrows $u,v:A\to B$ and $a\in F(A)$ with $F(u)a=F(v)a$, then there exists an object $Z$ of $\mathcal{C}$, an arrow $w:Z\to A$ and an element $z\in F(Z)$ such that $F(w)z=a$ and $u\circ w =v\circ w \in\text{Hom}_{\mathcal{C}}(Z,B)$.
\end{enumerate}
Here in the case of $\widehat{\mathcal{O}_{K}}$, we deduce that a covariant functor $F:\mathcal{O}_{K}\to\mathfrak{Sets}$ is flat if and only if 
\begin{enumerate}
\item $X:=F(\star )$ is a non empty set
\item Given two elements $a,b\in X$, then there exists $u,v\in \mathcal{O}_{K}$ and $z\in X$ such that $F(u)z=a$ and $F(v)z=b$
\item Given two elements $u,v\in\mathcal{O}_{K}$ and $a\in X$ with $F(u)a=F(v)a$, then there exists $w\in\mathcal{O}_{K}$ and $z\in X$ such that $F(w)z=a$ and $u\times w =v\times w \in\mathcal{O}_K$.
\end{enumerate}
Then we have :
\begin{theo}\label{theo:ptsfirst}
Let $F:\mathcal{O}_{K}\to\mathfrak{Sets}$ be a flat covariant functor. Then $X:=_{def} F(\star )$ can be naturally endowed with the structure of an $\mathcal{O}_{K}$-module which is isomorphic (not in a canonical way) to an $\mathcal{O}_{K}$-module included in $K$
\end{theo}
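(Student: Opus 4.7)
The plan is to use functoriality of $F$ to produce the multiplicative action of $\mathcal{O}_{K}$, and the three filtering conditions to build a compatible addition on $X$, using crucially the fact that $\mathcal{O}_{K}$ is an integral domain. For $n \in \mathcal{O}_{K}$ and $x \in X$ set $n \cdot x := F(n)x$; functoriality immediately turns $(X, \cdot)$ into a module over the multiplicative monoid $\mathcal{O}_{K}$. For the addition, given $a, b \in X$, apply condition 2 to obtain $u, v \in \mathcal{O}_{K}$ and $z \in X$ with $F(u)z = a$ and $F(v)z = b$, and set $a \oplus b := F(u + v)z$. A preliminary observation is that $F(0)$ is a constant map: given $y_{1}, y_{2} \in F(0)(X)$, condition 2 supplies $z \in X$ and $u, v \in \mathcal{O}_{K}$ with $F(u)z = y_{1}$, $F(v)z = y_{2}$, and applying $F(0)$ to both identities gives $y_{1} = F(0)z = y_{2}$. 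I denote this unique value $0_{X}$; it will serve as the additive neutral.

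The main obstacle is well-definedness of $\oplus$. Suppose $(u_{i}, v_{i}, z_{i})$, $i = 1, 2$, are two valid presentations of the pair $(a, b)$. I would first apply condition 2 to $z_{1}, z_{2}$ to obtain $w \in X$ and $s, t \in \mathcal{O}_{K}$ with $F(s)w = z_{1}$, $F(t)w = z_{2}$, so that $F(u_{1}s)w = a = F(u_{2}t)w$ and $F(v_{1}s)w = b = F(v_{2}t)w$. Two applications of condition 3, followed by one more round of conditions 2 and 3 to align the refinements they produce, then yield $w' \in X$ and $\rho \in \mathcal{O}_{K}$ such that $F(\rho)w' = w$, $u_{1}s\rho = u_{2}t\rho$, and $v_{1}s\rho = v_{2}t\rho$ in $\mathcal{O}_{K}$. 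If $\rho \neq 0$, integrality of $\mathcal{O}_{K}$ lets me cancel to obtain $(u_{1} + v_{1})s = (u_{2} + v_{2})t$, whence $F(u_{1} + v_{1})z_{1} = F(u_{2} + v_{2})z_{2}$. If instead $\rho = 0$, then $w = F(0)w' = 0_{X}$, which forces $z_{1} = z_{2} = 0_{X}$ and $a = b = 0_{X}$, so both candidates for $a \oplus b$ equal $0_{X}$. Carefully handling this dichotomy is the delicate point of the proof; it is also where the hypothesis that $\mathcal{O}_{K}$ is a ring of integers (hence an integral domain) is crucial.

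Once $\oplus$ is well defined, commutativity is immediate, and associativity and distributivity follow from the same common-refinement idea: given $a, b, c \in X$, two successive uses of condition 2 produce a single $z \in X$ and scalars $U, V, W \in \mathcal{O}_{K}$ with $F(U)z = a$, $F(V)z = b$, $F(W)z = c$, and every module identity then reduces to the corresponding identity in the ring $\mathcal{O}_{K}$ applied to $z$. The additive inverse of $a$ is simply $(-1) \cdot a$, because $a \oplus ((-1) \cdot a) = F(1 + (-1))a = F(0)a = 0_{X}$.

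For the embedding into $K$, I first check torsion-freeness: if $n \cdot a = 0_{X} = F(0)a$ with $n \neq 0$, condition 3 applied to the pair $(n, 0)$ at $a$ furnishes $r \in \mathcal{O}_{K}$ and $w \in X$ with $F(r)w = a$ and $nr = 0$, so integrality gives $r = 0$ and $a = F(0)w = 0_{X}$. Thus the canonical map $X \to V := X \otimes_{\mathcal{O}_{K}} K$ is injective. Moreover, for any two nonzero $a, b \in X$, condition 2 yields $z \in X$ and nonzero $u, v \in \mathcal{O}_{K}$ (nonzero because otherwise the corresponding element would equal $0_{X}$) with $F(u)z = a$ and $F(v)z = b$, whence $v \cdot a = F(uv)z = u \cdot b$ in $X$ and therefore $a$ and $b$ are $K$-linearly dependent in $V$. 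Hence $\dim_{K} V \leq 1$, and any choice of $K$-isomorphism $V \cong K$ realises $X$ as a sub-$\mathcal{O}_{K}$-module of $K$, the non-canonicity of the identification reflecting precisely the choice of this isomorphism.
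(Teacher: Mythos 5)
Your proposal is correct. The core of it — the construction of the addition and its well-definedness — follows the same route as the paper: present both summands over a common $z$ via the filtering condition (ii), set $a\oplus b:=F(u+v)z$, and then combine condition (iii) with cancellation in the integral domain $\mathcal{O}_{K}$, treating the degenerate (zero) case separately; your device of merging the two applications of condition (iii) into a single $\rho$ by one further round of (ii)+(iii) is in fact a cleaner organization of the paper's three-way case analysis, and your observation that $F(0)$ is a constant map is exactly the paper's $0_{X}$. Where you genuinely diverge is the final embedding: the paper writes down the explicit map $j_{X,x}(\tilde x)=\tilde k/k$ (where $x=F(k)z$, $\tilde x=F(\tilde k)z$) and checks by hand that it is well defined, $\mathcal{O}_{K}$-linear and injective, whereas you prove torsion-freeness via condition (iii), deduce injectivity of the canonical map $X\to X\otimes_{\mathcal{O}_{K}}K$, and use the relation $v\cdot a=u\cdot b$ (with $u,v\neq 0$) to get $\dim_{K}\left(X\otimes_{\mathcal{O}_{K}}K\right)\le 1$. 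The underlying computations are the same — your relation $v\cdot a=u\cdot b$ is precisely the ratio $\tilde k/k$ of the paper — but your version delegates the bookkeeping to the standard fact that the kernel of $X\to X\otimes_{\mathcal{O}_{K}}K$ is the torsion submodule, which makes it shorter at the cost of quoting that fact, while the paper's is self-contained. One cosmetic caveat: when $X=\{0_{X}\}$ there is no isomorphism $X\otimes_{\mathcal{O}_{K}}K\cong K$, so that trivial case should be disposed of separately, as the paper does.
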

Let us now prove this theorem with a long serie of lemma:

Let $F:\mathcal{O}_{K}\to\mathfrak{Sets}$ be a flat covariant functor.

Let us denote $X:=F(\star)$, the image by $F$ in $\mathfrak{Sets}$ of $\star$ the only object of the small category $\mathcal{O}_{K}$.

\begin{lemm}
The group law $+$ of $\mathcal{O}_K$ will induce through $F$ an intern law on $X$
\end{lemm}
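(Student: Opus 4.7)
The plan is to define, for $a,b\in X$, their sum by choosing a common preimage and transporting the additive structure of $\mathcal{O}_{K}$ through $F$. Concretely, by filtering condition 2 applied to the pair $(a,b)$, pick $u,v\in\mathcal{O}_{K}$ and $z\in X$ with $F(u)z=a$ and $F(v)z=b$; then I would set
\[ a+b \;:=\; F(u+v)(z). \]
This is the only reasonable candidate, since if one wants the induced law to be compatible with the action of $\mathcal{O}_{K}$ via $F$, it must satisfy $F(u+v)(z)=F(u)(z)+F(v)(z)$ whenever $a$ and $b$ arise from a common base $z$.

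The core of the lemma is to check that this assignment is independent of the choice of the triple $(u,v,z)$. Suppose $(u_1,v_1,z_1)$ and $(u_2,v_2,z_2)$ both represent $(a,b)$ in the above sense. First, apply filtering condition 2 to the pair $(z_1,z_2)$ to obtain $s,t\in\mathcal{O}_{K}$ and $\tilde z\in X$ with $F(s)\tilde z=z_1$ and $F(t)\tilde z=z_2$. Functoriality of $F$ yields $F(u_1 s)\tilde z=a=F(u_2 t)\tilde z$ and $F(v_1 s)\tilde z=b=F(v_2 t)\tilde z$. Apply filtering condition 3 to each of these two equalities separately to obtain $w_u,w_v\in\mathcal{O}_{K}$ (with respective bases $z'_u,z'_v\in X$) such that $u_1 s w_u=u_2 t w_u$ and $v_1 s w_v=v_2 t w_v$ in $\mathcal{O}_{K}$. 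Using condition 2 on $(z'_u,z'_v)$ and then condition 3 once more to reconcile the two equalizers, I obtain a single element $W\in\mathcal{O}_{K}$ and $z^\ast\in X$ with $F(W)z^\ast=\tilde z$ for which both
\[ u_1 s W \;=\; u_2 t W \quad\text{and}\quad v_1 s W \;=\; v_2 t W \]
hold in $\mathcal{O}_{K}$. Adding these two identities in the \emph{ring} $\mathcal{O}_{K}$ gives $(u_1+v_1)sW=(u_2+v_2)tW$. Evaluating $F$ of this equality at $z^\ast$ and tracing back through the factorizations $F(W)z^\ast=\tilde z$, $F(s)\tilde z=z_1$, $F(t)\tilde z=z_2$, the identity collapses, by pure functoriality, to $F(u_1+v_1)z_1=F(u_2+v_2)z_2$, which is exactly the required independence.

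The step I expect to be the main obstacle is the merging I have just described: condition 3 alone only produces an equalizer valid after post\nobreakdash-composition with a single auxiliary element $w$, and a priori it gives a \emph{different} auxiliary $w$ for the $u$-equation and the $v$-equation. Hence one cannot simply add the two ``partial'' equalities; one really has to iterate conditions 2 and 3 a bounded number of times to produce a common $W$ that enforces both equalities simultaneously, and then use that the underlying structure on $\mathcal{O}_{K}$ is a ring, so that the sum $(u+v)W$ makes sense and distributes properly. Once this well-definedness step is cleared, the operation $+$ is unambiguously defined on $X$, and its further properties (commutativity, associativity, neutral element, and compatibility with the multiplicative action of $\mathcal{O}_{K}$) should follow from the same filtering machinery applied to the appropriate identities in $\mathcal{O}_{K}$, which I expect to be the content of the next lemmas leading to theorem~\ref{theo:ptsfirst}.
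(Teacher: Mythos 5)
Your proof is correct, and on the crucial point it takes a genuinely different route from the paper. The definition $a+b:=F(u+v)z$ via filtering condition 2 is the same, but for well-definedness the paper, after producing the two separate equalizing elements (its $\beta$ and $\tilde\beta$), splits into cases according to whether they vanish and, in the generic case, \emph{cancels} them using that $\mathcal{O}_K$ is an integral domain to get $u\alpha=u'\alpha'$ and $v\alpha=v'\alpha'$ before adding; the degenerate cases where an equalizer is $0$ are then treated by a separate (rather delicate) argument. You instead merge the two equalizers into a common one: applying condition 2 to the pair of base points $(z'_u,z'_v)$ gives $p,q,y$ with $F(p)y=z'_u$, $F(q)y=z'_v$, hence $F(w_up)y=\tilde z=F(w_vq)y$, and condition 3 then yields $r,z^\ast$ with $F(r)z^\ast=y$ and $W:=w_upr=w_vqr$; multiplying the two partial identities by $pr$ and $qr$ respectively shows both $u_1sW=u_2tW$ and $v_1sW=v_2tW$ hold for this single $W$ with $F(W)z^\ast=\tilde z$, after which adding in $\mathcal{O}_K$ and pushing through $F$ at $z^\ast$ gives $F(u_1+v_1)z_1=F(u_2+v_2)z_2$ by functoriality alone. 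This buys you two things: no case analysis on vanishing multipliers, and no use of cancellation, so your argument works verbatim for any commutative base ring viewed as a multiplicative monoid, whereas the paper's argument leans on $\mathcal{O}_K$ having no zero divisors. The trade-off is one extra round of conditions 2 and 3, which is exactly the iteration you flagged as the main obstacle, and your resolution of it is sound.
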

\begin{proof}
The group law $+$ of $\mathcal{O}_K$ will induce through $F$ an intern law on $X$ in the following way :

Let $x,\tilde{x}\in X$ be two elements of $X$.

By the property (ii) of the flatness of $F$,

Let $u,v\in\mathcal{O}_K$ and $z\in X$ such that  $F(u)z=x$ and $F(v)z=\tilde{x}$.

Then we take as definition $x+\tilde{x}:=F(u+v)z$.

We must now check that this definition is independent of the choices made for $u,v$ and $z$.

Indeed let $u',v'\in\mathcal{O}_K$ and $z'\in X$ (not necessarily equal to $u,v$ and $z$ respectively) sucht that $F(u')z'=x$ and $F(v')z'=\tilde{x}$.

Then by property (ii) of the flatness of $F$,

Let $\alpha,\alpha '\in\mathcal{O}_K$ and $\hat{z}\in X$ such that $F(\alpha )\hat{z}=z$ and $F(\alpha ')\hat{z}=z'$.

Then $F(u\alpha )\hat{z}=x=F(u'\alpha ')\hat{z}$ and $F(v\alpha )\hat{z}=\tilde{x}=F(v'\alpha ')\hat{z}$.

So by property (iii) of the flatness of $F$.

Let $\beta\in\mathcal{O}_{K}$ and $\gamma\in X$ such that $F(\beta )\gamma =\hat{z}$ and $u\alpha\beta =u'\alpha '\beta$

And let $\tilde{\beta }\in\mathcal{O}_{K}$ and $\gamma '\in X$ such that $F(\tilde{\beta} )\gamma ' =\hat{z}$ and $v\alpha\tilde{\beta }=u'\alpha '\tilde{\beta }$.

From here there are several possibilities : 

\begin{itemize}
\item[$\bullet$]$\beta =0$ and $\tilde{\beta }=0$

Then $F(0)\gamma =\hat{z}=F(0)\gamma '$

And so $z=F(\alpha )\hat{z}=F(0)\gamma$ and $z'=F(\alpha ')\hat{z}=F(0)\gamma '$

So finally $F(u+v)z=x+\tilde{x}=F((u+v)0)\gamma =F(0)\gamma =\hat{z}=F(0)\gamma '=F((u'+v')0)\gamma '=F(u'+v')z'$
\item[$\bullet$]$\beta =0$ and $\tilde{\beta }\neq 0$ or $\beta\neq 0$ and $\tilde{\beta }=0$  (as $\beta$ and $\tilde{\beta }$ have symmetric roles we will just look the case $\beta =0$ and $\tilde{\beta }\neq 0$)

Then $F(0)\gamma =\hat{z}=F(\tilde{\beta })\gamma '$

And so $z=F(\alpha )\hat{z}=F(0)\gamma$.

So $x=F(u)z=F(0)\gamma$ and $\tilde{x}=F(v)z=F(0)\gamma$.

But since $x=F(u')z'$ and $\tilde{x}=F(v')z'$, by property (ii) of the flatness of $F$,

Let $\lambda ,\mu\in\mathcal{O}_{K}$ and $z''\in X$ such that $F(\lambda )z''=\gamma$ and $F(\mu )z''=z'$.

So $F(0)z''=x=F(\mu '\mu)z''$.

And so by property (iii) of flatness of $F$, let $\nu\in\mathcal{O}_K$ and $z'''\in X$ such that $F(\nu )z'''=z''$ and $0\nu =\mu '\mu\nu$ and then 

\begin{itemize}
\item[$\star$]either $\mu\nu =0$

Then $z'=F(0)z''$ and so $x+\tilde{x}=F(u+v)z=F(0)\gamma$ and $F(u'+v')z'=F((u'+v')0)z''=F(0)z''$.

So by property (ii) of flatness of $F$, let $l,m\in\mathcal{O}_K$ and $\bar{z}\in X$ such that $F(l)\bar{z}=\gamma$ and $F(m)\bar{z}=z''$

And so finally $F(u+v)z=x+\tilde{x}=F(0)\gamma =F(0)\bar{z}=F(0m)\bar{z}=F(0)z''=F(u'+v')z'$.
\item[$\star$]either $u'=0$

Then $x=F(0)z'$

So $F(u'+v')z'=F(v')z'=\tilde{x}$

And we will still have $x+\tilde{x}=F(0)\gamma =\tilde{x}$

So finally $x+\tilde{x}=F(u'+v')z'$.
\end{itemize}
\item[$\bullet$]$\beta\neq 0$ and $\beta '\neq 0$

Then $u\alpha =u'\alpha '$ and $v\alpha =v'\alpha '$

So finally $x+\tilde{x}=F(u+v)z=F(u\alpha +v\alpha )\hat{z}=F(u'\alpha '+v'\alpha ')\hat{z}=F(u'+v')z$
\end{itemize}

Therefore the definition of the law $+$ on $X$ is independent of the choices made.

\end{proof}

In fact, more is true: 
\begin{lemm}
The set $(X,+)$ with the intern law defined as before is an abelian group.
\end{lemm}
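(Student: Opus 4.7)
The guiding strategy is the same one that made well-definedness work: any identity we wish to check on $X$ can be pulled back to an identity of $\mathcal{O}_K$ by first using property (ii) of flatness, iterated if necessary, to place all the elements we are comparing ``over a common $z$'', and then applying $F$ to a known identity in the ring $\mathcal{O}_K$. When we need to conclude that two expressions \emph{coincide} rather than merely define the same class, we shall have to invoke property (iii) too, and this will force the same $\beta=0$ versus $\beta\neq 0$ case analysis as in the previous lemma.

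\textbf{Commutativity.} Given $x,\tilde x\in X$, choose $u,v\in\mathcal{O}_K$ and $z\in X$ with $F(u)z=x$, $F(v)z=\tilde x$. Then by definition $x+\tilde x=F(u+v)z=F(v+u)z=\tilde x+x$, using only commutativity of $+$ in $\mathcal{O}_K$.

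\textbf{Associativity.} Given $x,\tilde x,\hat x\in X$, I first apply property (ii) to $x$ and $\tilde x$ to obtain $u,v\in\mathcal{O}_K$ and $y\in X$ with $F(u)y=x$ and $F(v)y=\tilde x$; then I apply property (ii) to $y$ and $\hat x$ to get $\alpha,w\in\mathcal{O}_K$ and $z\in X$ with $F(\alpha)z=y$ and $F(w)z=\hat x$. Setting $u'=u\alpha$, $v'=v\alpha$, we have $F(u')z=x$, $F(v')z=\tilde x$, $F(w)z=\hat x$, so all three elements are realized over the same $z$. Then $(x+\tilde x)+\hat x=F((u'+v')+w)z=F(u'+(v'+w))z=x+(\tilde x+\hat x)$ by associativity in $\mathcal{O}_K$. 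The soundness of this relies on the well-definedness lemma already proved.

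\textbf{Neutral element.} I claim that $0_X:=F(0)z$ is independent of $z\in X$ and is a neutral element. Independence: given $z,z'\in X$, use property (ii) to find $\alpha,\alpha'\in\mathcal{O}_K$ and $\hat z\in X$ with $F(\alpha)\hat z=z$, $F(\alpha')\hat z=z'$; then $F(0)z=F(0\cdot\alpha)\hat z=F(0)\hat z=F(0\cdot\alpha')\hat z=F(0)z'$. Neutrality: for any $x\in X$, pick $u\in\mathcal{O}_K$ and $z\in X$ with $F(u)z=x$; then $0_X=F(0)z$ is realized over the same $z$, hence $x+0_X=F(u+0)z=F(u)z=x$.

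\textbf{Inverses.} For $x=F(u)z\in X$, set $-x:=F(-u)z$; well-definedness follows by the very same argument as for $+$ (there is nothing new, only the replacement of $u+v$ by $-u$ in the ring identities and the same case distinction on $\beta$). Then $x+(-x)=F(u+(-u))z=F(0)z=0_X$. The main obstacle here, and the only nontrivial point in the whole lemma, is that the $\beta=0$ branch of the case analysis from the previous lemma has to be reproduced essentially verbatim to ensure that $x\mapsto -x$ is genuinely a function on $X$; once that is done everything else is a transcription of ring axioms through $F$, and $(X,+)$ is an abelian group.
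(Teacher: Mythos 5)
Your proof is correct and follows the same overall strategy as the paper's: represent the elements to be compared over a common $z$ via property (ii), invoke the well-definedness of $+$, and transport the corresponding identity of $\mathcal{O}_K$ through $F$; the treatments of commutativity and associativity coincide with the paper's. Two sub-steps differ slightly. For the neutral element, you exploit the already-proved independence of $0_X$ to write $0_X=F(0)z$ over the \emph{same} $z$ that represents $x$, so that $x+0_X=F(u+0)z=x$ follows at once; the paper instead takes an arbitrary common representation $y=F(\tilde a)\tilde z$, $0_X=F(\tilde a')\tilde z$ produced by (ii) and then needs property (iii) to replace $\tilde a'$ by $0$ -- your shortcut is legitimate (the well-definedness lemma allows any common representation, not only those coming from an application of (ii)) and is in fact cleaner. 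For the inverse, your assertion that the well-definedness of $-x$ requires reproducing the $\beta=0$ case analysis verbatim is not accurate: since only one element is involved, a single application of (ii) followed by (iii) yields an honest equality $abc=a'b'c$ in $\mathcal{O}_K$, whence $F(-a)z=F(-abc)z'''=F(-a'b'c)z'''=F(-a')z'$ with no case split, which is exactly the paper's (simpler) argument; this misdescription does not affect the validity of your proof, since the well-definedness you defer to does hold by the same toolkit.
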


\begin{proof}
Let us now prove $(X,+)$ that is an abelian group.
\begin{itemize}
\item[$\bullet$]let us first check the associativity of $+$

It follows from the associativity of $+$ on $\mathcal{O}_{K}$ and more precisely : 

let $x,x',x''\in X$, let us now apply the property (ii) of the flatness of $F$ two times in a row,

let us then take $a,a',a''\in\mathcal{O}_{K}$ and $z\in X$ such that $x=F(a)z$, $x'=F(a')z$ and $x''=F(a'')z$.

Then $x+(x'+x'')=x+F(a'+a'')z=F(a)z+F(a'+a'')z=F(a+(a'+a''))z=F((a+a')+a'')z=(x+x')+x''$.

So the law $+$ is indeed associative.
\item[$\bullet$]let us now check the commutativity of $+$

Here again it follows from the commutativity of $+$ on $\mathcal{O}_{K}$ and more precisely :

let us then take $a,a'\in\mathcal{O}_{K}$ and $z\in X$ such that $x=F(a)z$ and  $x'=F(a')z$.

Then $x+x'=F(a+a')z=F(a'+a)z=x'+x$.

So the law $+$ is indeed commutative.
\item[$\bullet$]Let us now find the neutral element of $(X,+)$.

We denote $0_{X}:=F(0)x$ where $x\in X$ is any element of $X$.

Let us first show that $0_X$ is well defined :

Let $x,x'\in X$, by property (ii) of the flatness of $F$, let $a,a'\in\mathcal{O}_{K}$ and $z\in X$ such that $x=F(a)z$ and $x'=F(a')z$.

Then $F(0)x=F(0a)z=F(0)z=F(0a')z=F(0)x'$.

So $0_{X}$ is indeed well defined, let us now show that it is the neutral element of $(X,+)$.

Let $y\in X$, by property (ii) of flatness of $F$, let $\tilde{a},\tilde{a}'\in\mathcal{O}_{K}$ and $\tilde{z}\in X$ such that $x=F(\tilde{a})\tilde{z}$ and $0_{X}=F(\tilde{a}')\tilde{z}$.

But then by the definition of $0_{X}$, we have $F(\tilde{a}')\tilde{z}=0_{X}=F(0)\tilde{z}$.

So by property (iii) of flatness of $F$, let $w\in\mathcal{O}_K$ and $\hat{z}\in X$ such that $\tilde{z}=F(w)\hat{z}$ and $\tilde{a}'w=0w=0$.

All in all we have $y=F(\tilde{a}w)\hat{z}$ and $0_{X}=F(0)\hat{z}$.

And so $y+0_{X}=F(\tilde{a}w+0)\hat{z}=F(\tilde{a}w)\hat{z}=y$ and by commutativity $0_{X}+y=y+0_{X}=y$.

So $0_{X}$ is the neutral element of $(X,+)$.
\item[$\bullet$]Let us finally show that each element of $X$ admits a symmetric for the law $+$.

Let $x\in X$, as above by property (ii) and (iii) of flatness of $F$, let $a\in\mathcal{O}_K$ and $z\in X$ such that $x=F(a)z$ and $0_{X}=F(0)z$.

We denote $-x:=F(-a)z$, then $x+(-x)=F(a+(-a))z=F(0)z=0_X$ and by commutativity $(-x)+x=x+(-x)=0_X$.

But before concluding we must check that our definition of $-x$ is independent of the choices made for $a$ and $z$.

Let $a'\in\mathcal{O}_K$ and $z'\in X$ such that $x=F(a')z'$ and $0_{X}=F(0)z'$ too.

By property (ii) of flatness of $F$, let $b,b'\in\mathcal{O}_{K}$ and $z''\in X$ such that $z=F(b)z''$ and $z'=F(b')z''$.

Then $F(ab)z''=x=F(a'b')z''$, so by property (iii) of flatness of $F$, let $c\in\mathcal{O}_{K}$ and $z'''\in X$ such that $z''=F(c)z'''$ and $abc=a'b'c$.

So $-x=F(-a)z=F(-abc)z'''=F(-a'b'c)z'''=F(-a')z'$.

So $-x$ is well defined and is the symmetric of $x$ for the law $+$
\end{itemize}

Therefore $(X,+)$ is an abelian group. 
\end{proof}

In fact we have a better result :
\begin{lemm}
We can endow $X$ with the structure of an $\mathcal{O}_K$-module.
\end{lemm}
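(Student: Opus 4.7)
The plan is to exploit the obvious fact that every element $\lambda\in\mathcal{O}_{K}$ already acts on $X$ via the functor: the arrow $\lambda:\star\to\star$ gets sent by $F$ to a set map $F(\lambda):X\to X$. I would simply define the scalar action by
$$\lambda\cdot x\,:=\,F(\lambda)(x),\qquad \lambda\in\mathcal{O}_{K},\;x\in X.$$
Nothing to check for well-definedness, since $F(\lambda)$ is a genuine function on $X$.

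Next I would dispatch the two purely multiplicative axioms by pure functoriality, with no appeal to flatness: since $F$ preserves identities, $1\cdot x = F(1)(x)=\mathrm{id}_{X}(x)=x$, and since $F$ preserves composition, $\lambda\cdot(\mu\cdot x)=F(\lambda)(F(\mu)(x))=F(\lambda\mu)(x)=(\lambda\mu)\cdot x$.

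The two distributivity axioms are where the previous lemma's construction of $+$ on $X$ pays off. For $(\lambda+\mu)\cdot x=\lambda\cdot x+\mu\cdot x$, I would note that by definition of the sum on $X$, one may in particular take $z=x$, $u=\lambda$, $v=\mu$ (the representations $F(\lambda)(x)=\lambda\cdot x$ and $F(\mu)(x)=\mu\cdot x$ are tautological), and then the formula for addition immediately gives $\lambda\cdot x+\mu\cdot x=F(\lambda+\mu)(x)=(\lambda+\mu)\cdot x$. For $\lambda\cdot(x+y)=\lambda\cdot x+\lambda\cdot y$, I would invoke property (ii) of flatness to obtain $u,v\in\mathcal{O}_{K}$ and $z\in X$ with $F(u)(z)=x$, $F(v)(z)=y$, so that by construction $x+y=F(u+v)(z)$. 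Functoriality then gives $\lambda\cdot(x+y)=F(\lambda(u+v))(z)=F(\lambda u+\lambda v)(z)$, while the representations $\lambda\cdot x=F(\lambda u)(z)$, $\lambda\cdot y=F(\lambda v)(z)$ together with the definition of $+$ on $X$ (using the witnesses $\lambda u,\lambda v,z$) yield $\lambda\cdot x+\lambda\cdot y=F(\lambda u+\lambda v)(z)$, the same element.

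Honestly I do not expect a real obstacle here: the hard work was done in the previous lemma, where well-definedness of $+$ on $X$ required a careful case analysis with flatness property (iii). Once $+$ is known to be well defined and to agree, through any representative, with the image under $F$ of addition in $\mathcal{O}_{K}$, the module axioms drop out almost automatically from functoriality of $F$ plus one application of (ii) to produce a common witness $z$ for $x$ and $y$ in the second distributivity. The only point that demands a pause is checking that using any valid witnesses produces the same element on both sides of the second distributivity, and this is immediate since we chose the \emph{same} triple $(u,v,z)$ on both sides.
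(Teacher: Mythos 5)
Your proof is correct, and in substance it is the paper's proof in streamlined form. The only real difference is the definition of the scalar action: the paper sets $\alpha\bullet x:=F(\alpha u)z$ for a flatness witness $(u,z)$ with $F(u)z=x$ (and $F(0)z=0_{X}$) and then verifies independence of the choices, whereas you define $\lambda\cdot x:=F(\lambda)(x)$ outright. These agree, since functoriality gives $F(\alpha u)z=F(\alpha)\bigl(F(u)z\bigr)=F(\alpha)(x)$, so your observation that no well-definedness check is needed is right and in fact shows the paper's verification is redundant; the axioms $1\cdot x=x$ and $\lambda\cdot(\mu\cdot x)=(\lambda\mu)\cdot x$ then follow from functoriality exactly as you say. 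For the two distributive laws your argument matches the paper's: both rest on the fact, established in the previous lemma, that $a+b$ may be computed from \emph{any} triple $(u,v,z)$ with $F(u)z=a$ and $F(v)z=b$; you use the tautological witnesses $(\lambda,\mu,x)$ for $(\lambda+\mu)\cdot x$ and a common witness from flatness property (ii) for $\lambda\cdot(x+y)$, while the paper uses the witnesses $(\lambda u,\mu u,z)$ with $x=F(u)z$ --- the same computation. What your route buys is brevity (no well-definedness case analysis for the action); what the paper's buys is only a uniform presentation in which every operation on $X$ is expressed through flatness witnesses.
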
 
 
\begin{proof} 
Let us now show that we can endow $X$ with the structure of an $\mathcal{O}_K$-module.
Let us define $\bullet$ the action law by $\bullet\left\{\begin{aligned}\mathcal{O}_{K}\times X&\longrightarrow & X\\ (\alpha ,x) & \longmapsto & \alpha\bullet x:=F(\alpha u )z\end{aligned}\right.$

Where $u\in\mathcal{O}_K$ and $z\in X$ are such that $x=F(u)z$ and $0_{X}=F(0)z$ (by property (ii) and (iii) of flatness of $F$ such elements always exist).

Let us first check that $\bullet$ is well defined, ie independent of the choices made to define it:

Let $(\alpha ,x)\in\mathcal{O}_{K}\times X$, by property (ii) and (iii) of flatness of $F$, let $u,u'\in\mathcal{O}_{K}$ and $z,z'\in X$ such that $F(u)z=x=F(u')z'$ and $F(0)z=0_{X}=F(0)z'$.

To show that $\bullet$ is well defined, let us show that $F(\alpha u)z=F(\alpha u')z'$.

Since $F(u)z=x=F(u')z'$, by property (ii) and (iii) of $F$, let $w,w'\in\mathcal{O}_{K}$ and $\hat{z}$, $\hat{w}\in\mathcal{O}_{K}$ and $\hat{\hat{z}}\in X$ such that $z=F(w)\hat{z}$, $z'=F(w')\hat{z}$, $\hat{z}=F(\hat{w)})\hat{\hat{z}}$ and $uw\hat{w}=u'w'\hat{w}$.

Then $F(\alpha u)z=F(\alpha uw\hat{w})\hat{\hat{z}}=F(\alpha u'w'\hat{w})\hat{\hat{z}}=F(\alpha u')z'$.

So $\bullet$ is indeed well defined.

Let us now check the following relations :
\begin{itemize}
\item[$\star$]$\forall\alpha\in\mathcal{O}_{K},\forall (x,y)\in X^{2}, \alpha\bullet(x+y)=\alpha\bullet x+\alpha\bullet y$

Indeed, let $\alpha\in\mathcal{O}_{K}$ and $x,y\in X$,

by property (ii) of flatness of $F$, let $u,v\in\mathcal{O}_{K}$ and $z\in X$ such that $x=F(u)z$ and $y=F(v)z$.

Then $\alpha\bullet (x+y)=\alpha\bullet (F(u+v)z)=F(\alpha (u+v))z=F(\alpha u+\alpha v)z=F(\alpha u)z+F(\alpha v)z=\alpha\bullet x+\alpha\bullet y$.

\item[$\star$]$\forall (\alpha ,\beta )\in\mathcal{O}_{K}^{2},\forall x\in X, (\alpha +\beta )\bullet x=\alpha\bullet x+\beta\bullet x$

Indeed, let $\alpha ,\beta\in\mathcal{O}_{K}$ and $x\in X$, 

By property (ii) and (iii) of flatness of $F$, let $u\in\mathcal{O}_{K}$ and $z\in X$ such that $x=F(u)z$ and $0_{X}=F(0)z$.

Then $(\alpha +\beta )\bullet x=F((\alpha +\beta)u)z=F(\alpha u+\beta u)z=F(\alpha u)z+F(\beta u)z=\alpha\bullet x+\beta\bullet x$

\item[$\star$]$\forall (\alpha ,\beta )\in\mathcal{O}_{K}^{2},\forall x\in X,\alpha\bullet (\beta\bullet x)=(\alpha\beta )\bullet x$

Indeed, let $\alpha ,\beta\in\mathcal{O}_{K}$ and $x\in X$, by property (ii) and (iii) of flatness of $F$, let $u\in\mathcal{O}_{K}$ and $z\in X$ such that $x=F(u)z$ and $0_{X}$.

Then $\alpha\bullet (\beta\bullet x)=\alpha\bullet (F(\beta u)z)=F(\alpha\beta u)z=(\alpha\beta )\bullet F(u)z=(\alpha\beta )\bullet x$

\item[$\bullet$]$\forall x\in X, 1\bullet x=x$

Indeed, let $x\in X$,  by property (ii) and (iii) of flatness of $F$, let $u\in\mathcal{O}_{K}$ and $z\in X$ such that $x=F(u)z$ and $0_{X}$.

Then $1\bullet x=1\bullet F(u)z=F(1\times u)z=F(u)z=x$
\end{itemize}
So finally $X$ is indeed an $\mathcal{O}_{K}$-module.

\end{proof}

We end the proof of the theorem \ref{theo:ptsfirst} thanks to the following lemma:
\begin{lemm}
The $\mathcal{O}_{K}$-module $X$ is isomorphic (in a non canonical way) to an $\mathcal{O}_{K}$-module included in $K$.
\end{lemm}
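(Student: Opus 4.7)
The plan is to produce an injective $\mathcal{O}_K$-linear map $\phi:X\hookrightarrow K$ by first establishing two structural properties of $X$ and then constructing $\phi$ explicitly. The first property is $\mathcal{O}_K$-torsion-freeness: if $\alpha\bullet x=0_X$ with $\alpha\in\mathcal{O}_K$ and $x\in X$, then $\alpha=0$ or $x=0_X$. To see this, write $x=F(u)z$ via property (ii) of flatness, so that $F(\alpha u)z=0_X=F(0)z$; property (iii) then furnishes $w\in\mathcal{O}_K$ and $z''\in X$ with $F(w)z''=z$ and $\alpha u w=0$. Since $\mathcal{O}_K$ is an integral domain, either $\alpha=0$, or $u=0$ (forcing $x=F(0)z=0_X$), or $w=0$ (forcing $z=F(0)z''=0_X$ and hence $x=0_X$).

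If $X=\{0_X\}$ the statement is trivial, so fix once and for all some $x_0\in X\setminus\{0_X\}$. For any $y\in X$, property (ii) applied to the pair $x_0,y$ yields $z\in X$ and $u,v\in\mathcal{O}_K$ with $u\bullet z=x_0$ and $v\bullet z=y$; the scalar $u$ is nonzero, since otherwise $x_0=0\bullet z=0_X$. I then set $\phi(y):=v/u\in K$. Heuristically this is simply the natural map $X\to X\otimes_{\mathcal{O}_K}K$, which is forced to land in the line $K\cdot x_0$ thanks to the filteredness of $X$.

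The main technical point, and where I expect the real work to sit, is the well-definedness of $\phi$. Suppose also $u'\bullet z'=x_0$ and $v'\bullet z'=y$; apply property (ii) to $z,z'$ to produce $z''\in X$ and $\alpha,\alpha'\in\mathcal{O}_K$ with $\alpha\bullet z''=z$ and $\alpha'\bullet z''=z'$. Then $(u\alpha)\bullet z''=x_0=(u'\alpha')\bullet z''$ and $(v\alpha)\bullet z''=y=(v'\alpha')\bullet z''$. Since $(u\alpha)\bullet z''=x_0\neq 0_X$ we must have $z''\neq 0_X$, so torsion-freeness (from the first step) yields $u\alpha=u'\alpha'$ and $v\alpha=v'\alpha'$ in $\mathcal{O}_K$, whence $v/u=v'/u'$ in $K$. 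Once $\phi$ is well-defined, using property (ii) to represent finitely many elements of $X$ over a common $z$ makes additivity $\phi(y_1+y_2)=\phi(y_1)+\phi(y_2)$ and $\mathcal{O}_K$-homogeneity $\phi(\beta\bullet y)=\beta\,\phi(y)$ immediate; injectivity is also immediate, since $\phi(y)=0$ forces $v=0$ and hence $y=0\bullet z=0_X$.
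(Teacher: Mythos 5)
Your proof is correct and takes essentially the same route as the paper: fix $x_{0}\in X\backslash\{0_{X}\}$ and send $y$ to $v/u\in K$, where $x_{0}=F(u)z$ and $y=F(v)z$ come from property (ii) of flatness, then check well-definedness, linearity and injectivity. The only difference is organizational: the paper establishes well-definedness by a direct chase through properties (ii) and (iii) (producing the auxiliary elements $w,w',\hat{w},\hat{\hat{w}}$), whereas you first isolate torsion-freeness of $X$ (via property (iii) and the fact that $\mathcal{O}_{K}$ is an integral domain) and then deduce well-definedness from it together with the already-established module structure -- a cleaner packaging of the same underlying argument.
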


\begin{proof}
We have two possibilities

\begin{itemize}
\item[$\star$]$X=\{0_{X}\}$ then obviously $X\simeq\{0_{K}\}$
\item[$\star$]$\{0_{X}\}\subsetneqq X$

Then let us take $x\in X\backslash\{ 0_{X}\}$ and let us thus note $j_{X,x}\left\{\begin{aligned} X&\longrightarrow & K\\ \tilde{x} & \longmapsto & \frac{\tilde{k}}{k}\end{aligned}\right.$ where we have $k,\tilde{k}\in\mathcal{O}_{K}$ and $z\in X$ such that $x=F(k)z$ and $\tilde{x}=F(\tilde{k})z$ (there always exists such elements by property (ii) of flatness of $F$).

Let us first show that $j_{X,x}$ is well defined.

Let $\tilde{x}\in X$, then by property (ii) of flatness of $F$, let $k,\tilde{k},k',\tilde{k}'\in\mathcal{O}_{K}$ and $z,z'\in X$ such that $F(k)z=x=F(k')z'$ and $F(\tilde{k})z=\tilde{x}=F(\tilde{k}')z'$.

According to what we have shown earlier on $0_{X}$, $k\neq 0$ and $k'\neq 0$ (otherwise we would have $x=0_{X}$ which is impossible).

So now to show that $j_{X,x}$ is well defined, we only have left to show that $\frac{\tilde{k}}{k}=\frac{\tilde{k}'}{k'}$.

By property (ii) and (iii) of flatness of $F$, let $w,w',\hat{w}\in\mathcal{O}_{K}$ and $\hat{z},\hat{\hat{z}}\in X$ such that $z=F(w)\hat{z},z'=F(w')\hat{z},\hat{z}=F(\hat{w})\hat{\hat{z}}$ and $kw\hat{w}=k'w'\hat{w}$, so since $k\neq 0$, $w\hat{w}=\frac{k'}{k}w'\hat{w}$.

So $0_{X}\neq x=F(kw\hat{w})\hat{\hat{z}}=F(k'w'\hat{w})\hat{\hat{z}}$.

So $kw\hat{w}\neq 0$ and $k'w'\hat{w}\neq 0$, so $w'\hat{w}\neq 0$.

We also have $F(\tilde{k}w\hat{w})\hat{\hat{z}}=\tilde{x}=F(\tilde{k}'w'\hat{w})\hat{\hat{z}}$.

So by property (iii) of flatness of $F$, let $\hat{\hat{w}}\in\mathcal{O}_{K}$ and $\hat{\hat{\hat{z}}}\in X$ such that $\hat{\hat{z}}=F(\hat{\hat{w}})\hat{\hat{\hat{z}}}$ and $\tilde{k}w\hat{w}\hat{\hat{w}}=\tilde{k}w\hat{w}\hat{\hat{w}}$.

So $\tilde{k}\frac{k'}{k}w'\hat{w}\hat{\hat{w}}=\tilde{k}'w'\hat{w}\hat{\hat{w}}$.

And since $w'\hat{w}\neq 0$ and $k'\neq 0$, we thus have $\frac{\tilde{k}}{k}\hat{\hat{w}}=\frac{\tilde{k}'}{k'}\hat{\hat{w}}$.

And $\hat{\hat{w}}\neq 0$ because otherwise we would have $\hat{\hat{z}}=F(0)\hat{\hat{\hat{z}}}$ and so $x=F(kw\hat{w}0)\hat{\hat{\hat{z}}}=0_{X}$ which is impossible.

And so we get that $\frac{\tilde{k}}{k}=\frac{\tilde{k}'}{k'}$ and so $j_{X,x}$ is well defined.

Let us now check that $j_{X,x}$ is a linear map from $X$ to $K$.

Let $x',x''\in X$ and $\lambda\in\mathcal{O}_{K}$. By property (ii) of flatness of $F$ there exists $k,k',k''\in\mathcal{O}_{K}$ and $z\in X$ such that $X=F(k)z$, $x'=F(k')z$ and $x''=F(k'')z$ and as $x\neq 0$, $k\neq 0$.

Then $\lambda x'+x''=F(\lambda k'+k'')z$.

And so $j_{X,x}(\lambda x'+x'')=\frac{\lambda k'+k''}{k}=\lambda\frac{k'}{k}+\frac{k''}{k}=\lambda j_{X,x}(x')+j_{X,x}(x'')$ and so $j_{X,x}$ is linear.

Let us now show that $j_{X,x}$ is injective.

Indeed, let $x'\in X$ such that $j_{X,x}(x')=0$, then by property (ii) of flatness of $F$, let $k,k'\in\mathcal{O}_{K}$ and $z\in X$ such that $x=F(k)z$ and $x'=F(k')z$.

Since $x\neq 0$, we have $k\neq 0$ and then $0=j_{X,x}(x')=\frac{k'}{k}$.

So $k'=0$ and finally $x'=F(0)z=0_{X}$.

So finally we have $X\simeq\text{Im}(j_{X,x})$ and of course the dependance in $x$ makes it non canonical.
\end{itemize}
\end{proof}
Thanks to the theorem \ref{theo:ptsfirst} we get that :
\begin{theo}\label{theo:ptsscd}
The category of (geometric) points of the topos $\widehat{\mathcal{O}_{K}}$ is canonically equivalent to the category of sub $\mathcal{O}_{K}$-modules of $K$ and morphisms of $\mathcal{O}_{K}$-modules.
\end{theo}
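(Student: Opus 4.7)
The plan is to upgrade Theorem \ref{theo:ptsfirst} to an equivalence of categories by exhibiting an explicit pseudo-inverse and checking it is fully faithful. Recall that a geometric point of $\widehat{\mathcal{O}_K}$ is the same as a flat covariant functor $F:\mathcal{O}_K\to\mathfrak{Sets}$, and a morphism of points is a natural transformation. The comparison functor will send $F$ to $F(\star)$ equipped with the canonical $\mathcal{O}_K$-module structure built in Theorem \ref{theo:ptsfirst}; by that theorem the target lies, up to isomorphism, in the category of sub-$\mathcal{O}_K$-modules of $K$. On morphisms, a natural transformation $\eta:F\Rightarrow G$ is nothing more than its single component $\phi:=\eta_\star:F(\star)\to G(\star)$ satisfying $\phi\circ F(a)=G(a)\circ\phi$ for every $a\in\mathcal{O}_K$, which is exactly $\mathcal{O}_K$-linearity; additivity of $\phi$ is then automatic by unwinding the definition of $+$ given in the proof of Theorem \ref{theo:ptsfirst} via the filtering property (ii).

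For the pseudo-inverse, I would associate to a sub-$\mathcal{O}_K$-module $M\subseteq K$ the functor $F_M$ defined by $F_M(\star)=M$ and $F_M(a)(m)=am$, and verify that $F_M$ is flat. Property (i) is trivial since $0\in M$. Property (iii) is easy: if $um=vm$ with $m\neq 0$, then $u=v$ because $K$ is a field, so one takes $w=1$, $z=m$; and the case $m=0$ is settled by $w=z=0$. Property (ii) is the single nontrivial point: given $m_1,m_2\in M$, I use that $\mathcal{O}_K$ is a PID to write the fractional ideal $m_1\mathcal{O}_K+m_2\mathcal{O}_K$ as $z\mathcal{O}_K$ with $z=\alpha m_1+\beta m_2\in M$; then $m_i=u_iz=F_M(u_i)(z)$ for suitable $u_i\in\mathcal{O}_K$.

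It then remains to check essential surjectivity and fullness/faithfulness. Essential surjectivity is built into the pseudo-inverse: every sub-$\mathcal{O}_K$-module $M$ of $K$ is literally $F_M(\star)$. Fullness and faithfulness come from the identification above of natural transformations with $\mathcal{O}_K$-linear maps. The last point to verify is that for a flat $F$ the module $F(\star)$ is naturally isomorphic in the $\mathcal{O}_K$-module sense to $F_{j(F(\star))}(\star)$, where $j$ is any embedding provided by Theorem \ref{theo:ptsfirst}; this is immediate since $j$ is already shown to be $\mathcal{O}_K$-linear and injective. The main obstacle is Property (ii) of flatness for $F_M$, and it is precisely there that the class-number-one hypothesis on $\mathcal{O}_K$ is used in an essential way, in agreement with the remark in the introduction that without it one would only recover rank-one sub-modules of $K$.
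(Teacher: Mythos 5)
Your proposal is correct and follows essentially the same route as the paper: the evaluation functor $F\mapsto F(\star)$, the identification of natural transformations with $\mathcal{O}_{K}$-linear maps via the single object $\star$, and essential surjectivity by checking flatness of $F_{M}$ for a submodule $M\subseteq K$, with the filtering property (ii) secured exactly as in the paper by clearing denominators and invoking principality of the resulting ideal. The only cosmetic difference is that you phrase the key step directly in terms of the fractional ideal $m_{1}\mathcal{O}_{K}+m_{2}\mathcal{O}_{K}$ being principal, whereas the paper first multiplies into an integral ideal, but this is the same argument.
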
%$\mathcal{O}_{K}$-modules isomorphic to
\begin{proof}
By theorem VII.5.2bis p 382 of \cite{mlm} the category of geometric ponts of $\widehat{\mathcal{O}_{K}}$ and natural transformations is equivalent to the category $\mathfrak{Flat}(\mathcal{O}_{K})$ of the covariant flat functors from the small category $\mathcal{O}_{K}$ to $\mathfrak{Sets}$ and natural transformations.

Now we just have to prove that $\mathfrak{Flat}(\mathcal{O}_{K})$ is equivalent to the category $\mathcal{O}_{K}-\mathfrak{Mod}_{\simeq\subset K}$ of $\mathcal{O}_{K}$-modules isomorphic to sub $\mathcal{O}_{K}$-modules of $K$ and morphisms of $\mathcal{O}_{K}$-modules.

First we can define \[\mathcal{E}\left\{\begin{aligned} \mathfrak{Flat}(\mathcal{O}_{K})&\longrightarrow & \mathcal{O}_{K}-\mathfrak{Mod}_{\simeq\subset K}\\ F:\mathcal{O}_{K}\to\mathfrak{Sets} & \longmapsto & F(\star ) \\ \Phi :F\to G & \longmapsto & \Phi :F(\star )\to G(\star )\end{aligned}\right.\]

Let us first check that $\mathcal{E}$ is well defined :
\begin{itemize}
\item[$\bullet$]on the objects $\mathcal{E}$ is well defined as shown by the last lemma (1.1)

\item[$\bullet$]let $\Phi$ be a natural transformation from $F$ to $G$ with $F,G:\mathcal{O}_{K}\to\mathfrak{Sets}$ two flat covariant functors from the small category $\mathcal{O}_{K}$ to $\mathfrak{Sets}$, then by definition $\Phi$ can also be seen as an application from $F(\star )$ to $G(\star )$ since the small category $\mathcal{O}_{K}$ has only one object noted $\star$.

Let us now show that $\Phi:F(\star )\to G(\star )$ is linear.

Let $\lambda\in\mathcal{O}_{K}$ and $x,y\in F(\star )$, then by property (ii) of flatness of $F$, let $u,v\in\mathcal{O}_{K}$ and $z\in F(\star )$ such that $x=F(u)z$ and $y=F(v)z$.

Then since $\Phi$ is more than a mere application from $F(\star )$ to $G(\star )$ but also a natural transformation from $F$ to $G$, we have $\Phi (x)=G(u)\Phi (z)$ and $\Phi (y)=G(v)\Phi  (z)$.

And so $\Phi (\lambda x+y)=\Phi (\lambda F(u)z+F(v)z)=\Phi (F(\lambda u+v)z)=G(\lambda u+v)\Phi (z)=\lambda G(u)\Phi (z)+G(v)\Phi (z)=\lambda\Phi (x)+\Phi (y)$.

So it means that $\Phi :F(\star )\to G(\star )$ is indeed linear.
\end{itemize}
So $\mathcal{E}$ is well defined and is in fact a covariant functor, indeed for all $F$ flat covariant functor from the small category $\mathcal{O}_{K}$ to $\mathfrak{Sets}$ we have almost by definition  $\mathcal{E}(\text{id}_{F})=\text{id}_{F(\star )}$ and also for any $F,G,H:\mathcal{O}_{K}\to\mathfrak{Sets}$ three flat covariant functors from the small category $\mathcal{O}_{K}$ to $\mathfrak{Sets}$ and any $\Phi$ be a natural transformation from $F$ to $G$ and $\Psi$ be a natural transformation from $G$ to $H$, we have then immediately $\mathcal{E}(\Psi\circ\Phi )=\mathcal{E}(\Psi )\circ\mathcal{E}(\Phi )$.

Let us now show that $\mathcal{E}$ is fully faithful. Indeed let $F,G:\mathcal{O}_{K}\to\mathfrak{Sets}$ two flat covariant functors from the small category $\mathcal{O}_{K}$ to $\mathfrak{Sets}$, as the small category $\mathcal{O}_{K}$ has only one object we deduce when we look closely at the definitions that it is rigorously the same to consider a natural transformation from $F$ to $G$ than a linear application from $F(\star )$ to $G(\star )$.

Let us now finally check that $\mathcal{E}$ is essentially surjective then $\mathcal{E}$ will induce an equivalence of categories. As for essential surjectivity we work up to isomorphism we can directly take $X$ an $\mathcal{O}_{K}$-module included in $K$.

Let us then define $F$ as follows \[F\left\{\begin{aligned} \mathcal{O}_{K}&\longrightarrow & \mathfrak{Sets}\\ \star & \longmapsto & X \\ \lambda\in\mathcal{O}_{K} & \longmapsto & \mu_{\lambda}:X\to X;x\mapsto\lambda x\end{aligned}\right.\]

$F$ is obviously a covariant functor. One now has to check that it is flat.

Of course $X\neq\emptyset$ so property (i) of flatness is satified by $F$.

Let us now check that property (ii) is satisfied: 

let $x,y\in X$, as $X\subset K$, then :
\begin{itemize}
\item[$\bullet$]first case, $x=0=y$, then we have $x=0\times 0$ and $y=0\times 0$

\item[$\bullet$]second case ($x=0$ and $y\neq 0$) or ($y=0$ and $x\neq 0$), without loss of generality, let us just study the case $x=0$ and $y\neq 0$, then $x=0y$ and $y=1y$

\item[$\bullet$]third case $x\neq 0$ and $y\neq 0$

let us write $x$ and $y$ as irreducible fractions : $x=\frac{x_{n}}{x_{d}}$ and $y=\frac{y_{n}}{y_{d}}$.

Let us note $<x,y>$ the $\mathcal{O}_{K}$-module generated by $x$ and $y$ and $t:=x_{d}y_{d}$.

Then $t<x,y>$ is an $\mathcal{O}_{K}$-module included in $\mathcal{O}_K$ ie an ideal of $\mathcal{O}_K$.

Since $\mathcal{O}_K$ is principal, let us note $\delta\in\mathcal{O}_K$ a generator of $t<x,y>$.

So let $u,v\in\mathcal{O}_{K}$ such that $tx=u\delta$ and $ty=v\delta$.

Now let us note $z:=\frac{\delta }{t}$, and so we have $x=uz$, $y=vz$ and $z\in <x,y>\subset X$
\end{itemize}

Let us finally check that property (iii) is satisfied by $F$ : let $a\in X$ and let $u,v\in\mathcal{O}_{K}$ such that $ua=va$.
\begin{itemize}
\item[$\bullet$]First case $a=0$, then let us take $w=0\in\mathcal{O}_{K}$ and $z=0=a\in X$, and so $wz=a$ and $uw=vw$.

\item[$\bullet$]Second case $a\neq 0$, then let us take $w=1\in\mathcal{0}_{K}$ and $z=a\in X$, and so $wz=a$ and $uw=vw$.
\end{itemize}
And so the theorem \ref{theo:ptsscd} is proved.
\end{proof}

\section{Adelic interpretation of the geometric points of $\widehat{\mathcal{O}_{K}}$}
Let us denote $\mathbb{A}^{f}_{K}:=_{def} \prod^{'}_{\mathfrak{p} \enskip\text{prime}} K_{\mathfrak{p}}$  (restricted product) the ring of finite adeles of $K$ and $\widetilde{\mathcal{O}_{K}}:=\prod_{\mathfrak{p}\enskip \text{prime}} \mathcal{O}_{\mathfrak{p}}$ its maximal compact subring.
 
Let us recall the definition of Dedekind's complementary module (or inverse different), it is the fractionnal ideal $\mathfrak{D}_{\mathcal{O}_{K}}:=\{x\in K / \text{tr}(x.\mathcal{O}_{K})\in\mathbb{Z}\}$ of $\mathcal{O}_{K}$  denoted $\mathfrak{D}_{\mathcal{O}_{K}}$.

Then we have the following lemmas :
\begin{lemm}\label{lemm:dual1}
A closed sub-$\mathcal{O}_{K}$-module of $\widetilde{\mathcal{O}_{K}}$ is an ideal of the ring $\widetilde{\mathcal{O}_{K}}$.
\end{lemm}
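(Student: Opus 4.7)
The plan is to upgrade stability under multiplication by $\mathcal{O}_K$ to stability under multiplication by the whole ring $\widetilde{\mathcal{O}_K}$, using a standard density-plus-continuity-plus-closedness argument. The key input is that $\mathcal{O}_K$, embedded diagonally in $\widetilde{\mathcal{O}_K}=\prod_{\mathfrak{p}}\mathcal{O}_{\mathfrak{p}}$, has dense image.

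First I would recall (or briefly verify) the density statement. A basic open neighbourhood of a point $(a_{\mathfrak{p}})\in\widetilde{\mathcal{O}_K}$ has the form $\prod_{\mathfrak{p}\in S}(a_{\mathfrak{p}}+\mathfrak{p}^{n_{\mathfrak{p}}}\mathcal{O}_{\mathfrak{p}})\times\prod_{\mathfrak{p}\notin S}\mathcal{O}_{\mathfrak{p}}$, for some finite set $S$ of primes and integers $n_{\mathfrak{p}}\geq 0$. Since the prime ideals $\mathfrak{p}^{n_{\mathfrak{p}}}$, $\mathfrak{p}\in S$, are pairwise coprime in $\mathcal{O}_K$, the Chinese Remainder Theorem produces some $x\in\mathcal{O}_K$ with $x\equiv a_{\mathfrak{p}}\pmod{\mathfrak{p}^{n_{\mathfrak{p}}}}$ for every $\mathfrak{p}\in S$; and such $x\in\mathcal{O}_K$ lies automatically in $\mathcal{O}_{\mathfrak{p}}$ for every $\mathfrak{p}\notin S$. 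Thus $x$ belongs to the chosen open neighbourhood, proving density.

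Second, fix a closed sub-$\mathcal{O}_K$-module $M\subset\widetilde{\mathcal{O}_K}$, an element $m\in M$ and an element $\alpha\in\widetilde{\mathcal{O}_K}$. By the density just established, choose a sequence $(\alpha_n)_{n\geq 0}$ of elements of $\mathcal{O}_K$ (viewed diagonally in $\widetilde{\mathcal{O}_K}$) converging to $\alpha$. Because $M$ is a sub-$\mathcal{O}_K$-module, $\alpha_n\cdot m\in M$ for every $n$. The ring $\widetilde{\mathcal{O}_K}$ is a topological ring (it is the product of the topological rings $\mathcal{O}_{\mathfrak{p}}$), so the multiplication map $\xi\mapsto\xi\cdot m$ is continuous, which forces $\alpha_n m\to \alpha m$. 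Since $M$ is closed, the limit $\alpha m$ belongs to $M$. As $\alpha$ was arbitrary, $M$ is stable under multiplication by the whole ring $\widetilde{\mathcal{O}_K}$, hence is an ideal.

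There is no substantial obstacle: the argument is the standard principle that, in a topological ring, a closed submodule over a dense subring is automatically stable under the whole ring. The only point requiring care is the density of $\mathcal{O}_K$ in $\widetilde{\mathcal{O}_K}$, which rests on the Chinese Remainder Theorem in $\mathcal{O}_K$ (used above in the guise of strong approximation at a finite set of primes).
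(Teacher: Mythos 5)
Your argument is correct and is essentially the same as the paper's proof: both upgrade $\mathcal{O}_K$-stability to $\widetilde{\mathcal{O}_{K}}$-stability by combining the density of $\mathcal{O}_K$ in $\widetilde{\mathcal{O}_{K}}$ with continuity of multiplication and the closedness of the submodule. The only difference is cosmetic: you verify the density explicitly via the Chinese Remainder Theorem, whereas the paper simply invokes the strong approximation theorem.
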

\begin{proof}
Let $J$ be a closed sub-$\mathcal{O}_{K}$-module of $\widetilde{\mathcal{O}_{K}}$.

To prove that $J$ is an ideal of the ring $\widetilde{\mathcal{O}_{K}}$, we only have to check that $\forall\alpha\in\widetilde{\mathcal{O}_{K}},\forall j\in J, \alpha j\in J$.

Since $J$ is a sub-$\mathcal{O}_{K}$-module of $\widetilde{\mathcal{O}_{K}}$, we already have that $\forall\alpha\in\mathcal{O}_{K},\forall j\in J, \alpha j\in J$.

Now let $\alpha\in\widetilde{O}_{K}$.

Since $\mathcal{O}_{K}$ is dense in $\widetilde{\mathcal{O}_{K}}$ (thanks to strong approximation theorem), let $(\alpha_{n})_{n\in\mathbb{N}}\in(\mathcal{O}_{K})^{\mathbb{N}}$ such that $\alpha_{n}\xrightarrow[n\to\infty]{} \alpha$.

Then, since we have $\alpha_{n}j\xrightarrow[n\to\infty]{} \alpha j$ and $\forall n\in\mathbb{N}, \alpha_{n}j\in J$ and also $J$ closed, we get that $\alpha j\in J$.

Therefore $J$ is an ideal of the ring $\widetilde{\mathcal{O}_{K}}$.
\end{proof}
\begin{lemm}\label{lemm:dual2}
Let $J$ be a closed sub-$\mathcal{O}_{K}$-module of $\widetilde{\mathcal{O}_{K}}$. For each prime ideal $\mathfrak{p}$ of $\mathcal{O}_{K}$ the projection $\pi_{\mathfrak{p}}(J)\subset \mathcal{O}_{\mathfrak{p}}$ coincides with the intersection $(\{0\}\times\mathcal{O}_{K,\mathfrak{p}})\cap J$ and is a closed ideal of $J_{\mathfrak{p}}\subset\mathcal{O}_{K,\mathfrak{p}}$, moreover one has $x\in J \Leftrightarrow \forall \mathfrak{p}\quad\text{prime}, \pi_{\mathfrak{p}}(x)\in J_{\mathfrak{p}}$
\end{lemm}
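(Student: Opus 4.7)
The plan is to leverage Lemma \ref{lemm:dual1}: since $J$ is a closed sub-$\mathcal{O}_K$-module of $\widetilde{\mathcal{O}_K}$, it is in fact an ideal of the ring $\widetilde{\mathcal{O}_K}$. The key tool will then be the idempotents $e_{\mathfrak{p}}\in\widetilde{\mathcal{O}_K}$ defined by $\pi_{\mathfrak{p}}(e_{\mathfrak{p}})=1$ and $\pi_{\mathfrak{q}}(e_{\mathfrak{p}})=0$ for $\mathfrak{q}\neq\mathfrak{p}$; these obviously lie in $\widetilde{\mathcal{O}_K}$, and multiplying by $e_{\mathfrak{p}}$ kills every component except the one at $\mathfrak{p}$ while preserving membership in $J$.

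For the first assertion, one inclusion of the identification $\pi_{\mathfrak{p}}(J)=\pi_{\mathfrak{p}}\bigl((\{0\}\times\mathcal{O}_{K,\mathfrak{p}})\cap J\bigr)$ is obvious. For the other, given $x\in J$ I would consider $e_{\mathfrak{p}}\cdot x\in J$: this element is supported only at $\mathfrak{p}$ and projects to $\pi_{\mathfrak{p}}(x)$, so it witnesses that every value of the projection can be realized by an element of $J$ supported at $\mathfrak{p}$. To see that $J_{\mathfrak{p}}$ is closed in $\mathcal{O}_{K,\mathfrak{p}}$, I would use that $J$ is closed in the compact space $\widetilde{\mathcal{O}_K}$, that $\{0\}\times\mathcal{O}_{K,\mathfrak{p}}$ is closed, and that $\pi_{\mathfrak{p}}$ restricted to this closed subset is a homeomorphism onto its image. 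The ideal property of $J_{\mathfrak{p}}$ inside $\mathcal{O}_{K,\mathfrak{p}}$ follows since if $\alpha\in\mathcal{O}_{K,\mathfrak{p}}$ and $\beta\in J_{\mathfrak{p}}$, lifting $\alpha$ to an element of $\widetilde{\mathcal{O}_K}$ (e.g.\ $e_{\mathfrak{p}}\cdot\tilde{\alpha}$ for any $\tilde{\alpha}\in\widetilde{\mathcal{O}_K}$ with $\pi_{\mathfrak{p}}(\tilde{\alpha})=\alpha$) and multiplying by a supported-at-$\mathfrak{p}$ lift of $\beta$ stays in $J$.

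For the ``moreover'' statement, the direction $x\in J\Rightarrow\pi_{\mathfrak{p}}(x)\in J_{\mathfrak{p}}$ is immediate from the definition of $J_{\mathfrak{p}}$. The converse is the main point: given $x\in\widetilde{\mathcal{O}_K}$ with $\pi_{\mathfrak{p}}(x)\in J_{\mathfrak{p}}$ for every prime $\mathfrak{p}$, the first part produces for each $\mathfrak{p}$ an element $y^{(\mathfrak{p})}\in J$ supported only at $\mathfrak{p}$ with $\pi_{\mathfrak{p}}(y^{(\mathfrak{p})})=\pi_{\mathfrak{p}}(x)$. For every finite set $S$ of primes, the sum $y_{S}:=\sum_{\mathfrak{p}\in S}y^{(\mathfrak{p})}$ lies in $J$, and since the topology on $\widetilde{\mathcal{O}_K}$ is the product topology, the net $(y_S)$ indexed by finite $S$ converges to $x$ (for any finite test set $T$, once $S\supseteq T$ the $T$-components of $y_S$ already agree with those of $x$). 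Closedness of $J$ then yields $x\in J$. The step most likely to require care is checking that $J_{\mathfrak{p}}$ is closed with the correct topology and that the componentwise convergence is really what the product topology on $\widetilde{\mathcal{O}_K}$ delivers; everything else is formal manipulation with the idempotents $e_{\mathfrak{p}}$ afforded by Lemma \ref{lemm:dual1}.
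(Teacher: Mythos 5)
Your argument is correct and follows the same backbone as the paper's: invoke Lemma \ref{lemm:dual1} to view $J$ as an ideal of $\widetilde{\mathcal{O}_{K}}$, multiply by the idempotent adele supported at $\mathfrak{p}$ (your $e_{\mathfrak{p}}$ is the paper's $a_{\mathfrak{p}}$) to identify $\pi_{\mathfrak{p}}(J)$ with $(\{0\}\times\mathcal{O}_{K,\mathfrak{p}})\cap J$, and deduce the closed-ideal property from closedness of $J$ and of $\{0\}\times\mathcal{O}_{K,\mathfrak{p}}$. The one place where you genuinely diverge is the converse of the ``moreover'' statement, and there your version is the more careful one: the paper asserts that $x=\sum_{\mathfrak{p}}a_{\mathfrak{p}}\pi_{\mathfrak{p}}(x)$ is a \emph{finite} sum ``because $\pi_{\mathfrak{p}}(x)=0$ almost everywhere'', which is not justified, since an element of $\widetilde{\mathcal{O}_{K}}=\prod_{\mathfrak{p}}\mathcal{O}_{\mathfrak{p}}$ need not have almost all components zero (that condition characterizes the restricted product relative to $0$, not the maximal compact subring). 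Your replacement — the net of partial sums $y_{S}\in J$ over finite sets $S$ of primes converges to $x$ in the product topology, and closedness of $J$ then forces $x\in J$ — is exactly the argument needed, and it makes visible where the hypothesis that $J$ is closed is actually used in this implication; in the paper's formulation as written, closedness plays no role at this step, which is a sign something was elided. So: same strategy, but your treatment of the final implication repairs a genuine sloppiness in the paper's own proof.
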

\begin{proof}
Let $J$ be a closed sub-$\mathcal{O}_{K}$-module of $\widetilde{\mathcal{O}_{K}}$.

Thanks to the preceding lemma, $J$ is an ideal of the ring $\widetilde{\mathcal{O}_{K}}$.

Let $\mathfrak{p}$ be a prime ideal of $\mathcal{O}_{K}$ and let us note $a_{\mathfrak{p}}$ the finite adele which is zero everywhere except in $\mathfrak{p}$ where it is equal to 1.

Then we have that $\pi_{\mathfrak{p}}(J)=a_{\mathfrak{p}}J\subset J$ since $J$ is an ideal.

And by definition $\pi_{\mathfrak{p}}(J)\subset\mathcal{O}_{K,\mathfrak{p}}\simeq\{0\}\times\mathcal{O}_{K,\mathfrak{p}}$.

So $\pi_{\mathfrak{p}}(J)\subset\left(\{0\}\times\mathcal{O}_{K,\mathfrak{p}}\right)\cap J$.

The converse inclusion is obvious, so all in all we have indeed that $$\pi_{\mathfrak{p}}(J)=\left(\{0\}\times\mathcal{O}_{K,\mathfrak{p}}\right)\cap J$$

Now since $\mathcal{O}_{K,\mathfrak{p}}\simeq\{0\}\times\mathcal{O}_{K,\mathfrak{p}}\subset{\widetilde{O}_{K}}$ and since $J$ is an ideal of $\widetilde{\mathcal{O}_{K}}$, we have that $\mathcal{O}_{K,\mathfrak{p}}.J\subset J$.

So $\mathcal{O}_{K,\mathfrak{p}}.\pi_{\mathfrak{p}}=\mathcal{O}_{K,\mathfrak{p}}.\left(\left(\{0\}\times\mathcal{O}_{K,\mathfrak{p}}\right)\cap J\right)\subset\left(\left(\{0\}\times\mathcal{O}_{K,\mathfrak{p}}\right)\cap J\right)=\pi_{\mathfrak{p}}(J)$.

And  since $\{0\}\times\mathcal{O}_{K,\mathfrak{p}}$ and $J$ are closed subgroups of $\widetilde{\mathcal{O}_{K}}$, we get that $\pi_{\mathfrak{p}}(J)$ is a closed subgroup of $\mathcal{O}_{K,\mathfrak{p}}$, and so all in all $\pi_{\mathfrak{p}}(J)$ is a closed ideal of $\mathcal{O}_{K,\mathfrak{p}}$.

Finally the implication $x\in J\Rightarrow\forall\mathfrak{p},\pi_{\mathfrak{p}}(x)\in\pi_{\mathfrak{p}}(J)$, is obvious.

Conversely let $x\in\widetilde{\mathcal{O}_{K}}$ such that $\forall\mathfrak{p},\pi_{\mathfrak{p}}(x)\in\pi_{\mathfrak{p}}(J)$.

Since $\forall\mathfrak{p}, \pi_{\mathfrak{p}}(x)\in\pi_{\mathfrak{p}}(J)=\left(\{0\}\times\mathcal{O}_{K,\mathfrak{p}}\right)\cap J\subset J$.

Let us note for each prime ideal $\mathfrak{p}$ of $\mathcal{O}_{K}$, $a_{\mathfrak{p}}\in\widetilde{\mathcal{O}_{K}}$ the finite adele which is zero everywhere except in $\mathfrak{p}$ where it is equal to 1.

So $x=\sum_{\mathfrak{p}}a_{\mathfrak{p}}\pi_{\mathfrak{p}}\in J$ since $J$ is an ideal of $\widetilde{\mathcal{O}_{K}}$ and the sum is finite because $\pi_{\mathfrak{p}}(x)=0$ almost everywhere because $x\in\widetilde{O}_{K}$.

Therefore we have proved that $x\in J\Leftrightarrow\forall\mathfrak{p},\pi_{\mathfrak{p}}(x)\in\pi_{\mathfrak{p}}(J)$
\end{proof}
\begin{lemm}\label{lemm:dual3}
For any prime ideal $\mathfrak{p}$ of $\mathcal{O}_{K}$, any ideal of $\mathcal{O}_{K,\mathfrak{p}}$ is principal.
\end{lemm}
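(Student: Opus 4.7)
The plan is to exploit the fact that $\mathcal{O}_{K,\mathfrak{p}}$ is a (complete) discrete valuation ring. Since $K$ is a number field, $\mathcal{O}_K$ is a Dedekind domain (in our setting actually principal), so for any non-zero prime ideal $\mathfrak{p}$ the localization $(\mathcal{O}_K)_{\mathfrak{p}}$ is a DVR; completing it with respect to the $\mathfrak{p}$-adic topology yields $\mathcal{O}_{K,\mathfrak{p}}$, which is again a DVR. From that structural input, the classification of ideals in a DVR gives the result almost for free.

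Concretely, I would fix a uniformizer $\pi \in \mathcal{O}_{K,\mathfrak{p}}$ (a generator of the unique maximal ideal) and let $v_{\mathfrak{p}} : \mathcal{O}_{K,\mathfrak{p}} \setminus \{0\} \to \mathbb{N}$ denote the normalized $\mathfrak{p}$-adic valuation, so $v_{\mathfrak{p}}(\pi) = 1$ and every non-zero $x$ factors uniquely as $x = u\, \pi^{v_{\mathfrak{p}}(x)}$ with $u \in \mathcal{O}_{K,\mathfrak{p}}^{\times}$. Given a non-zero ideal $I \subset \mathcal{O}_{K,\mathfrak{p}}$, I would set $n := \min \{ v_{\mathfrak{p}}(x) : x \in I \setminus \{0\} \}$, which is well-defined because $v_{\mathfrak{p}}$ takes values in $\mathbb{N}$. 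Picking $x_0 \in I$ with $v_{\mathfrak{p}}(x_0) = n$, one has $x_0 = u\, \pi^n$ with $u$ a unit, hence $\pi^n = u^{-1} x_0 \in I$; conversely, for any non-zero $y \in I$ the inequality $v_{\mathfrak{p}}(y) \geq n$ means $y \pi^{-n} \in \mathcal{O}_{K,\mathfrak{p}}$, so $y \in (\pi^n)$. The ideal $I$ is thus equal to $(\pi^n)$, and the zero ideal is trivially principal.

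There is no real obstacle here: the only point worth being careful about is invoking correctly the fact that $\mathcal{O}_{K,\mathfrak{p}}$ is a DVR (which uses that $\mathcal{O}_K$ is Dedekind, available since its ring of integers is even principal in the running hypotheses). Once this structural fact is in place, the argument above is entirely standard.
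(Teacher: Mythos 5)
Your proposal is correct and follows essentially the same route as the paper, which simply invokes that $\mathcal{O}_{K,\mathfrak{p}}$ is a (complete) discrete valuation ring so that every ideal has the form $\pi^{n}\mathcal{O}_{K,\mathfrak{p}}$; you merely spell out the standard minimal-valuation argument that the paper leaves implicit.
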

\begin{proof}
Since $\mathcal{O}_{K,\mathfrak{p}}$ is a complete discrete valuation ring, every ideal of $\mathcal{O}_{K,\mathfrak{p}}$ is of the form $\pi^{n}\mathcal{O}_{K,\mathfrak{p}}$ with $n\in\mathbb{N}$ and $\pi$ an element of valuation $1$.
\end{proof}
With the lemmas \ref{lemm:dual1}, \ref{lemm:dual2} and \ref{lemm:dual3} and Pontrjagin duality, one can show that :
%COMPLETEPROFINIDIFFERENTEARAJOUTER?
\begin{theo}\label{theo:adlfst}
Any non trivial sub-$\mathcal{O}_K$-module of $K$ is uniquely of the form $H_{a}:=\{q\in K | aq\in\widehat{\mathfrak{D}_{\mathcal{O}_{K}}}\}$ where $a\in\mathbb{A}^{f}_{K}/\widetilde{\mathcal{O}_{K}}^{\times}$ and $\widehat{\mathfrak{D}_{\mathcal{O}_{K}}}$ denotes the profinite completion of the different.
\end{theo}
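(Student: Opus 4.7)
The plan is to establish a bijection $a \leftrightarrow H_a$ between $\mathbb{A}_K^f/\widetilde{\mathcal{O}_K}^\times$ and the non-trivial sub-$\mathcal{O}_K$-modules of $K$, combining Pontrjagin duality with the three preceding lemmas. First I would check that $a \mapsto H_a$ is well defined: since $\widehat{\mathfrak{D}_{\mathcal{O}_K}}$ is a $\widetilde{\mathcal{O}_K}$-module, replacing $a$ by $au$ with $u \in \widetilde{\mathcal{O}_K}^\times$ leaves $H_a$ unchanged; the $\mathcal{O}_K$-module structure on $H_a$ is immediate, and $H_a$ is non-trivial because $v_\mathfrak{p}(a) < -v_\mathfrak{p}(\mathfrak{D}_{\mathcal{O}_K})$ only at finitely many primes, so strong approximation produces a non-zero $q\in K$ with $v_\mathfrak{p}(aq) \geq -v_\mathfrak{p}(\mathfrak{D}_{\mathcal{O}_K})$ everywhere. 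Injectivity follows by reading the valuation data directly off $H$: one shows $v_\mathfrak{p}(a) = -\min\{v_\mathfrak{p}(q): q\in H\setminus\{0\}\} - v_\mathfrak{p}(\mathfrak{D}_{\mathcal{O}_K})$, the minimum being attained by strong approximation when finite and interpreted as $-\infty$ when no local constraint is in force at $\mathfrak{p}$, and this determines $a$ modulo $\widetilde{\mathcal{O}_K}^\times$.

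For surjectivity, given a non-trivial $H$, first choose $q_0\in H\setminus\{0\}$ and replace $H$ by $q_0^{-1}H\supset\mathcal{O}_K$ (the factor $q_0$ will be absorbed into the final $a$). Strong approximation gives $\mathbb{A}_K^f = K + \widetilde{\mathcal{O}_K}$ with $K\cap\widetilde{\mathcal{O}_K} = \mathcal{O}_K$, hence a canonical isomorphism $K/\mathcal{O}_K \simeq \mathbb{A}_K^f/\widetilde{\mathcal{O}_K}$. The image of $H/\mathcal{O}_K$ in $\mathbb{A}_K^f/\widetilde{\mathcal{O}_K}$ has closure $\overline{J}$, which is a closed sub-$\widetilde{\mathcal{O}_K}$-module, the $\widetilde{\mathcal{O}_K}$-action being inherited from the $\mathcal{O}_K$-action by density and continuity in exactly the fashion used in the proof of Lemma \ref{lemm:dual1}.

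Next I would apply Pontrjagin duality via the adelic character $\psi_K^f : \mathbb{A}_K^f \to \mathbb{S}^1$ built from $\text{tr}_{K/\mathbb{Q}}$ and the standard character of $\mathbb{A}_\mathbb{Q}^f/\widehat{\mathbb{Z}} \simeq \mathbb{Q}/\mathbb{Z}$; under the resulting self-duality of $\mathbb{A}_K^f$ the defining property of $\mathfrak{D}_{\mathcal{O}_K}$ translates into $\widetilde{\mathcal{O}_K}^\perp = \widehat{\mathfrak{D}_{\mathcal{O}_K}}$. Consequently $\mathbb{A}_K^f/\widetilde{\mathcal{O}_K}$ is Pontrjagin dual to $\widehat{\mathfrak{D}_{\mathcal{O}_K}}$, and $\overline{J}^{\perp}$ is a closed sub-$\widetilde{\mathcal{O}_K}$-module of $\widehat{\mathfrak{D}_{\mathcal{O}_K}}$. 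Choosing local generators of the different yields a topological $\widetilde{\mathcal{O}_K}$-module isomorphism $\widehat{\mathfrak{D}_{\mathcal{O}_K}} \simeq \widetilde{\mathcal{O}_K}$; Lemma \ref{lemm:dual1} then makes $\overline{J}^{\perp}$ an ideal of $\widetilde{\mathcal{O}_K}$, Lemma \ref{lemm:dual2} decomposes it coordinatewise into closed ideals $J_\mathfrak{p}^{\perp}\subset\mathcal{O}_{K,\mathfrak{p}}$, and Lemma \ref{lemm:dual3} realises each $J_\mathfrak{p}^{\perp}$ as $\pi_\mathfrak{p}^{n_\mathfrak{p}}\mathcal{O}_{K,\mathfrak{p}}$ with $n_\mathfrak{p}\in\mathbb{N}\cup\{\infty\}$ zero almost everywhere. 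The resulting sequence $(n_\mathfrak{p})$, after undoing the different shift and the $q_0$-rescaling, yields the desired adele $a$ modulo $\widetilde{\mathcal{O}_K}^\times$.

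The main obstacle will be verifying that the round trip $H \leadsto \overline{J} \leadsto \overline{J}^{\perp} \leadsto a \leadsto H_a$ actually returns $H$ itself and not merely a larger sub-module of $K$. This amounts to the local-to-global statement that a non-trivial sub-$\mathcal{O}_K$-module of $K$ equals the intersection in $K$ of its $\mathfrak{p}$-adic closures inside the various $K_\mathfrak{p}$, which is where the class-number-one hypothesis and the Dedekind nature of $\mathcal{O}_K$ enter: together with strong approximation, they guarantee that any compatible prescription of local valuation data is realised by a global element of $K$, so that the local decomposition delivered by Lemmas \ref{lemm:dual1}--\ref{lemm:dual3} exactly matches the original module $H$.
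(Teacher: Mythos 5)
Your route for existence is essentially the paper's own: after rescaling by $q_0^{-1}$ so that $H\supset\mathcal{O}_{K}$, you pass to $K/\mathcal{O}_{K}\simeq\mathbb{A}^{f}_{K}/\widetilde{\mathcal{O}_{K}}$, dualize with the trace character so that $\widetilde{\mathcal{O}_{K}}^{\perp}=\widehat{\mathfrak{D}_{\mathcal{O}_{K}}}$, and feed the annihilator into Lemmas \ref{lemm:dual1}--\ref{lemm:dual3} to extract $a$; this is exactly the paper's argument, and your uniqueness-via-valuations paragraph is a sound, more explicit supplement to the paper, which instead gets uniqueness from the uniqueness (up to $\widetilde{\mathcal{O}_{K}}^{\times}$) of the generator of a closed ideal of $\widetilde{\mathcal{O}_{K}}$.

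The one step that does not work as written is precisely the one you flag as the main obstacle. Knowing that any compatible prescription of local valuation data is realised by a global element of $K$ only produces elements of $H_{a}$: it bears on $H\subseteq H_{a}$ and on $H_{a}\neq\{0\}$, but says nothing about the inclusion you actually need, $H_{a}\subseteq H$; and neither the class-number-one hypothesis nor strong approximation is the operative ingredient there. What closes the loop is Pontrjagin biduality itself: $\mathbb{A}^{f}_{K}/\widetilde{\mathcal{O}_{K}}$ is discrete (so your closure $\overline{J}$ is just $J=\mathcal{H}$ and every subgroup is closed), hence $(\mathcal{H}^{\perp})^{\perp}=\mathcal{H}$, and the local computation that the annihilator of $\pi_{\mathfrak{p}}^{n}\mathcal{O}_{K,\mathfrak{p}}$ under the trace pairing is the corresponding dilate of the local inverse different identifies $(\mathcal{H}^{\perp})^{\perp}$ with $H_{a}/\mathcal{O}_{K}$; this is exactly how the paper concludes, via $\mathcal{H}=(\mathcal{H}^{\perp})^{\perp}$. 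Equivalently, the local-to-global statement you invoke holds over any Dedekind domain by the standard conductor argument: for $q\in H_{a}$ the ideal $\{\alpha\in\mathcal{O}_{K}:\alpha q\in H\}$ is contained in no maximal ideal, hence equals $\mathcal{O}_{K}$, so $q\in H$ --- no class number and no strong approximation enter. Finally, your parenthetical claim that the exponents $n_{\mathfrak{p}}$ vanish almost everywhere is false in general (take $H=\sum_{\mathfrak{p}}\mathfrak{p}^{-1}$, for which every $n_{\mathfrak{p}}=1$), but it is harmless: the components $\pi_{\mathfrak{p}}^{n_{\mathfrak{p}}}$ (or $0$) are integral, so they form a finite adele in any case, and the different shift and $q_{0}$-rescaling only disturb integrality at finitely many places.
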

%REECRIRE PREUVE (COMPLETE PROFINI) ET DEF H DIFFERENTE
\begin{proof}
Let us recall that Tate's character for finite adeles is $$\chi_{Tate}:=\left\{\begin{aligned}\mathbb{A}^{f}_{K} & \longrightarrow & \mathbb{S}^{1} \\
\left( x_{\mathfrak{p}}\right) & \longmapsto & \prod_{\mathfrak{p}}\psi_{\text{char}(K_{\mathfrak{p}})}(\text{tr}(x_{\mathfrak{p}}))\end{aligned}\right.$$ 
where for all prime number $p$, $\psi_{p}$ is $\psi_{p}:=\mathbb{Q}_{p}\xrightarrow{\text{can}}\mathbb{Q}_{p}/\mathbb{Z}_{p}
\to\mathbb{Q}/\mathbb{Z}\xrightarrow{\exp(2\pi\imath\bullet )}\mathbb{S}^{1}$.

Then the pairing $\langle k,a\rangle =\chi_{\text{Tate}}(ka), \forall k\in K/\mathcal{O}_{K}, \forall a\in\mathfrak{D}_{\mathcal{O}_{K}}$ identifies $\widehat{\mathfrak{D}_{\mathcal{O}_{K}}}$ with the Pontrjagin dual of $K/\mathcal{O}_{K}$ by a direct application of proposition VIII.4.12 of \cite{weilbasic}.

Let us now prove the theorem.

Let $H$ be a non trivial $\mathcal{O}_K$-module included in $K$. If $H$ is included in $\mathcal{O}_{K}$, then $H$ is an integral ideal and the result is obvious.

Let $H$ be a non trivial $\mathcal{O}_K$-module included in $K$ and containing $\mathcal{O}_{K}$, it is completely determined by its image $\mathcal{H}$ in $K/\mathcal{O}_{K}$.

Then the Pontrjagin duality implies that : $\mathcal{H}=(\mathcal{H}^{\bot})^{\bot}=\{k\in K/\mathcal{O}_{K}/ \forall x\in\mathcal{H}^{\bot}, \langle k,x\rangle =1\}$ and $\mathcal{H}^{\bot}=\{x\in\mathfrak{D}_{\mathcal{O}_{K}}/\forall k\in K/\mathcal{O}_{K}, \langle k,x\rangle =1 \}$.

We also have that $\mathfrak{D}_{\mathcal{O}_{K}}=\prod_{\mathfrak{p}\ni F}\mathcal{O}_{K,\mathfrak{p}}\times\prod_{\mathfrak{p}\in F}\mathfrak{p}^{-n_{\mathfrak{p}}}$ where $F$ is a finite set included in $\text{Spec}(\mathcal{O}_{K})$.

Since $\mathcal{H}^{\bot }\subset\mathfrak{D}_{\mathcal{O}_{K}}$, we have that $\left(\prod_{\mathfrak{p}\in F}\mathfrak{p}^{-n_{\mathfrak{p}}}\right) .\mathcal{H}^{\bot }\subset \widetilde{\mathcal{O}_{K}}$ and $\left(\prod_{\mathfrak{p}\in F}\mathfrak{p}^{-n_{\mathfrak{p}}}\right) .\mathcal{H}^{\bot }$ is also a closed sub-$\mathcal{O}_{K}$-module of $\widetilde{\mathcal{O}_{K}}$.

So thanks to the three last lemmas, we know that a closed sub-$\mathcal{O}_{K}$-module of $\widetilde{\mathcal{O}_{K}}$ can be written in the form $a.\widetilde{\mathcal{O}_{K}}$ with $a\in\widetilde{\mathcal{O}_{K}}$ and this $a$ is unique up to multiplication by an element of $\widetilde{\mathcal{O}_{K}}^{\star}$, so we get the result.
\end{proof}
And so we get that:
\begin{cor}
There is a canonical bijection between the quotient space $\mathbb{A}^{f}_{K}/(K^{\star}(\prod\mathcal{O}_{\mathfrak{p}}^{\star}\times \{1\}))$ and the isomorphisms classes of the (geometric) points of the topos $\widehat{\mathcal{O}_{K}}$
\end{cor}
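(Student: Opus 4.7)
The plan is to combine Theorem \ref{theo:ptsscd} with Theorem \ref{theo:adlfst}. By the former, isomorphism classes of geometric points of $\widehat{\mathcal{O}_{K}}$ are in bijection with isomorphism classes of sub-$\mathcal{O}_{K}$-modules of $K$. By the latter, every non-trivial such sub-module is uniquely of the form $H_{a}$ for a class $a \in \mathbb{A}^{f}_{K}/\widetilde{\mathcal{O}_{K}}^{\times}$, while the zero sub-module naturally corresponds to $a=0$. It therefore remains to identify when $H_{a}$ and $H_{b}$ are isomorphic as $\mathcal{O}_{K}$-modules.

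The first key observation is that any $\mathcal{O}_{K}$-linear map $\phi : H \to H'$ between two non-zero sub-$\mathcal{O}_{K}$-modules of $K$ is multiplication by a fixed element of $K$. Fix $h_{0}\in H\setminus\{0\}$; for any $h\in H$, $\mathcal{O}_{K}$-linearity applied to the identity $h\cdot h_{0}=h_{0}\cdot h$ in $K$ gives $h_{0}\phi(h)=h\phi(h_{0})$, and since $h_0\neq 0$ in the field $K$ we get $\phi(h)=(\phi(h_{0})/h_{0})\,h$. Such a $\phi$ is an isomorphism precisely when $\lambda:=\phi(h_{0})/h_{0}\in K^{\star}$. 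Therefore $H \simeq H'$ if and only if $H' = \lambda H$ for some $\lambda\in K^{\star}$.

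Next, I would transport this $K^{\star}$-action onto the parameter space. A direct computation gives $H_{\lambda a}=\lambda^{-1}H_{a}$: indeed $q \in H_{\lambda a}$ iff $a(\lambda q) \in \widehat{\mathfrak{D}_{\mathcal{O}_{K}}}$ iff $\lambda q\in H_{a}$. Consequently $H_{a} \simeq H_{b}$ if and only if $b$ and $a$ lie in the same $K^{\star}$-orbit in $\mathbb{A}^{f}_{K}/\widetilde{\mathcal{O}_{K}}^{\times}$, i.e.\ in the same class of the quotient $\mathbb{A}^{f}_{K}/(K^{\star}(\prod_{\mathfrak{p}}\mathcal{O}_{\mathfrak{p}}^{\star}\times\{1\}))$. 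The trivial sub-module $\{0\}$ is a single isomorphism class, matched with the unique orbit of $0$, so both cases fit uniformly into the same formula.

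The main obstacle is just careful bookkeeping: combining the uniqueness statement from Theorem \ref{theo:adlfst} (giving the class $a$ modulo $\widetilde{\mathcal{O}_{K}}^{\times}$) with the new $K^{\star}$-action, and checking that canonicity of the bijection is preserved throughout. No further deep input beyond the two preceding theorems is required, and the map $[a]\mapsto [H_a]$ provides the announced canonical bijection.
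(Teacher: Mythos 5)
Your main line coincides with the paper's: combine Theorem \ref{theo:ptsscd} with Theorem \ref{theo:adlfst}, observe that an isomorphism between non-zero sub-$\mathcal{O}_{K}$-modules of $K$ is multiplication by an element of $K^{\star}$ (you actually prove this, which the paper merely asserts; just note that to obtain $h_{0}\phi(h)=h\phi(h_{0})$ you must first clear denominators, writing $h=a/s$, $h_{0}=a_{0}/s$ with $a,a_{0},s\in\mathcal{O}_{K}$, since $h,h_{0}$ need not lie in $\mathcal{O}_{K}$ and $\mathcal{O}_{K}$-linearity only applies to scalars in $\mathcal{O}_{K}$), and then transport the $K^{\star}$-action to the parameter via $H_{\lambda a}=\lambda^{-1}H_{a}$, exactly as in the paper's step showing $H_{b}=k\,H_{a}$ forces $a=kb$ in $\mathbb{A}^{f}_{K}/\widetilde{\mathcal{O}_{K}}^{\times}$.

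The one genuine slip is your treatment of the degenerate classes: you assert that the zero sub-module ``naturally corresponds to $a=0$'', but by the formula $H_{a}=\{q\in K \mid aq\in\widehat{\mathfrak{D}_{\mathcal{O}_{K}}}\}$ one has $H_{0}=K$, not $\{0\}$; indeed $\{0\}$ is never of the form $H_{a}$, which is precisely why Theorem \ref{theo:adlfst} is stated only for non-trivial sub-modules. As written, your matching assigns the class of $0$ both to $K$ (via $a\mapsto H_{a}$) and to $\{0\}$ (by your separate convention), so the map is not well defined there, and the point of $\widehat{\mathcal{O}_{K}}$ corresponding to the module $\{0\}$ is left without a partner. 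The correct bookkeeping is the one the paper adopts implicitly: the adelic class of $0$ corresponds to the module $K=H_{0}$, and the bijection is with the isomorphism classes of the non-trivial points, the trivial point being set aside (as in Connes--Consani). Apart from this boundary case your argument is complete and is essentially the paper's proof.
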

\begin{proof}
Thanks to theorem \ref{theo:ptsscd} and \ref{theo:adlfst}, any non trivial point of the topos $\widehat{\mathcal{O}_{K}}$ is  obtained from a $\mathcal{O}_{K}$-module $H_{a}$ of rank 1 included in $K$ where $a\in\mathbb{A}^{f}_{K}/\widetilde{\mathcal{O}_{K}}^{\times}$. Two elements $a,b\in\mathbb{A}^{f}_{K}/\widetilde{\mathcal{O}_{K}}^{\times}$ determine isomorphic $\mathcal{O}_{K}$-modules $H_{a}$ and $H_{b}$ of rank 1 included in $K$. An isomorphism between $\mathcal{O}_{K}$-modules of rank 1 included in $K$ is given by the multiplication by an element   $k\in K^{\star}$ so that $H_{b}=k.H_{a}$ and then by theorem 3.1, $a=kb$ in $\mathbb{A}^{f}_{K}/\widetilde{\mathcal{O}_{K}}^{\times}$. Therefore the result is proved.
\end{proof}

\section{The geometric points of the arithmetic site for an imaginary quadratic field with class number 1}

In the rest of this paper we will restrict our attention to the simple case where we have only a finite group of symetries, therefore in the sequel we assume that $K$ is an imaginary quadratic number field. Moreover we assume its class number is 1.

Let us denote $\widetilde{\mathcal{C}_{\mathcal{O}_{K}}}$ the set of $\emptyset$, $\{0\}$ and the convex polygons of the real plane (identified with $\mathbb{C}$) with non empty interior, with center $0$, whose summits have affix in $\mathcal{O}_{K}$ and who are invariant by the action of the elements $\mathcal{U}_{K}$.

\begin{lemm}
$\left(\widetilde{\mathcal{C}_{\mathcal{O}_{K}}},\text{Conv}(\bullet \cup \bullet), +\right)$ is an (idempotent) semiring whose neutral element for the first law is $\emptyset$ and for the second law is $\{0\}$
\end{lemm}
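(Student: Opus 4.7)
The plan is to check one by one: closure of both operations on $\widetilde{\mathcal{C}_{\mathcal{O}_{K}}}$, the treatment of the two exceptional elements $\emptyset$ and $\{0\}$, the algebraic laws (commutativity, associativity, idempotency), and finally distributivity of $+$ over $\text{Conv}(\bullet\cup\bullet)$.

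First I would establish closure. For two honest elements $P,Q$ of $\widetilde{\mathcal{C}_{\mathcal{O}_{K}}}$ (convex polygons with nonempty interior, centered at $0$, with vertices in $\mathcal{O}_K$, invariant under $\mathcal{U}_K$), the vertices of the convex hull $\text{Conv}(P\cup Q)$ form a subset of the vertices of $P$ together with the vertices of $Q$, since every extreme point of the convex hull of a union of convex polygons is an extreme point of one of them; they therefore lie in $\mathcal{O}_K$. Invariance under the direct similitudes given by $\mathcal{U}_K$ commutes with both $\cup$ and $\text{Conv}$, and the interior is clearly nonempty. For the Minkowski sum $P+Q$, the vertices are among the sums $v+w$ with $v,w$ vertices of $P,Q$ respectively (hence in $\mathcal{O}_K$), centrality and $\mathcal{U}_K$-invariance are preserved by addition, and nonempty interiors add to a nonempty interior.

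Next I would handle the exceptional elements. The identity $\emptyset\cup A=A$ gives $\text{Conv}(\emptyset\cup A)=A$, so $\emptyset$ is neutral for the first law. For a genuine polygon $P$, the fact that $0$ lies in its interior yields $\text{Conv}(\{0\}\cup P)=P$, and the trivial cases $\text{Conv}(\{0\}\cup\{0\})=\{0\}$ and $\text{Conv}(\{0\}\cup\emptyset)=\{0\}$ are clear. For the Minkowski sum, $\{0\}+A=A$ (so $\{0\}$ is neutral for the second law), while $\emptyset+A=\emptyset$ because $A+B=\{a+b\mid a\in A,b\in B\}$ is empty whenever one of the summands is; this is precisely the absorbing property needed of the additive neutral in a semiring. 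The algebraic laws of $\text{Conv}(\bullet\cup\bullet)$ then reduce to commutativity and associativity of $\cup$ together with the identity $\text{Conv}(\text{Conv}(X)\cup Y)=\text{Conv}(X\cup Y)$, and idempotence is $\text{Conv}(P\cup P)=P$ by convexity of $P$; commutativity and associativity of $+$ are inherited from $\mathbb{C}$.

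The main step, and the one I expect to require the most care, is distributivity:
\[
P + \text{Conv}(Q\cup R) \;=\; \text{Conv}\bigl((P+Q)\cup(P+R)\bigr).
\]
The inclusion $\supseteq$ follows from $P+Q\subseteq P+\text{Conv}(Q\cup R)$ and $P+R\subseteq P+\text{Conv}(Q\cup R)$ combined with the fact that the left-hand side of the equality above is a Minkowski sum of two convex sets, hence convex. For the reverse inclusion I would use the classical identity $\text{Conv}(Q\cup R)=\bigcup_{t\in[0,1]}(tQ+(1-t)R)$ valid whenever $Q,R$ are convex: given $x=tq+(1-t)r\in\text{Conv}(Q\cup R)$ and $p\in P$, one writes $p+x=t(p+q)+(1-t)(p+r)$ as a convex combination of an element of $P+Q$ and an element of $P+R$. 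The degenerate cases where one of $P,Q,R$ is $\emptyset$ or $\{0\}$ are then handled separately by the rules $\emptyset+\bullet=\emptyset$, $\{0\}+\bullet=\bullet$, and the absorption of $\{0\}$ by any polygon containing $0$ in its interior.
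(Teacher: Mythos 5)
Your proof is correct and takes essentially the same route as the paper: you check that the vertices of $\text{Conv}(P\cup Q)$ and of $P+Q$ lie among the vertices (respectively, sums of vertices) of $P$ and $Q$, so membership in $\widetilde{\mathcal{C}_{\mathcal{O}_{K}}}$ is preserved, and then reduce to the semiring structure on convex subsets of the plane with the convex-hull-of-union and Minkowski operations. The only difference is one of completeness: the paper simply invokes this semiring of plane convex sets as well known, while you verify its axioms explicitly, in particular distributivity via the identity $\text{Conv}(Q\cup R)=\bigcup_{t\in[0,1]}\bigl(tQ+(1-t)R\bigr)$, together with the degenerate cases involving $\emptyset$ and $\{0\}$, which is a perfectly valid (and more self-contained) way to finish.
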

\begin{proof}
A convex polygon is the convex hull of a finite number of points. Let $\mathcal{P}:=\text{Conv}\left(\bigcup_{i\in [|1,n|]}P_{i}\right)$ and $\widetilde{\mathcal{P}}:=\text{Conv}\left(\bigcup_{j\in [|1,m|]}\tilde{P}_{j}\right)$ be two polygones whose summits have their affix in $\mathcal{O}_{K}$ and which are invariant by the action of the elements $\mathcal{U}_{K}$ (here $P_{i}$ and $\tilde{P}_{j}$ mean both the points of the plane and their affixes).

Then $\text{Conv}(\mathcal{P}\cup\widetilde{\mathcal{P}})=\text{Conv}(\{P_{i},i\in [|1,n|]\})$ and $\mathcal{P}+\widetilde{\mathcal{P}}=\text{Conv}(\{P_{i}+\tilde{P_{j}},(i,j)\in [|1,n|]\times[|1,m|]\})$ are also polygons. From those formulae one sees also immediately that they have summits wich have affix in $\mathcal{O}_{K}$ and who are invariant by the action of the elements $\mathcal{U}_{K}$ and that they have non empty interior and center $0$. These formulae still work when one is either $\emptyset$ or $\{0\}$.

So $\widetilde{\mathcal{C}_{\mathcal{O}_{K}}}$ is a sub semiring of the well known semiring of the convex sets of the plane with the operations convex hull of the union and the Minkowski operation, so we have the result.
\end{proof}
\begin{lemm}
$\mathcal{O}_{K}$ acts multiplicatively by direct complex similitudes on $\widetilde{\mathcal{C}_{\mathcal{O}_{K}}}$, that is to say that $\alpha\in\mathcal{O}_{K}\backslash\{0\}$ acts as the direct similitude $(\mathbb{C}\to\mathbb{C}, z\mapsto \alpha z)$ and $\emptyset$ is sent to $\emptyset$ and $0$ sends everything to $\{0\}$ except $\emptyset$ which is sent to $\emptyset$.
\end{lemm}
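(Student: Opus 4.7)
The plan is to verify three things: (a) that for any nonzero $\alpha\in\mathcal{O}_K$ the image $\alpha\cdot P$ of a polygon $P\in\widetilde{\mathcal{C}_{\mathcal{O}_K}}$ under the similitude $z\mapsto\alpha z$ still lies in $\widetilde{\mathcal{C}_{\mathcal{O}_K}}$, so that the action is well defined; (b) that the boundary cases $\emptyset$ and $\alpha=0$ behave as stated; and (c) that the resulting operation is multiplicative, i.e. $(\alpha\beta)\cdot P=\alpha\cdot(\beta\cdot P)$ and $1\cdot P=P$.

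For (a), I take $\alpha\in\mathcal{O}_K\setminus\{0\}$ and a polygon $P=\mathrm{Conv}\{P_1,\ldots,P_n\}\in\widetilde{\mathcal{C}_{\mathcal{O}_K}}$, so $P_i\in\mathcal{O}_K$. Since the map $z\mapsto\alpha z$ is $\mathbb{R}$-linear (hence affine), $\alpha P=\mathrm{Conv}\{\alpha P_1,\ldots,\alpha P_n\}$ is again a convex polygon. Its vertices $\alpha P_i$ lie in $\mathcal{O}_K$ because $\mathcal{O}_K$ is a ring. Since $\alpha\neq 0$, the similitude is a homeomorphism of $\mathbb{C}$, so $\alpha P$ has nonempty interior; and it fixes the origin, so its center is still $0$. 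For invariance under $\mathcal{U}_K$, given $u\in\mathcal{U}_K$, commutativity of $\mathcal{O}_K$ gives $u\cdot(\alpha P)=(u\alpha)P=(\alpha u)P=\alpha(uP)=\alpha P$. Hence $\alpha P\in\widetilde{\mathcal{C}_{\mathcal{O}_K}}$.

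For (b), the conventions force $\alpha\cdot\emptyset=\emptyset$ for every $\alpha$ (the image of the empty set is empty), and $0\cdot P=\{0\}$ whenever $P\neq\emptyset$ (the image of any nonempty set under the zero map is $\{0\}$, which lies in $\widetilde{\mathcal{C}_{\mathcal{O}_K}}$). These assignments are unambiguous and lie in $\widetilde{\mathcal{C}_{\mathcal{O}_K}}$.

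For (c), the multiplicativity is immediate on the nonzero part: for $\alpha,\beta\in\mathcal{O}_K\setminus\{0\}$ and any polygon $P$, pointwise $(\alpha\beta)z=\alpha(\beta z)$ shows $(\alpha\beta)\cdot P=\alpha\cdot(\beta\cdot P)$, and $1\cdot P=P$. The only care needed is bookkeeping in the degenerate cases: if $\alpha=0$ or $\beta=0$, both sides equal $\{0\}$ when $P\neq\emptyset$ and $\emptyset$ when $P=\emptyset$, using the fact that $\alpha\cdot\{0\}=\{0\}$ for every $\alpha\in\mathcal{O}_K$. No step is really delicate here; the only mild obstacle is the casework around $\emptyset$ and $0$, which is a routine verification against the stated conventions.
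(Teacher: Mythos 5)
Your proposal is correct and follows essentially the same route as the paper, whose proof is the one-line observation that direct similitudes preserve extremal points of convex sets (which is exactly the fact you use to see that the summits of $\alpha P$ are the $\alpha P_i\in\mathcal{O}_K$). Your additional checks (non-empty interior, center $0$, $\mathcal{U}_K$-invariance, the conventions for $\emptyset$ and $0$, and multiplicativity) are just the routine details the paper leaves implicit.
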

\begin{proof}
Direct similitudes preserve extremal points of convex sets and so we get the result.
\end{proof}
\begin{lemm}\label{lemm:Csimple}
For $K=\mathbb{Q}(\imath )$ and $K=\mathbb{Q}(\imath\sqrt{3})$ (in other words the only cases where $\mathcal{U}_{K}$ is greater than $\{1,-1\}$),let us denote $D_{K}$ the convex polygon (with center $0$) whose summits are the elements of $\mathcal{U}_{K}$.
Then  $$\widetilde{\mathcal{C}_{\mathcal{O}_{K}}}=\text{Semiring}(\{h.D_{K},h\in\mathcal{O}_{K}\})\cup\{\emptyset\}$$ ie the semiring generated by $\{h.D_{K},h\in\mathcal{O}_{K}\}$ (to which we add $\emptyset$), it has also an action of $\mathcal{O}_K$ on it by direct similitudes of the complex plane.
\end{lemm}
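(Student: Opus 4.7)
The plan is to show the two inclusions separately; the nontrivial direction reduces to an observation about the orbit structure of $\mathcal{U}_K$ acting on the vertices of a polygon in $\widetilde{\mathcal{C}_{\mathcal{O}_K}}$.

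For the inclusion $\text{Semiring}(\{h.D_K,h\in\mathcal{O}_K\})\cup\{\emptyset\}\subseteq\widetilde{\mathcal{C}_{\mathcal{O}_K}}$, I would first check that each generator $h.D_K$ lies in $\widetilde{\mathcal{C}_{\mathcal{O}_K}}$: when $h=0$ it equals $\{0\}$, otherwise its set of vertices is $\mathcal{U}_K\cdot h\subset\mathcal{O}_K$, it is $\mathcal{U}_K$-invariant by construction (the action of $\mathcal{U}_K$ permutes $\mathcal{U}_K\cdot h$), has centre $0$ (since $-1\in\mathcal{U}_K$), and has non-empty interior because $|\mathcal{U}_K|\geq 4$ gives at least four affinely non-collinear vertices. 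Since the previous lemma showed $\widetilde{\mathcal{C}_{\mathcal{O}_K}}$ is itself a semiring (containing $\emptyset$), it must contain the semiring generated by these elements.

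For the reverse inclusion, take $P\in\widetilde{\mathcal{C}_{\mathcal{O}_K}}$. Dispose of the trivial cases: $\emptyset$ is in the right-hand side by definition, and $\{0\}=0.D_K$ by the second lemma. Otherwise $P$ is a convex polygon with non-empty interior, vertices in $\mathcal{O}_K$, invariant under $\mathcal{U}_K$. Let $V$ denote its (finite) vertex set. Since $V$ is $\mathcal{U}_K$-invariant and $0\notin V$ (because $P$ has non-empty interior centred at $0$), and since the rotation action of $\mathcal{U}_K$ on $\mathcal{O}_K\setminus\{0\}$ is free, $V$ decomposes as a disjoint union of $\mathcal{U}_K$-orbits:
\[
V=\bigsqcup_{i=1}^{N}\mathcal{U}_{K}\cdot h_{i},\qquad h_{i}\in\mathcal{O}_{K}\setminus\{0\}.
\]
The key identity is then $h_i.D_K=\text{Conv}(h_i\cdot\mathcal{U}_K)=\text{Conv}(\mathcal{U}_K\cdot h_i)$, so that
\[
\text{Conv}\Bigl(\bigcup_{i=1}^{N}h_i.D_K\Bigr)=\text{Conv}\Bigl(\bigcup_{i=1}^{N}\mathcal{U}_K\cdot h_i\Bigr)=\text{Conv}(V)=P,
\]
exhibiting $P$ as an iterated $\text{Conv}(\bullet\cup\bullet)$ of generators $h_i.D_K$.

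The proof has no real obstacle, and in fact shows something slightly stronger: one never needs the Minkowski sum to generate $\widetilde{\mathcal{C}_{\mathcal{O}_K}}$; the $\text{Conv}(\bullet\cup\bullet)$ operation alone already suffices. The only delicate point is justifying that the orbit decomposition of $V$ really has representatives in $\mathcal{O}_K$ (immediate, since $V\subset\mathcal{O}_K$) and that multiplying $D_K$ by $h_i\in\mathcal{O}_K$ keeps us inside $\widetilde{\mathcal{C}_{\mathcal{O}_K}}$, which follows from the previous lemma on the multiplicative action of $\mathcal{O}_K$ by direct similitudes. The statement about the $\mathcal{O}_K$-action on this semiring is simply inherited from that earlier lemma.
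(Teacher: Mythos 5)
Your proof is correct and follows essentially the same route as the paper's: the paper also reduces to decomposing the vertex set of $P$ into $\mathcal{U}_K$-orbits (choosing representatives with argument in a fundamental sector $[0,\theta_K)$ rather than invoking the abstract orbit decomposition) and then writes $P=\text{Conv}\bigl(\bigcup_i h_i.D_K\bigr)$, so that, as you note, only the $\text{Conv}(\bullet\cup\bullet)$ operation is needed. Your verification of the easy inclusion and of the trivial cases is slightly more explicit than the paper's, but the argument is the same.
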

\begin{proof}
Let us first note $\omega_{K}:=\left\{\begin{aligned} \imath\quad &\:\text{if} & K=\mathbb{Q}(\imath )\\ \frac{1+\imath\sqrt{3}}{2}  &\:\text{if} & K=\mathbb{Q}(\imath\sqrt{3})\end{aligned}\right.$ and $\theta_{K}:=\text{Arg}(\omega_{K} )=\left\{\begin{aligned} \frac{\pi}{2} &\:\text{if} & K=\mathbb{Q}(\imath )\\ \frac{\pi}{3}  &\:\text{if} & K=\mathbb{Q}(\imath\sqrt{3})\end{aligned}\right.$ and $\sigma_{K}=\left\{\begin{aligned} 4 &\:\text{if} & K=\mathbb{Q}(\imath )\\ 6  &\:\text{if} & K=\mathbb{Q}(\imath\sqrt{3})\end{aligned}\right.$. 

Of course $\text{Semiring}(\{h.D_{K},h\in\mathcal{O}_{K}\})\subset\widetilde{\mathcal{C}_{\mathcal{O}_{K}}}$

By definition of $\text{Semiring}(\{h.D_{K},h\in\mathcal{O}_{K}\})$, $\{\emptyset, \{ 0\}\}\subset\text{Semiring}(\{h.D_{K},h\in\mathcal{O}_{K}\})$.

Let $C\in\widetilde{\mathcal{C}_{\mathcal{O}_{K}}}\backslash\{\emptyset, \{0\}\}$.

Let us note $\mathcal{S}_{+,C}$ the set of summits of $C$ whose arguments (modulo $2\pi$) belong to $\left[ 0,\theta_{K}\right[$ and $\mathcal{S}_{C}$ the set of all summits of $C$.

Let us now show that $\mathcal{S}_{C}=\bigcup_{u\in\mathcal{U}_{K}}u\mathcal{S}_{+,C}$

Indeed since $C$ is invariant under the action of $\mathcal{U}_{K}$ we have that $\bigcup_{u\in\mathcal{U}_{K}}u\mathcal{S}_{+,C}\subset\mathcal{S}_{C}$

Now let $s\in\mathcal{S}_{C}$, then let $k\in [|0,\sigma_{K}|]$ and $\alpha\in [0,\theta_{K}[$ such that $\text{Arg}(s)\equiv k\theta_{K}+\alpha (2\pi )$.

So $\text{Arg}(\omega_{K}^{-k}s)\equiv\alpha (2\pi )$ and $\omega_{K}^{-k}\in\mathcal{U}_{K}$

And since $C$ is invariant by the action of $\mathcal{U}_{K}$, $\zeta_{K}^{-k}s\in\mathcal{S}_{+,C}$

So $s\in\omega_{K}^{k}\mathcal{S}_{+,C}\subset\bigcup_{u\in\mathcal{U}_{K}}u\in\mathcal{S}_{+,C}$

And so $\mathcal{S}_{C}\subset\bigcup_{u\in\mathcal{U}_{K}}u\mathcal{S}_{+,C}$ which ends the proof of $\mathcal{S}_{C}=\bigcup_{u\in\mathcal{U}_{K}}u\mathcal{S}_{+,C}$

And so finally we get that $C=\text{Conv}\left(\bigcup_{s\in\mathcal{S}_{+,C}}sD_{K}\right)\in\text{Semiring}(\{h.D_{K},h\in\mathcal{O}_{K}\})$

So we conclude that $\text{Semiring}(\{h.D_{K},h\in\mathcal{O}_{K}\})\supset\widetilde{\mathcal{C}_{\mathcal{O}_{K}}}$ and so $\widetilde{\mathcal{C}_{\mathcal{O}_{K}}}=\text{Semiring}(\{h.D_{K},h\in\mathcal{O}_{K}\})$
\end{proof}
\begin{rem}
The definition of $D_{K}$ in the preceding lemma does not make any sense for the seven other quadratic imaginary number fields with class number 1.
\end{rem}

Then for $K:=\mathbb{Q}(\sqrt{d})$ with $\mathcal{U}_{K}=\{1,-1\}$, we adopt the following definition :
\begin{itemize}
\item[$\bullet$] when $d\equiv 2,3 (4)$ define $\mathcal{O}_{K}=\mathbb{Z}[\sqrt{d}]$ , define for $D_{K}$ to be the convex polygone whose summits are $1, \sqrt{d}, -1, -\sqrt{d}$
\item[$\bullet$] when $d\equiv 1 (4)$ define $\mathcal{O}_{K}=\mathbb{Z}[\frac{1+\sqrt{d}}{2}]$ , define for $D_{K}$ to be the convex polygone whose summits are $1, \frac{1+\sqrt{d}}{2}, -1, -\frac{1+\sqrt{d}}{2}$.
\end{itemize}

\begin{defi}
Let us then denote $$\mathcal{C}_{\mathcal{O}_{K}}:=\text{Semiring}(\{h.D_{K},h\in\mathcal{O}_{K}\})\cup\{\emptyset\}$$ the semiring generated by $\{h.D_{K},h\in\mathcal{O}_{K}\}$ .
\end{defi}

\begin{lemm}
Let $K$ be a quadratic imaginary number field with class number one. Then $\widetilde{\mathcal{C}_{\mathcal{O}_{K}}}=\mathcal{C}_{\mathcal{O}_{K}}$ if and only if $K=\mathbb{Q}(\imath )$ or $K=\mathbb{Q}(\imath\sqrt{3})$, ie if and only if $\{1,-1\}\subsetneqq\mathcal{U}_{K}$, ie if and only if "we have enough symmetries".
\end{lemm}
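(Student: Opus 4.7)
The plan is to handle the two implications separately. The direction $(\Leftarrow)$ is immediate: if $K=\mathbb{Q}(\imath)$ or $\mathbb{Q}(\imath\sqrt{3})$, Lemma \ref{lemm:Csimple} states that $\widetilde{\mathcal{C}_{\mathcal{O}_{K}}}=\text{Semiring}(\{hD_{K},h\in\mathcal{O}_{K}\})\cup\{\emptyset\}=\mathcal{C}_{\mathcal{O}_{K}}$, with nothing else to prove.

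For the reverse direction $(\Rightarrow)$ I assume $\mathcal{U}_{K}=\{1,-1\}$ and will exhibit an explicit polygon in $\widetilde{\mathcal{C}_{\mathcal{O}_{K}}}\setminus\mathcal{C}_{\mathcal{O}_{K}}$. Write $\mathcal{O}_{K}=\mathbb{Z}[\omega]$ with $\omega=\sqrt{d}$ when $d\equiv 2,3\,(4)$ and $\omega=(1+\sqrt{d})/2$ when $d\equiv 1\,(4)$, so that $D_{K}=\text{Conv}(\{\pm 1,\pm\omega\})$. The candidate is the parallelogram
\[ P:=\text{Conv}(\{\pm 1,\,\pm(1+\omega)\}). \]
Its four vertices lie in $\mathcal{O}_{K}$, are exchanged in pairs by $z\mapsto -z$, bound a non-empty interior and have centroid $0$, so $P\in\widetilde{\mathcal{C}_{\mathcal{O}_{K}}}$.

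To prove $P\notin\mathcal{C}_{\mathcal{O}_{K}}$ I invoke the distributivity identity $A+\text{Conv}(B\cup C)=\text{Conv}((A+B)\cup(A+C))$ of Minkowski sum over convex-hull-of-union, which lets me rewrite any nonempty element of $\mathcal{C}_{\mathcal{O}_{K}}$ in ``sum-of-products'' form $\text{Conv}\bigl(\bigcup_{k}Z_{k}\bigr)$ with each $Z_{k}=\sum_{j}h_{kj}D_{K}$. Each summand $h_{kj}D_{K}$ is centrally symmetric, hence contains $0$, forcing $h_{kj}D_{K}\subseteq Z_{k}\subseteq P$. A direct computation from the four edge equations of $P$ yields the lattice-point count $P\cap\mathcal{O}_{K}=\{0,\pm 1,\pm(1+\omega)\}$, the crucial point being that $\omega$ itself lies strictly above the edge of $P$ joining $-1$ to $1+\omega$ (an elementary inequality valid as soon as $|d|\geq 2$, which covers all relevant $d\in\{-2,-7,-11,-19,-43,-67,-163\}$). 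Since the vertices of $hD_{K}$ are $\pm h$ and $\pm h\omega$, the inclusion $hD_{K}\subseteq P$ requires $h\omega\in P\cap\mathcal{O}_{K}$; expanding $h\omega$ with the relation $\omega^{2}=d$ or $\omega^{2}=\omega+(d-1)/4$ and testing the four candidates $h\in\{\pm 1,\pm(1+\omega)\}$ leaves only $h=0$ as a possibility. Hence every $h_{kj}=0$, so the presentation collapses to $\{0\}$, contradicting $P\supsetneq\{0\}$.

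The main obstacle is the uniform treatment of the two parity cases $d\equiv 2,3\,(4)$ and $d\equiv 1\,(4)$: the shape of the lattice $\mathcal{O}_{K}$, the explicit edge equations of $P$, and the expansion of $h\omega$ all differ between them, so the lattice-point enumeration $P\cap\mathcal{O}_{K}=\{0,\pm 1,\pm(1+\omega)\}$ and the verifications $h\omega\notin P$ for each nonzero candidate $h$ have to be carried out twice, although the two computations are structurally identical.
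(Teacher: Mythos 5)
Your proof is correct, and its overall shape matches the paper's: $(\Leftarrow)$ is exactly Lemma \ref{lemm:Csimple}, and $(\Rightarrow)$ is settled by exhibiting an explicit element of $\widetilde{\mathcal{C}_{\mathcal{O}_{K}}}\setminus\mathcal{C}_{\mathcal{O}_{K}}$. Where you genuinely diverge is in the choice of witness and in how non-membership is certified. The paper stretches the real pair of vertices of $D_{K}$ (summits $\pm 3,\pm\imath\sqrt{2}$ for $\mathbb{Q}(\imath\sqrt{2})$, summits $\pm 2,\pm\frac{1\pm\imath\sqrt{d}}{2}$ in the remaining six cases) and then simply lists the handful of elements of $\mathcal{C}_{\mathcal{O}_{K}}$ that could have $3$ (resp.\ $2$) as a summit, checking that none has the other required vertex; this enumeration is quick but leaves the underlying decomposition reasoning implicit. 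You instead take the sheared parallelogram $P=\text{Conv}(\{\pm 1,\pm(1+\omega)\})$ and make the decomposition explicit: distributivity of Minkowski sum over $\text{Conv}(\bullet\cup\bullet)$ gives the sum-of-products normal form, each generator factor $hD_{K}$ contains $0$ and hence sits inside $P$, so $h$ and $h\omega$ must lie in $P\cap\mathcal{O}_{K}=\{0,\pm 1,\pm(1+\omega)\}$, and testing the nonzero candidates (using $\omega^{2}=d$ or $\omega^{2}=\omega+\frac{d-1}{4}$) kills them all. I verified these asserted computations (vertex structure of $P$, the lattice-point count, and $h\omega\notin P$ for $h\in\{\pm 1,\pm(1+\omega)\}$) and they hold for all seven fields; your aside restricting to $|d|\geq 2$ is unnecessary but harmless. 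So your route is slightly longer but more uniform (one family of witnesses, two parity cases) and it rigorously substantiates the kind of claim the paper states by inspection, while the paper's argument buys brevity at the cost of case-by-case witnesses and an unproved (though easily believable) enumeration of the decompositions having the extreme summit.
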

\begin{proof}
$\Leftarrow$ was shown in lemma 4.3

Let us now prove $\Rightarrow$.

Let $K$ be a quadratic imaginary number field with class number one different from $K=\mathbb{Q}(\imath )$ and $K=\mathbb{Q}(\imath\sqrt{3})$.

So $K$ is one of these number fields $\mathbb{Q}(\imath\sqrt{2})$, $\mathbb{Q}(\imath\sqrt{7})$, $\mathbb{Q}(\imath\sqrt{11})$, $\mathbb{Q}(\imath\sqrt{19})$, $\mathbb{Q}(\imath\sqrt{43})$, $\mathbb{Q}(\imath\sqrt{67})$ and $\mathbb{Q}(\imath\sqrt{163})$ and for all of these the group of units is reduced to $\{\pm 1\}$

\begin{itemize}
\item[$\bullet$]for $K=\mathbb{Q}(\imath\sqrt{2})$, $D_{K}$ is the polygon with summits $1,\imath\sqrt{2},-1,-\imath\sqrt{2}$.

Let us then note $P$ the polygon with summits $3,\imath\sqrt{2},-3,-\imath\sqrt{2}$, we have immediately $P\in\widetilde{\mathcal{C}_{\mathcal{O}_{K}}}$.

But $P\notin\mathcal{C}_{\mathcal{O}_{K}}$, indeed the only polygones in $\mathcal{C}_{\mathcal{O}_{K}}$ which have $3$ as a summit are $3D_{K}$, $D_{K}+D_{K}+D_{K}$, $D_{K}+\imath\sqrt{2}D_{K}$ but none of them have $\imath\sqrt{2}$ as a summit so none of them is equal to $P$

\item[$\bullet$]for $K=\mathbb{Q}(\imath\sqrt{7})$, $D_{K}$ is the polygon with summits $1,\frac{1+\imath\sqrt{7}}{2},-1,\frac{1-\imath\sqrt{7}}{2}$.

Let us then note $P$ the polygon with summits $2,\frac{1+\imath\sqrt{7}}{2},-2,\frac{1-\imath\sqrt{7}}{2}$, we have immediately $P\in\widetilde{\mathcal{C}_{\mathcal{O}_{K}}}$.

But $P\notin\mathcal{C}_{\mathcal{O}_{K}}$, indeed the only polygones in $\mathcal{C}_{\mathcal{O}_{K}}$ which have $2$ as a summit are $2D_{K}$, $D_{K}+D_{K}$, $\frac{1-\imath\sqrt{7}}{2}D_{K}$ but none of them have $\frac{1+\imath\sqrt{7}}{2}$ as a summit so none of them is equal to $P$

\item[$\bullet$]for the other five cases let us write $K=\mathbb{Q}(\imath\sqrt{d})$ with $d\in\{ 11,19,43,67,163 \}$, then  $D_{K}$ is the polygon with summits $1,\frac{1+\imath\sqrt{d}}{2},-1,\frac{1-\imath\sqrt{d}}{2}$.

Let us then note $P$ the polygon with summits $2,\frac{1+\imath\sqrt{d}}{2},-2,\frac{1-\imath\sqrt{d}}{2}$, we have immediately $P\in\widetilde{\mathcal{C}_{\mathcal{O}_{K}}}$.

But $P\notin\mathcal{C}_{\mathcal{O}_{K}}$, indeed the only polygones in $\mathcal{C}_{\mathcal{O}_{K}}$ which have $2$ as a summit are $2D_{K}$ and $D_{K}+D_{K}$ but none of them have $\frac{1+\imath\sqrt{d}}{2}$ as a summit so none of them is equal to $P$.
\end{itemize}
\end{proof}

\begin{rem}
Why this choice of structural sheaf? 

It is  because following the strategy of \cite{ccsitearith}, we would like now to put a structural sheaf on the topos $\widehat{\mathcal{O}_K}$ which is an idempotent  semiring, and that the points of this semiringed topos with values in something to be isomorphic to $\mathbb{A}^{f}_{K}\times\mathbb{C}/(K^{\star}(\prod\mathcal{O}_{\mathfrak{p}}^{\star}\times \{1\}))$.

 However the set $\mathcal{U}_K$ acts trivially $\mathbb{A}^{f}_{K}/(K^{\star}(\prod\mathcal{O}_{\mathfrak{p}}^{\star}\times \{1\}))$ but not on $\mathbb{C}$.
 
  This has the following consequence : let $(H,\lambda)\in ((\mathbb{A}^{f}_{K}/(\prod\mathcal{O}_{\mathfrak{p}}^{\star})\times \{1\})\times \mathbb{C})/K^{\star}$, we have $(H,\lambda)\simeq (r.H,r\lambda)$ for $r\in K^{\star}$, so for $r_{0}\in \mathcal{U}_{K}$ and so $(H,\lambda)=(H,r_{0}\lambda)$, so $\lambda$ and $r_{0}\lambda$ induce the same embedding of the fiber of the structural sheaf in $H$ into something. So $1$ and $r_{0}$ should have the same action the fiber of the structural sheaf in $H$ and more generally, $1$ and $r_{0}$ should have the same action on the structural sheaf but since the action of $1$ is the identity.
  
   We come to the conclusion that the set $\mathcal{U}_{K}$  of units of $\mathcal{O}_{K}$ should be seen as the set of \emph{symmetries} of the structural sheaf.
\end{rem}

\begin{rem}
Why this choice of $D_{K}$ for $K=\mathbb{Q}(\sqrt{d})$? An heuristic explanation could be that since $\mathcal{O}_{K}$ is a lattice in the plane, one should view it as a tiling puzzle and one of the smallest tile is the triangle $0,1,\sqrt{d}$ or $\frac{1+\sqrt{d}}{2}$ depending on $d$ (the last two elements form a base of $\mathcal{O}_{K}$ viewed as a $\mathbb{Z}$ module). Then we let the elements of $\mathcal{U_{K}}$ act on this tile, and the union of all tiles. We get this way $D_{K}$ when $K=\mathbb{Q}(\imath )$ or $K=\mathbb{Q}(\imath\sqrt{3})$ (there were enough symmetries), in the other cases in order to get $D_{K}$ we have to get the convex enveloppe of the union of all the tiles.
\end{rem}

\begin{defi}
Let $K$ be a quadratic imaginary number field with class number 1.The arithmetic site for $K$ is the datum $\left(\widehat{\mathcal{O}_{K}},\mathcal{C}_{\mathcal{O}_{K}}\right)$ where the topos $\widehat{\mathcal{O}_{K}}$ is endowed with the structure sheaf $\mathcal{C}_{\mathcal{O}_{K}}$ viewed as a semiring in the topos using the action of $\mathcal{O}_{K}$ by similitudes.
\end{defi}

\begin{theo}\label{theo:strshptone}
The stalk of the structure sheaf $\mathcal{C}_{\mathcal{O}_K}$ at the point of the topos $\widehat{\mathcal{O}_K}$ associated with the $\mathcal{O}_{K}$ module $H$ is canonically isomorphic to $\mathcal{C}_{H}:=\text{Semiring}(\{h.D_{K},h\in H\})$ the semiring generated by the polygons $h\times D_{K}$ with $h\in H$ (to which we add $\emptyset$) viewed here in this context as a semiring. 
\end{theo}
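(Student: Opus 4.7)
The plan is to identify the stalk as an explicit filtered colimit using the general theory of points of presheaf toposes (theorem VII.5.2bis of \cite{mlm}) and then to exhibit a canonical semiring isomorphism with $\mathcal{C}_H$. By Theorem \ref{theo:ptsscd}, the point of $\widehat{\mathcal{O}_K}$ associated with $H \subset K$ corresponds to the flat functor $F_H : \mathcal{O}_K \to \mathfrak{Sets}$ with $F_H(\star) = H$ and $F_H(\alpha)$ equal to multiplication by $\alpha$. The associated stalk functor is then the tensor product $\bullet \otimes_{\mathcal{O}_K} H$, so the stalk of $\mathcal{C}_{\mathcal{O}_K}$ at this point is the quotient of $\mathcal{C}_{\mathcal{O}_K} \times H$ by the relation generated by $(\alpha \cdot p,\, x) \sim (p,\, \alpha x)$, where $\alpha \cdot p$ denotes the direct similitude by $\alpha$ applied to $p$; since $F_H$ is flat this colimit is filtered.

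I would then define a map $\phi : \mathcal{C}_{\mathcal{O}_K} \times H \to \mathcal{C}_H$ by $\phi(p, x) := x \cdot p$, interpreting $x \in K \subset \mathbb{C}$ as a complex similitude. The image really lies in $\mathcal{C}_H$ since each generator $h D_K$ of $p$ (with $h \in \mathcal{O}_K$) is sent to $(xh) D_K$ with $xh \in H$, and direct similitudes commute with both $\text{Conv}(\bullet \cup \bullet)$ and the Minkowski sum. Well-definedness on the quotient is the identity $\phi(\alpha p, x) = (x\alpha) \cdot p = \phi(p, \alpha x)$, using the commutativity of multiplication in $K$. The stalk inherits a semiring structure from $\mathcal{C}_{\mathcal{O}_K}$ because the inverse image functor is left exact; concretely, to combine two classes $[p, x]$ and $[q, y]$ one first uses property (ii) of flatness to represent them with a common $H$-coordinate $z$, as $[up, z]$ and $[vq, z]$, and then applies the semiring operation to the $\mathcal{C}_{\mathcal{O}_K}$-representatives. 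A short computation then shows that $\phi$ descends to a semiring morphism $\bar{\phi}$ from the stalk to $\mathcal{C}_H$.

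Surjectivity is immediate: every generator $h D_K$ of $\mathcal{C}_H$ equals $\bar{\phi}([D_K, h])$, and $\bar{\phi}$ respects the two semiring laws. For injectivity, starting from $x p = y q$ in $\mathcal{C}_H$, I would use property (ii) of flatness to produce $u, v \in \mathcal{O}_K$ and $z \in H$ with $uz = x$ and $vz = y$, so that $[p, x] = [up, z]$ and $[q, y] = [vq, z]$; the hypothesis becomes $z(up) = z(vq)$ in $\mathcal{C}_H$. When $z \neq 0$ in $K$, which is automatic as soon as one of $x, y$ is nonzero (since $H \hookrightarrow K$), the similitude by the nonzero complex number $z$ is an injective endomorphism of the ambient semiring of planar convex sets, so $up = vq$ in $\mathcal{C}_{\mathcal{O}_K}$ and hence $[p, x] = [q, y]$.

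The main obstacle will be the careful bookkeeping of the two exceptional elements $\emptyset$ and $\{0\}$, together with the residual case $z = 0$ (forcing $x = y = 0$) in the injectivity argument. The key observations to settle it are that $[\emptyset, x]$ and $[\{0\}, x]$ each collapse to a single class in the stalk independent of $x$; that these two classes remain distinct there, since their $\bar{\phi}$-images $\emptyset$ and $\{0\}$ are distinct in $\mathcal{C}_H$; and that whenever $x = 0$ the class $[p, 0]$ equals $[\{0\}, z]$ for any $z \in H$ via $(p, 0) \sim (0 \cdot p, z) = (\{0\}, z)$ provided $p \neq \emptyset$. Once these special elements are dispatched, one obtains the canonical semiring isomorphism asserted in the theorem.
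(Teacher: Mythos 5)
Your proposal is correct and follows essentially the same route as the paper: the stalk is computed as the geometric realization $(\mathcal{C}_{\mathcal{O}_{K}}\times H)/\sim$, the comparison map is $(C,h)\mapsto hC$, the semiring structure is transported via left exactness of the inverse image functor together with common-denominator representatives obtained from property (ii) of flatness, and injectivity is proved by the same case split on whether the common element $z\in H$ vanishes. The only divergence is surjectivity, where the paper constructs an explicit elementary preimage of an arbitrary $C\in\mathcal{C}_{H}$ using the principality of $\mathcal{O}_{K}$, whereas you note that the image of your morphism is a subsemiring of $\mathcal{C}_{H}$ containing $\emptyset$ and all generators $h D_{K}$, which suffices since $\mathcal{C}_{H}$ is by definition generated by these; both arguments are valid.
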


\begin{proof}
By theorem \ref{theo:ptsscd} to the point of $\widehat{\mathcal{O}_{K}}$ associated to the $\mathcal{O}_{K}$ module $H\subset K$ corresponds to the flat functor $F_{H}:\mathcal{O}_{K}$ which associates to the only object $\star $ of the small category $\mathcal{O}_{K}$ the $\mathcal{O}_{K}$ module $H$ and the endomorphism indexed by $k$ the multiplication $F(k)$ by $k$ in $H\subset K$.

As said in \cite{ccsitearith} and shown in \cite{mlm}, the inverse image functor associated to this point is the functor which associates to any $\mathcal{O}_{K}$ equivariant set its geometric realization
$$|-|_{F_{H}}\left\{\begin{aligned} \mathcal{O}_{K}-\text{equivariant sets} &\longrightarrow & \mathfrak{Sets}\\ C  &\longmapsto & |C|_{F_{H}}:=(C\times_{\mathcal{O}_{K}}H)/\sim\end{aligned}\right.$$
where $\sim$ is the equivalence relation stating the equivalence of the couples $(C,F(k)h)\sim (kC,h)$.

Let us recall as in \cite{ccsitearith} that thanks to the property (ii) of flatness of $F_{H}$, we have $$(C,h)\sim (C',h')\Leftrightarrow\exists\hat{h}\in H,\exists k,k'\in\mathcal{O}_{K},\quad\text{such that}\quad k\hat{h}=h\quad\text{and}\quad k'\hat{h}=h'\quad\text{and}\quad kC=k'C'$$

But we would like to have a better understanding and description of the fiber.

The natural candidate we imagine to be the fiber is $\mathcal{C}_{H}:=\text{Semiring}\{hD_{K}, h\in H\}$. Let us now show that this intuition is true.

Let us consider the map $\beta:\left\{\begin{aligned} \mathcal{C}_{\mathcal{O}_{K}}\times H &\to &\mathcal{C}_{H}\\ (C,h)  &\mapsto & hC\end{aligned}\right.$

Let us show that $\beta$ is compatible with the equivalence relation $\sim$.

Let $(C,h),(C',h')\in\mathcal{C}_{\mathcal{O}_{K}}\times H$ such that $(C,h)\sim (C',h')$.

So let $\hat{h}\in H$, $k,k'\in\mathcal{O}_{K}$ such that $k\hat{h}=h$ and $k'\hat{h}=h'$ and $kC=k'C'$.

Then $\beta (C,h)=hC=k\hat{h}C=\hat{h}kC=\hat{h}k'C'=k'\hat{h}C'=h'C'=\beta (C',h')$.

So $\beta$ is compatible with the equivalence relation.

And so $\beta$ induces another application again noted $\beta$ from $|\mathcal{C}_{\mathcal{O}_{K}}|_{F_{H}}$ to $\mathcal{C}_{H}$.

Let us now show that $\beta$ is surjective.

Let $C\in\mathcal{C}_{H}$, let us note $\mathcal{S}_{C}$ the set of summits of $C$. 

As $\mathcal{S}_{C}$ is a finite set, let $q\in\mathcal{O}_{K}\backslash\{0\}$ such that $\forall s\in\mathcal{S}_{C}, qs\in\mathcal{O}_{K}$.

Then $I=q<\mathcal{S}_{C}>\subset\mathcal{O}_{K}$ is an ideal where $<\mathcal{S}_{C}>$ means the sub-$\mathcal{O}_{K}$-module included in $K$ generated by the elements of $\mathcal{S}_{C}$.

Since $\mathcal{O}_{K}$ is principal, let $d\in\mathcal{O}_{K}$ such that $I=<d>$.

So $\exists \left(a_{s}\right)_{s\in\mathcal{S}_{C}}\in\left(\mathcal{O}_{K}\right)^{\mathcal{S}_{C}},d=\sum_{s\in\mathcal{S}_{C}}a_{s}qs$, so $\frac{d}{q}\in <\mathcal{S}_{C}>\subset H$.

And so $\beta(\text{Conv}\left(\bigcup_{s\in\mathcal{S}_{C}}a_{s}D_{K},\frac{d}{q}\right)=C$, so $\beta$ is surjective.

Let us now show that $\beta$ is injective.

Let $(C,h),(C',h')\in|\mathcal{C}_{\mathcal{O}_{K}}|_{F_{H}}$ such that $\beta (C,h)=\beta (C',h')$, ie such that $hC=h'C'$.

By property (ii) of flatness of $F_{H}$, there exists $\hat{h}\in H$, $k,k'\in\mathcal{O}_{K}$ such that $h=k\hat{h}$ and $h'=h'\hat{h}$. 

So we have immediately that $k\hat{h}C=k'\hat{h}C'$.

First case $\hat{h}\neq 0$ and so $kC=k'C'$ and so by definition of $\sim$, $(C,h)\sim (C',h')$.

Second case $\hat{h}=0$, then $h=0=h'$ and we have also $0.C=0.C'$ and so by definition of $\sim$, we have $(C,0)\sim (C',0)$.

Finally $\beta$ is injective and so bijective, so $|\mathcal{C}_{\mathcal{O}_{K}}|_{F_{H}}\simeq\mathcal{C}_{H}$ so we have a good understanding of the fiber now.

Now we would like to know what is the semiring structure on the fiber induced by the semiring structure on $\mathcal{C}_{\mathcal{O}_{K}}$.

Here we follow once again \cite{ccsitearith}. The two operations $\text{Conv}(\bullet \cup \bullet )$ and $+$ of $\mathcal{C}_{\mathcal{O}_{K}}$ determine canonically maps of $\mathcal{O}_{K}$-spaces $\mathcal{C}_{\mathcal{O}_{K}}\times\mathcal{C}_{\mathcal{O}_{K}}\to\mathcal{C}_{\mathcal{O}_{K}}$.

Applying the geometric realization functor, we get that the induced operations on the fiber correspond to the induced maps $|\mathcal{C}_{\mathcal{O}_{K}}\times\mathcal{C}_{\mathcal{O}_{K}}|_{F_{H}}\to|\mathcal{C}_{\mathcal{O}_{K}}|_{F_{H}}$.

But since the geometric realization functor commutes with finite limits, we get the following identification (one could also prove it by hand) :
$$\left\{\begin{aligned} |\mathcal{C}_{\mathcal{O}_{K}}\times\mathcal{C}_{\mathcal{O}_{K}}|_{F_{H}} &= &|\mathcal{C}_{\mathcal{O}_{K}}|_{F_{H}}\times |\mathcal{C}_{\mathcal{O}_{K}}|_{F_{H}}\\ (C,C',h)  &\mapsto & (C,h)\times (C',h)\end{aligned} \right.$$

And thanks to this identification one just needs to study the induced maps $$|\mathcal{C}_{\mathcal{O}_{K}}|_{F_{H}}\times |\mathcal{C}_{\mathcal{O}_{K}}|_{F_{H}}\to|\mathcal{C}_{\mathcal{O}_{K}}|_{F_{H}}$$

However we already have :

$\forall C,C'\in\mathcal{C}_{\mathcal{O}_{K}},\forall h\in H, h\text{Conv}(C\cup C')=\text{Conv}(hC\cup hC')$ and $ h(C+C')=hC+hC'$.

So we conclude that the semiring laws on the fiber induced by the semiring laws $\text{Conv}(\bullet \cup \bullet )$ and $+$ of $\mathcal{C}_{\mathcal{O}_{K}}$ are the laws $\text{Conv}(\bullet \cup \bullet )$ and $+$ on $\mathcal{C}_{H}$.

\end{proof}

\begin{rem}
With the same notations as in the last theorem, when $K=\mathbb{Q}(\imath )$ or $K=\mathbb{Q}(\imath\sqrt{3})$, thank to the symmetries, $\mathcal{C}_{H}$ is the semiring of convex compact polygons with non empty interior and center zero and summits with affixes in $H$ and symmetric under the action of $\mathcal{U}_K$ (and with $\emptyset$ and $\{0\}$).
\end{rem}

\begin{prop}
The set of global sections $\Gamma (\widehat{\mathcal{O}_{K}},\mathcal{C}_{\mathcal{O}_{K}})$ of the structure sheaf are given by the semiring $\{\emptyset,\{0\}\}\simeq\mathbb{B}$.
\end{prop}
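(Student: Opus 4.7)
The plan is to identify global sections with fixed points of the $\mathcal{O}_K$-action, and then to exploit the absorbing behaviour of $0\in\mathcal{O}_K$ to collapse to the two trivial polygons.

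Since $\widehat{\mathcal{O}_K}$ is the topos of contravariant functors from the one-object category $\mathcal{O}_K$ to $\mathfrak{Sets}$, a presheaf is nothing but a set carrying an action of the monoid $\mathcal{O}_K$, and global sections are given by the hom from the terminal presheaf, i.e.\ by the subset of elements fixed under the whole action. For the structural sheaf $\mathcal{C}_{\mathcal{O}_K}$ the action of $k\in\mathcal{O}_K$ is the direct similitude $C\mapsto k\cdot C$ described in the second lemma of section 4. Thus I first reduce the proposition to the computation of
\[
\Gamma(\widehat{\mathcal{O}_K},\mathcal{C}_{\mathcal{O}_K})=\{C\in\mathcal{C}_{\mathcal{O}_K}\mid k\cdot C=C\text{ for every }k\in\mathcal{O}_K\}.
\]

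Next I specialise to the element $k=0$. By definition the similitude indexed by $0$ sends $\emptyset$ to $\emptyset$ and every other element of $\mathcal{C}_{\mathcal{O}_K}$ to $\{0\}$. Imposing $0\cdot C=C$ therefore forces $C\in\{\emptyset,\{0\}\}$. Conversely, for any $k\in\mathcal{O}_K$ one immediately has $k\cdot\emptyset=\emptyset$ and $k\cdot\{0\}=\{0\}$, so both candidates are genuinely invariant; no further test with the nonzero elements of $\mathcal{O}_K$ is needed. This settles the set-theoretic identification $\Gamma(\widehat{\mathcal{O}_K},\mathcal{C}_{\mathcal{O}_K})=\{\emptyset,\{0\}\}$.

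Finally I check that the induced semiring structure matches $\mathbb{B}$. Under the bijection $\emptyset\leftrightarrow 0$ and $\{0\}\leftrightarrow 1$, the convex hull operation gives $\mathrm{Conv}(\emptyset\cup\emptyset)=\emptyset$, $\mathrm{Conv}(\emptyset\cup\{0\})=\{0\}$, $\mathrm{Conv}(\{0\}\cup\{0\})=\{0\}$, which is exactly the idempotent addition of $\mathbb{B}$, while Minkowski sum gives $\emptyset+\emptyset=\emptyset$, $\emptyset+\{0\}=\emptyset$ (because $\emptyset$ is absorbing for $+$, as noted in the introduction) and $\{0\}+\{0\}=\{0\}$, which is the multiplication of $\mathbb{B}$. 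I expect no genuine obstacle here: the only point that requires a moment of care is the absorbing role of $\emptyset$ for $+$, and the fact that in a presheaf topos over a monoid the terminal-presheaf hom really is the fixed-point functor. Once the observation about $k=0$ is on the table the rest is essentially a verification.
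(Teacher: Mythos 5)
Your proposal is correct and follows essentially the same route as the paper: global sections are identified with the elements of $\mathcal{C}_{\mathcal{O}_{K}}$ fixed by the whole monoid action, and these are exactly $\emptyset$ and $\{0\}$, giving $\mathbb{B}$. Your explicit use of the arrow $k=0$ to force the collapse, and the check of the two laws on $\{\emptyset,\{0\}\}$, merely spell out details the paper leaves implicit.
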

\begin{proof}
As in \cite{ccsitearith} we recall that for a Grothendieck topos $\mathcal{T}$ the global section functor $\Gamma:\mathcal{T}\to\mathfrak{Sets}$ is given by $\Gamma (E):=\text{Hom}_{\mathcal{T}}(1,E)$ where $E$ is an object in the topos and $1$ the final object in the topos. In the special case of a topos of the form $\widehat{\mathcal{C}}$ where $\mathcal{C}$ is a small category, a global section of a contravariant functor $P:\mathcal{C}\to\mathfrak{Sets}$ is a function which assigns to each object $C$ of $\mathcal{C}$ an element $\gamma_{C}\in P(C)$ in such a way that for any morphism $f:D\to C\in\text{Hom}_{\mathcal{C}}(D,C)$ and $D$ any another object of $\mathcal{C}$ one has $P(f)\gamma_{C}=\gamma_{D}$ (as explained in \cite{mlm} Chap I.6.(9)).

When we apply this definition to our special case which is the small category $\mathcal{O}_{K}$, the global sections of sheaf $\mathcal{C}_{\mathcal{O}_{K}}$ are the elements of $\mathcal{C}_{\mathcal{O}_{K}}$ which are invariant by the action of $\mathcal{O}_{K}$, so the global sections of $\mathcal{C}_{\mathcal{O}_{K}}$ are only $\emptyset$ and $\{ 0\}$ and $\{\emptyset ,\{ 0\}\}\simeq\mathbb{B}$. So the set of global sections is isomorphic to $\mathbb{B}$.
\end{proof}

Let us now denote for $K$ an imaginary quadratic number field of class number 1 :
\begin{defi}
$\mathcal{C}_{K,\mathbb{C}}:=\text{Semiring}(\{h.D_{K},h\in\mathbb{C}\})\cup\{\emptyset\}$  the semiring generated by $\{h.D_{K},h\in\mathbb{C}\}$  (to which we add $\emptyset$ the neutral element for $\text{Conv}(\bullet\cup\bullet )$ which is absorbant for $+$).
\end{defi}

\begin{rem}
When $K=\mathbb{Q}(\imath )$ or $K=\mathbb{Q}(\imath\sqrt{3})$, due to the symmetries, $\mathcal{C}_{K,\mathbb{C}}$ is the semiring of convex compact polygons with non empty interior and center zero  and symmetric under the action of $\mathcal{U}_K$ (and with $\emptyset$ and $\{0\}$).
\end{rem}

We have the following interesting result on $\mathcal{C}_{K,\mathbb{C}}$ :

\begin{defi}
Let us denote $\text{Aut}_{\mathbb{B}}^{+}(\mathcal{C}_{K,\mathbb{C}})$ the group of direct $\mathbb{B}$-automorphisms of $\mathcal{C}_{K,\mathbb{C}}$.

Let us give the definition of direct $\mathbb{B}$-automorphisms of $\mathcal{C}_{K,\mathbb{C}}$ : an element $f$ of $\text{Aut}_{\mathbb{B}}^{+}(\mathcal{C}_{K,\mathbb{C}})$ is an application from $\mathcal{C}_{K,\mathbb{C}}$ to $\mathcal{C}_{K,\mathbb{C}}$ such that :

\begin{itemize}
\item[$\bullet$] $f$ is bijective
\item[$\bullet$] $\forall C,D\in\mathcal{C}_{K,\mathbb{C}}, f(\text{Conv}(C\cup D)=\text{Conv}(f(C)\cup f(D))$
\item[$\bullet$] $\forall C,D\in\mathcal{C}_{K,\mathbb{C}}, f(C+D)=f(C)+f(D)$
\item[$\bullet$] $\forall \mu\in\mathbb{S}^{1}, \forall C\in\mathcal{C}_{K,\mathbb{C}}, f(\mu.C)=\mu.f(C)$
\end{itemize}.
\end{defi}

Before showing the interesting result on $\text{Aut}_{\mathbb{B}}^{+}(\mathcal{C}_{K,\mathbb{C}})$, let us prove the following lemma :

\begin{lemm}\label{lemm:convlin}
We have that for any $f\in\text{Aut}_{\mathbb{B}}^{+}(\mathcal{C}_{K,\mathbb{C}})$:

\begin{itemize}
\item[$\bullet$] $f(\emptyset)=\emptyset$

\item[$\bullet$] $f(\{0\})=\{ 0\}$

\item[$\bullet$] $\forall C\in\mathcal{C}_{K,\mathbb{C}}, \forall\lambda\in \mathbb{C}, f(\lambda C)=\lambda f(C)$ 

In other words, the elements of $\text{Aut}_{\mathbb{B}}^{+}(\mathcal{C}_{K,\mathbb{C}})$ commute with complex homotheties.
\end{itemize}
\end{lemm}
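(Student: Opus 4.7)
The plan is to handle the three properties in order, exploiting first the algebraic uniqueness of the neutral elements of the two semiring operations, then $\mathbb{S}^1$-equivariance to reduce scalar multiplication to the real positive case, and finally a density/squeeze argument to climb from rationals to arbitrary positive reals.

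For $f(\emptyset)=\emptyset$, I would use the fact that $\emptyset$ is the (necessarily unique) neutral element of $\text{Conv}(\bullet\cup\bullet)$ in $\mathcal{C}_{K,\mathbb{C}}$. Since $f$ is bijective and preserves this operation, for an arbitrary $D\in\mathcal{C}_{K,\mathbb{C}}$, writing $D=f(C)$, one computes $\text{Conv}(D\cup f(\emptyset))=f(\text{Conv}(C\cup\emptyset))=f(C)=D$. By uniqueness of the neutral element we conclude $f(\emptyset)=\emptyset$. The same argument applied to the law $+$, whose neutral element is $\{0\}$, yields $f(\{0\})=\{0\}$.

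For the commutation with complex homotheties, I would decompose $\lambda=r\mu$ with $r=|\lambda|\geq 0$ and (when $\lambda\neq 0$) $\mu=\lambda/|\lambda|\in\mathbb{S}^1$. Since $f$ is $\mathbb{S}^1$-equivariant by hypothesis and since $\lambda=0$ is covered by the previous step, it suffices to prove $f(rC)=rf(C)$ for all $r>0$ and all $C\in\mathcal{C}_{K,\mathbb{C}}$ with non empty interior. For a positive integer $n$, convexity of $C$ together with $0\in C$ (the center is $0$) gives $nC=C+\cdots+C$ ($n$ terms), so additivity of $f$ yields $f(nC)=nf(C)$. For $r=1/q$ with $q$ a positive integer, writing $f(C)=f(q\cdot(C/q))=qf(C/q)$ and using the injectivity of scaling by $q$ on compact convex bodies, one gets $f(C/q)=f(C)/q$. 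Composing, $f((p/q)C)=(p/q)f(C)$ for every positive rational.

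The hardest step, and the main obstacle, is the extension to irrational $r>0$, where no a priori continuity of $f$ is available. The idea is to use the order $A\leq B\Longleftrightarrow \text{Conv}(A\cup B)=B$ on $\mathcal{C}_{K,\mathbb{C}}$, which is exactly set inclusion and is preserved by $f$. For rationals $s<r<t$, $sC\subseteq rC\subseteq tC$ (because $0\in C$), whence $sf(C)\subseteq f(rC)\subseteq tf(C)$. Since $f(C)$ is a compact convex body with $0$ in its interior, its Minkowski functional $\mu_{f(C)}$ satisfies $\bigcap_{t>r,\,t\in\mathbb{Q}}tf(C)=rf(C)$ and $\overline{\bigcup_{s<r,\,s\in\mathbb{Q}}sf(C)}=rf(C)$. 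As $f(rC)$ is closed, the two inclusions squeeze to $f(rC)=rf(C)$, concluding the proof.
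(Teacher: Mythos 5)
Your proof is correct and follows essentially the same route as the paper: handle $\emptyset$ and $\{0\}$ first, then get $f(nC)=nf(C)$ from additivity, pass to positive rationals, extend to positive reals by a monotonicity/approximation argument, and finish with the $\mathbb{S}^{1}$-equivariance from the definition of $\text{Aut}_{\mathbb{B}}^{+}(\mathcal{C}_{K,\mathbb{C}})$. The only differences are cosmetic: you pin down $f(\emptyset)$ and $f(\{0\})$ by uniqueness of the neutral elements of the two laws (the paper uses bijectivity plus preservation of inclusion), and your two-sided rational squeeze, using closedness of $f(rC)$ and the identities $\bigcap_{t>r}tf(C)=rf(C)$, $\overline{\bigcup_{s<r}sf(C)}=rf(C)$, makes explicit what the paper phrases as preservation of least upper bounds along the chain $\{\mu C,\ \mu\in\mathbb{R}^{+}\}$.
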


\begin{proof}
Let $f\in\text{Aut}_{\mathbb{B}}^{+}(\mathcal{C}_{K,\mathbb{C}})$.

We can first deduce, from the definition of $f$, that $\forall C,D\in\mathcal{C}_{K,\mathbb{C}}, C\subset D\Rightarrow f(C)\subset f(D)$.

\begin{itemize}
\item[$\bullet$] Since $f$ is bijective, let $E\in\mathcal{C}_{K,\mathbb{C}}$ be the only element of $\mathcal{C}_{K,\mathbb{C}}$ such that $f(E)=\emptyset$.

Since $\emptyset\subset E$, we have that $f(\emptyset)\subset f(E)=\emptyset$. So $f(\emptyset)=\emptyset$.

Therefore we have in fact that $E=\emptyset$, so in other otherwords the only element of $\mathcal{C}_{K,\mathbb{C}}$ whose image by $f$ is $\emptyset$ is $\emptyset$.

\item[$\bullet$] Since $f$ is bijective and since $f(\emptyset )=\emptyset$, let $Z\in\mathcal{C}_{K,\mathbb{C}}\backslash\{\emptyset\}$ be the only element of $\mathcal{C}_{K,\mathbb{C}}$ such that $f(Z)=\{0\}$.

But $\{0\}\subset Z$ so $f(\{0\})\subset\{0\}$. But $f(\{0\})\neq\emptyset$ since $f$ is bijective and $f(\emptyset)=\emptyset$ so $f(\{0\})=\{0\}$.

Therefore $Z=\{0\}$ and so $\{0\}$ is the only element of $\mathcal{C}_{K,\mathbb{C}}$ whose image by $f$ is $\{0\}$.

\item[$\bullet$] Let $f\in\text{Aut}_{\mathbb{B}}^{+}(\mathcal{C}_{K,\mathbb{C}})$.

Then using the definition of $\text{Aut}_{\mathbb{B}}^{+}(\mathcal{C}_{K,\mathbb{C}})$, by induction, we can show that $$\forall C\in\mathcal{C}_{K,\mathbb{C}}, \forall\lambda\in \mathbb{N}, f(\lambda C)=\lambda f(C)$$

Since $\{\pm 1\}\subset\mathcal{U}_{K}$ and since the elements of $\mathcal{C}_{K,\mathbb{C}}$ are symmetric by the action of the elements of $\mathcal{U}_{K}$, we get that $$\forall C\in\mathcal{C}_{K,\mathbb{C}}, \forall\lambda\in \mathbb{Z}, f(\lambda C)=\lambda f(C)$$

Then classically if we take $C\in\mathcal{C}_{K,\mathbb{C}}$ and $\lambda\in\mathbb{Q}^{\star}$, we take $p,q\in\mathbb{Z}$ prime to each other such that $\lambda=\frac{p}{q}$,

Then $q.f(\lambda.C)=f(q\lambda.C)=f(p.C)=p.f(C)$, so $f(\lambda.C)=\lambda.C$ so finally we have that $$\forall C\in\mathcal{C}_{K,\mathbb{C}}, \forall\lambda\in \mathbb{Q}, f(\lambda C)=\lambda f(C)$$

Let $C\in\mathcal{C}$ and $\lambda\in\mathbb{R}^{+}$.

Let us denote $\mathcal{D}_{C}^{+}:=\{\mu.C,\mu\in\mathbb{R}^{+}\}$. Equipped with the inclusion relation $\subset$, $\mathcal{D}_{C}^{+}$ is a totally ordered set.

Since $\mathbb{R}^{+}$ has the least upper bound property and since $\forall \mu,\mu'\in\mathbb{R}^{+}, \mu.C\subset\mu'.C\Leftrightarrow \mu\leq\mu'$, $\mathcal{D}_{C}^{+}$ has the least upper bound property too, for an non empty strict subset $\mathcal{A}$ of $\mathcal{D}_{C}^{+}$, we will denote its least upper bound $\text{sup}^{\subset} (\mathcal{A})$.

But by its definition, $f$ is a non decreasing map for the order $\subset$. Moreover $f$ is bijective, so for every non empty strict subset $\mathcal{A}$ of $\mathcal{D}_{C}^{+}$, $f(\text{sup}^{\subset} (\mathcal{A}))=\text{sup}^{\subset}(f(\mathcal{A})$.

But we know that $\lambda=\text{sup}^{\leq }\{r\in\mathbb{Q}/r<\lambda\}$, so we have $f(\text{sup}^{\subset}\{r.C/r\in\mathbb{Q}\wedge r<\lambda\})=\text{sup}^{\subset}\{r.f(C)/r\in\mathbb{Q}\wedge r<\lambda\}$.

But we easily have that $f(\text{sup}^{\subset}\{r.C/r\in\mathbb{Q}\wedge r<\lambda\})
=f(\lambda.C)$ and that $\text{sup}^{\subset}\{r.f(C)/r\in\mathbb{Q}\wedge r<\lambda\}=\text{sup}^{\leq}\{r\in\mathbb{Q}/r<\lambda\}.C=\lambda.C$.

So we have that $f(\lambda.C)=\lambda.C$.

And so we have proved that $$\forall C\in\mathcal{C}_{K,\mathbb{C}}, \forall\lambda\in \mathbb{R}^{+}, f(\lambda C)=\lambda f(C)$$

And finally since $\forall \mu\in\mathbb{S}^{1}, \forall C\in\mathcal{C}_{K,\mathbb{C}}, f(\mu.C)=\mu.f(C)$, we can conclude that $$\forall C\in\mathcal{C}_{K,\mathbb{C}}, \forall\lambda\in \mathbb{C}, f(\lambda C)=\lambda f(C)$$
\end{itemize}.
\end{proof}

We can now state the interesting result on $\text{Aut}_{\mathbb{B}}^{+}(\mathcal{C}_{K,\mathbb{C}})$ :

\begin{prop}
We have that $\text{Aut}_{\mathbb{B}}^{+}(\mathcal{C}_{K,\mathbb{C}})=\mathbb{C}^{\star}/\mathcal{U}_{K}$.
\end{prop}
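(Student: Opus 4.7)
The plan is to build the natural homomorphism $\Phi:\mathbb{C}^{\star}\to\text{Aut}_{\mathbb{B}}^{+}(\mathcal{C}_{K,\mathbb{C}})$ sending $\lambda$ to the multiplication map $m_{\lambda}:C\mapsto\lambda C$, and to show it descends to an isomorphism from $\mathbb{C}^{\star}/\mathcal{U}_{K}$. It is routine to check that $m_{\lambda}$ is a direct $\mathbb{B}$-automorphism (bijective with inverse $m_{\lambda^{-1}}$, compatible with $\text{Conv}(\bullet\cup\bullet)$ and Minkowski $+$ by distributivity of complex multiplication, commuting with the $\mathbb{S}^{1}$-action since $\mathbb{C}^{\star}$ is abelian) and that $\Phi$ is a group homomorphism. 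To compute the kernel, $m_{\lambda}=\mathrm{id}$ forces $\lambda D_{K}=D_{K}$; a short case analysis pins down $\lambda\in\mathcal{U}_{K}$. In the symmetric cases $K\in\{\mathbb{Q}(\imath),\mathbb{Q}(\imath\sqrt{3})\}$ the vertex set of $D_{K}$ is exactly $\mathcal{U}_{K}$, so $\lambda$ must permute $\mathcal{U}_{K}$ while fixing $0$. In the generic cases $\mathcal{U}_{K}=\{\pm 1\}$ the vertices of $D_{K}$ are $\pm 1,\pm e$, and the constraints $\lambda\cdot 1\in\{\pm 1,\pm e\}$ and $\lambda\cdot e\in\{\pm 1,\pm e\}$ combined with $e^{2}\notin\{\pm 1,\pm e\}$ eliminate $\lambda=\pm e$. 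Conversely, every $\lambda\in\mathcal{U}_{K}$ satisfies $\lambda C=C$ for all $C\in\mathcal{C}_{K,\mathbb{C}}$ by the built-in $\mathcal{U}_{K}$-invariance of the elements of $\mathcal{C}_{K,\mathbb{C}}$, so $\ker\Phi=\mathcal{U}_{K}$ and $\Phi$ descends to an injection $\mathbb{C}^{\star}/\mathcal{U}_{K}\hookrightarrow\text{Aut}_{\mathbb{B}}^{+}(\mathcal{C}_{K,\mathbb{C}})$.

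\textbf{Reduction of surjectivity to a single polygon.} For surjectivity, fix $f\in\text{Aut}_{\mathbb{B}}^{+}(\mathcal{C}_{K,\mathbb{C}})$ and set $P:=f(D_{K})\in\mathcal{C}_{K,\mathbb{C}}$. By Lemma \ref{lemm:convlin}, $f$ commutes with multiplication by every complex scalar, so $f(hD_{K})=hP$ for all $h\in\mathbb{C}$. Since $f$ preserves both semiring operations and $\mathcal{C}_{K,\mathbb{C}}$ is by definition generated as a semiring by $\{hD_{K}:h\in\mathbb{C}\}$ together with $\emptyset$, the automorphism $f$ is entirely determined by $P$. It therefore suffices to prove that $P=\lambda D_{K}$ for some $\lambda\in\mathbb{C}^{\star}$, since then $f$ and $\Phi(\lambda)=m_{\lambda}$ agree on a generating family and hence everywhere.

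\textbf{Semiring characterization of the orbit $\mathbb{C}^{\star}\cdot D_{K}$.} To force $P$ into this orbit I would isolate a semiring-theoretic property preserved by $f$. Say that $C\in\mathcal{C}_{K,\mathbb{C}}\setminus\{\emptyset,\{0\}\}$ is \emph{$\text{Conv}$-join-irreducible} if $C=\text{Conv}(A\cup B)$ with $A,B\in\mathcal{C}_{K,\mathbb{C}}$ forces $A=C$ or $B=C$; any bijection preserving $\text{Conv}(\bullet\cup\bullet)$ preserves this property. The key lemma I would need is that the $\text{Conv}$-join-irreducibles are exactly the $hD_{K}$ for $h\in\mathbb{C}^{\star}$. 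For the forward direction, each vertex of $hD_{K}$ is an extreme point of $\text{Conv}(A\cup B)$, hence of $A$ or of $B$; $\mathcal{U}_{K}$-invariance of $A,B$ promotes this to the entire $\mathcal{U}_{K}$-orbit of the vertex, so at least one $\mathcal{U}_{K}$-orbit of vertices of $hD_{K}$ lies wholly in (say) $A$. In the symmetric cases the vertex set of $hD_{K}$ is a single $\mathcal{U}_{K}$-orbit, whence $A\supseteq hD_{K}$ and thus $A=hD_{K}$. For the converse direction, one exhibits any other $C$ as a non-trivial $\text{Conv}$-join of two strictly smaller elements by grouping the $\mathcal{U}_{K}$-orbits of vertices of $C$ into two pieces and realising each piece as a semiring element of $\mathcal{C}_{K,\mathbb{C}}$ (typically as a $\text{Conv}$ of rescaled generators $h'D_{K}$ chosen to carry those vertices as extreme points). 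Once the key lemma is proved, $P=f(D_{K})$ must be $\text{Conv}$-join-irreducible, hence $P=\lambda D_{K}$, closing the argument.

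\textbf{Main obstacle.} The principal difficulty is the generic case $\mathcal{U}_{K}=\{\pm 1\}$ of the forward direction of the key lemma. Here the four vertices of $D_{K}=\{\pm 1,\pm e\}$ split into two distinct $\{\pm 1\}$-orbits, and the orbit argument does not rule out a splitting decomposition $D_{K}=\text{Conv}(A\cup B)$ with $\pm 1$ vertices of $A\subsetneq D_{K}$ and $\pm e$ vertices of $B\subsetneq D_{K}$. The natural candidate $A=e^{-1}D_{K}\in\mathcal{C}_{K,\mathbb{C}}$ has $\pm 1$ among its vertices and the other two vertices $\pm e^{-1}$ strictly inside $D_{K}$, so $A$ genuinely exists, and the obstruction must come from ruling out a companion $B\subsetneq D_{K}$ in $\mathcal{C}_{K,\mathbb{C}}$ carrying $\pm e$ as vertices. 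I would attack this through support functions: any such $B$ has $h_{B}$ realised as a tropical polynomial (max of sums) in the piecewise-linear support functions $h_{h'D_{K}}$, whose edges are constrained to the directions $h'(e\pm 1)$, and the conjunction $h_{B}\leq h_{D_{K}}$ together with $h_{B}(\imath e/|e|)=h_{D_{K}}(\imath e/|e|)$ should, by a slope analysis of tropical combinations of the generators, force $h_{B}=h_{D_{K}}$. Carrying this support-function analysis through uniformly over the seven generic imaginary quadratic fields of class number $1$, together with verifying the converse direction of the key lemma in the same cases, is where I expect the bulk of the technical work to sit.
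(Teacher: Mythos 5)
Your overall route coincides with the paper's: reduce, via Lemma \ref{lemm:convlin}, to determining $P=f(D_{K})$, and pin $P$ down by a semiring-theoretic indecomposability property of the sets $hD_{K}$ under $\text{Conv}(\bullet\cup\bullet)$. The genuine gap is exactly the point you label the ``main obstacle'': you only prove join-irreducibility of the $hD_{K}$ in the two symmetric cases $K=\mathbb{Q}(\imath)$ and $K=\mathbb{Q}(\imath\sqrt{3})$, and for the seven fields with $\mathcal{U}_{K}=\{\pm 1\}$ you offer a programme (support functions, tropical slope analysis) that is not carried out. Since those seven fields are most of the cases the proposition covers, surjectivity of your map $\Phi$ is left unproved there.

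The missing idea, which is how the paper argues uniformly in all nine cases, is to look not at $\mathcal{U}_{K}$-orbits of vertices but at an extremal point $P$ of $\lambda D_{K}$ of \emph{maximal modulus}. If $\lambda D_{K}=\text{Conv}(A\cup B)$ with $A,B\in\mathcal{C}_{K,\mathbb{C}}$, then $P$ is an extreme point of $A$ or of $B$, say of $A$; and an element of $\mathcal{C}_{K,\mathbb{C}}$ (i.e.\ of the semiring generated by the $hD_{K}$) which is contained in $\lambda D_{K}$ and admits $P$ as an extreme point must equal $\lambda D_{K}$. In the generic cases the maximal-modulus vertices are $\pm\lambda e$ with $e=\imath\sqrt{d}$ or $\tfrac{1+\imath\sqrt{d}}{2}$, $|e|>1$, so this addresses precisely the side you could not control: your candidate piece $e^{-1}D_{K}$ carrying $\pm 1$ is harmless, because the piece forced to be all of $\lambda D_{K}$ is the one carrying $\pm\lambda e$, and no proper element of $\mathcal{C}_{K,\mathbb{C}}$ inside $\lambda D_{K}$ has $\lambda e$ as a vertex. (To be fair, the paper's justification of that last containment statement is itself terse, and, like you, it does not spell out the converse direction that every join-irreducible element is some $hD_{K}$, which is also needed to conclude $f(D_{K})=\lambda D_{K}$; but it does supply a uniform argument exactly where your proposal stops.)
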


\begin{proof}

Let $f\in\text{Aut}_{\mathbb{B}}^{+}(\mathcal{C}_{K,\mathbb{C}})$.

We can first recall, from the definition of $f$, that $\forall C,D\in\mathcal{C}_{K,\mathbb{C}}, C\subset D\Rightarrow f(C)\subset f(D)$.

The convex sets $\lambda.D_{K}$ for $\lambda\in\mathbb{C}^{\star}$ are caracterised abstractly by the property that they cannot be decomposed non trivially in the semiring $\mathcal{C}_{K,\mathbb{C}}$. More precisely for every $\lambda\in\mathbb{C}^{\star}$, if $A,B\in\mathcal{C}_{K,\mathbb{C}}$ are such that $\lambda.D_{K}=\text{Conv}(A\cup B)$, then either $A$ or $B$ is equal to $\lambda.D_{K}$, let's say without loss of generality that it is $A$, and then $B\subset A$. 

Indeed let $\lambda\in\mathbb{C}^{\star}$ and let $A,B\in\mathcal{C}_{K,\mathbb{C}}$ such that $\lambda.D_{K}=\text{Conv}(A\cup B)$, then the set of extremal points of $\lambda.D_{K}$ is included in the union of the set of extremal points of $A$ and the sets of extremal points of $B$. Let $P$ be an extremal point of $\lambda.D_{K}$ of greater module, then $P$ is either an extremal point of $A$ or an extremal point of $B$. Without loss of generality, let's say that $P$ is an extremal point of $A$. Then because the definition of $\mathcal{C}_{K,\mathbb{C}}$ and because $A\subset\lambda.D_{K}$, we get that $A=\lambda.D_{K}$. And so because $\lambda.D_{K}=\text{Conv}(A\cup B)$, we get that $B\subset A$.

This abstract property of the $\lambda.D_{K}$ is preserved by any automorphism of $\mathcal{C}_{K,\mathbb{C}}$, so $f$ sends the sets of the form $\lambda.D_{K}$ to other sets of the form $\lambda.D_{K}$. So let $\lambda\in\mathbb{C}^{\star}$ such that $f(D_{K})=\lambda.D_{K}$. And since the elements of $\mathcal{C}_{K,\mathbb{C}}$ are symmetric with respect to $\mathcal{U}_{K}$, such a $\lambda$ is unique only up to multiplication by an element of $\mathcal{U}_{K}$.

Since $\mathcal{C}_{K,\mathbb{C}}=\text{Semiring}(\{h.D_{K},h\in\mathbb{C}\})\cup\{\emptyset\}$, $f$ is only determined by the image it gives to $D_{K}$.

Therefore $\text{Aut}_{\mathbb{B}}^{+}(\mathcal{C}_{K,\mathbb{C}})=\mathbb{C}^{\star}/\mathcal{U}_{K}$.
%We can first notice that by the same arguments as in the proof of lemma \ref{lemm:Csimple}, $\mathcal{C}_{K,\mathbb{C}}=Semiring(\{\lambda.D_{K}, \lambda\in\mathbb{C}\}$ (and even better in the sense that every non trivial element of $\mathcal{C}_{K,\mathbb{C}}$ is convex hull of the reunion of polygons of the form $\lambda.D_{K}$ with $\lambda\in\mathbb{C}$).

\end{proof}

\begin{defi}
A point of $\left(\widehat{\mathcal{O}_{K}},\mathcal{C}_{\mathcal{O}_{K}}\right)$ over $\mathcal{C}_{K,\mathbb{C}}$ is a pair $(p,f)$ given by a point $p$ of the topos $\widehat{\mathcal{O}_{K}}$ and a direct similitude $f$ (so it preserves the orientation and the number of summits) from $\mathbb{C}$ to $\mathbb{C}$ which induces a morphism $f^{\sharp}_{p}:\mathcal{C}_{\mathcal{O}_{K},p}\to\mathcal{C}_{K,\mathbb{C}}$ of semirings from the stalk of $\mathcal{C}_{\mathcal{O}_{K}}$ at the point $p$ into $\mathcal{C}_{K,\mathbb{C}}$.
\end{defi}

\begin{lemm}\label{lemm:bijmodad}
We follow the notations of theorem \ref{theo:strshptone}. Let us denote $\mathcal{S}_{mod}$ the set of sub-semirings of $\mathcal{C}_{K,\mathbb{C}}$ of the form $\text{Semiring}\{hD_{K},h\in H\}$ where $H$ is a sub-$\mathcal{O}_{K}$ module of $K$.

Let us note now $\Phi$ the map from $\left(\mathbb{A}^{f}_{K}/\widetilde{\mathcal{O}_{K}}^{\star}\right)\times\left(\mathbb{C}/\mathcal{U}_{K}\right)$ to $\mathcal{S}_{mod}$ defined by

$$\Phi\left\{\begin{aligned} \left(\mathbb{A}^{f}_{K}/\widetilde{\mathcal{O}_{K}}^{\star}\right)\times\left(\mathbb{C}/\mathcal{U}_{K}\right) &\to & \mathcal{S}_{mod}\\ (a,\lambda)  &\mapsto & \Phi (a,\lambda ):=\text{Semiring}\{h\lambda D_{K}, h\in H_{a}\}.\end{aligned}\right.$$

Then $\Phi$ induces a bijection between the quotient of $\left(\mathbb{A}^{f}_{K}/\widetilde{\mathcal{O}_{K}}^{\star}\right)\times\left(\mathbb{C}/\mathcal{U}_{K}\right)$ by the diagonal action of $K^{\star}$ and the set $\mathcal{S}_{mod}$.
\end{lemm}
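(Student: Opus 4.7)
The strategy is the standard one for such bijections: verify that $\Phi$ descends to the quotient by the diagonal $K^{\star}$-action, establish surjectivity using Theorem \ref{theo:adlfst}, and prove injectivity on the quotient by intrinsically recovering the sub-$\mathcal{O}_K$-module from the semiring structure.

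\textbf{Well-definedness and surjectivity.} First I would fix $k\in K^{\star}$ and compute $H_{ka}$ directly from the definition in Theorem \ref{theo:adlfst}: $H_{ka}=\{q\in K\mid kaq\in\widehat{\mathfrak{D}_{\mathcal{O}_K}}\}=k^{-1}H_a$. Substituting in the definition of $\Phi$, one gets $\{hk\lambda D_K,\; h\in H_{ka}\}=\{h'\lambda D_K,\; h'\in H_a\}$, so $\Phi(ka,k\lambda)=\Phi(a,\lambda)$ and $\Phi$ factors through the quotient by the diagonal $K^{\star}$-action (the second factor is well defined as $\mathcal{U}_K\subset K^{\star}$). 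For surjectivity, any element $S\in\mathcal{S}_{mod}$ is by definition of the form $\text{Semiring}\{hD_K,\;h\in H\}$ for some sub-$\mathcal{O}_K$-module $H$ of $K$; by Theorem \ref{theo:adlfst} we have $H=H_a$ for a suitable $a\in\mathbb{A}^f_K/\widetilde{\mathcal{O}_K}^{\star}$, and $\Phi(a,1)=S$.

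\textbf{Injectivity.} Suppose $\Phi(a,\lambda)=\Phi(a',\lambda')$. The key step is to identify intrinsically, from the semiring structure of $\Phi(a,\lambda)$, the set of its ``atoms'', that is, the nonempty, nonzero polygons $P$ which cannot be written non-trivially as $\text{Conv}(A\cup B)$ with $A,B\in\Phi(a,\lambda)$ and neither included in the other. The argument already used in the proof of the proposition computing $\text{Aut}_{\mathbb{B}}^{+}(\mathcal{C}_{K,\mathbb{C}})$ — namely that an extremal point of maximal modulus forces one of $A,B$ to coincide with $P$ — shows that the atoms of $\Phi(a,\lambda)$ are exactly the $\mu D_K$ for $\mu\in\lambda H_a\setminus\{0\}$. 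The equality of the two semirings thus yields $\lambda H_a\cdot\mathcal{U}_K=\lambda'H_{a'}\cdot\mathcal{U}_K$ as subsets of $\mathbb{C}$, and since $H_a$ and $H_{a'}$ are $\mathcal{U}_K$-stable, this simplifies to $\lambda H_a=\lambda'H_{a'}$ in $\mathbb{C}$. If both are $\{0\}$ the conclusion is immediate. Otherwise, fixing a nonzero $h\in H_a$ and writing $\lambda h=\lambda'h'$ for some $h'\in H_{a'}$, one gets $k:=\lambda/\lambda'=h'/h\in K^{\star}$, whence $H_{a'}=kH_a=H_{k^{-1}a}$; by the uniqueness part of Theorem \ref{theo:adlfst} this forces $a'=k^{-1}a$ in $\mathbb{A}^f_K/\widetilde{\mathcal{O}_K}^{\star}$. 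Setting $\tilde k=k^{-1}$ one obtains $(a',\lambda')=(\tilde k a,\tilde k\lambda)$, which is exactly the diagonal $K^{\star}$-equivalence.

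\textbf{Main obstacle.} The only delicate point is the intrinsic recovery of the atoms $\mu D_K$ from the abstract semiring $\Phi(a,\lambda)$; once they are identified, passing from the set of atoms to $\lambda H_a$ and then back to the adelic parameter via Theorem \ref{theo:adlfst} is straightforward. Since this atom characterization was already established in the proof of the proposition on $\text{Aut}_{\mathbb{B}}^{+}(\mathcal{C}_{K,\mathbb{C}})$, I would invoke it directly rather than repeating the extremal-point argument.
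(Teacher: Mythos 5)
Your well-definedness, surjectivity, and endgame (rank-one modules give $k=\lambda/\lambda'\in K^{\star}$, then the uniqueness part of theorem \ref{theo:adlfst}) are the same as in the paper. The genuine gap is in your injectivity step, which rests on the claim that the $\text{Conv}(\bullet\cup\bullet)$-indecomposable elements (``atoms'') of $\Phi(a,\lambda)$ are \emph{exactly} the $\mu D_{K}$ with $\mu\in\lambda H_{a}\setminus\{0\}$. The extremal-point argument you cite from the proposition on $\text{Aut}^{+}_{\mathbb{B}}(\mathcal{C}_{K,\mathbb{C}})$ only gives one inclusion (each $\mu D_{K}$ is indecomposable); it neither shows that an atom must be of this shape, nor that $\mu D_{K}\in\Phi(a,\lambda)$ forces $\mu\in\lambda H_{a}$, and the first of these statements is in fact false for the seven fields with $\mathcal{U}_{K}=\{\pm1\}$. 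Concretely, take $K=\mathbb{Q}(\imath\sqrt{2})$, $s=\imath\sqrt{2}$, $\lambda=1$, $H=\mathcal{O}_{K}$, and $P=D_{K}+(1+s)D_{K}$: it is an octagon with vertices $\pm(2+s),\pm(1+2s),\pm(2s-2),\pm(s-3)$, so not of the form $\mu D_{K}$. By distributivity every element of $\text{Semiring}\{hD_{K},h\in\mathcal{O}_{K}\}$ is a $\text{Conv}$-union of Minkowski sums $\sum_{j}h_{j}D_{K}$, and its extreme points are extreme points of such sums; writing $h_{j}=a_{j}+b_{j}s$, the support function in the direction $(0,1)$ forces $\sum_{j}\max(|a_{j}|,|b_{j}|)\leq 2$ for any sum contained in $P$, and checking the few remaining candidates ($(1+2s)D_{K}$, $sD_{K}+(1+s)D_{K}$, $2(1-s)D_{K}$, $D_{K}+(1+s)D_{K}$) shows the only one contained in $P$ with $1+2s$ as a vertex is $P$ itself. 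Hence any element of the semiring contained in $P$ having the vertex $1+2s$ equals $P$, so $P$ is an atom of $\Phi(a,\lambda)$: equality of atom sets does not give $\lambda H_{a}=\lambda'H_{a'}$ as you wrote (your characterization does hold for $\mathbb{Q}(\imath)$ and $\mathbb{Q}(\imath\sqrt{3})$, where every element is the convex hull of the $\mathcal{U}_{K}$-orbits of its vertices, but the lemma is stated for all nine fields).

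The step can be repaired without atoms, and the repair is essentially the paper's own argument. Since $\Phi(a,\lambda)=\Phi(a',\lambda')$ as subsets of $\mathcal{C}_{K,\mathbb{C}}$, compare directly the sets $\{\mu\in\mathbb{C}\,:\,\mu D_{K}\in\Phi(a,\lambda)\}$: the generators give the inclusion $\lambda H_{a}\setminus\{0\}$ into this set, and an induction on the two operations shows every vertex of every element of $\Phi(a,\lambda)$ lies in $\lambda H_{a}$ (the summits $\pm1,\pm s_{K}$ of $D_{K}$ lie in $\mathcal{O}_{K}$ and $H_{a}$ is an $\mathcal{O}_{K}$-module), so the set equals $\lambda H_{a}\setminus\{0\}$ and one recovers $\lambda H_{a}=\lambda' H_{a'}$. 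The paper proceeds by the same mechanism but phrased additively: each generator $h\lambda D_{K}$ of one semiring, being an element of the other, has its summits in $\mu H_{b}\cdot1+\mu H_{b}\cdot(-1)+\mu H_{b}\cdot s_{K}+\mu H_{b}\cdot(-s_{K})=\mu H_{b}$, giving $\lambda H_{a}\subset\mu H_{b}$ and, by symmetry, equality; after that point your conclusion and the paper's coincide.
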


\begin{proof}
Let us first show that the map $\Phi$ is invariant under the diagonal action of $K^{\star}$.

Let $k\in K^{\star}$ and $(a,\lambda )\in \left(\mathbb{A}^{f}_{K}/\widetilde{\mathcal{O}_{K}}^{\star}\right)\times\left(\mathbb{C}/\mathcal{U}_{K}\right)$

Then since $H_{ka}=k^{-1}H_{a}$, we have $$\begin{aligned} \Phi (ka,k\lambda ) & = & \text{Semiring}\{ hk\lambda D_{K}, h\in H_{ka}\} \\  & = & \text{Semiring}\{ h\lambda D_{K}, h\in H_{a}\} \\  & = & \Phi (a,\lambda )\end{aligned}$$

We also have immediately thanks to theorem \ref{theo:ptsscd} that $\Phi$ is surjective.

Let us now show that $\Phi$ is injective.

Let $(a,\lambda ), (b,\mu )\in\left(\mathbb{A}^{f}_{K}/\widetilde{\mathcal{O}_{K}}^{\star}\right)\times\left(\mathbb{C}/\mathcal{U}_{K}\right)$ such that $\Phi (a,\lambda )=\Phi (b,\mu )$.

So $\text{Semiring}\{h\lambda D_{K}, h\in H_{a}\}=\text{Semiring}\{\tilde{h}\mu D_{K}, \tilde{h}\in H_{b}\}$.

And since the summits of $D_{K}$ are $1, -1, s_{K}, -s_{K}$ where $s_{K}=\imath\sqrt{d}$ when $K=\mathbb{Q}(\sqrt{-d})$ with $-d\equiv 1,3(4)$ and $s_{K}=\frac{1+\imath\sqrt{d}}{2}$ when $K=\mathbb{Q}(\sqrt{-d})$ with $-d\equiv 2(4)$

We get $1.\lambda .H_{a}\subset 1\mu H_{b}+(-1).\mu .H_{b}+s_{K}\mu H_{b}+(-s_{K}).\mu .H_{b}$,

And $s_{K}.\lambda .H_{a}\subset 1\mu H_{b}+(-1).\mu .H_{b}+s_{K}\mu H_{b}+(-s_{K}).\mu .H_{b}$,

And $(-1).\lambda .H_{a}\subset 1\mu H_{b}+(-1).\mu .H_{b}+s_{K}\mu H_{b}+(-s_{K}).\mu .H_{b}$,

And $(-s_{K}).\lambda .H_{a}\subset1\mu H_{b}+(-1).\mu .H_{b}+s_{K}\mu H_{b}+(-s_{K}).\mu .H_{b}$.

So $1\lambda H_{a}+(-1).\lambda .H_{a}+s_{K}\lambda H_{a}+(-s_{K}).\lambda .H_{a}\subset 1\mu H_{b}+(-1).\mu .H_{b}+s_{K}\mu H_{b}+(-s_{K}).\mu .H_{b}$.

And by the same process we get the other way around so we have the equality $$\lambda (1 H_{a}+(-1).H_{a}+s_{K}.H_{a}+(-s_{K}).H_{a})=\mu (1.H_{b}+(-1).H_{b}+s_{K}.H_{b}+(-s_{K}).H_{b})$$

But $1,-1,s_{K},-s_{K}\in\mathcal{O}_{K}$ and $H_{a},H_{b}$ are $\mathcal{O}_{K}$-modules, so $\lambda H_{a}=\mu H_{b}$.

Since $H_{a}$ and $H_{b}$ are $\mathcal{O}_{K}$-modules of rank 1, there exists $k\in K^{\star}$ such that $\lambda =k\mu$.

So then one gets $\lambda H_{a}=\lambda kH_{b}$.

So $H_{a}=kH_{b}$, but $kH_{b}=H_{k^{-1}b}$.

So by theorem \ref{theo:ptsscd} we have $a=k^{-1} b$ in $\mathbb{A}^{f}_{K}/\widetilde{\mathcal{O}_{K}}^{\star}$ and then we get $\mu=k\lambda$ and $b=ka$ thus the injectivity of $\Phi$. The lemma is proved.

\end{proof}

\begin{theo}\label{theo:ptsckc}
The set of points of the arithmetic site $\left(\widehat{\mathcal{O}_{K}},\mathcal{C}_{\mathcal{O}_{K}}\right)$ over $\mathcal{C}_{K,\mathbb{C}}$ is naturally identified to $\left(\mathbb{A}^{f}_{K}\times\mathbb{C}/\mathcal{U}_{K}\right)/\left(K^{\star}\times\left(\prod_{\mathfrak{p}\enskip \text{prime}}\mathcal{O}^{\star}_{\mathfrak{p}}\times{1}\right)\right)$ which can also be identified to $\mathbb{A}_{K}/\left( K^{\star}\left(\prod_{\mathfrak{p}}\mathcal{O}_{\mathfrak{p}}^{\star}\times\{ 1\}\right)\right)$.
\end{theo}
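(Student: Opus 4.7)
The plan is to decompose a point $(p,f)$ of the arithmetic site over $\mathcal{C}_{K,\mathbb{C}}$ into its two pieces and then track the notion of isomorphism. By Theorems~\ref{theo:ptsscd} and~\ref{theo:adlfst}, the underlying topos point $p$ is, up to isomorphism, encoded by an element $a\in\mathbb{A}_{K}^{f}/\widetilde{\mathcal{O}_{K}}^{\times}$ via the sub-$\mathcal{O}_{K}$-module $H_{a}\subset K$, and Theorem~\ref{theo:strshptone} identifies the stalk $\mathcal{C}_{\mathcal{O}_{K},p}$ canonically with $\mathcal{C}_{H_{a}}$. Since every polygon in both $\mathcal{C}_{H_{a}}$ and $\mathcal{C}_{K,\mathbb{C}}$ has center $0$, any direct similitude $f:\mathbb{C}\to\mathbb{C}$ inducing a semiring morphism $\mathcal{C}_{H_{a}}\to\mathcal{C}_{K,\mathbb{C}}$ must have zero translation part, so $f(z)=\lambda z$ for some $\lambda\in\mathbb{C}$ (with $\lambda=0$ corresponding to the degenerate point sending every non-empty polygon to $\{0\}$). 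The $\mathcal{U}_{K}$-invariance of $D_{K}$ makes the morphism depend only on the class of $\lambda$ in $\mathbb{C}/\mathcal{U}_{K}$, and the image subsemiring is precisely $\Phi(a,\lambda)$ from Lemma~\ref{lemm:bijmodad}. This gives a well-defined map from isomorphism classes of points to the quotient of $(\mathbb{A}_{K}^{f}/\widetilde{\mathcal{O}_{K}}^{\times})\times(\mathbb{C}/\mathcal{U}_{K})$ modulo the remaining equivalence to be determined.

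Next I would pin down when two pairs $(a,\lambda)$ and $(a',\lambda')$ yield isomorphic points. An isomorphism $p\to p'$ corresponds by Theorem~\ref{theo:ptsscd} to an isomorphism of $\mathcal{O}_{K}$-modules $H_{a}\to H_{a'}$ inside $K$, which is necessarily multiplication by some $k\in K^{\star}$ with $H_{a'}=kH_{a}$, and then Theorem~\ref{theo:adlfst} gives $a'=k^{-1}a$ in $\mathbb{A}_{K}^{f}/\widetilde{\mathcal{O}_{K}}^{\times}$. The induced map on stalks is $C\mapsto kC$, and the required compatibility of the two similitudes reads $\lambda C=\lambda'(kC)$ for every $C\in\mathcal{C}_{H_{a}}$, i.e.\ $\lambda'=k^{-1}\lambda$. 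Reparametrising $k\leftarrow k^{-1}$, the isomorphism relation is exactly the diagonal action $(a,\lambda)\sim(ka,k\lambda)$ of $K^{\star}$. Combined with Lemma~\ref{lemm:bijmodad}, this shows the set of isomorphism classes of points is in canonical bijection with $\bigl((\mathbb{A}_{K}^{f}/\widetilde{\mathcal{O}_{K}}^{\times})\times(\mathbb{C}/\mathcal{U}_{K})\bigr)/K^{\star}$.

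For the adelic reformulation, using that $K$ is imaginary quadratic so $\mathbb{A}_{K}=\mathbb{A}_{K}^{f}\times\mathbb{C}$, I would lift the $\widetilde{\mathcal{O}_{K}}^{\times}$-quotient back to a quotient by $\prod_{\mathfrak{p}}\mathcal{O}_{\mathfrak{p}}^{\star}\times\{1\}$ on the $\mathbb{A}_{K}^{f}$-factor. It remains to absorb the $\mathcal{U}_{K}$-quotient on the $\mathbb{C}$-factor into the $K^{\star}$-action: for any $\mu\in\mathcal{U}_{K}$, the diagonal element $(\mu,\mu)\in K^{\star}$ composed with $(\mu^{-1},1)\in\widetilde{\mathcal{O}_{K}}^{\times}\times\{1\}$ produces $(1,\mu)$, so $\{1\}\times\mathcal{U}_{K}\subset K^{\star}\cdot(\widetilde{\mathcal{O}_{K}}^{\times}\times\{1\})$. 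This gives the claimed identification with $\mathbb{A}_{K}/\bigl(K^{\star}(\prod_{\mathfrak{p}}\mathcal{O}_{\mathfrak{p}}^{\star}\times\{1\})\bigr)$. The main obstacle is the compatibility computation in the second paragraph: getting the direction of the scalars right so that the $K^{\star}$-action truly comes out as a diagonal action on $(a,\lambda)$, and checking that no finer identifications arise beyond the $\mathcal{U}_{K}$-quotient already built into $\lambda$; once this dictionary is fixed, the adelic rewriting is a short bookkeeping exercise.
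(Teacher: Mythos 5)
Your proposal is correct and follows essentially the same route as the paper: decompose a point into the topos point $p$ (hence a module $H_{a}$ via Theorems \ref{theo:ptsscd}, \ref{theo:adlfst} and the stalk $\mathcal{C}_{H_{a}}$ via Theorem \ref{theo:strshptone}), read off the scalar $\lambda\in\mathbb{C}/\mathcal{U}_{K}$ from the similitude (with the degenerate $\mathbb{B}$-valued case treated separately), and invoke Lemma \ref{lemm:bijmodad} to obtain the quotient by the diagonal $K^{\star}$-action. Your explicit tracking of the isomorphism relation $(a,\lambda)\sim(ka,k\lambda)$ and the absorption of the $\mathcal{U}_{K}$-quotient into $K^{\star}\bigl(\prod_{\mathfrak{p}}\mathcal{O}_{\mathfrak{p}}^{\star}\times\{1\}\bigr)$ merely spells out steps the paper leaves implicit.
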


\begin{rem}
This theorem is a generalization of  the theorem of Connes and Consani in\cite{ccsitearith} on the interpretation of the points of the arithmetic site over $\mathbb{R}_{\max}$.
\end{rem}

\begin{proof}
Let us consider a point of the arithmetic site with values in $\mathbb{C}_{K,\mathbb{C}}$ $(p,f_{p}^{\star})$.

By theorem \ref{theo:ptsscd}, to a point of the topos $\widehat{\mathcal{O}_{K}}$ is associated to $H$ an $\mathcal{O}_{K}$-module (of rank 1 since $\mathcal{O}_{K}$ is principal) included in $K$, and by theorem \ref{theo:strshptone} the stalk of $\mathcal{C}_{\mathcal{O}_{K}}$ at the point $p$ is $\mathcal{C}_{H}$.

As in \cite{ccsitearith} we consider the following two cases depending on the range of $f_{p}^{\star}$ :
\begin{enumerate}
\item The range of $f_{p}^{\star}$ is $\mathbb{B}\simeq\{\emptyset ,\{ 0\}\}\subset\mathcal{C}_{K,\mathbb{C}}$.

$f_{p}^{\star}$ sends non empty sets of $\mathcal{C}_{H}$ to $\{0\}$ and so the pair $(p,f_{p}^{\star})$ is uniquely determined by the point $p$ and so by theorem \ref{theo:ptsscd} the set of those kind of points is isomorphic to $K^{\star}\backslash\mathbb{A}_{K}^{f}/\widetilde{\mathcal{O}_{K}}^{\star}$.

\item The range of $f_{p}^{\star}$ is not contained in $\mathbb{B}$, then by the definition of a point ($f_{p}^{\star}$ is direct similitude) the range of $f_{p}^{\star}$ is of the form $\text{Semiring}\{h\lambda D_{K}, h\in H\}$ and $\lambda\in\mathbb{C}/\mathcal{U}_{K}$ with $f_{p}^{\star}(D_{K})=\lambda D_{K}$. 

So by lemma \ref{lemm:bijmodad} the set of those points is isomorphic to $K^{\star}\backslash\left(\mathbb{A}_{K}^{f}/\widetilde{\mathcal{O}_{K}}^{\star}\times\mathbb{C}/\mathcal{U}_{K}\right)$ 
\end{enumerate}
So all in all the set of the points of $\widehat{\mathcal{O}_{K}}$ with values in $\mathcal{C}_{K,\mathbb{C}}$ is isomorphic to $K^{\star}\backslash\left(\mathbb{A}_{K}^{f}\times\mathbb{C}/\mathcal{U}_{K}\right)/\left(\widetilde{\mathcal{O}_{K}}^{\star}\times\{1\}\right)$
\end{proof}

\section{Link with the Dedekind zeta function}
\subsection{The spectral realization of critical zeros of $L$-functions by Connes}
In this section, $K$ will denote an imaginary quadratic number field with class number 1.

Let us first recall some facts (\cite{connesmarcolli} and \cite{trace}) about homogeneous distributions on adeles and L-functions and then the construction of the Hilbert space $\mathcal{H}$ underlying the spectral realization of the critical zeroes.

%mettre def quasicharacter, fct L, s'inspirer de Kudla Tate's thesis

Let $k$ be a local field and $\chi$ a quasi-character of $k^{\star}$, let $s\in\mathbb{C}$, we can write $\chi$ in the following form : $\forall x\in k^{\star}\chi(x)=\chi_{0}(x)|x|^{s}$ with $\chi_{0}:k^{\star}\to S^{1}$. Let $\mathcal{S}(k)$ denote the Schwartz Bruhat space on $k$.

\begin{defi}
We say that a distribution $D\in\mathcal{S}'(k)$ is homogeneous of weight $\chi$ if one has $\forall f\in\mathcal{S}(k),\forall a\in k^{\star}, \langle f^{a},D\rangle =\chi(a)^{-1}\langle f,D\rangle$ where by definition $f^{a}(x)=f(ax)$.
\end{defi}

We have the following property :

\begin{prop}  
For $\sigma=\Re(s)>0$, there exists up to normalization, only one homogeneous distribution  of weight $\chi$ on $k$ which is given by the absolutely convergent integral $\Delta_{\chi}(f)=\int_{k^{\star}}f(x)\chi(x)\text{d}^{\star}x$.
\end{prop}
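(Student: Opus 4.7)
My plan is to establish the two halves of the statement separately: existence via the integral formula, then uniqueness via a support argument.

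For existence, I would first show that for $\sigma=\Re(s)>0$ the integral $\Delta_{\chi}(f)=\int_{k^{\star}}f(x)\chi(x)\,\text{d}^{\star}x$ converges absolutely for every $f\in\mathcal{S}(k)$. Writing $\text{d}^{\star}x=c\,\text{d}x/|x|$ for a suitable constant $c$ (and the additive Haar measure $\text{d}x$), I split the domain into $\{|x|\leq 1\}$ and $\{|x|>1\}$. On the first piece $f$ is bounded and the remaining integral is $\int_{|x|\leq 1}|x|^{\sigma-1}\text{d}x$, which converges precisely because $\sigma>0$. On the second piece the Schwartz--Bruhat decay of $f$ dominates the polynomial growth $|\chi(x)|=|x|^{\sigma}$. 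Then a direct change of variable $x\mapsto a^{-1}x$ together with the fact that $\text{d}^{\star}x$ is invariant under multiplicative translation yields $\langle f^{a},\Delta_{\chi}\rangle=\chi(a)^{-1}\langle f,\Delta_{\chi}\rangle$, so $\Delta_{\chi}$ is homogeneous of weight $\chi$.

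For uniqueness up to a scalar, let $D_{1},D_{2}\in\mathcal{S}'(k)$ be two homogeneous distributions of weight $\chi$. Their restrictions to the open subset $k^{\star}\subset k$ are distributions on the homogeneous $k^{\star}$-space $k^{\star}$ which transform by the character $\chi$; since $k^{\star}$ acts simply transitively on itself, such a distribution is unique up to a multiplicative constant (namely a scalar multiple of $\chi(x)\,\text{d}^{\star}x$). So after adjusting by a scalar we may assume $D:=D_{1}-D_{2}$ vanishes on $\mathcal{S}(k^{\star})$, i.e.\ $D$ is supported at $\{0\}$, while still being homogeneous of weight $\chi$.

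The main obstacle is then to show that no nonzero distribution supported at $\{0\}$ is homogeneous of weight $\chi$ when $\sigma>0$. In the non-Archimedean case, such $D$ is a scalar multiple of $\delta_{0}$, and $\langle f^{a},\delta_{0}\rangle=f(0)=\langle f,\delta_{0}\rangle$ shows that $\delta_{0}$ has trivial weight, which forces $s=0$ and $\chi_{0}=1$, contradicting $\sigma>0$. In the Archimedean case $D$ is a finite linear combination of derivatives $\delta_{0}^{(n)}$, and the direct computation $\langle f^{a},\delta_{0}^{(n)}\rangle=(-1)^{n}\partial^{n}(f^{a})(0)=a^{n}\langle f,\delta_{0}^{(n)}\rangle$ shows $\delta_{0}^{(n)}$ has weight $a\mapsto a^{-n}$; comparing absolute values forces $\sigma=-n\leq 0$, again incompatible with $\sigma>0$. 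Hence $D=0$, which proves uniqueness up to normalization. The two nontrivial ingredients I would flag as the delicate steps are the identification of homogeneous distributions on the bare group $k^{\star}$ with multiples of $\chi(x)\,\text{d}^{\star}x$ (which reduces to the uniqueness of Haar measure) and the classification of distributions supported at the origin in each local case.
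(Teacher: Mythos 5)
Your argument is correct, but it is worth noting that the paper does not actually prove this proposition: its ``proof'' is a one-line reference to Weil's \emph{Fonctions zeta et distributions} (\cite{weiltr}), so there is nothing internal to compare against. What you have written is essentially the classical argument from Tate's thesis/Weil, made self-contained: absolute convergence of $\int_{k^{\star}}f(x)\chi(x)\,\mathrm{d}^{\star}x$ for $\sigma>0$ plus the change of variables $x\mapsto a^{-1}x$ gives existence; restriction to $k^{\star}$, where twisting by $\chi^{-1}$ reduces the weight-$\chi$ condition to multiplicative translation invariance and hence to uniqueness of Haar measure (as a distribution, which needs the small smoothing argument in the archimedean case and is immediate in the totally disconnected case), shows any homogeneous $D$ agrees with a multiple of $\Delta_{\chi}$ away from $0$; and the classification of distributions supported at the origin kills the difference, since $\delta_{0}$ and its derivatives are homogeneous of weights $a\mapsto a^{n}$ with $n\geq 0$, i.e.\ of ``$\sigma\leq 0$'', incompatible with $\sigma>0$. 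Two minor points to tighten: in the non-archimedean case the reduction to $c\,\delta_{0}$ should be phrased via the observation that any $f\in\mathcal{S}(k)$ with $f(0)=0$ vanishes on a neighbourhood of $0$ (there are no transverse derivatives to worry about); and for $k=\mathbb{C}$ the distributions supported at the origin are spanned by the mixed derivatives $\partial^{j}\bar{\partial}^{k}\delta_{0}$, of weight $a\mapsto a^{-j}\bar{a}^{-k}$, whose modulus again corresponds to $\sigma=-(j+k)/2\leq 0$, so your comparison of absolute values goes through verbatim. With those cosmetic adjustments your proof is complete and is presumably the same argument the cited reference contains; its value here is precisely that it makes the paper's external citation self-contained.
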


\begin{proof}
Cf in \cite{weiltr}.
\end{proof}

If $k$ is non-archimedean field, and $\pi_{k}$ a uniformizer, let us now define and note for all $s\in\mathbb{C}$: 
\begin{defi}
The distribution $\Delta^{'}_{s}\in\mathcal{S}'(k)$ is defined for all $f\in\mathcal{S}(k)$ as $\Delta^{'}_{s}=\int_{k^{\star}}(f(x)-f(\pi_{k}x))|x|^{s}\text{d}^{\star}x$, with the multiplicative Haar measure $\text{d}^{\star}x$ normalized by $\langle 1_{\mathcal{O}^{\star}_{k}},\text{d}^{\star}x\rangle =1$
\end{defi}
The distribution $\Delta_{s}'$ is well defined because by the very definition of $\mathcal{S}(k)$, for $f\in\mathcal{S}(k)$, $f(x)-f(\pi_{k}x)=0$ for $x$ small enough.

This distribution has the following properties :

\begin{prop}
We have :
\begin{enumerate}
\item $\langle 1_{\mathcal{O}_{k}},\Delta^{'}_{s}\rangle =1$
\item $\langle f^{a}, \Delta^{'}_{s}\rangle = |a|^{-s}\langle f,\Delta^{'}_{s}\rangle$
\item $\Delta^{'}_{s}=(1-q^{-s})\Delta_{s}$ with $|\pi_{k}|=q^{-1}$
\end{enumerate}
\end{prop}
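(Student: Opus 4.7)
The plan is to prove the three items by direct computation from the defining formula
$\langle f,\Delta'_s\rangle=\int_{k^\star}(f(x)-f(\pi_k x))|x|^s\,d^\star x$, using only the multiplicative invariance of $d^\star x$ and the normalization $\langle 1_{\mathcal{O}_k^\star},d^\star x\rangle=1$.

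For (1), I will substitute $f=1_{\mathcal{O}_k}$. The integrand $1_{\mathcal{O}_k}(x)-1_{\mathcal{O}_k}(\pi_k x)$ is the difference of the indicator functions of the two concentric balls $\mathcal{O}_k$ and $\pi_k^{-1}\mathcal{O}_k$; these differ by a single $\mathcal{O}_k^\star$-orbit, so the integrand is supported exactly on that orbit. On the orbit, $|x|^s$ is a constant power of $q$, and the normalization of $d^\star x$ collapses the integral to the stated value $1$.

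For (2), I will change variables by $y=ax$ inside $\langle f^a,\Delta'_s\rangle$. Multiplicative invariance gives $d^\star x=d^\star y$ and $|x|^s=|a|^{-s}|y|^s$, while the commutativity of $a$ with $\pi_k$ preserves the bracketed difference $f(\cdot)-f(\pi_k\,\cdot)$; the integral therefore factors as $|a|^{-s}\langle f,\Delta'_s\rangle$, establishing the quasi-homogeneity of weight $\chi(x)=|x|^s$.

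For (3), I will split, under the assumption $\sigma=\Re(s)>0$ so that both pieces converge absolutely,
$\Delta'_s(f)=\int_{k^\star}f(x)|x|^s\,d^\star x-\int_{k^\star}f(\pi_k x)|x|^s\,d^\star x$.
The first integral is $\Delta_s(f)$ by definition; in the second, the substitution $y=\pi_k x$ combined with the multiplicative invariance of $d^\star x$ produces a single factor $|\pi_k|^{-s}$ in front of $\Delta_s(f)$, and subtracting yields the factorisation $\Delta'_s=(1-q^{-s})\Delta_s$. The conceptual payoff, which will be used in the sequel, is that this same identity provides the meromorphic continuation of $\Delta_s$ to all of $\mathbb{C}$: the Schwartz--Bruhat condition forces $f(x)-f(\pi_k x)$ to vanish in a neighbourhood of $0$, so $\Delta'_s$ is entire in $s$, and the poles of $\Delta_s$ must therefore come exclusively from the zeros of $1-q^{-s}$. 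The only real obstacle is bookkeeping the exponent of $q$ consistently with the convention $|\pi_k|=q^{-1}$; once that is fixed, all three items reduce to one-line substitution arguments.
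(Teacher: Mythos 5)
Your overall strategy (direct computation from the defining integral, using invariance of $d^{\star}x$ and the normalization $\langle 1_{\mathcal{O}_{k}^{\star}},d^{\star}x\rangle=1$) is the standard one, and it is in fact more informative than the paper, which simply cites \S 9.1 of Connes--Marcolli. Item (2) of your argument is correct. But items (1) and (3) contain a genuine error, and it occurs exactly at the spot you dismiss as ``bookkeeping''. With the definition as written in the paper, namely $\Delta'_{s}(f)=\int_{k^{\star}}\bigl(f(x)-f(\pi_{k}x)\bigr)|x|^{s}\,d^{\star}x$ together with the convention $|\pi_{k}|=q^{-1}$, your own steps do not give the stated values. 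For (1): $1_{\mathcal{O}_{k}}(\pi_{k}x)=1_{\pi_{k}^{-1}\mathcal{O}_{k}}(x)$, and since $\mathcal{O}_{k}\subset\pi_{k}^{-1}\mathcal{O}_{k}$ the integrand equals $-1$ (not $+1$) on the orbit $\pi_{k}^{-1}\mathcal{O}_{k}^{\star}$, where $|x|^{s}=q^{s}$; so the integral ``collapses'' to $-q^{s}$, not to $1$. For (3): the substitution $y=\pi_{k}x$ indeed produces the factor $|\pi_{k}|^{-s}$, but $|\pi_{k}|^{-s}=q^{+s}$, so subtracting gives $\Delta'_{s}=(1-q^{s})\Delta_{s}$, not $(1-q^{-s})\Delta_{s}$; your conclusion silently replaces $|\pi_{k}|^{-s}$ by $q^{-s}$, i.e.\ uses $|\pi_{k}|=q$ against the stated convention.

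The repair is not a matter of bookkeeping within your computation but of the convention entering the definition: the subtracted term must be $f(\pi_{k}^{-1}x)$ (equivalently, one keeps $f(\pi_{k}x)$ but with $|\pi_{k}|=q$), which is the convention of Connes--Marcolli that the proposition is quoted from. Then the integrand in (1) equals $+1$ precisely on $\mathcal{O}_{k}^{\star}$, where $|x|^{s}=1$, giving $\langle 1_{\mathcal{O}_{k}},\Delta'_{s}\rangle=1$ directly from the normalization, and the substitution in (3) produces the factor $|\pi_{k}|^{s}=q^{-s}$, giving $\Delta'_{s}=(1-q^{-s})\Delta_{s}$; one can also cross-check via $\Delta_{s}(1_{\mathcal{O}_{k}})=\sum_{n\ge 0}q^{-ns}=(1-q^{-s})^{-1}$ for $\Re(s)>0$, which makes (1) and (3) consistent. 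A correct write-up should therefore either prove the statement from the corrected definition, or explicitly note that the formula as transcribed has the shift in the wrong direction and record the computation that exposes this; as it stands, your proof asserts conclusions that do not follow from the definition you started from. Your closing remark about meromorphic continuation of $\Delta_{s}$ is a nice observation and survives the correction unchanged.
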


\begin{proof}
Cf §9.1 in \cite{connesmarcolli}.
\end{proof}

Let $\chi$ be now a quasi-character from the idele class group $C_{K}=\mathbb{A}_{K}^{\star}/K^{\star}$. We can note $\chi$ as $\chi =\prod_{\nu}$ and $\chi (x)=\chi_{0}(x)|x|^s$ with $s\in\mathbb{C}$ and $\chi_{0}$ a character of $C_{K}$. Let us note $P$ the finite set of places where $\chi_{0}$ is ramified. For any place $\nu\notin P$, let us denote $\Delta_{\nu}^{'}$ the unique homogeneous distribution of weight $\chi_{\nu}$ normalized by $\langle\Delta_{\nu}^{'},1_{\mathcal{O}_{\nu}}\rangle =1$. For any $\nu\in P$ or infinite place and for $\sigma=\Re (s)>0$ let us denote $\Delta_{\nu}^{'}$ the homogeneous distribution of weight $\chi_{\nu}$ given by proposition 5.1 (this one is unnormalized). Then the infinite tensor product $\Delta_{s}^{'}=\prod_{\nu}\Delta_{\nu}^{'}$ makes sense as a continuous linear form on $\mathcal{S}(\mathbb{A}_{K})$ and it is homogeneous of weight $\chi$, it is not equal to zero since $\Delta_{\nu}^{'}\neq 0$ for every $\nu$ and for infinite places as well and is finite by construction of the space $\mathcal{S}(\mathbb{A}_{K})$ as the infinite tensor product $\bigotimes_{\nu}\left(\mathcal{S}(K_{\nu}),1_{\mathcal{O}_{\nu}}\right)$

Then we can see the L functions appear as a normalization factor thanks to the following property:

\begin{prop}\label{prop:2.52}
For $\sigma =\Re (s)>1$, the following integral converges absolutely 
$$\forall f\in\mathcal{S}(\mathbb{A}_{K}), \int_{\mathcal{A}_{K}^{\star}}f(x)\chi_{0}(x)|x|^{s}\text{d}^{\star}x=\Delta_{s}(f)$$

and we have $\forall f\in\mathcal{S}(\mathbb{A}_{K}), \Delta_{s}(f)=L(\chi_{0},s)\Delta_{s}^{'}(f)$.
\end{prop}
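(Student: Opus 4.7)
The plan is to prove both statements via the standard local-global factorization of Tate's thesis. First I would reduce to the case of a decomposable Schwartz-Bruhat function $f = \bigotimes_{\nu} f_{\nu}$ with $f_{\nu} = 1_{\mathcal{O}_{\nu}}$ for almost every finite place $\nu$, since such elementary tensors span $\mathcal{S}(\mathbb{A}_{K})$ and both sides of the claimed identity are continuous linear forms on it. Under this reduction, the adelic integral formally factorizes into an infinite product of local integrals, and likewise $\Delta_{s} = \prod_{\nu} \Delta_{\nu}$ and $\Delta_{s}' = \prod_{\nu} \Delta_{\nu}'$ are already defined place-by-place.

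For the absolute convergence when $\Re(s) > 1$, I would analyze the local factors separately. At an unramified finite place $\nu \notin P$ with $f_{\nu} = 1_{\mathcal{O}_{\nu}}$, a direct geometric series calculation yields
\[
\int_{K_{\nu}^{\star}} 1_{\mathcal{O}_{\nu}}(x)\, \chi_{0,\nu}(x)\, |x|_{\nu}^{s}\, \mathrm{d}^{\star}x \;=\; \sum_{n\geq 0} \chi_{0}(\pi_{\nu})^{n} q_{\nu}^{-ns} \;=\; \bigl(1 - \chi_{0}(\pi_{\nu}) q_{\nu}^{-s}\bigr)^{-1},
\]
so that the product over unramified places is exactly the Euler product of $L(\chi_{0},s)$, which converges absolutely for $\Re(s) > 1$ by comparison with $\zeta_{K}(\Re(s))$. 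At the finitely many remaining places (archimedean or $\nu \in P$), the local integral converges absolutely for $\Re(s)>0$ because $f_{\nu}$ is Schwartz (rapid decay at infinity) and the singularity of $|x|_{\nu}^{s}\, \mathrm{d}^{\star}x$ at $0$ is mild in that range. Matching this convergent integral with $\Delta_{s}(f)$ then follows from Proposition 5.1 applied place by place: both sides are non-zero homogeneous distributions of weight $\chi_{\nu}$ on $K_{\nu}$, hence proportional, and the normalization $\langle 1_{\mathcal{O}_{\nu}}, \Delta_{\nu}\rangle = (1 - \chi_{0}(\pi_{\nu}) q_{\nu}^{-s})^{-1}$ pins down the constant.

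To derive $\Delta_{s}(f) = L(\chi_{0},s)\, \Delta_{s}'(f)$, I would divide place by place. At an unramified finite $\nu \notin P$, the local relation obtained by twisting item (iii) of Proposition 5.3 by $\chi_{0,\nu}$ reads $\Delta_{\nu}' = (1 - \chi_{0}(\pi_{\nu}) q_{\nu}^{-s})\, \Delta_{\nu}$, so the local ratio $\Delta_{\nu}/\Delta_{\nu}'$ is precisely the local Euler factor of $L(\chi_{0},s)$. At archimedean places and at $\nu \in P$ the distributions $\Delta_{\nu}'$ and $\Delta_{\nu}$ coincide by construction, so they contribute a factor $1$ to the ratio. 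Multiplying over all places yields exactly the Euler product $\prod_{\nu \notin P}(1 - \chi_{0}(\pi_{\nu}) q_{\nu}^{-s})^{-1} = L(\chi_{0},s)$, which establishes the identity.

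The main obstacle will be the careful bookkeeping at the finitely many bad places and the consistency with the convention adopted for $L(\chi_{0},s)$: one has to verify that the local integrals at archimedean and ramified finite places are finite and holomorphic in $s$ on the relevant half-plane, and that the local factors absorbed into $L(\chi_{0},s)$ agree with those the construction of $\Delta_{\nu}'$ implicitly chose (in particular setting local L-factors equal to $1$ at $\nu \in P$). Once this is settled, the rest of the proof is a direct assembly of the local calculations already carried out in the preceding propositions, together with the uniqueness part of Proposition 5.1 which guarantees that every intermediate proportionality constant is determined.
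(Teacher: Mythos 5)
Your argument is correct, but note that the paper itself does not prove this proposition at all: its ``proof'' is only the citation ``Cf lemma 2.50 in \cite{connesmarcolli}''. What you have written is precisely the standard local--global argument behind that citation --- reduce to factorizable $f$, compute the unramified local integrals as Euler factors converging for $\Re (s)>1$, observe that at ramified and archimedean places $\Delta_{\nu}'$ and $\Delta_{\nu}$ coincide by construction so the ratio $\Delta_{s}/\Delta_{s}'$ is exactly the Euler product $L(\chi_{0},s)$ --- and your closing remark about matching normalizations at the bad places is indeed the only point requiring care, so your proposal correctly supplies the proof the paper delegates to the reference.
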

\begin{proof}
Cf lemma 2.50 in \cite{connesmarcolli}
\end{proof}

Let us now note $\mathcal{S}(\mathbb{A}_{K})_{0}$ the following subspace of $\mathcal{S}(\mathbb{A}_{K})$ defined by $\mathcal{S}(\mathbb{A}_{K})_{0}=\{ f\in \mathcal{S}(\mathbb{A}_{K}) / f(0)=0, \int_{\mathbb{A}_{K}}f(x)\text{d}x=0 \}$. %and $L^{2}$
%\textbf{expliquer mieux comment E apparait avec Deitmar, le rediger mieux sous for me de proposition et rediger toutes les justifications}
We can now define the operator $\mathcal{E}$:

\begin{defi}
Let $f\in\mathcal{S}(\mathbb{A}_{K})_{0}$ and $g\in C_{K}$, then we set $$\mathcal{E}(f)(g)=|g|^{\frac{1}{2}}\sum_{q\in K^{\star}}f(qg)$$.
\end{defi}
We have the following properties for $\mathcal{E}$:

\begin{prop} \label{prop:rapidecay} We have :
\begin{enumerate}
\item for all $f\in\mathcal{S}(\mathbb{A}_{K})_{0}$ and $g\in C_{K}$, the series $\mathcal{E}(f)(g)$ converges absolutely
\item $\forall f\in\mathcal{S}(\mathbb{A}_{K})_{0},\,\forall n\in\mathbb{N},\exists c>0,\forall g\in C_{K}, |\mathcal{E}(f)(g)|\leq c\exp (-n|\log |g| |)$
\item for all $f\in\mathcal{S}(\mathbb{A}_{K})_{0}$ and $g\in C_{K}$, $\mathcal{E}(\hat{f})(g)=\mathcal{E}(f)(g^{-1})$
\end{enumerate}
\end{prop}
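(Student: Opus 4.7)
The plan is to establish (1) and a one-sided rapid decay estimate by a direct bound on factorizable $f$, then to derive (3) from the Poisson summation formula on $\mathbb{A}_K$ with respect to $K$ (using the vanishing conditions defining $\mathcal{S}(\mathbb{A}_K)_0$), and finally to combine (3) with the one-sided estimate applied to $\hat f$ in order to deduce the symmetric decay required in (2).

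I would first handle (1) and rapid decay as $|g|\to\infty$. By linearity and continuity in the Schwartz--Bruhat topology it suffices to consider a factorizable $f=\bigotimes_{\nu}f_{\nu}$. At each non-archimedean place $\mathfrak{p}$, $f_{\mathfrak{p}}$ is locally constant with compact support, so the condition $f_{\mathfrak{p}}(qg_{\mathfrak{p}})\neq 0$ forces $q$ into a translate $a_{\mathfrak{p}}(g)\mathcal{O}_{\mathfrak{p}}$ with $a_{\mathfrak{p}}(g)\in K^{\star}$; intersecting the finitely many non-trivial constraints, the $q\in K^{\star}$ giving a nonzero contribution lie in a fractional ideal $I(g)\subset K$, i.e.\ a rank-$2$ lattice in $K\otimes_{\mathbb{Q}}\mathbb{R}\simeq\mathbb{C}$. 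Writing $g=g_{\mathrm{fin}}\cdot g_{\infty}$ and using Schwartz decay $|f_{\infty}(z)|\leq C_{N}(1+|z|)^{-N}$ at the archimedean place, one obtains
\[
\sum_{q\in K^{\star}}|f(qg)|\;\leq\;C_{N}\Bigl(\prod_{\mathfrak{p}}\|f_{\mathfrak{p}}\|_{\infty}\Bigr)\sum_{q\in I(g)\setminus\{0\}}(1+|qg_{\infty}|)^{-N}.
\]
For any fixed $g$ the right-hand side is finite as soon as $N>2$, giving absolute convergence and hence (1); for $|g_{\infty}|$ large, comparison of the lattice sum with an integral bounds it by $O(|g_{\infty}|^{-N})$, with the implicit constant controlled by the covolume and shortest vector of $I(g)$, which in turn are controlled by the finite idelic part $g_{\mathrm{fin}}$. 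Since the idelic norm satisfies $|g|=|g_{\mathrm{fin}}||g_{\infty}|$ and $N$ is arbitrary, one deduces $|\mathcal{E}(f)(g)|=O(|g|^{-m})$ as $|g|\to\infty$ for every $m\in\mathbb{N}$.

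Next I would derive (3). Applying Poisson summation on $\mathbb{A}_{K}$ relative to the discrete cocompact subgroup $K$ (self-dual under the pairing attached to Tate's character $\chi_{\mathrm{Tate}}$, the different $\mathfrak{D}_{\mathcal{O}_{K}}$ being absorbed into the normalization of the Haar measure used to define $\hat f$) to the dilate $x\mapsto f(gx)$ gives
\[
\sum_{q\in K}f(qg)\;=\;\frac{1}{|g|}\sum_{q\in K}\hat f(qg^{-1}).
\]
Isolating $q=0$ produces $f(0)$ on the left and $|g|^{-1}\hat f(0)=|g|^{-1}\int_{\mathbb{A}_{K}}f$ on the right, both of which vanish by the very definition of $\mathcal{S}(\mathbb{A}_{K})_{0}$. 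Multiplying the remaining identity by $|g|^{1/2}=|g^{-1}|^{-1/2}$ then gives
\[
\mathcal{E}(f)(g)\;=\;|g|^{-1/2}\sum_{q\in K^{\star}}\hat f(qg^{-1})\;=\;\mathcal{E}(\hat f)(g^{-1}),
\]
which is (3).

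Finally, for (2), the Fourier transform preserves $\mathcal{S}(\mathbb{A}_{K})_{0}$ since the two vanishing conditions $f(0)=0$ and $\int f=0$ are exchanged by Fourier inversion. The one-sided estimate of the first step therefore applies to $\hat f$, giving $|\mathcal{E}(\hat f)(h)|=O(|h|^{-n})$ as $|h|\to\infty$; specializing $h=g^{-1}$ and using (3) this translates into $|\mathcal{E}(f)(g)|=O(|g|^{n})$ as $|g|\to 0$. Combined with the bound as $|g|\to\infty$ already obtained, this produces the symmetric rapid decay $|\mathcal{E}(f)(g)|\leq c\exp(-n|\log|g||)$ required in (2). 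The main obstacle is the bookkeeping in the first step: one must show that the rapid archimedean decay is not destroyed when passing to the lattice $I(g)$, which requires controlling the dependence of the covolume of $I(g)$ on the finite idelic part $g_{\mathrm{fin}}$. Once this is settled, (3) and (2) follow formally from Poisson summation and the definition of $\mathcal{S}(\mathbb{A}_{K})_{0}$.
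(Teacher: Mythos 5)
Your plan is correct and reconstructs exactly the argument the paper relies on: the paper gives no proof of its own here, simply citing Lemma 2.51 of \cite{connesmarcolli}, whose proof is precisely your combination of lattice/Schwartz estimates for absolute convergence and decay as $|g|\to\infty$, adelic Poisson summation together with the two vanishing conditions $f(0)=0$ and $\int f=0$ for the identity $\mathcal{E}(\hat f)(g)=\mathcal{E}(f)(g^{-1})$, and the application of the one-sided bound to $\hat f$ to get decay as $|g|\to 0$. The only point to nail down is the one you yourself flag, namely uniformity of the covolume/shortest-vector constants of $I(g)$ over idele classes of a given module, which is handled by the compactness of $C_{K,1}$ (or, in this class-number-one setting, by choosing representatives via $\mathbb{A}_{K}^{\star}=K^{\star}\left(\widetilde{\mathcal{O}_{K}}^{\star}\times\mathbb{C}^{\star}\right)$, which fixes the lattice up to the archimedean scaling).
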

\begin{proof}
Cf lemma 2.51 in \cite{connesmarcolli}.
\end{proof}
And so we can get that 

\begin{prop}
For $\sigma =\Re (s)>0$ and any character $\chi_{0}$ of $C_{K}$, we have that $$\forall f\in\mathcal{S}(\mathbb{A}_{K})_{0}, \int_{C_{K}}\mathcal{E}(f)(x)\chi_{0}(x)|x|^{s-\frac{1}{2}}\text{d}^{\star}x=cL(\chi_{0},s)\Delta_{s}^{'}(f)$$ where the non zero constant $c$ depends on the normalization of the Haar measure $\text{d}^{\star}x$ on $C_{K}$. 
\end{prop}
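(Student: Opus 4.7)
The plan is to recognize the left-hand side as an unfolded version of Tate's integral, use the $K^{\star}$-invariance built into the ingredients $\chi_0$ and $|\cdot|$, and then invoke Proposition \ref{prop:2.52} on the resulting absolutely convergent integral over $\mathbb{A}_K^{\star}$. Finally I would extend the identity from the region of absolute convergence to $\sigma>0$ by analytic continuation, using the rapid decay of $\mathcal{E}(f)$.

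First I would work in the region $\sigma=\Re(s)>1$. The product formula implies $|q|=1$ and the fact that $\chi_0$ is a character of $C_K=\mathbb{A}_K^{\star}/K^{\star}$ gives $\chi_0(q)=1$ for every $q\in K^{\star}$. Unfolding the quotient $\mathbb{A}_K^{\star}\to C_K$ together with the change of variables $x\mapsto qx$ therefore yields, after choosing Haar measures compatibly and with a proportionality constant $c'$ coming from this normalization,
\[
\int_{\mathbb{A}_K^{\star}}f(x)\chi_0(x)|x|^{s}\,\text{d}^{\star}x
\;=\;c'\int_{C_K}\Bigl(\sum_{q\in K^{\star}}f(qx)\Bigr)\chi_0(x)|x|^{s}\,\text{d}^{\star}x.
\]
The absolute convergence required to justify the interchange of sum and integral comes from Proposition \ref{prop:2.52} applied to $|f|$ for $\sigma>1$. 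Using the definition $\mathcal{E}(f)(x)=|x|^{1/2}\sum_{q\in K^{\star}}f(qx)$, this rewrites as
\[
\int_{\mathbb{A}_K^{\star}}f(x)\chi_0(x)|x|^{s}\,\text{d}^{\star}x
\;=\;c'\int_{C_K}\mathcal{E}(f)(x)\chi_0(x)|x|^{s-\frac{1}{2}}\,\text{d}^{\star}x.
\]
Combining this with Proposition \ref{prop:2.52}, which states that the left member equals $L(\chi_0,s)\Delta_s'(f)$, I obtain the desired identity on $\sigma>1$ with constant $c=(c')^{-1}$.

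It remains to extend to the strip $0<\sigma\le 1$. Here I would appeal to Proposition \ref{prop:rapidecay}: for $f\in\mathcal{S}(\mathbb{A}_K)_0$, the function $\mathcal{E}(f)$ has rapid decay in both directions on $C_K$, so the integral $\int_{C_K}\mathcal{E}(f)(x)\chi_0(x)|x|^{s-1/2}\,\text{d}^{\star}x$ defines an entire function of $s$. On the right-hand side, $\Delta_s'(f)$ is holomorphic for $\sigma>0$ by Proposition 5.1, and the regularization factors $(1-q_\mathfrak{p}^{-s})$ introduced in the definition of $\Delta_s'$ cancel the local Euler factors of $L(\chi_0,s)$; the vanishing conditions defining $\mathcal{S}(\mathbb{A}_K)_0$ (namely $f(0)=0$ and $\int_{\mathbb{A}_K} f\,\text{d}x=0$) eliminate any residual pole of $L(\chi_0,s)$ at $s=1$ for the trivial character. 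Thus both sides are holomorphic on $\sigma>0$ and agreeing on $\sigma>1$ they agree on $\sigma>0$ by the identity principle.

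The main obstacle is the careful analytic continuation step: one needs to track how the possible pole of $L(\chi_0,s)$ at $s=1$ (when $\chi_0$ is trivial) is balanced by a zero of $\Delta_s'(f)$ coming from the conditions $f(0)=0$ and $\int f=0$, so that $L(\chi_0,s)\Delta_s'(f)$ really does extend holomorphically to $\sigma>0$. This requires a local analysis at each archimedean and ramified place, comparing the poles of the local $L$-factors against the vanishing built into $f\in\mathcal{S}(\mathbb{A}_K)_0$. Once this is verified, the proof is concluded.
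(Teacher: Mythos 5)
Your argument is correct in substance, and it is worth noting that the paper itself gives no proof of this proposition at all: it simply refers to Lemma 2.52 of \cite{connesmarcolli}, whose proof is exactly the unfolding-plus-continuation argument you describe. For $\Re (s)>1$ the absolute convergence granted by Proposition \ref{prop:2.52} justifies the Fubini step, the invariance $\chi_{0}(qx)=\chi_{0}(x)$ and $|qx|=|x|$ for $q\in K^{\star}$ gives the unfolded identity, and Proposition \ref{prop:2.52} converts the adelic integral into $L(\chi_{0},s)\Delta_{s}'(f)$; the left-hand side is entire by the rapid decay of Proposition \ref{prop:rapidecay}. The only place where your write-up drifts is the closing step, which is simpler than you make it: the local factors $(1-q_{\mathfrak{p}}^{-s})$ have nothing to cancel, since no local Euler factor of $L(\chi_{0},s)$ has a pole in $\Re (s)>0$ — the pole at $s=1$ for trivial $\chi_{0}$ is a purely global phenomenon — so no place-by-place analysis at archimedean or ramified places is needed. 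One simply applies the identity principle on the connected open set $\{\Re (s)>0\}\setminus\{1\}$, where both sides are holomorphic, and then observes that equality with the entire left-hand side on a punctured neighbourhood of $s=1$ forces $\Delta_{s}'(f)$ to vanish there to the order of the pole of $L(\chi_{0},s)$ (this is where $f(0)=0$ and $\int f\,\mathrm{d}x=0$ enter, via the entirety of the Tate integral on $\mathcal{S}(\mathbb{A}_{K})_{0}$), so the identity extends across $s=1$ as well. With that adjustment your proof is complete and coincides with the argument of the cited reference.
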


\begin{proof}
Cf lemma 2.52 in \cite{connesmarcolli}.
\end{proof}

And we also have the following lemma :

\begin{lemm}\label{lemm:approx}
There exists an approximate unit $(f_{n})_{n\in\mathbb{N}}$ such that for all $n\in\mathbb{N}$, $f_{n}\in\mathcal{S}(C_{K})$, $\hat{f_{n}}$ has compact support and there exists $C>0$ such that $||\theta_{m}(f_{n})||\leq C$ and that $\theta_{m}(f_{n})\to 1$ strongly in $L^{2}_{\delta}(C_{K})$ as $n\to\infty$.
\end{lemm}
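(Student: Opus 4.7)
The plan is to use the abelian Fourier transform on the locally compact group $C_K$, which via the module map splits as $C_K\simeq C_{K,1}\times\mathbb{R}^{\star}_{+}$. Since the Fourier transform sends $\mathcal{S}(C_K)$ bijectively onto $\mathcal{S}(\widehat{C_K})$, constructing $f_n\in\mathcal{S}(C_K)$ whose Fourier transform has compact support is the same as choosing a sequence of smooth compactly supported functions on the dual. I would pick an exhaustion $(K_n)$ of $\widehat{C_K}$ by compact sets, together with $\phi_n \in C_c^\infty(\widehat{C_K})$ with $0\leq\phi_n\leq 1$, $\phi_n\equiv 1$ on $K_n$ and $\mathrm{supp}(\phi_n)$ compact. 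Setting $f_n:=\mathcal{F}^{-1}\phi_n$ then yields $\widehat{f_n}=\phi_n$ with compact support, and $f_n\in\mathcal{S}(C_K)$ by standard Paley--Wiener considerations (smoothness of $\phi_n$ gives rapid decay of $f_n$).

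The operator $\theta_m(f)$ is, in this setting, the operator of right convolution by $f$ on the regular representation realized on $L^2_\delta(C_K)$, which under Fourier transform becomes the multiplication operator by $\widehat{f}$. Hence one obtains immediately $\|\theta_m(f_n)\|\leq\|\phi_n\|_\infty\leq 1$, providing the uniform bound with $C=1$. For the strong convergence $\theta_m(f_n)\to 1$, note that for any $\xi\in L^2_\delta(C_K)$ whose Fourier transform $\widehat{\xi}$ is continuous with compact support, one has $\theta_m(f_n)\xi=\xi$ for every $n$ large enough so that $K_n$ contains $\mathrm{supp}(\widehat{\xi})$. A density argument combined with the uniform bound then extends the convergence to all of $L^2_\delta(C_K)$ via the usual three-epsilon trick.

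The main obstacle will be to handle the weighted norm $L^2_\delta$ instead of plain $L^2$. The weight, which grows polynomially in $|\log|\cdot||$ along the archimedean direction, requires identifying $L^2_\delta(C_K)$ on the Fourier side with a Sobolev-type space on $\widehat{C_K}$. One must verify in particular (i) that $\theta_m(f_n)$ is indeed bounded on $L^2_\delta$ (this reduces to an integration-by-parts argument since $\phi_n$ is smooth, so differentiation against the weight is absorbed by derivatives of $\phi_n$), and (ii) that the vectors $\xi\in L^2_\delta(C_K)$ with $\widehat{\xi}$ of compact support form a dense subspace. Both points follow from the Plancherel theorem adapted to the weighted setting, but their verification is where the actual technical work lies; the resulting approximate unit is then tailored so as to interact correctly with the operators $\theta_m$ used in the spectral realization.
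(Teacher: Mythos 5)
The paper itself offers no proof of this lemma: it is recalled from \cite{trace} and \cite{connesmarcolli}, where it is established essentially along the route you propose (Fourier transform on $C_{K}\simeq C_{K,1}\times\mathbb{R}^{\star}_{+}$, compactly supported cutoffs on the dual group, convolution becoming multiplication). So your overall strategy is the intended one; the problem lies in the quantitative step you treat as routine.

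The concrete gap is the uniform bound. The estimate $\Vert\theta_{m}(f_{n})\Vert\leq\Vert\phi_{n}\Vert_{\infty}\leq 1$ is valid only on the unweighted $L^{2}(C_{K})$. On $L^{2}_{\delta}(C_{K})$ the weight $(1+(\log|x|)^{2})^{\delta/2}$ turns the Fourier side into a Sobolev space of order $\delta/2$ along the $\mathbb{R}$-direction of $\widehat{C_{K}}\simeq\widehat{C_{K,1}}\times\mathbb{R}$ (the factor $\widehat{C_{K,1}}$ being discrete, ``compact support'' there just means finitely many characters), and the norm of multiplication by $\phi_{n}$ on such a space is controlled by norms of the derivatives of $\phi_{n}$ up to order $k>\delta/2$, not by $\Vert\phi_{n}\Vert_{\infty}$. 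Consequently a generic family of cutoffs subordinate to an arbitrary exhaustion does \emph{not} give a constant $C$ independent of $n$: the derivatives of $\phi_{n}$ may blow up as the supports grow. You must build the uniformity into the choice, e.g.\ take $\phi_{n}=\chi\ast 1_{[-n,n]}$ in the real variable for one fixed bump $\chi\in\mathcal{C}^{\infty}_{c}(\mathbb{R})$, tensored with indicators of a growing finite set of characters of $C_{K,1}$; then all $C^{k}$ norms are bounded uniformly in $n$, multiplication by $\phi_{n}$ is uniformly bounded on the weighted space, and your density-plus-uniform-boundedness argument for the strong convergence $\theta_{m}(f_{n})\to 1$ goes through (compactly supported elements are indeed dense in the Sobolev-type space, giving the dense set of vectors fixed by $\theta_{m}(f_{n})$ for large $n$). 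Finally, since $\delta>1$ need not be an even integer, the ``integration by parts against the weight'' should be replaced by, or justified through, the standard fact that multiplication by a function with uniformly bounded derivatives up to order $k>\delta/2$ is bounded on the fractional Sobolev space, e.g.\ by interpolation between integer orders.
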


Now we are able to define the Hilbert space $\mathcal{H}$. First on $\mathcal{S}(\mathbb{A}_{K})_{0}$ we can put the inner product corresponding to the norm  $\Vert f\Vert_{\delta }^{2}=\int_{C_{K}} |\mathcal{E}(f)(x)|^{2}(1+(\log |x|)^{2})^{\frac{\delta }{2}}\text{d}^{\star}x$.

Let us denote $L_{\delta}^{2}(\mathbb{A}_{K}/K^{\star})_{0}$ the separated completion of $\mathcal{S}(\mathbb{A}_{K})_{0}$ with respect to the inner product defined ealier. Let us also define $\theta_{a}$ the representation of $C_{K}$ on $\mathcal{S}(\mathbb{A}_{K})$ given by for $\xi\in\mathcal{S}(\mathbb{A}_{K})$, $\forall\alpha\in C_{K}, \forall x\in\mathbb{A}_{K}, (\theta_{a}(\alpha)\xi)(x)=\xi (\alpha^{-1}x)$.

We can also put the following Sobolev norm on $L_{\delta}^{2}(C_{K})$, $\Vert\xi\Vert_{\delta}^{2}=\int_{C_{K}}|\xi (x)|^{2}(1+(\log |x|)^{2})^{\frac{\delta}{2}}\text{d}^{\star}x$.

Then by construction, the linear map $\mathcal{E}:\mathcal{S}(\mathbb{A}_{K})_{0}\to L_{\delta}^{2}(C_{K})$ satisfies 
for all $f\in\mathcal{S}(\mathbb{A}_{K})_{0}$, $\Vert f\Vert_{\delta}^{2}=\Vert\mathcal{E}(f)\Vert_{\delta}^{2}$. Thus this map extends to an isometry still denoted $\mathcal{E}:L_{\delta}^{2}(\mathbb{A}_{K}/K^{\star})_{0}\to L_{\delta}^{2}(C_{K})$.

Let us also denote $\theta_{m}$ the regular representation of $C_{K}$ on $L_{\delta}^{2}(C_{K})$ .

We have for any $\xi\in L_{\delta}^{2}(C_{K})$ : $$\forall\alpha\in C_{K}, \forall x\in C_{K}, (\theta_{a}(\alpha)\xi)(x)=\xi (\alpha^{-1}x)$$.

We then get for every $f\in L_{\delta}^{2}(\mathbb{A}_{K}/K^{\star})_{0}$, $\alpha\in C_{K}$ and $g\in C_{K}$ that :

$\begin{aligned}
\mathcal{E}(\theta_{a}(\alpha)f)(g) & = & |g|^{\frac{1}{2}}\sum_{q\in K^{\star}}(\theta_{a}(\alpha)f)(qg) \\
 & = & |g|^{\frac{1}{2}}\sum_{q\in K^{\star}}f(\alpha^{-1}qg) \\
 & = & |g|^{\frac{1}{2}}\sum_{q\in K^{\star}}f(q\alpha^{-1}g) \\
 & = & |\alpha |^{\frac{1}{2}}|\alpha^{-1}g|^{\frac{1}{2}}\sum_{q\in K^{\star}}f(q\alpha^{-1}g) \\
 & = & |\alpha |^{\frac{1}{2}}(\theta_{m}(\alpha)\mathcal{E}(f))(g)
\end{aligned}$

Thus we have that $\mathcal{E}\theta_{a}(\alpha)=|\alpha |^{\frac{1}{2}}\theta_{m}(\alpha)\mathcal{E}$. In other words, it shows that the natural representation $\theta_{a}$ of $C_{K}$ on $L_{\delta}^{2}(\mathbb{A}_{K}/K^{\star})_{0}$ corresponds, via the isometry $\mathcal{E}$, to the restriction of $|\alpha |^{\frac{1}{2}}\theta_{m}(\alpha )$ to the invariant subspace given by the range of $\mathcal{E}$.

\begin{defi}
We denote by $\mathcal{H}=L^{2}_{\delta}(C_{K})/\text{Im}(\mathcal{E})$ the cokernel of the map $\mathcal{E}$. Let us denote also $\underline{\theta}_{m}$ the quotient representation of $C_{K}$ on $\mathcal{H}$ and finally let us denote for a character $\chi$ of $C_{K,1}$, $\mathcal{H}_{\chi}=\{h\in \mathcal{H}/\forall g\in C_{K,1}, \theta_{m}(g)h=\chi(g)h\}$
\end{defi}
Since $N_{1}$ is a compact group, we get that :
\begin{prop}
The Hilbert space $\mathcal{H}$ splits as a direct sum $\mathcal{H}=\bigoplus_{\chi\in \widehat{C_{K,1}}}\mathcal{H}_{\chi}$ and the representation $\underline{\theta}_{m}$ decomposes as a direct sum of representation $\underline{\theta}_{m,\chi}:C_{K}\to\text{Aut}(\mathcal{H}_{\chi})$
\end{prop}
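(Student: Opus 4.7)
The plan is to recognize the statement as an instance of Peter--Weyl for the compact abelian group $C_{K,1}$, lifted from $L^2_\delta(C_K)$ down to the quotient $\mathcal{H}$, and then use abelianness of $C_K$ to see that each isotypic component is stable under the full $C_K$-action.

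First I would check that the restriction $\theta_m\big|_{C_{K,1}}$ is a unitary representation on $L^2_\delta(C_K)$. For $\alpha \in C_{K,1}$ one has $|\alpha|=1$, so $\log|\alpha x|=\log|x|$ for all $x\in C_K$, and the change of variable $y=\alpha^{-1}x$ in the Sobolev integral defining $\|\cdot\|_\delta$ shows $\|\theta_m(\alpha)\xi\|_\delta=\|\xi\|_\delta$. Hence $C_{K,1}$ acts unitarily. Next I would verify that the closed subspace $\mathrm{Im}(\mathcal{E})\subset L^2_\delta(C_K)$ is $C_{K,1}$-stable: specializing the intertwining identity $\mathcal{E}\theta_a(\alpha)=|\alpha|^{1/2}\theta_m(\alpha)\mathcal{E}$ to $\alpha \in C_{K,1}$ gives $\mathcal{E}\theta_a(\alpha)=\theta_m(\alpha)\mathcal{E}$, so $\theta_m(\alpha)\big(\mathrm{Im}(\mathcal{E})\big)\subset \mathrm{Im}(\mathcal{E})$. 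Consequently the quotient representation $\underline{\theta}_m\big|_{C_{K,1}}$ on $\mathcal{H}=L^2_\delta(C_K)/\mathrm{Im}(\mathcal{E})$ is a well-defined unitary representation of the compact abelian group $C_{K,1}$.

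Then I would invoke the spectral decomposition (Peter--Weyl for compact abelian groups acting unitarily on a Hilbert space): $\mathcal{H}$ splits orthogonally as a Hilbert direct sum over $\widehat{C_{K,1}}$ of its $\chi$-isotypic components
\[
\mathcal{H}_\chi = \{h\in \mathcal{H}\,/\, \forall g\in C_{K,1},\; \underline{\theta}_m(g)h=\chi(g)h\},
\]
i.e.\ $\mathcal{H}=\bigoplus_{\chi\in\widehat{C_{K,1}}}\mathcal{H}_\chi$. Finally, because $C_K$ is an abelian group (as a quotient of $\mathbb{A}_K^\star$), for any $\alpha\in C_K$ and $\beta\in C_{K,1}$ the operators $\underline{\theta}_m(\alpha)$ and $\underline{\theta}_m(\beta)$ commute, so $\underline{\theta}_m(\alpha)$ preserves each eigenspace of $\underline{\theta}_m(\beta)$; intersecting over $\beta \in C_{K,1}$ shows that each $\mathcal{H}_\chi$ is stable under all of $C_K$. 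This yields the desired decomposition $\underline{\theta}_m=\bigoplus_\chi \underline{\theta}_{m,\chi}$ with $\underline{\theta}_{m,\chi}:C_K\to \mathrm{Aut}(\mathcal{H}_\chi)$.

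The main (mild) obstacle is not the algebraic decomposition but the functional-analytic setup: one must confirm that $L^2_\delta(C_K)$ is genuinely a Hilbert space on which $C_{K,1}$ acts by a strongly continuous unitary representation (so that Peter--Weyl applies), and that $\mathrm{Im}(\mathcal{E})$ is closed (so that $\mathcal{H}$ is Hausdorff and the orthogonal decomposition descends); both points are built into the definition of $\mathcal{H}$ used in \cite{trace} and \cite{connesmarcolli}, and the only property specific to this context, namely the $C_{K,1}$-stability of $\mathrm{Im}(\mathcal{E})$, follows from the intertwining relation recalled above.
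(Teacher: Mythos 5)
Your argument is correct and follows exactly the route the paper intends: the paper states this proposition without proof, simply noting that $C_{K,1}$ is compact (and, in the proof of the spectral theorem, decomposing $L^{2}_{\delta}(C_{K})$ via the projections $P_{\chi_{0}}=\int\bar{\chi}_{0}(\gamma)\theta_{m}(\gamma)\,\mathrm{d}_{1}\gamma$), which is precisely the isotypic decomposition you invoke. Your additional checks --- unitarity of $\theta_{m}\big|_{C_{K,1}}$ for the Sobolev norm, stability of $\mathrm{Im}(\mathcal{E})$ via the intertwining relation $\mathcal{E}\theta_{a}(\alpha)=|\alpha|^{1/2}\theta_{m}(\alpha)\mathcal{E}$, closedness of $\mathrm{Im}(\mathcal{E})$ since $\mathcal{E}$ is an isometry, and commutativity of $C_{K}$ to get $C_{K}$-stability of each $\mathcal{H}_{\chi}$ --- are exactly the details left implicit in the paper.
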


This situation gives rise to operators whose spectra will the critical zeroes of L functions and so the spectral interpretation of it.:

\begin{defi}
Let us define and note $D_{\chi}$ the infinitesimal generator of the restriction of $\underline{\theta}_{m,\chi}$ to $1\times\mathbb{R}^{\star}_{+}\subset C_{K}$, in other words we have for every $\xi\in\mathcal{H}_{\chi}$, $D_{\chi}\xi=\lim_{\epsilon\to 0}\frac{1}{\epsilon}(\underline{\theta}_{m,\chi}-1)\xi$
\end{defi}

Then the central theorem of the spectral realisation of the critical zeroes of the L functions as in \cite{trace} and in \cite{connesmarcolli} is :

\begin{theo}\label{theo:spconnes}
Let $\chi$, $\delta >1$, $\mathcal{H}_{\chi}$ and $D_{\chi}$ as above. Then $D_{\chi}$ has a discrete spectrum and $\text{Sp}(D_{\chi})\subset \imath\mathbb{R}$ is the set of imaginary parts of zeroes of the L-function with Gr\"{o}ssencharacter $\tilde{\chi}$ (the extension of $\chi$ to $C_{K}$) which have real part equal to $\frac{1}{2}$, ie $\rho\in\text{Sp}(D_{\chi})\Leftrightarrow L\left(\tilde{\chi},\frac{1}{2}+\rho\right)=0$ and $\rho\in\imath\mathbb{R}$. Moreover the multiplicity of $\rho$ in $\text{Sp}(D_{\chi})$ is equal to the largest integer $n<1+\frac{\delta}{2}$, $n\leq$ multiplicity of $\frac{1}{2}+\rho$ as a zero of L.
\end{theo}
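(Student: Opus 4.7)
My plan is to follow Connes' strategy from \cite{trace}: reduce the spectral analysis of $D_\chi$ to Mellin analysis on $C_K$, and then use Proposition \ref{prop:2.52} to identify $\mathrm{Im}(\mathcal{E})$ as an ``$L$-multiple'' so that the cokernel $\mathcal{H}_\chi$ localizes on the critical zeros. First I would exploit the decomposition $C_K \simeq C_{K,1} \times \mathbb{R}^{\star}_+$ from the module map. The regular representation $\theta_m$ commutes with the compact action of $C_{K,1}$, so the $\chi$-isotypic component $L^2_\delta(C_K)_\chi$ is stable under the $\mathbb{R}^{\star}_+$ subrepresentation and identifies with a weighted Sobolev space of functions on $\mathbb{R}^{\star}_+$. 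Passing to the Mellin transform in the variable $|x|$ turns the infinitesimal generator of the translation action into multiplication by $s - \frac{1}{2}$, with dual variable $s = \frac{1}{2} + i\tau$, so the unquotiented operator has purely continuous spectrum $i\mathbb{R}$.

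The crucial step is to control $\mathrm{Im}(\mathcal{E})$ in this Mellin picture. By Proposition \ref{prop:2.52}, and its analytic continuation from $\Re s > 1$ to the critical line via the rapid decay of Proposition \ref{prop:rapidecay}, the pairing of $\mathcal{E}(f)$ with the quasi-character $\chi_0(x)|x|^{s-\frac{1}{2}}$ equals, up to a nonzero constant, $L(\chi_0, s)\,\Delta'_s(f)$. Since $\Delta'_s(f)$ is entire in $s$, and since as $f$ ranges over $\mathcal{S}(\mathbb{A}_K)_0$ the functions $\Delta'_s(f)$ can be prescribed essentially freely on the critical strip (using the approximate unit of Lemma \ref{lemm:approx} whose $\hat{f_n}$ have compact support), the image $\mathcal{E}(\mathcal{S}(\mathbb{A}_K)_0)$ is, in Mellin coordinates, the set of functions of the form $L(\chi_0, \tfrac{1}{2}+i\tau)\,g(\tau)$ for $g$ in a suitable space. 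The cokernel $\mathcal{H}_\chi$ is therefore supported exactly on the set $\{\tau \in \mathbb{R} : L(\tilde\chi, \tfrac{1}{2}+i\tau) = 0\}$, and the eigenvalues of $D_\chi$ are precisely the $\rho = i\tau$ in this set, which gives the announced spectral inclusion $\mathrm{Sp}(D_\chi) \subset i\mathbb{R}$ with the stated characterization.

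For the multiplicity statement, at a zero of order $m$ of $L$ at $\frac{1}{2}+\rho$ the classes surviving in the quotient are spanned by the Dirac mass $\delta_\rho$ and its derivatives $\delta'_\rho, \ldots, \delta_\rho^{(m-1)}$; the Sobolev weight $(1+(\log|x|)^2)^{\delta/2}$ makes a derivative of order $n$ a bounded functional on $L^2_\delta$ exactly when $n < 1 + \delta/2$, giving the stated bound $n \leq \mathrm{ord}_{\frac{1}{2}+\rho}L$ and $n < 1 + \delta/2$. The main obstacle is the functional-analytic rigor behind this picture. One must establish that $\mathrm{Im}(\mathcal{E})$ is closed in the Sobolev norm (so that the quotient $\mathcal{H}_\chi$ is Hausdorff), that every zero really does contribute a nonzero class in $\mathcal{H}_\chi$ (surjectivity onto zeros, which requires building explicit eigenvectors concentrated near a prescribed zero via Lemma \ref{lemm:approx}), and that no spurious spectrum enters from potential poles of $L$ at $s=0,1$, which is precisely why one restricts to $\mathcal{S}(\mathbb{A}_K)_0$: the conditions $f(0)=0$ and $\int_{\mathbb{A}_K} f\,dx = 0$ kill the polar contributions and confine the analysis to the critical zeros.
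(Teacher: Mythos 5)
Your plan follows the same overall route as the paper (which reproduces Connes' argument): decompose under the compact group $C_{K,1}$, fix $C_{K}=C_{K,1}\times\mathbb{R}^{\star}_{+}$, pass to Mellin transforms, and use Proposition \ref{prop:2.52} together with the rapid decay of Proposition \ref{prop:rapidecay} and the approximate units of Lemma \ref{lemm:approx} to tie the range of $\mathcal{E}$ to $L(\tilde\chi,s)$. However, as written there is a genuine gap at the central step. You assert that ``$\Delta'_s(f)$ can be prescribed essentially freely'' and hence that, in Mellin coordinates, $\mathcal{E}(\mathcal{S}(\mathbb{A}_K)_0)$ is exactly the set of functions $L(\chi_0,\tfrac12+i\tau)\,g(\tau)$, and you then list the closedness of $\mathrm{Im}(\mathcal{E})$, the fact that every zero contributes a nonzero class, and the exclusion of spurious spectrum as ``obstacles'' to be dealt with. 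Those items are not peripheral rigor: identifying which classes survive in the cokernel and with what multiplicity \emph{is} the theorem, so deferring them leaves the proof incomplete, and the exact description of the image you propose is both unproved and stronger than what is actually needed.

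The paper avoids describing $\mathrm{Im}(\mathcal{E})$ or proving it closed. It computes instead the annihilator of the range inside the dual $L^{2}_{-\delta,\chi_0}(C_K)$: writing $\eta(g)=\tilde\chi_0(g)\psi(|g|)$ and using Mellin inversion, orthogonality to the range becomes $\int_{\mathbb{R}}\Delta_{\frac12+\imath t}(f)\hat\psi(t)\,dt=0$; this is then tested only against the explicit family $f_0\otimes f_b$ with $b\in\mathcal{C}^{\infty}_{c}(\mathbb{R}^{\star}_{+})$ (with $f_0$ built from the $1_{\mathcal{O}_\nu}$ and the ramified components of $\chi_0$, and, in the unramified/trivial case, imposing $\int b(x)|x|\,d^{\star}x=0$ plus a density argument), which yields the distributional equation $L(\chi_0,\tfrac12+\imath t)\hat\psi(t)=0$. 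The classification of distributional solutions of $\phi\alpha=0$ then forces $\hat\psi$ to be a finite combination of $\delta_t^{(k)}$ at the critical zeros, with the cutoff on $k$ coming from the explicit integrability condition $\int(\log|x|)^{2k}(1+|\log|x||^{2})^{-\delta/2}d^{\star}x<\infty$ rather than from your unproved ``bounded functional iff $n<1+\delta/2$'' claim; the converse inclusion (your ``surjectivity onto zeros'') is obtained by differentiating Proposition \ref{prop:2.52}, i.e.\ $(\partial/\partial s)^{a}\Delta'_{s}(f)=0$ at a zero of order $>a$, which exhibits the explicit basis $\eta_{t,a}(x)=\chi_0(x)|x|^{\imath t}(\log|x|)^{a}$ of the annihilator. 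Finally the spectrum and multiplicities of $D_\chi$ are read off from the triangular action of $\theta_m(\lambda)^{t}$ on the $\eta_{t,a}$, not from a ``support of the cokernel'' picture. If you restructure your argument through this duality (annihilator plus transpose representation), the steps you flagged as obstacles disappear, and the multiplicity bound comes out of the explicit computation rather than assertion.
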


\begin{proof}

We follow the proof already of \cite{trace} and in \cite{connesmarcolli} making it more precise with respect to our goal.

We first need to understand the range of $\mathcal{E}$, in order to do that, we consider its orthogonal in the dual space that is $L^{2}_{\delta}(C_{K})$. 

Since the subgroup $C_{K,1}$ of $C_{K}$  is the group the ideles classes of norm 1, $C_{K,1}$ is a compact group and acts by the representation $\theta_{m}$ which is unitary when restricted to $C_{K,1}$.

Therefore we can decompose $L^{2}_{\delta}(C_{K})$ and its dual $L^{2}_{-\delta}(C_{K})$ into the direct of the following subspaces 

$$L^{2}_{\delta,\chi_{0}}(C_{K})=\{\xi\in L^{2}_{\delta}(C_{K});\forall g\in C_{K}, \forall \gamma\in C_{K,1}, \xi(\gamma^{-1}g)=\chi_{0}(\gamma)\xi(g)\}$$

which correspond to the projections $P_{\chi_{0}}=\int_{C_{K}}\bar{\chi_{0}}(\gamma)\theta_{m}(\gamma)\text{d}_{1}\gamma$.

And for the dual : $$L^{2}_{-\delta,\chi_{0}}(C_{K})=\{\xi\in L^{2}_{-\delta}(C_{K});\forall g\in C_{K}, \forall \gamma\in C_{K,1}, \xi(\gamma g)=\chi_{0}(\gamma)\xi(g)\}$$

which correspond to the projections $P_{\chi_{0}}^{\textit{t}}=\int_{C_{K}}\bar{\chi_{0}}(\gamma)\theta_{m}(\gamma)^{\textit{t}}\text{d}_{1}\gamma$.

Here we have used $(\theta_{m}(\gamma)^{\textit{t}}\eta)(x)=\eta(\gamma x)$ which comes from the definition of the transpose $\langle\theta_{m}(\gamma)\xi,\eta\rangle=\langle\xi,\theta_{m}(\gamma)^{\textit{t}}\eta)\rangle$ using $\int_{C_{K}}\xi(\gamma^{-1}x)\eta(x)d^{\star}x=\int_{C_{K}}\xi(y)\eta(\gamma y)d^{\star}y$.

In these formulas one only uses the character $\chi_{0}$ as a character of the compact subgroup $C_{K,1}$ of $C_{K}$. One now chooses non canonically an extension $\tilde{\chi}_{0}$ of $\chi_{0}$ as a character of $C_{K}$ (ie we have $\forall\gamma\in C_{K,1}, \tilde{\chi}_{0}(\gamma )=\chi_{0}(\gamma )$). This choice is not unique and two choices of extensions only differ by a character that is principal (ie of the form $\gamma\mapsto |\gamma |^{is_{0}}$ with $s_{0}\in\mathbb{R}$). We fix a factorization $C_{K}=C_{K,1}\times\mathbb{R}^{\star}_{+}$ and fix $\tilde{\chi}_{0}$ as being equal to $1$ on $\mathbb{R}^{\star}_{+}$.

Then by definition, we can write any element $\eta$ of $L^{2}_{-\delta,\chi_{0}}(C_{K})$ in the form : $$\eta: g\in C_{K} \mapsto \eta(g)=\tilde{\chi}_{0}(g)\psi(|g|)$$ where $\int_{C_{K}}|\psi(|g|)|^{2}(1+(\log |g|)^{2})^{-\delta/2}d^{\star}g<\infty$.

A vector like this $\eta$ is in the orthogonal of the range of $\mathcal{E}$ if and only if : $$\forall f\in\mathcal{S}(\mathbb{A}_{K})_{0},\,\int_{C_{K}}\mathcal{E}(f)(x)\tilde{\chi}_{0}(x)\psi(|x|)d^{\star}x=0$$

Using Mellin inversion formula $\psi(|x|)=\int_{\mathbb{R}}\hat{\psi}(t)|x|^{\imath t}dt$, we can see formally that this last equality becomes equivalent to : $$\int_{C_{K}}\int_{\mathbb{R}}\mathcal{E}(f)(x)\tilde{\chi}_{0}(x)\hat{\psi}(t)|x|^{\imath t}dtd^{\star}x=\int_{\mathbb{R}}\Delta_{\frac{1}{2}+\imath t}(f)\hat{\psi}(t)dt$$

Those formal manipulations are justified  by the use of the approximate units with special properties which appear in a previous lemma \ref{lemm:approx} and the rapid decay of $\mathcal{E}(f)$ of the proposition \ref{prop:rapidecay}.

Thanks to the last formula, we are now looking for nice functions $f\in\mathcal{S}(\mathbb{A}_{K})_{0}$ on which to test the distribution $\int_{C_{K}}\Delta_{\frac{1}{2}+\imath t}\hat{\psi}(t)dt$.

For the finite places, we denote by $P$ the finite set of finite places where $\chi_{0}$ ramifies, we take $f_{0}:=\otimes_{\nu\notin P}1_{\mathcal{O}_{\nu}}\otimes f_{\chi_{0}}$ where $f_{\chi_{0}}$ is the tensor product over ramified  places of the functions equal to $0$ outside $\mathcal{O}_{\nu}^{\star}$ and to $\chi_{0},\nu$ on $\mathcal{O}_{\nu}^{\star}$.

Then by the definition of $\Delta_{s}'$, for any $f\in\mathcal{S}(\mathbb{C})$ we get that  $\langle\Delta_{s}',f_{0}\otimes f\rangle=\int_{C_{K}} f(x)\chi_{0;\infty}(x)|x|^{s}d^{\star}x$. Moreover if the set $P$ of finite ramified places of $\chi_{0}$ is not empty, we have $f_{0}(0)=0$ and $\int_{\mathbb{A}_{K}^{f}}f_{0}(x)dx=0$ so that $f_{0}\otimes f\in\mathcal{S}(\mathbb{A}_{K})_{0}$ for all $f\in\mathcal{S}(\mathbb{C})$. 

We can in fact take a function $f$ of the form $f(x)=b(x)\bar{\chi}_{0,\infty}(x)$ with $b\in\mathcal{C}^{\infty}_{c}(\mathbb{R}^{\star}_{+})$. 

So for any $s\in\mathbb{C}$ such that $\Re s>0$, $\langle\Delta_{s}',f_{0}\otimes f_{b}\rangle =\int_{\mathbb{R}^{\star}_{+}}b(x)|x|^{s}d^{\star}x$.

So when we pair the distribution $\int_{\mathbb{R}}\Delta_{\frac{1}{2}+\imath t}\hat{\psi}(t)dt$ again such functions, we get that :

$\langle\int_{\mathbb{R}}\Delta_{\frac{1}{2}+\imath t}\hat{\psi}(t)dt,f_{0}\otimes f_{b}\rangle =\iint_{C_{K}\times\mathbb{R}} L(\chi_{0},\frac{1}{2}+\imath t)\hat{\psi}(t)b(x)|x|^{\frac{1}{2}+\imath t}d^{\star}xdt$.

But one can see that, if $\chi_{0}|_{C_{K,1}}\neq 1$, $L(\chi_{0},\frac{1}{2}+\imath t)$ is an analytic function of $t$ so the product $L(\chi_{0},\frac{1}{2}+\imath t)\hat{\psi}(t)$ is a tempered distrubution and so is its Fourier transform. Thanks to the last equality, we have that the Fourier transform of $L(\chi_{0},\frac{1}{2}+\imath t)\hat{\psi}(t)$ paired on arbitrary functions which are smooth with compact support equals 0 and so the Fourier transform $L(\chi_{0},\frac{1}{2}+\imath t)\hat{\psi}(t)$ is equal to $0$.

If $\chi_{0}|_{C_{K,1}}= 1$, we need to impose the condition $\int_{\mathbb{A}_{K}} fdx=0$ ie  $\int_{\mathbb{R}^{\star}_{+}} b(x)|x|d^{\star}x=0$ but we can see that the space of functions $b(x)|x|^{\frac{1}{2}}\in\mathcal{C}^{\infty}_{c}(\mathbb{R}^{\star}_{+})$ with the condition $\int_{\mathbb{R}^{\star}_{+}} b(x)|x|d^{\star}x=0$ is dense in $\mathcal{S}(\mathbb{R}^{\star}_{+})$.

Let us now recall that for the equation $\phi(t)\alpha(t)=0$ with $\alpha$ a distribution on $\mathbb{S}^{1}$ and $\phi\in\mathcal{C}^{\infty}(\mathbb{S}^{1})$ which has finitely many zeroes denoted $x_{i}$ of order $n_{i}$ with $i\in I$ with $I$ a finite set , the distributions $\delta_{x_{i}},\delta_{x_{i}}',\ldots,\delta_{x_{i}}^{n_{i}-1}, i\in I$ form a basis of the space of solutions in $\alpha$ (of the equation $\phi(t)\alpha(t)=0$).

Now we can come back to our main study. Thanks to what we have shown before, we now know that for $\eta$ orthogonal to the range of $\mathcal{E}$ and such that $\theta_{m}^{t}(h)(\eta )=\eta$, we have that $\hat{\psi}(t)$ is a distribution with compact support satisfying the equation $L(\chi_{0},\frac{1}{2}+\imath t)\hat{\psi}(t)=0$.

Therefore thanks to what we have recalled, we get that $\hat{\psi}$ is a finite linear combination of the distributions $\delta_{t}^{(k)}$ with $t$ such that $L(\chi_{0},\frac{1}{2}+\imath t)=0$ and $k$ striclty less than the order of the zero of this $L$ function (necessary and sufficient to get the vanishing on the range of $\mathcal{E}$) and also $k<\frac{\delta -1}{2}$ (necessary and sufficient to ensure that $\psi$ belongs to $L^{2}_{-\delta}(\mathbb{R}^{\star}_{+})$, ie $\int_{\mathbb{R}^{\star}_{+}} (\log|x|)^{2k}(1+|\log|x||^{2})^{-\delta/2}d^{\star}x<\infty$).

Conversely, let $s$ be a zero of $L(\chi_{0},s)$ of order $k>0$. Then by the proposition \ref{prop:2.52} and the finiteness and the analyticity of $\Delta_{s}'$ for $\Re s>0$, we get, for $a\in [| 0,k-1|]$ and $f\in\mathcal{S}(\mathbb{A}_{K})_{0}$, that : $\left(\frac{\partial}{\partial s}\right)^{a}\Delta_{s}'(f)=0$. 

We also have that $\left(\frac{\partial}{\partial s}\right)^{a}\Delta_{s}'(f)=\int_{C_{K}}\mathcal{E}(f)(x)\chi_{0}(x)|x|^{s-\frac{1}{2}}(\log|x|)^{a}d^{\star}x$.

Thus $\eta$ belongs to the orthogonal of the range of $\mathcal{E}$ and such that $\theta_{m}^{t}(h)\eta=\eta$ if and only if it is a finite linear combination of functions of the form $$\eta_{t,a}(x)=\chi_{0}(x)|x|^{\imath t}(\log|x|)^{a}$$ where $L\left(\chi_{0},\frac{1}{2}+\imath t\right)=0$ and $a<$ order of the zero $t$ and $a<\frac{\delta -1}{2}$.

Therefore the restriction to the subgroup $\mathbb{R}^{\star}_{+}$ of $C_{K}$ of the transpose of $\theta_{m}$ is given in the above basis $\eta_{t,a}$ by $$\theta_{m}(\lambda)^{t}\eta_{t,a}=\sum_{b=0}^{a}C_{a}^{b}\lambda^{\imath t}(\log(\lambda))^{b}\eta_{t,b-a}$$

Therefore if $L(\chi_{0},\frac{1}{2}+\imath s)\neq 0$ then $\imath s$ does not belong to the spectrum of $D_{\chi_{0}}^{t}$. This determine the spectrum of the operator $D_{\chi_{0}}^{t}$ and so the spectrum of $D_{\chi_{0}}$. Therefore the theorem is proved.
\end{proof}

\subsection{The link between the points of the arithmetic site and the Dedekind zeta function}

Since the class number of $K$ of $1$, we observe that $C_{K,1}$ the ideles classes of norm 1 is given by : $$C_{K,1}=(K^{\star}(\prod_{\mathfrak{p}}\mathcal{O}_{\mathfrak{p}}^{\star}\times \mathbb{S}^{1}))/K^{\star}$$ 

We still denote $\mathcal{H}$ the Hilbert space associated by Connes in \cite{trace} to $\left(\mathbb{A}^{f}_{K}\times\mathbb{C}\right)/K^{\star}$ and whose definition was recalled in the last section.

In the last section we have seen that one can decompose $\mathcal{H}$ in the following way : $\mathcal{H}=\bigoplus_{\chi\in\widehat{C_{K,1}}}\mathcal{H}_{\chi}$ with $\mathcal{H}_{\chi}=\{h\in \mathcal{H}/\forall g\in C_{K,1}, \theta_{m}(g)h=\chi(g)h\}$.

%Let us note $G=\left(\mathbb{Q}(\imath )^{\star}\times\left(\prod^{'}_{\mathfrak{p}\enskip \text{prime}}\mathcal{O}^{\star}_{\mathfrak{p}}\times{1}\right)\right)/\mathbb{Q}(\imath )^{\star}$.

Let us note $G=\left(K^{\star}\times\left(\prod_{\mathfrak{p}\enskip \text{prime}}\mathcal{O}^{\star}_{\mathfrak{p}}\times{1}\right)\right)/K^{\star}$.

We can observe that $C_{K,1}/G\simeq\mathbb{S}^{1}/\mathcal{U}_{K}$.

The main idea here is that we would like to have a spectral interpretation of $\zeta_{K}$ linked to the space of points of the arithmetic site $\left(\widehat{\mathcal{O}_{K}},\mathcal{C}_{\mathcal{O}_{K}}\right)$ over $\mathcal{C}_{K,\mathbb{C}}$ which is by theorem \ref{theo:ptsckc} $$\left(\mathbb{A}^{f}_{K}\times\mathbb{C}/\mathcal{U}_{K}\right)/\left(K^{\star}\times\left(\prod^{'}_{\mathfrak{p}\enskip \text{prime}}\mathcal{O}^{\star}_{\mathfrak{p}}\times{1}\right)\right)$$

In Connes' formalism (\cite{trace}, \cite{connesmarcolli}) and as recalled in the last section, $\mathcal{H}$ is an Hilbert space associated to the adele class space $\left(\mathbb{A}^{f}_{K}\times\mathbb{C}\right)/K^{\star}$ linked with the spectral interpretation of L functions. More precisely if we denote $\chi_{\text{trivial}}\in\widehat{C_{K,1}}$ the trivial character of $\widehat{C_{K,1}}$, then the results of \cite{trace} and \cite{connesmarcolli} show that $\mathcal{H}_{\chi_{\text{trivial}}}$ is associated to the spectral interpretation of $\zeta_{K}$.

\begin{theo}\label{theo:intsp}
We have $\mathcal{H}^{G}=\bigoplus_{\chi\in\widehat{S^{1}/\mathcal{U}_{K}}}\mathcal{H}^{G}_{\chi}$. Then as in \cite{trace} the space $\mathcal{H}^{G}_{\chi}$ corresponds to $L(\chi,\bullet )$, so in particular  when $\chi$ is trivial, $\mathcal{H}^{G}_{\chi}$ corresponds to $\zeta_{K}$ the Dedekind zeta function of $K$.
\end{theo}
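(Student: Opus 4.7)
The approach consists of two steps: first, establishing the direct sum decomposition of $\mathcal{H}^G$ over $\widehat{\mathbb{S}^1/\mathcal{U}_K}$ using the already recalled decomposition of $\mathcal{H}$ over $\widehat{C_{K,1}}$; second, invoking the spectral realization of Theorem \ref{theo:spconnes} on each summand to identify it with the critical zeros of a Hecke $L$-function, recovering $\zeta_K$ in the trivial case.

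For the decomposition, I would start from the decomposition $\mathcal{H} = \bigoplus_{\chi \in \widehat{C_{K,1}}} \mathcal{H}_\chi$ into isotypic components for the compact group $C_{K,1}$. Since $G \subset C_{K,1}$, each $g \in G$ acts on $\mathcal{H}_\chi$ as the scalar $\chi(g)$; therefore $\mathcal{H}_\chi^G$ equals $\mathcal{H}_\chi$ if $\chi|_G \equiv 1$ and equals $\{0\}$ otherwise. The characters of $C_{K,1}$ trivial on $G$ are exactly those that factor through $C_{K,1}/G \simeq \mathbb{S}^1/\mathcal{U}_K$, so Pontrjagin duality gives the identification $\{\chi \in \widehat{C_{K,1}} : \chi|_G = 1\} \simeq \widehat{\mathbb{S}^1/\mathcal{U}_K}$. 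Combining these remarks yields
\begin{equation*}
\mathcal{H}^G \;=\; \bigoplus_{\substack{\chi \in \widehat{C_{K,1}} \\ \chi|_G = 1}} \mathcal{H}_\chi \;=\; \bigoplus_{\chi \in \widehat{\mathbb{S}^1/\mathcal{U}_K}} \mathcal{H}^G_\chi.
\end{equation*}

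For the second step, I would notice that since $G$ commutes with $1 \times \mathbb{R}_+^* \subset C_K$, the infinitesimal generator $D_\chi$ of the action of $1 \times \mathbb{R}_+^*$ preserves each $\mathcal{H}^G_\chi = \mathcal{H}_\chi$. Applying Theorem \ref{theo:spconnes} then says that $\text{Sp}(D_\chi) \subset \imath\mathbb{R}$ consists precisely of those $\rho$ for which $L(\tilde{\chi}, \tfrac{1}{2} + \rho) = 0$, where $\tilde{\chi}$ denotes any extension of $\chi$, viewed as a character of $C_{K,1}$ via the quotient $C_{K,1} \to \mathbb{S}^1/\mathcal{U}_K$, to a Hecke character of $C_K$. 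When $\chi$ is trivial we may take $\tilde{\chi}$ trivial, and then $L(\tilde{\chi}, s) = \zeta_K(s)$.

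The main obstacle I expect, already flagged in Connes' proof of Theorem \ref{theo:spconnes}, is that the extension $\tilde{\chi}$ of $\chi$ from $C_{K,1}$ to $C_K$ is not canonical: two extensions differ by a principal character $g \mapsto |g|^{\imath s_0}$, which translates into a horizontal shift $s \mapsto s + \imath s_0$ in the $L$-function. This shift does not affect the critical line $\Re s = 1/2$ nor the set of critical zeros, so the spectrum of $D_\chi$ is well-defined once a factorization $C_K = C_{K,1} \times \mathbb{R}_+^*$ is fixed as in Theorem \ref{theo:spconnes}. Once this point is clarified, the theorem follows by the direct combination of the $G$-invariance analysis above with Connes' spectral theorem.
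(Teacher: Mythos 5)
Your proposal is correct, but it takes a genuinely shorter route than the paper. The paper does not deduce the theorem from the isotypic decomposition of $\mathcal{H}$ already recorded; instead it re-runs the whole analytic argument of Theorem \ref{theo:spconnes} on the $G$-invariant spaces $L^{2}_{\pm\delta}(C_{K})^{G}$: it decomposes them under $C_{K,1}/G\simeq\mathbb{S}^{1}/\mathcal{U}_{K}$, associates to each $\chi_{0}\in\widehat{\mathbb{S}^{1}/\mathcal{U}_{K}}$ a Gr\"{o}{\ss}encharakter $\widetilde{\chi_{0}}$ (unramified at all finite places; the paper notes that class number $1$ makes this association unique), and then repeats the Mellin-inversion and test-function computation leading to the equation $L(\widetilde{\chi_{0}},\tfrac{1}{2}+\imath t)\hat{\psi}(t)=0$ and to the eigenbasis $\eta_{t,a}$. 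You instead observe that $G\subset C_{K,1}$ acts on each isotypic component $\mathcal{H}_{\chi}$ by the scalar $\chi(g)$, so taking $G$-invariants in $\mathcal{H}=\bigoplus_{\chi\in\widehat{C_{K,1}}}\mathcal{H}_{\chi}$ keeps exactly the $\chi$ with $\chi|_{G}=1$, i.e.\ the characters factoring through $C_{K,1}/G\simeq\mathbb{S}^{1}/\mathcal{U}_{K}$, and then you apply Theorem \ref{theo:spconnes} verbatim to each surviving component. This is legitimate and more economical: it isolates the genuinely new content, namely the purely representation-theoretic identification of which characters (hence which Hecke $L$-functions, the everywhere-unramified ones) survive in $\mathcal{H}^{G}$, whereas the paper's rewriting keeps the analytic mechanism explicit and records how, for class number $1$, a character of $\mathbb{S}^{1}/\mathcal{U}_{K}$ determines the corresponding Gr\"{o}{\ss}encharakter. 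One small inaccuracy in your discussion of the extension: replacing $\tilde{\chi}$ by $\tilde{\chi}\,|\cdot|^{\imath s_{0}}$ translates the $L$-function by $\imath s_{0}$ and therefore does move the critical zeros (vertically along the line $\Re s=\tfrac{1}{2}$), so the spectrum of $D_{\chi}$ does depend on the choice of extension; your remedy of fixing the factorization $C_{K}=C_{K,1}\times\mathbb{R}^{\star}_{+}$ and taking $\tilde{\chi}$ trivial on $\mathbb{R}^{\star}_{+}$, as in Theorem \ref{theo:spconnes}, is exactly what is needed and matches the paper's convention of writing $L(\chi_{0},\bullet)$ for $L(\widetilde{\chi_{0}},\bullet)$.
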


\begin{proof}
We adapt here the same strategy as in the proof of theorem \ref{theo:spconnes}: 

In our case, let us consider $L^{2}_{\delta}(C_{K})^{G}$ (stable under the action of $G$) and $L^{2}_{-\delta}(C_{K})^{G}$ (stable under the action of $G$).

Since the subgroup $C_{K,1}G\simeq \mathbb{S}^{1}/\mathcal{U}_{K}$ thanks to $\theta_{m}$ (as recalled earlier $\theta_{m}$ denotes the regular representation of $C_{K}$ on $L^{2}_{\delta}(C_{K})$).

Therefore we can decompose $L^{2}_{\delta}(C_{K})^{G}$ and its dual $L^{2}_{-\delta}(C_{K})^{G}$ into the direct of the following subspaces 

$$L^{2}_{\delta,\chi_{0}}(C_{K})=\{\xi\in L^{2}_{\delta}(C_{K});\forall g\in C_{K}, \forall \gamma\in C_{K,1}/G, \xi(\gamma^{-1}g)=\chi_{0}(\gamma)\xi(g)\}$$

And for the dual : $$L^{2}_{-\delta,\chi_{0}}(C_{K})=\{\xi\in L^{2}_{-\delta}(C_{K});\forall g\in C_{K}, \forall \gamma\in C_{K,1}/G, \xi(\gamma g)=\chi_{0}(\gamma)\xi(g)\}$$

Let us also recall that to a character $\chi_{0}\in \widehat{C_{K,1}/G}\simeq\widehat{\mathbb{S}^{1}/\mathcal{U}_{K}}$, one can uniquely associate a Gr\"{o}{\ss}encharakter $\widetilde{\chi_{0}}$ (the conditions being a Gr\"{o}{\ss}encharakter and $K$ being class number $1$ give that the non archimedean part of $\widetilde{\chi_{0}}$ is completely determined by the archimedean part which is $\chi_{0}$).

From now on we can follow exactly the same strategy as the one used in the proof of \ref{theo:spconnes} :

Then by definition, we can write any element $\eta$ of $L^{2}_{-\delta,\chi_{0}}(C_{K})^{G}$ in the form : $$\eta: g\in C_{K} \mapsto \eta(g)=\tilde{\chi}_{0}(g)\psi(|g|)$$ where $\int_{C_{K}}|\psi(|g|)|^{2}(1+(\log |g|)^{2})^{-\delta/2}d^{\star}g<\infty$.

A vector like this $\eta$ is in the orthogonal of the range of $\mathcal{E}$ if and only if : $$\forall f\in\mathcal{S}(\mathbb{A}_{K})_{0},\,\int_{C_{K}}\mathcal{E}(f)(x)\tilde{\chi}_{0}(x)\psi(|x|)d^{\star}x=0$$

Using Mellin inversion formula $\psi(|x|)=\int_{\mathbb{R}}\hat{\psi}(t)|x|^{\imath t}dt$, we can see formally that this last equality becomes equivalent to : $$\int_{C_{K}}\int_{\mathbb{R}}\mathcal{E}(f)(x)\tilde{\chi}_{0}(x)\hat{\psi}(t)|x|^{\imath t}dtd^{\star}x=\int_{\mathbb{R}}\Delta_{\frac{1}{2}+\imath t}(f)\hat{\psi}(t)dt$$

Those formal manipulations are justified  by the use of the approximate units with special properties which appear in a previous lemma \ref{lemm:approx} and the rapid decay of $\mathcal{E}(f)$ of the proposition \ref{prop:rapidecay}.

Thanks to the last formula, we are now looking for nice functions $f\in\mathcal{S}(\mathbb{A}_{K})_{0}$ on which to test the distribution $\int_{C_{K}}\Delta_{\frac{1}{2}+\imath t}\hat{\psi}(t)dt$.

For the finite places, we denote by $P$ the finite set of finite places where $\widetilde{\chi_{0}}$ ramifies, we take $f_{0}:=\otimes_{\nu\notin P}1_{\mathcal{O}_{\nu}}\otimes f_{\widetilde{\chi_{0}}}$ where $f_{\widetilde{\chi_{0}}}$ is the tensor product over ramified  places of the functions equal to $0$ outside $\mathcal{O}_{\nu}^{\star}$ and to $\widetilde{\chi_{0}}_{\nu}$ on $\mathcal{O}_{\nu}^{\star}$.

Then by the definition of $\Delta_{s}'$, for any $f\in\mathcal{S}(\mathbb{C})$ we get that  $\langle\Delta_{s}',f_{0}\otimes f\rangle=\int_{C_{K}} f(x)\chi_{0}(x)|x|^{s}d^{\star}x$. Moreover if the set $P$ of finite ramified places of $\widetilde{\chi_{0}}$ is not empty, we have $f_{0}(0)=0$ and $\int_{\mathbb{A}_{K}^{f}}f_{0}(x)dx=0$ so that $f_{0}\otimes f\in\mathcal{S}(\mathbb{A}_{K})_{0}$ for all $f\in\mathcal{S}(\mathbb{C})$. 

We can in fact take a function $f$ of the form $f(x)=b(x)\chi_{0}(x)$ with $b\in\mathcal{C}^{\infty}_{c}(\mathbb{R}^{\star}_{+})$. 

So for any $s\in\mathbb{C}$ such that $\Re s>0$, $\langle\Delta_{s}',f_{0}\otimes f_{b}\rangle =\int_{\mathbb{R}^{\star}_{+}}b(x)|x|^{s}d^{\star}x$.

So when we pair the distribution $\int_{\mathbb{R}}\Delta_{\frac{1}{2}+\imath t}\hat{\psi}(t)dt$ again such functions, we get that :

$\langle\int_{\mathbb{R}}\Delta_{\frac{1}{2}+\imath t}\hat{\psi}(t)dt,f_{0}\otimes f_{b}\rangle =\iint_{C_{K}\times\mathbb{R}} L(\widetilde{\chi_{0}},\frac{1}{2}+\imath t)\hat{\psi}(t)b(x)|x|^{\frac{1}{2}+\imath t}d^{\star}xdt$.

But one can see that, if $\widetilde{\chi_{0}}$ is non trivial, $L(\widetilde{\chi_{0}},\frac{1}{2}+\imath t)$ is an analytic function of $t$ so the product $L(\widetilde{\chi_{0}},\frac{1}{2}+\imath t)\hat{\psi}(t)$ is a tempered distrubution and so is its Fourier transform. Thanks to the last equality, we have that the Fourier transform of $L(\widetilde{\chi_{0}},\frac{1}{2}+\imath t)\hat{\psi}(t)$ paired on arbitrary functions which are smooth with compact support equals 0 and so the Fourier transform $L(\widetilde{\chi_{0}},\frac{1}{2}+\imath t)\hat{\psi}(t)$ is equal to $0$.

If $\widetilde{\chi_{0}}$ is trivial, we need to impose the condition $\int_{\mathbb{A}_{K}} fdx=0$ ie  $\int_{\mathbb{R}^{\star}_{+}} b(x)|x|d^{\star}x=0$ but we can see that the space of functions $b(x)|x|^{\frac{1}{2}}\in\mathcal{C}^{\infty}_{c}(\mathbb{R}^{\star}_{+})$ with the condition $\int_{\mathbb{R}^{\star}_{+}} b(x)|x|d^{\star}x=0$ is dense in $\mathcal{S}(\mathbb{R}^{\star}_{+})$.

Let us now recall that for the equation $\phi(t)\alpha(t)=0$ with $\alpha$ a distribution on $\mathbb{S}^{1}$ and $\phi\in\mathcal{C}^{\infty}(\mathbb{S}^{1})$ which has finitely many zeroes denoted $x_{i}$ of order $n_{i}$ with $i\in I$ with $I$ a finite set , the distributions $\delta_{x_{i}},\delta_{x_{i}}',\ldots,\delta_{x_{i}}^{n_{i}-1}, i\in I$ form a basis of the space of solutions in $\alpha$ (of the equation $\phi(t)\alpha(t)=0$).

Now we can come back to our main study. Thanks to what we have shown before, we now know that for $\eta$ orthogonal to the range of $\mathcal{E}$ and such that $\theta_{m}^{t}(h)(\eta )=\eta$, we have that $\hat{\psi}(t)$ is a distribution with compact support satisfying the equation $L(\widetilde{\chi_{0}},\frac{1}{2}+\imath t)\hat{\psi}(t)=0$.

Therefore thanks to what we have recalled, we get that $\hat{\psi}$ is a finite linear combination of the distributions $\delta_{t}^{(k)}$ with $t$ such that $L(\widetilde{\chi_{0}},\frac{1}{2}+\imath t)=0$ and $k$ striclty less than the order of the zero of this $L$ function (necessary and sufficient to get the vanishing on the range of $\mathcal{E}$) and also $k<\frac{\delta -1}{2}$ (necessary and sufficient to ensure that $\psi$ belongs to $L^{2}_{-\delta}(\mathbb{R}^{\star}_{+})$, ie $\int_{\mathbb{R}^{\star}_{+}} (\log|x|)^{2k}(1+|\log|x||^{2})^{-\delta/2}d^{\star}x<\infty$).

Conversely, let $s$ be a zero of $L(\widetilde{\chi_{0}},s)$ of order $k>0$. Then by the proposition \ref{prop:2.52} and the finiteness and the analyticity of $\Delta_{s}'$ for $\Re s>0$, we get, for $a\in [| 0,k-1|]$ and $f\in\mathcal{S}(\mathbb{A}_{K})_{0}$, that : $\left(\frac{\partial}{\partial s}\right)^{a}\Delta_{s}'(f)=0$. 

We also have that $\left(\frac{\partial}{\partial s}\right)^{a}\Delta_{s}'(f)=\int_{C_{K}}\mathcal{E}(f)(x)\widetilde{\chi_{0}}(x)|x|^{s-\frac{1}{2}}(\log|x|)^{a}d^{\star}x$.

Thus $\eta$ belongs to the orthogonal of the range of $\mathcal{E}$ and such that $\theta_{m}^{t}(h)\eta=\eta$ if and only if it is a finite linear combination of functions of the form $$\eta_{t,a}(x)=\widetilde{\chi_{0}}(x)|x|^{\imath t}(\log|x|)^{a}$$ where $L\left(\widetilde{\chi_{0}},\frac{1}{2}+\imath t\right)=0$ and $a<$ order of the zero $t$ and $a<\frac{\delta -1}{2}$.

Therefore the restriction to the subgroup $\mathbb{R}^{\star}_{+}$ of $C_{K}$ of the transpose of $\theta_{m}$ is given in the above basis $\eta_{t,a}$ by $$\theta_{m}(\lambda)^{t}\eta_{t,a}=\sum_{b=0}^{a}C_{a}^{b}\lambda^{\imath t}(\log(\lambda))^{b}\eta_{t,b-a}$$

Therefore if $L(\widetilde{\chi_{0}},\frac{1}{2}+\imath s)\neq 0$ then $\imath s$ does not belong to the spectrum of $D_{\widetilde{\chi_{0}}}^{t}$. This determine the spectrum of the operator $D_{\widetilde{\chi_{0}}}^{t}$ and so the spectrum of $D_{\widetilde{\chi_{0}}}$. Therefore the theorem is proved. Let us remark that in the rest of the thesis and in the theorem we will make an abuse of notation and write $L(\chi_{0},\bullet)$ instead of $L(\widetilde{\chi_{0}},\bullet)$.
\end{proof}

\section{Link between $\text{Spec}\left(\mathcal{O}_{K}\right)$ and the arithmetic site}

In this section, $K$ will still denote an imaginary quadratic number field with class number 1.

We consider the Zariski topos $\text{Spec}\left(\mathcal{O}_{K}\right)$.

Let us denote for any prime ideal $\mathfrak{p}$ of $\mathcal{O}_{K}$,  $H(\mathfrak{p}):=\{ h\in K/ \alpha_{\mathfrak{p}} h\in\widetilde{\mathcal{O}_{K}}\}$ where $\alpha_{\mathfrak{p}}\in\mathbb{A}^{f}_{K}$ is the finite adele whose components are all equal to $1$ except at $\mathfrak{p}$ where the component vanishes.

\begin{defi}
Let us denote $\mathcal{S}_{K}$ the sheaf of sets on $\text{Spec}\left(\mathcal{O}_{K}\right)$ which assigns for each Zariski open set $U\subset\text{Spec}\left(\mathcal{O}_{K}\right)$ the set $\Gamma\left( U,\mathcal{S}_{K}\right) :=\{U\ni\mathfrak{p}\mapsto\xi_{\mathfrak{p}}\in H(\mathfrak{p})/\xi_{\mathfrak{p}}\neq 0 \enskip\text{for finitely many prime ideals}\enskip\mathfrak{p}\in U\}$. The action of $\mathcal{O}_{K}$ on the sections is done pointwise.
\end{defi}

\begin{theo}\label{theo:liensiteun}
The functor $T:\mathcal{O}_{K}\to\mathcal{S}h\left(\text{Spec}\left(\mathcal{O}_{K}\right)\right)$ which associates to the only object $\star$ of the small category $\mathcal{O}_{K}$ (the one already considered earlier where $\mathcal{O}_{K}$ is used as a mono\"{i}d with respect to the multiplication law), the sheaf $\mathcal{S}_{K}$ and to endomorphisms of $\star$ the action of $\mathcal{O}_{K}$ on $\mathcal{S}_{K}$ is filtering and so defines a geometric morphism $\Theta : \text{Spec}\left(\mathcal{O}_{K}\right)\to\widehat{\mathcal{O}_{K}}$. The image of a point $\mathfrak{p}$ of $\text{Spec}\left(\mathcal{O}_{K}\right)$ associated to the prime ideal $\mathfrak{p}$ of $\mathcal{O}_{K}$ is the point of $\widehat{\mathcal{O}_{K}}$ associated to the $\mathcal{O}_{K}$ module $H(\mathfrak{p})\subset K$.
\end{theo}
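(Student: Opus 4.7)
The plan is to invoke the standard correspondence (Diaconescu's theorem, \cite{sga4} or \cite{mlm} Ch.~VII) between geometric morphisms $\mathrm{Sh}(\mathrm{Spec}(\mathcal{O}_K))\to\widehat{\mathcal{O}_K}$ and filtering functors $\mathcal{O}_K\to\mathrm{Sh}(\mathrm{Spec}(\mathcal{O}_K))$. Since $\mathcal{O}_K$ is a one-object category, filtering for $T$ translates, via the internal language, into three pointwise conditions on $\mathcal{S}_K$ and the multiplicative action of $\mathcal{O}_K$ on it. These are the exact analogues of conditions (i)--(iii) recalled at the beginning of Section~3, but now interpreted for local sections rather than elements of a set.

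The first step is to verify these three conditions for $T$. Condition (i) is immediate, because $0\in\mathcal{S}_K(U)$ for every open $U\subset\mathrm{Spec}(\mathcal{O}_K)$, so $\mathcal{S}_K\to 1$ is an epimorphism. For condition (ii), given two sections $\xi,\eta\in\mathcal{S}_K(U)$, I would use that their combined support $S=\{\mathfrak{p}\mid\xi_\mathfrak{p}\neq0\text{ or }\eta_\mathfrak{p}\neq0\}$ is finite to cover $U$ by open sets $(V_i)$ each containing at most one element of $S$; on each $V_i$ the problem reduces to finding, at the unique point $\mathfrak{p}\in S\cap V_i$, elements $u_i,v_i\in\mathcal{O}_K$ and $z\in H(\mathfrak{p})$ with $\xi_\mathfrak{p}=u_iz$ and $\eta_\mathfrak{p}=v_iz$. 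This is exactly the principality argument already used in the essential surjectivity part of the proof of Theorem~\ref{theo:ptsscd}: the finitely generated sub-$\mathcal{O}_K$-module $\mathcal{O}_K\xi_\mathfrak{p}+\mathcal{O}_K\eta_\mathfrak{p}\subset H(\mathfrak{p})$ is principal, and its generator $z$ does the job. For condition (iii), if $u\xi=v\xi$ in $\mathcal{S}_K(U)$, one separates $U$ into the open complement of $\mathrm{supp}(\xi)$ (where one takes $w=0$) and small neighbourhoods of each point of $\mathrm{supp}(\xi)$ (where $\xi_\mathfrak{p}\neq0$ in the field $K$, forcing $u=v$, so $w=1$ works).

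Once $T$ is filtering, Diaconescu's theorem directly yields the geometric morphism $\Theta:\mathrm{Spec}(\mathcal{O}_K)\to\widehat{\mathcal{O}_K}$. To compute the image of a point $\mathfrak{p}$, I would unpack that the pullback of $\Theta$ along the point $\mathfrak{p}\to\mathrm{Spec}(\mathcal{O}_K)$ is the flat functor $\mathcal{O}_K\to\mathfrak{Sets}$ sending $\star$ to the stalk $\mathcal{S}_{K,\mathfrak{p}}$, and conclude with Theorem~\ref{theo:ptsscd}. It then remains to identify $\mathcal{S}_{K,\mathfrak{p}}=H(\mathfrak{p})$. The evaluation-at-$\mathfrak{p}$ map $\mathcal{S}_{K,\mathfrak{p}}\to H(\mathfrak{p})$ is surjective because, for any $h\in H(\mathfrak{p})$ and any open $U\ni\mathfrak{p}$, the section of $\mathcal{S}_K$ on $U$ that is equal to $h$ at $\mathfrak{p}$ and zero elsewhere lies in $\mathcal{S}_K(U)$ (its support is the single point $\mathfrak{p}$, hence finite); it is injective because two sections with equal value at $\mathfrak{p}$ differ by a section whose support is a finite set not containing $\mathfrak{p}$, hence becomes zero on the complement of that finite set, which is still a neighbourhood of $\mathfrak{p}$.

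The main obstacle I expect is a careful verification of condition (ii): one has to make sure that the element $z\in K$ produced by principality actually lies in $H(\mathfrak{p})$ and not merely in the fraction field, and that the construction can be carried out compatibly with the sheaf restriction so as to yield a genuine local section of $\mathcal{S}_K$ on each $V_i$ respecting the finite-support requirement. The other two filtering axioms and the stalk computation are essentially routine, and the identification of the image of a point is then a direct translation through Theorem~\ref{theo:ptsscd}.
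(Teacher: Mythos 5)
Your proposal is correct and follows essentially the same route as the paper: verify the three filtering conditions using the finite support of sections of $\mathcal{S}_{K}$ together with the flatness/principality argument for the modules $H(\mathfrak{p})$ already established in Theorem \ref{theo:ptsscd}, then invoke the MacLane--Moerdijk (Diaconescu) correspondence to obtain $\Theta$ and identify the image of $\mathfrak{p}$ via the stalk $\mathcal{S}_{K,\mathfrak{p}}=H(\mathfrak{p})$. The only differences are cosmetic: in condition (iii) the paper argues globally (a nonzero stalk value forces $u=v$ on all of $U$) rather than covering by supports, and your explicit verification that the stalk equals $H(\mathfrak{p})$ is a detail the paper states more briefly.
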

\begin{proof}
To check that the functor $T$ is filtering, we adapt in the very same way as in \cite{ccsitearith} the definition VII 8.1 of \cite{mlm} of the three filtering conditions for a functor and the lemma VII 8.4 of \cite{mlm} where those conditions are reformulated and apply it to our very special case where $\mathcal{O}_{K}$ is the small category which has only a single object $\star$ and $\mathcal{O}_{K}$ (the ring of integers of $K$) as endomorphism and where its image under $T$ is the object $T(\star )=\mathcal{S}$ of $\text{Spec}(\mathcal{O}_{K})$ to get that $T$ is filtering if and only if it respects the three following conditions:
\begin{enumerate}
\item For any open set $U$ of $\text{Spec}(\mathcal{O}_{K})$ there exists a covering $\{ U_{j}\}$ of $U$ and sections $\xi_{j}\in\Gamma (U_{j},\mathcal{S})$
\item For any open set $U$ of $\text{Spec}(\mathcal{O}_{K})$ and sections $c,d\in\Gamma (U,\mathcal{S})$, there exists a covering $\{ U_{j}\}$ of $U$ and for each $j$ arrows $u_{j},v_{j}:\star\to\star$ in $\mathcal{O}_{K}$ and a section $b_{j}\in\Gamma (U_{j},\mathcal{S})$ such that $c|_{U_{j}}=T(u_{j})b_{j}$ and $d|_{U_{j}}=T(v_{j})b_{j}$
\item Given two arrows $u,v:\star\to\star$ in $\mathcal{O}_{K}$ and a section $c\in\Gamma (U,\mathcal{S})$ with $T(u)c=T(v)c$, there exists a covering $\{ U_{j}\}$ of $U$ and for each $j$ an arrow $w_{j}:\star\to\star$ and a section $z_{j}\in\Gamma (U_{j},\mathcal{S})$ such that for each $j$, $T(w_{j})z_{j}=c|_{U_{j}}$ and $u\circ w_{j}=v\circ w_{j}\in\text{Hom}_{\mathcal{O}_{K}}(\star ,\star)$
\end{enumerate}
So to check that $T$ is filtering, all we have to do now is to check the three filtering conditions.
\begin{itemize}
\item[$\bullet$] Let us check (i).
Let $U$ be a non empty open set of $\text{Spec}(\mathcal{O}_{K})$, then the $0$ section, ie the section hose value at each prime ideal is $0$, is an element of $\Gamma (U,\mathcal{S})$ and so by considering $U$ itself as a cover of $U$ we have shown (i).
\item[$\bullet$] Let us check (ii)

Let $U$ be a non empty open set of $\text{Spec}(\mathcal{O}_{K})$.

Let $c,d\in\Gamma (U,\mathcal{S})$ two sections of $\mathcal{S}$ over $U$.

Then there exists a finite set $E\subset U$ of prime ideals of $\mathcal{O}_{K}$ such that both $c$ and $d$ vanish in the complement $V:=U\backslash E$ of $E$.

$V$ is a non empty set of $\text{Spec}(\mathcal{O}_{K})$ and let us note for each $\mathfrak{p}\in E$, $U_{\mathfrak{p}}:=V\cup\{\mathfrak{p}\}\subset U$.

By construction the collection $\{ U_{\mathfrak{p}}\}_{\mathfrak{p}\in E}$ form an open covering of $U$.

Then the restriction of the section $c$ and $d$ to $U_{\mathfrak{p}}$ are only determined by their value at $\mathfrak{p}$ since they vanish at every other point of $U_{\mathfrak{p}}$. Moreover given an element $b\in\mathcal{S}_{\mathfrak{p}}$, one can extend it uniquely to a section of $\mathcal{S}$ on on $U_{\mathfrak{p}}$ which vanishes on the complement of $\mathfrak{p}$. 

So finally for each $\mathfrak{p}$, thanks to the property (ii) of flatness of the functor associated to the stalk $\mathcal{S}_{\mathfrak{p}}=H_{\mathfrak{p}}$, as required we get that there exists arrows $u_{\mathfrak{p}},v_{\mathfrak{p}}\in\mathcal{O}_{K}$ and a section $b_{\mathfrak{p}}\in\Gamma (U_{\mathfrak{p}},\mathcal{S})$ such that $c|_{U_{\mathfrak{p}}}=T(u_{\mathfrak{p}})b_{\mathfrak{p}}$ and $d|_{U_{\mathfrak{p}}}=T(v_{\mathfrak{p}})b_{\mathfrak{p}}$. Since $\{ U_{\mathfrak{p}}\}_{\mathfrak{p}\in E}$ is an open cover of $U$, we finally get (ii).
\item[$\bullet$] Let us now check (iii)

Let $U$ be an open set of $\text{Spec}(\mathcal{O}_{K})$, let $c\in\Gamma (U,\mathcal{S})$ and $u,v\in\mathcal{O}_{K}$ such that $T(u)c=T(v)c$.
\begin{itemize}
\item[$\star$] Let us assume first that there is a prime ideal $\mathfrak{p}$ of $\mathcal{O}_{K}$ such that $c_{\mathfrak{p}}\neq 0$.

Then by property (iii) of flatness of the functor associated $\mathcal{S}_{\mathfrak{p}}=H_{\mathfrak{p}}$, let $\tilde{w}\in\mathcal{O}_{K}$ and $\tilde{z}_{\mathfrak{p}}\in H_{\mathfrak{p}}$ such that $T(w)\tilde{z}_{\mathfrak{p}}=c_{\mathfrak{p}}$ and $u\tilde{w}=v\tilde{w}$.

We cannot have $\tilde{w}=0$ because then we could have $c_{\mathfrak{p}}=0$ which is impossible.

So we have that $\tilde{w}=\neq 0$ and so that $u=v$.

And then we take $U$ itself as its own cover and with the notations of (iii) we take $z=c$ and $w=1\in\mathcal{O}_{K}$ and so (iii) is checked in this case.
\item[$\star$] Otherwise $c$ is the zero section.

In this case we take $U$ itself as its own cover and with the notation of (iii) we take $z=0$ and $w=0$.
\end{itemize}
\end{itemize}
We have thus shown that $T:\mathcal{O}_{K}\to\text{Spec}(\mathcal{O}_{K})$ respects the conditions (i), (ii) and (iii), so $T$ is filtering.

Then by theorem VII 9.1 of \cite{mlm} we get that  $T$ is flat.

Therefore by theorem VII 7.2 of \cite{mlm}, we get that $T$ defines a geometric morphism $\Theta :\text{Spec}(\mathcal{O}_{K})\to\widehat{\mathcal{O}_{K}}$.

Similarly to \cite{ccsitearith}, the image of a point $\mathfrak{p}$ of $\text{Spec}(\mathcal{O}_{K})$ is the point of $\widehat{\mathcal{O}_{K}}$ whose associated flat functor $F:\mathcal{O}_{K}\to\mathfrak{Sets}$ is the composition of the functor $T:\mathcal{O}_{K}\to\text{Spec}(\mathcal{O}_{K})$ with the stalk functor at $\mathfrak{p}$. This last funcotr associates to any sheaf on $\text{Spec}(\mathcal{O}_{K})$ its stlak at the point $\mathfrak{p}$ viewed as a set, so we get that $F$ is the flat functor from $\mathcal{O}_{K}$ to $\mathfrak{Sets}$ associated to the stalk $\mathcal{S}_{\mathfrak{p}}=H_{\mathfrak{p}}$.

All is proven.
\end{proof}

\begin{theo}\label{theo:liensitedeux}
Let us note $\Theta^{\star}(\mathcal{C}_{\mathcal{O}_{K}})$ the pullback of the structure sheaf of $\left(\widehat{\mathcal{O}_{K}},\mathcal{C}_{\mathcal{O}_{K}}\right)$. Then:
\begin{enumerate}
\item The stalk of $\Theta^{\star}(\mathcal{C}_{\mathcal{O}_{K}})$ at the prime $\mathfrak{p}$ is the semiring $\mathcal{C}_{H_{\mathfrak{p}}}$ and at the generic point it is $\mathbb{B}$.
\item The sections $\xi$  of $\Theta^{\star}(\mathcal{C}_{\mathcal{O}_{K}})$ on an open set $U$ of $\text{Spec}(\mathcal{O}_{K})$ are the maps $U\ni\mathfrak{p}\mapsto\xi_{\mathfrak{p}}\in\mathcal{C}_{H_{\mathfrak{p}}}$ which are either equal to $\{ 0\}$ outside a finite set or everywhere equal to the constant section $\xi_{\mathfrak{p}}=\emptyset\in\mathcal{C}_{H_{\mathfrak{p}}},\,\forall\mathfrak{p}\in U$
\end{enumerate}
\end{theo}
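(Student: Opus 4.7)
The approach is to combine the construction of $\Theta$ from Theorem~\ref{theo:liensiteun} with the stalkwise description already obtained in Theorem~\ref{theo:strshptone}. Recall that, because $\Theta$ arises from the flat functor $T:\mathcal{O}_{K}\to\mathcal{S}h(\text{Spec}(\mathcal{O}_{K}))$ with $T(\star)=\mathcal{S}_{K}$, the inverse image $\Theta^{\star}(\mathcal{C}_{\mathcal{O}_{K}})$ is the sheafification of the presheaf $U\mapsto(\mathcal{C}_{\mathcal{O}_{K}}\times\mathcal{S}_{K}(U))/\sim$, with $(C\cdot k,\xi)\sim(C,k\cdot\xi)$ for $k\in\mathcal{O}_{K}$.

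For part~(1), general topos theory (theorem VII.7.2 and the stalk formula of \cite{mlm}) gives that the stalk of $\Theta^{\star}(\mathcal{C}_{\mathcal{O}_{K}})$ at $x\in\text{Spec}(\mathcal{O}_{K})$ coincides with the stalk of $\mathcal{C}_{\mathcal{O}_{K}}$ at the image point $\Theta(x)$. At a closed point $\mathfrak{p}$, Theorem~\ref{theo:liensiteun} identifies $\Theta(\mathfrak{p})$ with the point of $\widehat{\mathcal{O}_{K}}$ attached to the module $H(\mathfrak{p})$, whence Theorem~\ref{theo:strshptone} gives the stalk $\mathcal{C}_{H(\mathfrak{p})}$. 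For the generic point, I would first observe that the stalk of $\mathcal{S}_{K}$ at $(0)$ is the trivial $\mathcal{O}_{K}$-module: any germ is represented by a section $\xi$ whose finite support $F$ can be avoided by shrinking to $U\setminus F$, which still contains $(0)$. Applying Theorem~\ref{theo:strshptone} to the module $\{0\}$ then yields $\mathcal{C}_{\{0\}}=\{\emptyset,\{0\}\}\simeq\mathbb{B}$.

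For part~(2), I would first verify that the two families of maps described are indeed sections. The elementary section $[C,\xi]$ evaluates at $\mathfrak{p}$ to $\xi_{\mathfrak{p}}\cdot C\in\mathcal{C}_{H(\mathfrak{p})}$. When $C=\emptyset$, Lemma 4.2 forces $\xi_{\mathfrak{p}}\cdot\emptyset=\emptyset$ for every $\mathfrak{p}$, yielding the everywhere-empty section. When $C\neq\emptyset$, outside the finite support of $\xi$ we get $0\cdot C=\{0\}$, so we land in the first family. Using the pointwise semiring operations and the fact that $\mathcal{C}_{H(\mathfrak{p})}$ is generated by the $h\cdot D_{K}$ with $h\in H(\mathfrak{p})$ (Theorem~\ref{theo:strshptone}), one produces every map of the first family by combining elementary sections $[D_{K},\eta_{j}]$ with the $\eta_{j}$ supported at single primes. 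Conversely, every section is locally of the form $[C,\xi]$; since $\text{Spec}(\mathcal{O}_{K})$ is Noetherian, any open cover admits a finite subcover, and a finite union of finite supports is finite, so the ``finite support outside which $\{0\}$'' property is preserved under gluing.

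The main obstacle is the sheafification step: one must check that the two families listed are stable under gluing and that no mixed section exists, i.e.\ that a local piece of type ``$\{0\}$ outside a finite set'' cannot glue with a piece of everywhere-empty type on their intersection. This is exactly the asymmetry between $\emptyset$ (absorbing for $+$, neutral for $\text{Conv}$) and $\{0\}$ (neutral for $+$) that pervades the semiring $\mathcal{C}_{\mathcal{O}_{K}}$, and carefully exploiting this incompatibility on overlaps to rule out any other section is the most delicate point of the argument.
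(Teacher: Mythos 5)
Your part (1) is correct and is exactly the paper's argument: the stalk of $\Theta^{\star}(\mathcal{C}_{\mathcal{O}_{K}})$ at a point $x$ of $\text{Spec}(\mathcal{O}_{K})$ is the stalk of $\mathcal{C}_{\mathcal{O}_{K}}$ at $\Theta(x)$, which by Theorem \ref{theo:liensiteun} is the point attached to $H(\mathfrak{p})$ at a closed point (whence $\mathcal{C}_{H(\mathfrak{p})}$ by Theorem \ref{theo:strshptone}) and to the trivial module at the generic point (your remark that every germ of $\mathcal{S}_{K}$ at $(0)$ dies after shrinking away the finite support is precisely the justification the paper leaves implicit), giving $\mathcal{C}_{\{0\}}=\{\emptyset,\{0\}\}\simeq\mathbb{B}$. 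For part (2), where the paper says only that it ``follows from the definition of pullback'', your unwinding is the right one: the pullback is the sheafification of the realization presheaf $U\mapsto(\mathcal{C}_{\mathcal{O}_{K}}\times\mathcal{S}_{K}(U))/\sim$, the elementary sections $[C,\xi]$ produce the two families, all maps of the first family are obtained from sections $[D_{K},\eta]$ with $\eta$ supported at a single prime using the semiring operations (since $\{0\}$ is neutral for $+$ and idempotent for both laws), and quasi-compactness of opens of $\text{Spec}(\mathcal{O}_{K})$ controls finiteness of the support after gluing.

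The genuine gap is the step you yourself defer: excluding sections that mix the two types is exactly the ``only such maps'' half of (2), and without it the classification is not proved. Moreover the mechanism is not the semiring asymmetry between $\emptyset$ and $\{0\}$ (absorbing versus neutral) that you invoke, but the module-action fact combined with the topology of $\text{Spec}(\mathcal{O}_{K})$. Concretely: for $h\in H(\mathfrak{p})$ one has $h\cdot C=\emptyset$ if and only if $C=\emptyset$ (the action of $0$ sends a non empty convex set to $\{0\}$, not to $\emptyset$), so under the bijection $\beta$ of Theorem \ref{theo:strshptone} a local representative $[C,\xi]$ has germ $\emptyset$ at one point of its domain if and only if $C=\emptyset$, in which case its germ is $\emptyset$ at every point of that domain. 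Every non empty open of $\text{Spec}(\mathcal{O}_{K})$ is cofinite and contains the generic point, so the space is irreducible and any two domains of local representatives intersect; applying the previous observation at a point of the intersection shows that the locus where a section has germ $\emptyset$ is either empty or all of $U$. Since non empty opens are infinite, ``$\emptyset$ on a non empty open'' is incompatible with ``$\{0\}$ outside a finite set'', which rules out mixed sections and also shows that a map taking the value $\emptyset$ at finitely many points (allowed by the literal wording of the statement) is not a section. Adding this short argument closes your proof, which is otherwise sound and more detailed than the paper's own.
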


\begin{proof}
\begin{enumerate}
\item The result follows from the fact that the stalk of $\Theta^{\star}(\mathcal{C}_{\mathcal{O}_{K}})$ at the prime $\mathfrak{p}$ is the same as the stalk of the sheaf $\mathcal{C}_{\mathcal{O}_{K}}$ at the $\Theta (\mathfrak{p})$ (and so associated to $H_{\mathfrak{p}}$) of $\widehat{\mathcal{O}_{K}}$.

For $\{ 0\}$ we consider the stalk of $\mathcal{C}_{\mathcal{O}_{K}}$ at the point of $\widehat{\mathcal{O}_{K}}$ associated to the $\mathcal{O}_{K}$-module $\{ 0\}$ which is so $\mathcal{C}_{\{ 0\}}=\{\emptyset,\{ 0\}\}\simeq\mathbb{B}$.
\item It follows from theorem 6.1 and the definition of pullback.%\textbf{DETAILLER}
\end{enumerate}
\end{proof}

\section{The square of the arithmetic site for $\mathbb{Z}[\imath]$}

In this section we will only treat the case of $\mathbb{Z}[\imath]$, the case for $\mathbb{Z}[j]$ being similar replace $[1,\imath]$ by the segment $[1,j]$.

That being said, before beginning investigating tensor products in the case of $\mathbb{Z}[i]$, we must change our point of view for an equivalent one which is functionnal, ie we will switch from convex sets to some restriction of the opposite of their support function. Although we only have an abstract description for now, I think it will be useful for the future and in other cases (for example $\mathbb{Z}[\sqrt{2}]$) to switch to the functional point of view.

\begin{defi} Let us note $\mathcal{F}_{\mathbb{Z}[\imath]}$ the set of all piecewise affine convex functions of the form $$\left\{\begin{aligned} [1,\imath ] [1,\imath]=\{1-t+it, t\in [0,1]\} & \longrightarrow &  \mathbb{R}^{+}_{\text{max}}\\ x+\imath y=1-t+\imath t & \longmapsto & \max_{(a,b)\in \Sigma}\langle (x,y),(a,b)\rangle =ax+by=a+(b-a)t\end{aligned}\right.$$ where $\Sigma$ is the set of summits (in fact thanks to the symmetries by $\imath$ and $-1$ we can only take the summits in the upper right quarter of the complex plane) of an element of $\mathcal{C}_{\mathbb{Z}[\imath]}$ (when $\Sigma$ is empty, the function associated is constant equal to $-\infty$).
\end{defi}

The easy proof of the following proposition is left to the reader.

\begin{prop} Endowed with the operations $max$ (punctual maximum) and $+$ (punctual addition), $(\mathcal{F}_{\mathbb{Z}[\imath]},max,+)$ is an idempotent semiring.
\end{prop}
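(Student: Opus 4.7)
The natural approach is to exhibit $\mathcal{F}_{\mathbb{Z}[\imath]}$ as a quotient of the idempotent semiring $\mathcal{C}_{\mathbb{Z}[\imath]}$ via a surjection that intertwines the two pairs of operations. Concretely, I would define
\[
\Phi : \mathcal{C}_{\mathbb{Z}[\imath]} \longrightarrow \mathcal{F}_{\mathbb{Z}[\imath]},
\qquad C \longmapsto \bigl(\,1-t+\imath t \mapsto \max_{(a,b)\in\Sigma_C}(a+(b-a)t)\,\bigr),
\]
where $\Sigma_C$ denotes the set of summits of $C$ (with the convention that $\Phi(\emptyset)$ is the constant function $-\infty$ and $\Phi(\{0\})$ is the constant function $0$). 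By the very definition of $\mathcal{F}_{\mathbb{Z}[\imath]}$, the map $\Phi$ is surjective.

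Then I would prove the two key intertwining identities, which amount to the classical dictionary between convex polygons and their support functions restricted to the segment $[1,\imath]$:
\[
\Phi\bigl(\mathrm{Conv}(C\cup C')\bigr) = \max\bigl(\Phi(C),\Phi(C')\bigr),
\qquad
\Phi(C+C') = \Phi(C)+\Phi(C').
\]
The first identity is immediate from the formula $\Sigma_{\mathrm{Conv}(C\cup C')}\subseteq \Sigma_C\cup\Sigma_{C'}$ together with the fact that a linear functional attains the same maximum on a finite set and on its convex hull, so that
\[
\max_{s\in\Sigma_C\cup\Sigma_{C'}}\langle(x,y),s\rangle
= \max\Bigl(\max_{s\in\Sigma_C}\langle(x,y),s\rangle,\ \max_{s'\in\Sigma_{C'}}\langle(x,y),s'\rangle\Bigr).
\]
The second identity is the support-function formula for the Minkowski sum: since the summits of $C+C'$ lie inside $\{s+s':s\in\Sigma_C,\ s'\in\Sigma_{C'}\}$, one has
\[
\max_{(s,s')\in \Sigma_C\times\Sigma_{C'}}\langle(x,y),s+s'\rangle
= \max_{s\in\Sigma_C}\langle(x,y),s\rangle + \max_{s'\in\Sigma_{C'}}\langle(x,y),s'\rangle,
\]
the separability of the linear pairing allowing the maxima to factor. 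The special cases involving $\emptyset$ and $\{0\}$ have to be checked by hand but are immediate from the conventions on $-\infty$ and $0$.

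From these two identities, closure of $\mathcal{F}_{\mathbb{Z}[\imath]}$ under the pointwise operations $\max$ and $+$ follows, as does the fact that $\Phi$ is a homomorphism of idempotent semirings onto $\mathcal{F}_{\mathbb{Z}[\imath]}$. All the semiring axioms (associativity and commutativity of both laws, idempotency of $\max$, distributivity of $+$ over $\max$, neutrality of the constant $-\infty$ for $\max$ and of the constant $0$ for $+$, and absorption $-\infty + f = -\infty$) are then inherited from those of $\mathcal{C}_{\mathbb{Z}[\imath]}$ (established in the first lemma of Section~4); alternatively they may be verified directly, since the target $\mathbb{R}^{+}_{\max}$ is itself an idempotent semiring and the operations on $\mathcal{F}_{\mathbb{Z}[\imath]}$ are pointwise. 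The only genuinely non-formal point of the argument is the verification of the two intertwining identities, and among these the Minkowski identity is the one that requires the slightly delicate observation that the maximum of a sum of two linear functionals (evaluated on a product of finite sets) factors as the sum of the two maxima.
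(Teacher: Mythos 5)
Your argument is correct. Note, though, that the paper offers no proof of this proposition at all (``the easy proof \dots is left to the reader''), so there is nothing to compare line by line; what you have written is essentially the content the paper defers to the very next statement, Proposition \ref{prop:faiscstrucfct}, where the map $\Phi$ is introduced and asserted to be ``immediately a surjective morphism'' of semirings. Your two intertwining identities --- that a linear form attains the same maximum on $\mathrm{Conv}(C\cup C')$ as on $\Sigma_C\cup\Sigma_{C'}$, and that the support function of a Minkowski sum is the sum of the support functions --- are exactly what that assertion rests on, and as you say they are the only non-formal points; combined with the fact that $\mathcal{C}_{\mathbb{Z}[\imath]}$ is itself closed under $\mathrm{Conv}(\bullet\cup\bullet)$ and $+$ (the lemma showing $\widetilde{\mathcal{C}_{\mathcal{O}_K}}$ is an idempotent semiring, together with the identification $\mathcal{C}_{\mathbb{Z}[\imath]}=\widetilde{\mathcal{C}_{\mathbb{Z}[\imath]}}$ valid for $K=\mathbb{Q}(\imath)$ --- this is in the section on the arithmetic site, not Section~4 as you cite) they give the closure of $\mathcal{F}_{\mathbb{Z}[\imath]}$ under pointwise $\max$ and $+$, which is the genuine issue. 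The remaining axioms are indeed free, either by transport of structure along the surjection or, more directly, because the operations are pointwise with values in the idempotent semiring $\mathbb{R}^{+}_{\max}$ and the constant functions $-\infty$ and $0$ belong to $\mathcal{F}_{\mathbb{Z}[\imath]}$ by the stated conventions. The only mild inefficiency is that your route re-proves the homomorphism property that the paper records immediately afterwards; a minimal proof of this particular proposition would just check closure plus the pointwise axioms, but your version is sound and, if anything, makes the later proposition's ``immediately'' honest.
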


We can now show that the viewpoints of the convex geometry and of those special functions are equivalent.
\begin{prop}\label{prop:faiscstrucfct}
$\left(\mathcal{C}_{\mathbb{Z}[\imath]},\text{Conv}(\bullet \cup \bullet), +\right)$ and $(\mathcal{F}_{\mathbb{Z}[\imath]},max,+)$ are isomorphic semirings through the isomorphism $$\Phi :\left\{\begin{aligned}\mathcal{C}_{\mathbb{Z}[\imath]} & \longrightarrow & \mathcal{F}_{\mathbb{Z}[\imath]} \\ C & \longmapsto \left\{\begin{aligned}[1,\imath] & \longrightarrow & \mathbb{R}^{+}_{\text{max}}\\ x+\imath y & \longmapsto & \max_{(a,b)\in\Sigma_{C}}\langle (x,y),(a,b)\rangle \end{aligned}\right.& \end{aligned}\right.$$ where $\Sigma_{C}$ stands for the set of summits of $C$ %(\textbf{DIRE 1/4 DE PLAN ET DANS CE CAS ETRE CLAIR BORDS?})
\end{prop}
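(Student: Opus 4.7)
The plan is to verify in turn that $\Phi$ is well-defined, that it intertwines both pairs of semiring operations, and that it is bijective. The key tool is the classical theory of the support function of a non-empty convex compact $C\subset\mathbb{R}^{2}$, defined by $h_{C}(v):=\max_{p\in C}\langle v,p\rangle$; it satisfies the two identities $h_{A+B}=h_{A}+h_{B}$ and $h_{\mathrm{Conv}(A\cup B)}=\max(h_{A},h_{B})$, is positively homogeneous of degree $1$, and uniquely determines $C$ among non-empty closed convex sets. Since we are dealing with convex polygons, $h_{C}$ is piecewise affine and equals the pointwise maximum of the affine functionals $v\mapsto\langle v,s\rangle$ as $s$ ranges over $\Sigma_{C}$.

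Well-definedness and surjectivity are then essentially tautological: the formula in the definition of $\Phi(C)$ is precisely the restriction of $h_{C}$ to the segment $[1,\imath]$, and $\mathcal{F}_{\mathbb{Z}[\imath]}$ is by construction the set of such restrictions as $C$ ranges over $\mathcal{C}_{\mathbb{Z}[\imath]}$. The extremal cases are $\Phi(\emptyset)=-\infty$ and $\Phi(\{0\})=0$, matching the neutral elements on each side. Restricting the two classical support-function identities to $[1,\imath]$ then yields $\Phi(\mathrm{Conv}(A\cup B))=\max(\Phi(A),\Phi(B))$ and $\Phi(A+B)=\Phi(A)+\Phi(B)$; the handful of degenerate combinations involving $\emptyset$ or $\{0\}$ are checked directly using that $\emptyset$ is absorbent for $+$ and neutral for $\mathrm{Conv}(\bullet\cup\bullet)$, matching $-\infty$ for the pair $(\max,+)$.

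The real content, and the main expected obstacle, is injectivity: a priori the passage from a planar convex body to the restriction of its support function to a single one-dimensional segment could lose information. Here one exploits the $\mathcal{U}_{K}$-invariance of elements of $\mathcal{C}_{\mathbb{Z}[\imath]}$, with $\mathcal{U}_{K}=\{\pm 1,\pm\imath\}$: the equality $C=uC$ for all $u\in\mathcal{U}_{K}$ implies $h_{C}(v)=h_{C}(u^{-1}v)$, so $h_{C}$ itself is $\mathcal{U}_{K}$-invariant. The four rotates of $[1,\imath]$ under multiplication by $1,\imath,-1,-\imath$ together cover the entire boundary of the square $D_{K}=\mathrm{Conv}(\{\pm 1,\pm\imath\})$, so $\Phi(C)=h_{C}|_{[1,\imath]}$ already determines $h_{C}|_{\partial D_{K}}$. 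Since $\partial D_{K}$ meets every ray from the origin exactly once, positive homogeneity of $h_{C}$ then recovers $h_{C}$ on all of $\mathbb{R}^{2}$, and the classical uniqueness statement for support functions recovers $C$ itself. Finally, $C=\emptyset$ is distinguished from all non-empty cases by the constant value $-\infty$ of $\Phi(C)$, completing injectivity and hence the proof.
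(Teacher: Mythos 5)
Your proposal is correct, and it diverges from the paper's argument only in how injectivity is established, which is in fact the one step with real content. The paper argues pointwise: given $C\neq C'$ and a point $c'\in C'\setminus C$, it applies Hahn--Banach to produce a separating direction $\vec{u}$, uses the $\mathcal{U}_{K}$-symmetry of $C$ and $C'$ to rotate $\vec{u}$ into the fundamental sector corresponding to $[1,\imath]$, and then reads off $(\Phi(C))(\vec{u})<(\Phi(C'))(\vec{u})$; the degenerate elements $\emptyset$ and $\{0\}$ are treated by observing that their images are the constants $-\infty$ and $0$ while any nondegenerate polygon takes a strictly positive value at a summit of maximal modulus. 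You instead make explicit the reconstruction principle underlying this: $\mathcal{U}_{K}$-invariance of $C$ forces $h_{C}(uv)=h_{C}(v)$ for $u\in\mathcal{U}_{K}$, the four rotates of $[1,\imath]$ cover $\partial D_{K}$, and positive homogeneity then recovers $h_{C}$ on all of $\mathbb{R}^{2}$ from its restriction to the segment, so the classical injectivity of the support-function transform finishes the argument (and handles $\{0\}$ uniformly, leaving only $\emptyset$ to be distinguished by the value $-\infty$). The two routes rest on the same convexity duality --- the cited uniqueness of support functions is itself a separation statement --- but yours is more modular and isolates exactly why one edge of the square suffices, whereas the paper's is a self-contained separation computation with the rotation into the fundamental domain done by hand; your version also generalizes more transparently to the other imaginary quadratic cases, where the rotates of the chosen segment cover $\partial D_{K}$ by the action of $\mathcal{U}_{K}=\{\pm 1\}$ together with the central symmetry of $D_{K}$.
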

\begin{proof}
This map $\Phi$ is immediately a surjective morphism between $\left(\mathcal{C}_{\mathbb{Z}[\imath]},\text{Conv}(\bullet \cup \bullet), +\right)$ and $(\mathcal{F}_{\mathbb{Z}[\imath]},max,+)$.

Let us now show that $\Phi$ is injective.

Let $C,C'\in\mathcal{C}_{\mathbb{Z}[\imath]}\backslash\{\emptyset ,\{ 0\}\}$ with $C\neq C'$.

Let $c' \in C'$ such that $c' \notin C$.

We identify $\mathbb{C}$ and $\mathbb{R}^2$, then thanks to Hahn-Banach theorem, there exists $\phi\in(\mathbb{R}^{2})^{\star}$ such that $\forall c\in C, \phi (c)<\phi (c')$.

But thanks to the canonical euclidian scalar product, we can identify $(\mathbb{R}^{2})^{\star}$ with $\mathbb{R}^{2}$, so let $\vec{u}\in\mathbb{R}^{2}$ such that $\phi =\langle\vec{u},\bullet\rangle$, so we have that $\forall c\in C, \langle\vec{u},c\rangle <\langle\vec{u},c'\rangle$.

But since $C$ is compact, let $\gamma\in C$ such that $\langle\vec{u},\gamma\rangle <\langle\vec{u},c'\rangle =\sup_{c\in C}\langle\vec{u},c\rangle <\langle\vec{u},c'\rangle$

Thanks to the symmetry of $C,C'$ by $\mathcal{U}_{K}$ and the identification of $\mathbb{C}$ and $\mathbb{R}^2$, we can assume that $\vec{u},\gamma ,c'\in\mathbb{C}/\mathcal{U}_{K}$.

Then finally we have that $(\Phi(C))(\vec{u})\leq \langle\vec{u},\gamma\rangle <\langle\vec{u},c'\rangle\leq (\Phi (C'))(\vec{u})$, so we have the injectivity in this case.

For the other cases, let us remark that $\Phi (\emptyset )\equiv -\infty$ the constant function equal to $'\infty$ by convention, $\Phi (\{ 0\} )=0$ the constant function equal to zero by direct calculation, and that for $C\in\mathcal{C}_{\mathcal{O}_{K}}\backslash\{\emptyset ,\{ 0\}\}$, if we take $c$ a summit of $C$ of maximal module among the summits of $C$, we immediately get that $(\Phi (C))(c)=|c|^{2}>0$ and so the injectivity is proved.

All in all, we indeed have that $\left(\mathcal{C}_{\mathbb{Z}[\imath]},\text{Conv}(\bullet \cup \bullet), +\right)$ and $(\mathcal{F}_{\mathbb{Z}[\imath]},max,+)$ are isomorphic semirings.
\end{proof}

Let us now determine $\mathcal{F}_{\mathbb{Z}[\imath]}\otimes_{\mathbb{B}}\mathcal{F}_{\mathbb{Z}[\imath]}$.

Viewing $(\mathcal{F}_{\mathbb{Z}[\imath]},\max )$ as a $\mathbb{B}$-module, we can define $\mathcal{F}_{\mathbb{Z}[\imath]}\otimes_{\mathbb{B}}\mathcal{F}_{\mathbb{Z}[\imath]}$ in the following way (see also \cite{semimod} and \cite{ccsitearith}):

\begin{defi}\label{defi:defprodtensnred} $(\mathcal{F}_{\mathbb{Z}[\imath]}\otimes_{\mathbb{B}}\mathcal{F}_{\mathbb{Z}[\imath]},\oplus)$ is the $\mathbb{B}$-module constructed as the quotient of $\mathbb{B}$-module of finite formal sums $\sum e_{i}\otimes f_{i}$ (we can remark that no coefficients are needed since $\mathcal{F}_{\mathbb{Z}[\imath]}$ is idempotent) by the equivalence relation $$\sum e_{i}\otimes f_{j}\sim\sum e_{j}'\otimes f_{j}' \Leftrightarrow \forall \Psi, \sum\Psi (e_{i},f_{i})=\sum\Psi (e_{j}',f_{j}')$$ where $\Psi$ is any bilinear map from $\mathcal{F}_{\mathbb{Z}[\imath]}\times\mathcal{F}_{\mathbb{Z}[\imath]}$ to any arbitrary $\mathbb{B}$-module and where $\oplus $ is just the formal sum.
\end{defi}

Since $(\mathcal{F}_{\mathbb{Z}[\imath]},\max , +)$ is moreover an idempotent semiring, we can see that the law $+$ of $\mathcal{F}_{\mathbb{Z}[\imath]}$ induces a new law again noted $+$ on $\mathcal{F}_{\mathbb{Z}[\imath]}\otimes_{\mathbb{B}}\mathcal{F}_{\mathbb{Z}[\imath]}$ in the following way : 

\begin{prop}\label{prop:prodtsemrg}
Let $a\otimes b$ and $a'\otimes b'\in \mathcal{F}_{\mathbb{Z}[\imath]}\otimes_{\mathbb{B}}\mathcal{F}_{\mathbb{Z}[\imath]}$, we can define $+$ such that $(a\otimes b) +(a'\otimes b')=(a+a')\otimes (b+b')$. In this way $+$ is well defined and it turns $(\mathcal{F}_{\mathbb{Z}[\imath]}\otimes_{\mathbb{B}}\mathcal{F}_{\mathbb{Z}[\imath]},\oplus,+)$ into an idempotent semiring.
\end{prop}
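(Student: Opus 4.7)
My plan is to use the universal property of the tensor product to extend the formula on simple tensors to all of $\mathcal{F}_{\mathbb{Z}[\imath]}\otimes_{\mathbb{B}}\mathcal{F}_{\mathbb{Z}[\imath]}$, and then verify the semiring axioms one by one. First I would consider the four-variable map $\psi:\mathcal{F}_{\mathbb{Z}[\imath]}^{4}\to\mathcal{F}_{\mathbb{Z}[\imath]}\otimes_{\mathbb{B}}\mathcal{F}_{\mathbb{Z}[\imath]}$ defined by $\psi(a,b,a',b'):=(a+a')\otimes(b+b')$, and check that it is $\mathbb{B}$-multilinear, i.e.\ compatible with $\max$ separately in each of the four arguments. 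This reduces to the pointwise identity $\max(x,y)+z=\max(x+z,y+z)$ valid in $\mathcal{F}_{\mathbb{Z}[\imath]}$ (since $+$ distributes over $\max$ pointwise), combined with the $\mathbb{B}$-bilinearity of $\otimes$. Applying twice the universal property of Definition \ref{defi:defprodtensnred}, $\psi$ will then descend to a well-defined binary operation on $\mathcal{F}_{\mathbb{Z}[\imath]}\otimes_{\mathbb{B}}\mathcal{F}_{\mathbb{Z}[\imath]}$ whose restriction to pairs of simple tensors is exactly the stated formula.

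Establishing the multilinearity of $\psi$ is the main obstacle, since this is precisely the point where one must check that the new operation is compatible with the equivalence relation defining $\otimes_{\mathbb{B}}$; once this is done, everything else will follow formally. Associativity and commutativity of $+$ on simple tensors are inherited from the corresponding properties of $+$ in $\mathcal{F}_{\mathbb{Z}[\imath]}$, and they propagate to all elements of the tensor product by the bilinearity of the extended $+$. For the neutral element I would take $e:=0_{\mathcal{F}}\otimes 0_{\mathcal{F}}$, where $0_{\mathcal{F}}$ is the constant zero function on $[1,\imath]$ (the image under $\Phi$ of $\{0\}\in\mathcal{C}_{\mathbb{Z}[\imath]}$); on a simple tensor one computes $(a\otimes b)+e=(a+0_{\mathcal{F}})\otimes(b+0_{\mathcal{F}})=a\otimes b$, and the general case follows from bilinearity.

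Finally, distributivity $(x\oplus y)+z=(x+z)\oplus(y+z)$ is exactly the $\mathbb{B}$-linearity in each argument of the binary operation $+$ produced by the universal property, hence holds by construction; absorption of the $\oplus$-neutral under $+$ is likewise automatic from the fact that the bilinear map produced from the universal property sends the zero element to the zero element in each variable. Idempotency $x\oplus x=x$ of the additive law of the semiring is automatic, since $\mathcal{F}_{\mathbb{Z}[\imath]}\otimes_{\mathbb{B}}\mathcal{F}_{\mathbb{Z}[\imath]}$ is a $\mathbb{B}$-module and $1+1=1$ in $\mathbb{B}$ forces $\oplus$ to be idempotent on every $\mathbb{B}$-module. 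Put together, these verifications yield the announced idempotent semiring structure on $\mathcal{F}_{\mathbb{Z}[\imath]}\otimes_{\mathbb{B}}\mathcal{F}_{\mathbb{Z}[\imath]}$.
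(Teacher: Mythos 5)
Your proposal is correct and follows essentially the same route as the paper: well-definedness of $+$ is obtained from the universal property of $\otimes_{\mathbb{B}}$ applied to the map $(a,b,a',b')\mapsto(a+a')\otimes(b+b')$, whose $\mathbb{B}$-multilinearity comes from the distributivity of $+$ over $\max$ in $\mathcal{F}_{\mathbb{Z}[\imath]}$, exactly as in the paper's factorization of $\Sigma_{a,b}$ through $\sigma_{a,b}$. Your version is in fact slightly more complete, since you treat both pairs of variables symmetrically and spell out the verification of the unit, absorption, and idempotency, which the paper leaves implicit.
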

\begin{proof}
Let $(a,b)\in\mathcal{F}_{\mathbb{Z}[\imath]}\times\mathcal{F}_{\mathbb{Z}[\imath]}$
We define the application $\Sigma_{a,b} :\left\{\begin{aligned} \mathcal{F}_{\mathbb{Z}[\imath]}\times\mathcal{F}_{\mathbb{Z}[\imath]} & \to & \mathcal{F}_{\mathbb{Z}[\imath]}\otimes\mathcal{F}_{\mathbb{Z}[\imath]} \\ 
(a',b') & \mapsto & (a+a')\otimes (b+b') \end{aligned}\right.$

Let $(a',b'),(a'',b')\in\mathcal{F}_{\mathbb{Z}[\imath]}\times\mathcal{F}_{\mathbb{Z}[\imath]}$, 

Then we have :

$\Sigma_{a,b}(\max (a', a''),b')=(a+\max(a',a''))\otimes (b+b')=(max(a+a',a+a''))\otimes(b+b')=((a+a')\otimes(b+b'))\oplus((a+a'')\otimes(b+b'))=\Sigma_{a,b}(a',b')\oplus\Sigma_{a,b}(a'',b')$

So $\Sigma_{a,b}$ is $\mathbb{B}$-linear in the first variable. One can show in the same way that $\Sigma_{a,b}$ is $\mathbb{B}$-linear in the second variable so finally that $\Sigma_{a,b}$ is a $\mathbb{B}$-bilinear map from $\mathcal{F}_{\mathbb{Z}[\imath]}\times\mathcal{F}_{\mathbb{Z}[\imath]}$ to $\mathcal{F}_{\mathbb{Z}[\imath]}\otimes\mathcal{F}_{\mathbb{Z}[\imath]}$ so it can be factorized by the universal property of tensor product by a linear map $\sigma_{a,b}:\mathcal{F}_{\mathbb{Z}[\imath]}\otimes\mathcal{F}_{\mathbb{Z}[\imath]}\to\mathcal{F}_{\mathbb{Z}[\imath]}\otimes\mathcal{F}_{\mathbb{Z}[\imath]}$

And consequently we denote for all $a'\otimes b'\in \mathcal{F}_{\mathbb{Z}[\imath]}\otimes\mathcal{F}_{\mathbb{Z}[\imath]}$, $(a\otimes b)+(a'\otimes b'):=_{\text{def}}\sigma_{a,b}(a'\otimes b')$.

So $+$ is well defined on elementary tensors and so after for all tensors. We deduce from this that $(\mathcal{F}_{\mathbb{Z}[\imath]}\otimes\mathcal{F}_{\mathbb{Z}[\imath]} ,\oplus , +)$ is a semiring. %\textbf{A DETAILLER PLUS?}
 
\end{proof}

\begin{prop}\label{prop:actprodtens}
$(\mathbb{Z}[\imath])^{2}$ acts on $(\mathcal{F}_{\mathbb{Z}[\imath]}\otimes\mathcal{F}_{\mathbb{Z}[\imath]} ,\oplus , +)$ and the action preserves the semiring structure.
\end{prop}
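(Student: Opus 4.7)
The plan is to use the universal property of the tensor product to produce the action, and then verify the compatibilities one by one. For each $(\alpha,\beta)\in(\mathbb{Z}[\imath])^{2}$, I will first build the action on elementary tensors by setting
\[
(\alpha,\beta)\cdot(a\otimes b) := (\alpha\cdot a)\otimes(\beta\cdot b),
\]
where $\alpha\cdot a$ denotes the action already available on $\mathcal{F}_{\mathbb{Z}[\imath]}$ coming, via the isomorphism $\Phi$ of Proposition~\ref{prop:faiscstrucfct}, from the action of $\mathbb{Z}[\imath]$ on $\mathcal{C}_{\mathbb{Z}[\imath]}$ by direct similitudes. The key preliminary fact I need is that this action on $\mathcal{F}_{\mathbb{Z}[\imath]}$ is a morphism of semirings: that $\alpha\cdot\max(a,a')=\max(\alpha\cdot a,\alpha\cdot a')$ and $\alpha\cdot(a+a')=(\alpha\cdot a)+(\alpha\cdot a')$, both of which follow from the corresponding polygon identities (a similitude of $\mathbb{C}$ commutes with $\mathrm{Conv}(\bullet\cup\bullet)$ and with the Minkowski sum).

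To show the action on elementary tensors extends to a well-defined action on $\mathcal{F}_{\mathbb{Z}[\imath]}\otimes_{\mathbb{B}}\mathcal{F}_{\mathbb{Z}[\imath]}$, I will fix $(\alpha,\beta)$ and consider the map
\[
\Psi_{\alpha,\beta}:\mathcal{F}_{\mathbb{Z}[\imath]}\times\mathcal{F}_{\mathbb{Z}[\imath]}\longrightarrow\mathcal{F}_{\mathbb{Z}[\imath]}\otimes_{\mathbb{B}}\mathcal{F}_{\mathbb{Z}[\imath]},\qquad (a,b)\longmapsto(\alpha\cdot a)\otimes(\beta\cdot b).
\]
Thanks to the compatibility of $\alpha\cdot$ and $\beta\cdot$ with $\max$ recalled above, $\Psi_{\alpha,\beta}$ is $\mathbb{B}$-bilinear, hence by Definition~\ref{defi:defprodtensnred} factors uniquely through a $\mathbb{B}$-linear map $\widetilde{\Psi}_{\alpha,\beta}:\mathcal{F}_{\mathbb{Z}[\imath]}\otimes_{\mathbb{B}}\mathcal{F}_{\mathbb{Z}[\imath]}\to\mathcal{F}_{\mathbb{Z}[\imath]}\otimes_{\mathbb{B}}\mathcal{F}_{\mathbb{Z}[\imath]}$. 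This $\widetilde{\Psi}_{\alpha,\beta}$ is, by construction, $\mathbb{B}$-linear, so already it preserves the law $\oplus$. The monoid axioms $(1,1)\cdot x=x$ and $((\alpha\alpha',\beta\beta'))\cdot x=(\alpha,\beta)\cdot((\alpha',\beta')\cdot x)$ are checked on elementary tensors, where they reduce to the corresponding identities $1\cdot a=a$ and $(\alpha\alpha')\cdot a=\alpha\cdot(\alpha'\cdot a)$ already known for the action on $\mathcal{F}_{\mathbb{Z}[\imath]}$, and then extend to the whole tensor product by $\mathbb{B}$-linearity.

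It remains to verify that the action also preserves the second law $+$ constructed in Proposition~\ref{prop:prodtsemrg}. For elementary tensors the check is direct: using the definition of $+$ one computes
\[
(\alpha,\beta)\cdot\bigl((a\otimes b)+(a'\otimes b')\bigr)=(\alpha\cdot(a+a'))\otimes(\beta\cdot(b+b')),
\]
which, by the compatibility $\alpha\cdot(a+a')=(\alpha\cdot a)+(\alpha\cdot a')$ and its analogue for $\beta$, equals $((\alpha a+\alpha a'))\otimes((\beta b+\beta b'))=((\alpha,\beta)\cdot(a\otimes b))+((\alpha,\beta)\cdot(a'\otimes b'))$. The general case then follows by $\mathbb{B}$-linearity, noting that the $+$ on a general element of the tensor product was defined by first reducing to elementary tensors through the bilinear construction of $\sigma_{a,b}$ in Proposition~\ref{prop:prodtsemrg}.

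I expect the only delicate point to be the well-definedness via the universal property: one has to be careful that $\Psi_{\alpha,\beta}$ really is $\mathbb{B}$-bilinear in each variable, i.e.\ that the action by a fixed $\alpha\in\mathbb{Z}[\imath]$ distributes over $\max$ on $\mathcal{F}_{\mathbb{Z}[\imath]}$, and that the formula for $+$ on elementary tensors given in Proposition~\ref{prop:prodtsemrg} is indeed the one induced on $\widetilde{\Psi}_{\alpha,\beta}$-images. Once those two algebraic facts are pinned down (transported from the polygon picture through $\Phi$), the rest of the proof is a routine extension-by-$\mathbb{B}$-linearity argument.
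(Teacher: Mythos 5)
Your proposal is correct and follows essentially the same route as the paper: the action is defined componentwise on elementary tensors through the isomorphism $\Phi$ with the polygon picture, and compatibility with $+$ is checked on elementary tensors using that the similitude action distributes over $\mathrm{Conv}(\bullet\cup\bullet)$ and the Minkowski sum, then extended by $\oplus$-linearity. Your explicit appeal to the universal property to show the action is well defined on all of $\mathcal{F}_{\mathbb{Z}[\imath]}\otimes_{\mathbb{B}}\mathcal{F}_{\mathbb{Z}[\imath]}$ (independently of the chosen decomposition) is a point the paper passes over quickly, so it is a welcome precision rather than a different approach.
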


\begin{proof}
Let $(\alpha,\beta)\in(\mathbb{Z}[\imath])^{2}$ and let $p\in\mathcal{F}_{\mathbb{Z}[\imath]}\otimes\mathcal{F}_{\mathbb{Z}[\imath]}$.

Let $I$ be a finite set and $f_{i},g_{i}\in\mathcal{F}_{\mathbb{Z}[\imath]}$ for all $i\in I$ such that $p=\bigoplus_{i\in I}f_{i}\otimes g_{i}$.

Then we define the action of $(\alpha,\beta)$ on $p$ by $(\alpha,\beta)\bullet p=\sum_{i\in I}\Phi(\alpha\bullet\Phi^{-1}(f_{i}))\otimes\Phi(\beta\bullet\Phi^{-1}(g_{i}))$ where $\Phi$ is the isomorphism between $\mathcal{C}_{\mathbb{Z}[\imath]}$ and $\mathcal{F}_{\mathbb{Z}[\imath]}$.

With this definition, the action of $(\mathbb{Z}[\imath])^{2}$ on $\mathcal{F}_{\mathbb{Z}[\imath]}\otimes\mathcal{F}_{\mathbb{Z}[\imath]}$ is directly compatible with the law $\oplus$ and so preserves the structure of $\mathbb{B}$-module.

Let $a\otimes b,a'\otimes b' \in \mathcal{F}_{\mathbb{Z}[\imath]}\otimes\mathcal{F}_{\mathbb{Z}[\imath]}$, then we have $a\otimes b + a'\otimes b'= (a+a')\otimes (b+b')$.

And for $(\alpha,\beta)\in(\mathbb{Z}[\imath])^{2}$, we have $(\alpha,\beta)\bullet ((a+a')\otimes (b+b'))=\Phi(\alpha\bullet\Phi^{-1}(a+a'))\otimes\Phi(\beta\bullet\Phi^{-1}(b+b'))$.

But $\alpha\bullet\Phi^{-1}(a+a')=\alpha\bullet\Phi^{-1}(a)+\alpha\bullet\Phi^{-1}(a')$ and $\beta\bullet\Phi^{-1}(b+b')=\beta\bullet\Phi^{-1}(b)+\beta\bullet\Phi^{-1}(b')$.

So we have $(\alpha,\beta)\bullet ((a+a')\otimes (b+b'))=(\alpha,\beta)\bullet a\otimes b + (\alpha,\beta)\bullet a'\otimes b'$.

the action of $(\mathbb{Z}[\imath])^{2}$ on $\mathcal{F}_{\mathbb{Z}[\imath]}\otimes\mathcal{F}_{\mathbb{Z}[\imath]}$ is directly compatible with the law $+$. 
\end{proof}

Thanks to this last proposition, we can therefore view $(\mathcal{F}_{\mathbb{Z}[\imath]}\otimes\mathcal{F}_{\mathbb{Z}[\imath]} ,\oplus , +)$ as an idempotent semiring in the topos $\widehat{(\mathbb{Z}[\imath])^{2}}$ (the topos of sets with an action of $(\mathbb{Z}[\imath])^{2}$ where the composition of arrows is the multiplication component by component). It allows us to define the unreduced square of the arithmetic site for $\mathbb{Z}[\imath]$ as follows :

\begin{defi}\label{defi:unredsq}
The unreduced square $\left( \widehat{(\mathbb{Z}[\imath])^{2}},\mathcal{F}_{\mathbb{Z}[\imath]}\otimes\mathcal{F}_{\mathbb{Z}[\imath]} \right)$ is the topos $\widehat{(\mathbb{Z}[\imath])^{2}}$ with the structure sheaf $(\mathcal{F}_{\mathbb{Z}[\imath]}\otimes\mathcal{F}_{\mathbb{Z}[\imath]} ,\oplus , +)$ viewed as an idempotent semiring in the topos.
\end{defi}

The idempotent semiring $(\mathcal{F}_{\mathbb{Z}[\imath]}\otimes\mathcal{F}_{\mathbb{Z}[\imath]} ,\oplus , +)$ is not necessarily a multiplicative cancellative semiring. In the case it is not, we can send it into a multiplicative cancellative semiring in the following way :

Let us set $\mathcal{P}:=\mathcal{F}_{\mathbb{Z}[\imath]}\otimes\mathcal{F}_{\mathbb{Z}[\imath]}$.

Let us denote $\mathcal{R}$ the idempotent semiring (with laws $\oplus$ and $+$ being defined component wise) $\mathcal{R}:=\mathcal{P}\times \mathcal{P}/\sim$ where $\sim$ is the equivalence relation defined as follows $$(a,b)\sim (a',b')\Leftrightarrow \exists c\in\mathcal{P}, a+b'+c=a'+b+c$$

\begin{prop}
The semiring $\mathcal{R}$ is multiplicatively cancellative.
\end{prop}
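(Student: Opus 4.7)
The plan is to prove something slightly stronger than multiplicative cancellativity: I will show that $+$ is \emph{unconditionally} cancellative on $\mathcal{R}$ (i.e.\ $u + v = u + w$ implies $v = w$, with no hypothesis on $u$), from which multiplicative cancellativity follows immediately. The key observation is that the equivalence relation $\sim$ is precisely the Grothendieck-style equivalence for the commutative monoid $(\mathcal{P}, +)$, so the quotient is by construction cancellative for the operation being symmetrized, and here that operation is the multiplicative law of the semiring.

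Before reaching the main argument, I would briefly note that the componentwise operations $\oplus$ and $+$ descend to $\mathcal{R}$ (a routine check using commutativity and associativity of $+$ in $\mathcal{P}$, combined with the definition of $\sim$), so that the statement ``$\mathcal{R}$ is an idempotent semiring'' is meaningful. I would then unfold the cancellation hypothesis: assume $[(a,b)] + [(x,y)] = [(a,b)] + [(x',y')]$ in $\mathcal{R}$. By the componentwise definition of $+$ on $\mathcal{R}$, this reads $[(a+x,\, b+y)] = [(a+x',\, b+y')]$, and by the definition of $\sim$ there exists $c \in \mathcal{P}$ with
\[
(a+x) + (b+y') + c = (a+x') + (b+y) + c.
\]

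Now the heart of the argument is a rearrangement: using commutativity and associativity of $+$ in $\mathcal{P}$, I can rewrite both sides as
\[
x + y' + (a+b+c) = x' + y + (a+b+c).
\]
Setting $c' := a+b+c \in \mathcal{P}$, this is precisely the relation $(x,y) \sim (x',y')$ witnessed by $c'$, so $[(x,y)] = [(x',y')]$ in $\mathcal{R}$. Thus $+$ is unconditionally cancellative on $\mathcal{R}$, which in particular proves multiplicative cancellativity of the semiring $\mathcal{R}$.

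There is no real obstacle here: the entire statement is a formal consequence of the design of the equivalence relation, which is engineered to absorb any additive ``translation'' into the existential witness $c$. The only thing one has to be a bit careful about is that the operations are defined componentwise and that the Grothendieck trick is being applied to the multiplicative, rather than the additive, law of the semiring; hence the conclusion is multiplicative (not additive) cancellativity. If one wanted, one could go further and observe that $(\mathcal{R}, +)$ is in fact a group — the Grothendieck group of $(\mathcal{P}, +)$ — but that is not needed for the statement as worded.
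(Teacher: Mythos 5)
Your core computation is the paper's own proof of this proposition: unfold the definition of $\sim$ for the pairs $(a+x,b+y)$ and $(a+x',b+y')$, use commutativity and associativity of $+$ in $\mathcal{P}$ to rewrite the witnessed identity as $x+y'+(a+b+c)=x'+y+(a+b+c)$, and absorb $a+b+c$ into a new witness; you are in fact slightly more careful than the paper, which writes the equality in $\mathcal{R}$ as if it were a literal identity in $\mathcal{P}\times\mathcal{P}$ and drops the witness altogether. One caution, though, about your strengthening to \emph{unconditional} cancellativity and the closing remark that $(\mathcal{R},+)$ is the Grothendieck group of $(\mathcal{P},+)$: the zero $(-\infty,-\infty)$ of $\mathcal{P}$ is absorbing for the law $+$, so if the witness $c$ in the definition of $\sim$ is allowed to equal it the relation collapses and $\mathcal{R}$ is trivial (and a monoid with an absorbing element has trivial Grothendieck group); on the intended nondegenerate reading the witness must be nonzero, and then your new witness $a+b+c$ has to be checked to be nonzero, which is precisely where the hypothesis that the multiplier is different from $(-\infty,-\infty)$ — present in the paper's statement and in the standard notion of a multiplicatively cancellative semiring, since cancellation by the zero class is false in any nontrivial $\mathcal{R}$ — comes in. So for the proposition as stated your argument coincides with the paper's, but the claim that no hypothesis on the multiplier is needed should be dropped.
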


\begin{proof}
Let $(a,b),(a',b'),(c,d)\in\mathcal{R}$ with $(c,d)\neq (-\infty,-\infty)$ such that $(c,d)+(a,b)=(c,d)+(a',b')$, so we have $(a+c,b+d)=(a'+c,b'+d)$.

So we have $a+c+b'+d=a'+c+b+d$, ie $a+b'+(c+d)=a'+b+(c+d)$.

So in $\mathcal{R}$, we have $(a,b)=(a',b')$ and so $\mathcal{R}$ is multiplicatively cancellative.
\end{proof}

\begin{defi}\label{defi:prdtensred}
Let us denote $\mathcal{F}_{\mathbb{Z}[\imath]}\hat{\otimes}\mathcal{F}_{\mathbb{Z}[\imath]}$ the image of $\mathcal{P}=\mathcal{F}_{\mathbb{Z}[\imath]}\otimes\mathcal{F}_{\mathbb{Z}[\imath]}$ by the application $$\gamma : \left\{\begin{aligned}\mathcal{P} & \to & \mathcal{R} \\ a & \mapsto & (a,0)\end{aligned}\right.$$

 It is an idempotent multiplicatively cancellative semiring.
\end{defi}

\begin{prop}
The reduced tensor product of $\mathcal{F}_{\mathbb{Z}[\imath]}$ by $\mathcal{F}_{\mathbb{Z}[\imath]}$ is given by $\mathcal{F}_{\mathbb{Z}[\imath]}\hat{\otimes}\mathcal{F}_{\mathbb{Z}[\imath]}$, it satisfies the following universal property. For any multiplicative cancellative ring $R$ and any homomorphism $\rho:\mathcal{F}_{\mathbb{Z}[\imath]}\otimes\mathcal{F}_{\mathbb{Z}[\imath]}\to R$ such that $\rho^{-1}(\{0\})=\{(-\infty,-\infty )\}$, then there exists a unique homomorphism $\rho':\mathcal{F}_{\mathbb{Z}[\imath]}\hat{\otimes}\mathcal{F}_{\mathbb{Z}[\imath]}\to R$ such that $\rho=\rho'\circ\gamma$.
\end{prop}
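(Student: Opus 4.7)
I would approach the universal property in the standard way: first derive uniqueness from the surjectivity of $\gamma$ onto its codomain, then construct $\rho'$ by fiat on the image and verify well-definedness and compatibility with the two semiring operations.

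\textbf{Uniqueness.} By the very definition of $\mathcal{F}_{\mathbb{Z}[\imath]}\hat{\otimes}_{\mathbb{B}}\mathcal{F}_{\mathbb{Z}[\imath]}$ as the image $\gamma(\mathcal{P})$ (where $\mathcal{P}:=\mathcal{F}_{\mathbb{Z}[\imath]}\otimes_{\mathbb{B}}\mathcal{F}_{\mathbb{Z}[\imath]}$), the map $\gamma$ is surjective onto its codomain. Any $\rho'$ satisfying $\rho = \rho'\circ\gamma$ is therefore forced to obey $\rho'(\gamma(a)) = \rho(a)$ for every $a\in\mathcal{P}$, and this formula determines $\rho'$ completely.

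\textbf{Existence.} I would define $\rho'(\gamma(a)) := \rho(a)$. The only nontrivial verification is well-definedness: suppose $\gamma(a) = \gamma(a')$, i.e.\ $(a,0)\sim(a',0)$ in $\mathcal{P}\times\mathcal{P}$. Unwinding the equivalence on $\mathcal{R}$ produces a nontrivial witness $c\in\mathcal{P}$, distinct from the additive zero $-\infty$ of $\mathcal{P}$, such that $a + c = a' + c$ in $\mathcal{P}$, where $+$ denotes the multiplication introduced in Proposition \ref{prop:prodtsemrg}. Applying the hypothesised homomorphism $\rho$ gives $\rho(a) + \rho(c) = \rho(a') + \rho(c)$ in $R$; the hypothesis $\rho^{-1}(\{0\}) = \{-\infty\}$ forces $\rho(c)\neq 0$, and then the multiplicative cancellation property of $R$ yields $\rho(a) = \rho(a')$. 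Finally, $\rho'$ is automatically a morphism of semirings: $\gamma$ is itself a homomorphism (both operations on $\mathcal{R}$ are componentwise, and the $0$ in $\gamma(a)=(a,0)$ denotes the multiplicative identity of $\mathcal{P}$, which is idempotent for both $\oplus$ and $+$), and $\rho$ is a homomorphism by hypothesis, so the identity $\rho'(\gamma(a)\star\gamma(a')) = \rho'(\gamma(a\star a')) = \rho(a)\star\rho(a') = \rho'(\gamma(a))\star\rho'(\gamma(a'))$ holds for $\star\in\{\oplus,+\}$.

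\textbf{Main obstacle.} The delicate point is the well-definedness, and specifically the interpretation of the defining equivalence on $\mathcal{R}$ with a \emph{nontrivial} witness: the choice $c = -\infty$ would reduce $a + c = a' + c$ to the tautology $-\infty = -\infty$, collapsing the whole quotient to a single point. The hypothesis $\rho^{-1}(\{0\}) = \{-\infty\}$ is precisely what converts the nontrivial witness into a usable cancellation in $R$, which is why both assumptions on $R$ and on $\rho$ enter the argument at exactly the same step.
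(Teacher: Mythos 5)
Your proof is correct and follows essentially the same route as the paper's: define $\rho'(\gamma(a)):=\rho(a)$, extract a witness $c$ with $a+c=a'+c$, use the hypothesis $\rho^{-1}(\{0\})=\{(-\infty,-\infty)\}$ to get $\rho(c)\neq 0_{R}$, and conclude by multiplicative cancellation in $R$. Your explicit uniqueness and homomorphism checks, and your remark that the witness $c$ must be taken nontrivial (lest the quotient collapse), only make explicit points the paper leaves implicit — indeed the paper's own proof invokes a witness $c\neq(-\infty,-\infty)$ exactly as you do.
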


\begin{proof}
Let $R$ a multiplicative cancellative ring and a homomorphism $\rho:\mathcal{F}_{\mathbb{Z}[\imath]}\otimes\mathcal{F}_{\mathbb{Z}[\imath]}\to R$ such that $\rho^{-1}(\{0\})=\{(-\infty,-\infty )\}$.

Let $a,b\in\mathcal{F}_{\mathbb{Z}[\imath]}\otimes\mathcal{F}_{\mathbb{Z}[\imath]}$ such that $a=b$ in $\mathcal{F}_{\mathbb{Z}[\imath]}\hat{\otimes}\mathcal{F}_{\mathbb{Z}[\imath]}$.

Then there exists $c\in\mathcal{F}_{\mathbb{Z}[\imath]}\otimes\mathcal{F}_{\mathbb{Z}[\imath]}\backslash\{ (-\infty,-\infty)\}$ such that $a+c=b+c$.

Then $\rho(a+c)=\rho(b+c)$, so $\rho(a)\times_{R}\rho(c)=\rho(b)\times_{R}\rho(c)$.

Since $\rho^{-1}(\{0\})=\{(-\infty,-\infty)\}$, $\rho(c)\neq 0_{R}$.

And so since $R$ is multiplicatively cancellative, we have $\rho(a)=\rho(b)$, so the image of an element of $\mathcal{F}_{\mathbb{Z}[\imath]}\otimes\mathcal{F}_{\mathbb{Z}[\imath]}$ by the application $\rho$ depends only on the class of this latter element in $\mathcal{F}_{\mathbb{Z}[\imath]}\hat{\otimes}\mathcal{F}_{\mathbb{Z}[\imath]}$ and so we can take $\rho':\mathcal{F}_{\mathbb{Z}[\imath]}\hat{\otimes}\mathcal{F}_{\mathbb{Z}[\imath]}\ni \gamma(a)\mapsto\rho(a)$. We have shown that the application $\rho'$ is well defined and we have $\rho =\rho'\circ\gamma$. Therefore the result is proved.

\end{proof}

\begin{prop}
The action of $\mathbb{Z}[\imath]\times\mathbb{Z}[\imath]$ on $\mathcal{F}_{\mathbb{Z}[\imath]}\otimes\mathcal{F}_{\mathbb{Z}[\imath]}$ induces an action on $\mathcal{F}_{\mathbb{Z}[\imath]}\hat{\otimes}\mathcal{F}_{\mathbb{Z}[\imath]}$ which is compatible with the semiring structure.
\end{prop}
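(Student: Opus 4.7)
The plan is to check, in the following order, that (i) the $\mathbb{Z}[\imath]\times\mathbb{Z}[\imath]$-action on $\mathcal{P}:=\mathcal{F}_{\mathbb{Z}[\imath]}\otimes_{\mathbb{B}}\mathcal{F}_{\mathbb{Z}[\imath]}$ supplied by Proposition \ref{prop:actprodtens} descends through the equivalence relation $\sim$ used in Definition \ref{defi:prdtensred}, and then (ii) the induced operation on $\mathcal{F}_{\mathbb{Z}[\imath]}\hat{\otimes}_{\mathbb{B}}\mathcal{F}_{\mathbb{Z}[\imath]}$ is compatible with the two semiring laws $\oplus$ and $+$. The whole argument is formal and rests essentially on Proposition \ref{prop:actprodtens}, which tells us that for every $(\alpha,\beta)\in\mathbb{Z}[\imath]\times\mathbb{Z}[\imath]$ the map $(\alpha,\beta)\bullet -:\mathcal{P}\to\mathcal{P}$ is a morphism of semirings.

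For step (i), I would fix $(\alpha,\beta)\in\mathbb{Z}[\imath]\times\mathbb{Z}[\imath]$ and suppose $(a,b),(a',b')\in\mathcal{P}\times\mathcal{P}$ satisfy $(a,b)\sim(a',b')$. By definition there exists $c\in\mathcal{P}$ with $a+b'+c=a'+b+c$. Applying $(\alpha,\beta)\bullet -$ to this equality and using the compatibility of the action with $+$ from Proposition \ref{prop:actprodtens}, I obtain
\[
(\alpha,\beta)\bullet a + (\alpha,\beta)\bullet b' + (\alpha,\beta)\bullet c = (\alpha,\beta)\bullet a' + (\alpha,\beta)\bullet b + (\alpha,\beta)\bullet c,
\]
so the pair $\bigl((\alpha,\beta)\bullet a,(\alpha,\beta)\bullet b\bigr)$ is $\sim$-equivalent to $\bigl((\alpha,\beta)\bullet a',(\alpha,\beta)\bullet b'\bigr)$. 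This means that the componentwise formula $(\alpha,\beta)\bullet(a,b):=\bigl((\alpha,\beta)\bullet a,(\alpha,\beta)\bullet b\bigr)$ passes to the quotient $\mathcal{R}=\mathcal{P}\times\mathcal{P}/\sim$, and restricting to the image of $\gamma:\mathcal{P}\to\mathcal{R},\ a\mapsto(a,0)$ gives a well-defined action on $\mathcal{F}_{\mathbb{Z}[\imath]}\hat{\otimes}_{\mathbb{B}}\mathcal{F}_{\mathbb{Z}[\imath]}$.

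For step (ii), I would verify compatibility on representatives. The laws $\oplus$ and $+$ on $\mathcal{R}$ are defined componentwise, and by Proposition \ref{prop:actprodtens} the map $(\alpha,\beta)\bullet -$ is already a homomorphism of $(\mathcal{P},\oplus,+)$; so the componentwise extension is automatically a homomorphism of $(\mathcal{R},\oplus,+)$, and a fortiori of the subsemiring $\mathcal{F}_{\mathbb{Z}[\imath]}\hat{\otimes}_{\mathbb{B}}\mathcal{F}_{\mathbb{Z}[\imath]}=\gamma(\mathcal{P})$. Functoriality of the action (preservation of products and of the identity $(1,1)\bullet-$) is inherited from the corresponding properties on $\mathcal{P}$, again by componentwise reduction.

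The only subtlety I anticipate is making sure that the potentially degenerate cases $\alpha=0$ or $\beta=0$ do not spoil the argument: the action of $0$ on $\mathcal{C}_{\mathbb{Z}[\imath]}$ collapses everything except $\emptyset$ to $\{0\}$, and one should check that this still respects the relation $\sim$ used to define $\mathcal{F}_{\mathbb{Z}[\imath]}\hat{\otimes}_{\mathbb{B}}\mathcal{F}_{\mathbb{Z}[\imath]}$. This is harmless since $\sim$ is phrased purely in terms of the additive law $+$, and the action is additive by Proposition \ref{prop:actprodtens}; so I would simply remark that the equality $a+b'+c=a'+b+c$ is preserved under any endomorphism of $(\mathcal{P},+)$, zero-action included. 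This is really the only point requiring a line of justification, and it is the step I would write out most carefully.
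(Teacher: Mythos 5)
Your argument is correct and follows essentially the same route as the paper: both proofs reduce everything to the additivity of the $\mathbb{Z}[\imath]\times\mathbb{Z}[\imath]$-action established in Proposition \ref{prop:actprodtens}, applied to the witnessing equality ($a+b'+c=a'+b+c$ in your version, $a+c=b+c$ in the paper's) to show the cancellation relation is preserved, and then invoke the same proposition for compatibility with $\oplus$ and $+$. Your extra care in descending the componentwise action to all of $\mathcal{R}$ before restricting to $\gamma(\mathcal{P})$, and your remark on the degenerate case $\alpha=0$ or $\beta=0$, are harmless refinements of the same proof.
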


\begin{proof}
Let $a,b\in\mathcal{F}_{\mathbb{Z}[\imath]}\otimes\mathcal{F}_{\mathbb{Z}[\imath]}$ such that in $\mathcal{F}_{\mathbb{Z}[\imath]}\hat{\otimes}\mathcal{F}_{\mathbb{Z}[\imath]}$, $a$ is equal to $b$ (ie $\gamma(a)=\gamma(b)$).

Then let $c\in\mathcal{F}_{\mathbb{Z}[\imath]}\otimes\mathcal{F}_{\mathbb{Z}[\imath]}$ such that $a+c=b+c$.

Then for any $(\alpha,\beta)\in\mathbb{Z}[\imath]\times\mathbb{Z}[\imath]$, we have $(\alpha,\beta)\bullet(a+c)=(\alpha,\beta)\bullet(b+c)$ and so $(\alpha,\beta)\bullet a+ (\alpha,\beta)\bullet c=(\alpha,\beta)\bullet b+(\alpha,\beta)\bullet c$ and finally $(\alpha,\beta)\bullet a$ equal to $(\alpha,\beta)\bullet b$ in $\mathcal{F}_{\mathbb{Z}[\imath]}\hat{\otimes}\mathcal{F}_{\mathbb{Z}[\imath]}$ (ie $\gamma((\alpha,\beta)\bullet a)=\gamma((\alpha,\beta)\bullet b)$.

Consequently the action of $\mathbb{Z}[\imath]\times\mathbb{Z}[\imath]$ is compatible with the relation $\sim$ used to define $\mathcal{F}_{\mathbb{Z}[\imath]}\hat{\otimes}\mathcal{F}_{\mathbb{Z}[\imath]}$ and since the action of $\mathbb{Z}[\imath]\times\mathbb{Z}[\imath]$ was compatible with the semiring structure of $\mathcal{F}_{\mathbb{Z}[\imath]}\otimes\mathcal{F}_{\mathbb{Z}[\imath]}$, the induced action of $\mathbb{Z}[\imath]\times\mathbb{Z}[\imath]$ is compatible with the semiring structure on $\mathcal{F}_{\mathbb{Z}[\imath]}\hat{\otimes}\mathcal{F}_{\mathbb{Z}[\imath]}$.

\end{proof}

\begin{defi}\label{defi:redsquare}
The reduced square $\left( \widehat{(\mathbb{Z}[\imath])^{2}},\mathcal{F}_{\mathbb{Z}[\imath]}\hat{\otimes}\mathcal{F}_{\mathbb{Z}[\imath]} \right)$ is the topos $\widehat{(\mathbb{Z}[\imath])^{2}}$ with the structure sheaf $(\mathcal{F}_{\mathbb{Z}[\imath]}\hat{\otimes}\mathcal{F}_{\mathbb{Z}[\imath]} ,\oplus , +)$ viewed as an idempotent semiring in the topos.
\end{defi}

\end{document}